\titleformat{\subsection}[runin]
  {\normalfont\large\bfseries}{\thesubsection.}{0.3em}{\addperiod}
  \newcommand{\addperiod}[1]{#1.}
\newcommand\nnfootnote[1]{%
  \begin{NoHyper}
  \renewcommand\thefootnote{}\footnote{#1}%
  \addtocounter{footnote}{-1}%
  \end{NoHyper}
}
\newtheorem{prop}{Proposition}[section]
\newtheorem{lem}[prop]{Lemma}
\newtheorem{cor}[prop]{Corollary}
\newtheorem{thm}[prop]{Theorem}
\numberwithin{equation}{section}
\newtheorem{lem-de}[prop]{Lemma-Definition}
\theoremstyle{definition}
\newtheorem{defn}[prop]{Definition}
\newtheorem{de-lem}[prop]{Definition-Lemma}
\newtheorem{example}[prop]{Example}
\newtheorem{rem}[prop]{Remark}
\newtheorem{nota}[prop]{Notation}
\newtheorem{nota-rem}[prop]{Notation-Remark}
\newtheorem{nota-defn}[prop]{Notation-Definition}
\DeclareMathOperator{\SL}{SL}
\DeclareMathOperator{\sign}{sgn}
\DeclareMathOperator{\tr}{tr}
\DeclareMathOperator{\GL}{GL}
\DeclareMathOperator{\diag}{diag} 
\DeclareMathOperator{\supp}{{\rm supp}}
\DeclareMathOperator{\Stab}{Stab}
\DeclareMathOperator{\Jac}{{\rm Jac}}
\DeclareMathOperator{\Rho}{{\rm P}}
\DeclareMathOperator{\A}{{\mathbb A}}
\DeclareMathOperator{\B}{{\mathbb B}}
\newcommand{\Span}{\mathrm{Span}}
\newcommand{\low}{\mathrm{low}}
\newcommand{\Pf}{\mathrm{Pf}}
\newcommand{\RN}[1]{%
  \textup{\uppercase\expandafter{\romannumeral#1}}%
}
\def \ind {{\rm ind}}
\def \CC {{\mathbb C}}
\def \ZZ {{\mathbb Z}}
\def \NN {{\mathbb N}}
\def \TT {{\mathbb T}}
\def \calA  {{\mathcal{A}}}           
\def \calB  {{\mathcal{B}}}
\def \calC  {{\mathcal{C}}}
\def \calI  {{\mathcal{I}}}
\def \calQ  {{\mathcal{Q}}}
\def \calP {{\mathcal{P}}}
\def \calT {{\mathcal{T}}}
\def \calU  {{\mathcal{U}}}
\def \g  {\mathfrak{g}}   
\def \h  {\mathfrak{h}}
\def \n  {\mathfrak{n}}
\def \b  {\mathfrak{b}}
\def \p  {\mathfrak{p}}
\def \r  {\mathfrak{r}}
\def \sl {\mathfrak{sl}}
\def \gl {\mathfrak{gl}}
\def \t  {\mathfrak{t}}
\def \l  {\mathfrak{l}}
\def \hs {\hspace{.2in}}
\def \hand {\hs \mbox{and} \hs}
\def \la {{\langle}}
\def \ra {{\rangle}}
\def \lara {\langle \,, \, \rangle}
\def \ow {\overline{w}}
\def \low {{\rm low}}
\def \sv {{\scriptscriptstyle \vee}}
\def \Ad {{\rm Ad}}
\def \pist {\pi_{\rm st}}
\def \TT {\mathbb{T}}
\def \lrw {\longrightarrow}
\def \Maps {\longmapsto}
\def \bfw {{\bf w}}
\def \bfwz {{\bf w_0}}
\def \vphi {{\varphi}}
\def \sGB {{\scriptscriptstyle G/B}}
\def \pist {\pi_{\rm st}}
\def  \piGB {\pi_{\sGB}}
\def \bbone {{\mathbbm 1}}
\def \wB {{w_\cdot B}}
\def \calCBFZ {\calC_{\rm BFZ}}
\def \gsts {{\g_{\rm st}^{\ast}}}
\def \gog {{\g \oplus \g}}
\def \elw {{\ell(w_0)}}
\begin{document}

\title{Polynomial integrable systems from cluster structures}
\author{Yanpeng Li \and Yu Li \and Jiang-Hua Lu}

\newcommand{\Addresses}{{
  \bigskip
  \footnotesize
  
  \textsc{Department of Mathematics, Sichuan University, No.24 South Section 1, Yihuan Road, Chengdu, China}\par\nopagebreak
  \textit{E-mail address}: \texttt{yanpeng.li@scu.edu.cn}

  \medskip


  \textsc{Department of Mathematics, University of Notre Dame, 255 Hurley Bldg., Notre Dame, IN 46556, USA}\par\nopagebreak
  \textit{E-mail address}: \texttt{yli234@nd.edu}

  \medskip

  \textsc{Department of Mathematics, The University of Hong Kong, Pokfulam Road, Hong Kong, China}\par\nopagebreak
  \textit{E-mail address}: \texttt{jhlu@maths.hku.hk}
}}
\date{}
\maketitle

\nnfootnote{\emph{Keywords:} cluster algebras, integrable systems, Poisson Lie groups, linearizations.}

\begin{abstract}
\noindent 
We present a general framework for constructing polynomial integrable systems on linearizations
of Poisson varieties that
admit log-canonical systems. Our construction is in particular applicable to Poisson varieties with compatible cluster or generalized cluster structures. As examples, we consider a standard complex semi-simple Poisson Lie group
$G$ and a Borel subgroup $B$ of $G$, equipped with the Berenstein-Fomin-Zelevinsky cluster structures;  
the unipotent Lie subgroup $N_w$ of $G$ associated to 
any $w$ in the Weyl group of $G$, equipped with the cluster structure on the corresponding 
Schubert cell 
as first defined by Geiss-Leclerc-Schr\"oer when $G$ is simply-laced; 
and the dual Poisson Lie group
$\GL(n, \CC)^*$ of the standard Poisson Lie group $\GL(n, \CC)$, equipped with the 
Gekhtman-Shapiro-Vainshtein generalized cluster structure.   In each of these four cases, we show that every extended cluster in the respective cluster 
or generalized cluster structure gives rise to at least one polynomial integrable system with respect to the linearization of the Poisson structure at the identity element. For some of the polynomial integrable systems, we show that all their Hamiltonian flows are complete. 

Just as generalized minors on a complex semi-simple Lie group $G$ 
are used to describe certain initial extended clusters in the Berenstein-Fomin-Zelevinsky cluster structure on $G$,
we introduce a special class of homogeneous polynomials, 
called {\it signed generalized minors}, on the Lie algebra $\g$ of $G$, which are then used 
to describe some of the
polynomial integrable systems obtained via our construction. As a further application, 
we use the homogeneous degrees of certain signed generalized minors to give an explicit formula 
for the index  of the Lie algebra of $N_w$  for every $w$ in the Weyl group. 

\end{abstract}

\tableofcontents 
\addtocontents{toc}{\protect\setcounter{tocdepth}{2}}

\section{Introduction and main results}\label{s:intro}
\subsection{Background and motivation}\label{ss:intro}
The theory of cluster algebras \cite{FZ:I, FZ:II, BFZ:III, FZ:IV} is now ubiquitous in mathematics. In this paper, we 
explore a connection between cluster structures and integrable systems on vector spaces with linear Poisson structures.

Recall that a linear Poisson structure on a finite dimensional complex\footnote{In this paper we only consider complex Poisson manifolds.}  vector 
space $V$ is, by definition, a Poisson 
structure $\pi_0$ on $V$ such that the linear functions on $V$ are closed under 
the Poisson bracket $\{\,, \, \}_{\pi_0}$
on the algebra of algebraic functions on $V$. 
Linear Poisson structures on $V$, also referred to as {\it Kirillov-Kostant-Souriau Poisson structures},
are thus in bijection with Lie algebra structures on the dual vector space $V^*$ of $V$. 

Let $\l$ be  an $n$-dimensional complex Lie algebra, and denote by $\pi_{0, \l}$ the linear 
Kirillov-Kostant-Souriau Poisson 
structure on $\l^*$. 
It is well-known (see $\S$\ref{ss:int-sys}) that a set of algebraically independent polynomial functions 
on $\l^*$ with pairwise zero Poisson brackets with respect to $\pi_{0, \l}$ has cardinality at most  
$n - \frac{1}{2}{\rm rk} (\pi_{0, \l})$, 
where ${\rm rk}(\pi_{0, \l})$ is the rank of $\pi_{0, \l}$ (see Definition \ref{defn:int}), 
and such a set is called a {\it polynomial integrable system} on $(\l^*, \pi_{0, \l})$ if its cardinality is equal to $n - \frac{1}{2}{\rm rk} (\pi_{0, \l})$.
The Mishchenko-Fomenko conjecture \cite{M-F:Poi}, 
proved by S. T. Sadetov in \cite{Sadetov}, says that $(\l^*, \pi_{0, \l})$ admits a polynomial integrable system
for every finite dimensional complex\footnote{The Mishchenko-Fomenko conjecture, as proved in 
\cite{Sadetov}, holds for finite dimensional Lie algebras over any field of characteristic $0$.}
Lie algebra $\l$. For a given Lie algebra $\l$, it has long been an interesting problem to 
concretely construct polynomial integrable systems on $(\l^*, \pi_{0, \l})$ (see, for example,
\cite{Vergne:CRAS,Vergne:nilpotent, Trofimov,Thimm, Bolsinov}).

Assume that $(P, \pi)$ is an $n$-dimensional  
complex Poisson manifold, and let $p_0 \in P$ be  such that $\pi(p_0) = 0$. The {\it linearization} of $\pi$ at $p_0$ is then a linear Poisson structure $\pi_0$ on the tangent 
space $T_{p_0}P$ of $P$ at $p_0$ (see $\S$\ref{ss:linPoi}), and we refer to $(T_{p_0}P, \pi_0)$ as the {\it linearization\footnote{In general $(P, \pi)$ is not necessarily {\it linearizable at $p_0$}, i.e.,
$(P, \pi)$ may not be locally isomorphic to $(T_{p_0}P, \pi_0)$.} of $(P, \pi)$ at $p_0$}. 
Our paper starts with the
observation (Lemma \ref{Lem:mainlem}) that if two local holomorphic functions $\vphi$ and $\psi$ on $P$ defined near $p_0$ 
have log-canonical Poisson bracket with respect to
$\pi$, i.e., if
\begin{equation}\label{eq:log-can}
\{\vphi, \psi\}_\pi \in \CC \vphi \psi,
\end{equation}
then their {\it lowest degree terms} $\vphi^\low$ and $\psi^\low$ at $p_0$
(see Definition \ref{defn:phi-low}), which are homogeneous
polynomial functions on $T_{p_0}P$, satisfy\footnote{See Remark \ref{rem:higher} for a  condition 
that is weaker than \eqref{eq:log-can}
but still implies \eqref{eq:low-0}.}
\begin{equation}\label{eq:low-0}
\{\vphi^\low, \, \psi^\low\}_{\pi_0} = 0.
\end{equation} 
For a set\footnote{The bracket $\{\,\cdot ,\ldots, \cdot \, \}$ used in denoting the set 
$\Phi = \{\vphi_1, \ldots, \vphi_n\}$ should not be confused with the Poisson bracket between two functions.
An $n$-tuple
$\Phi = (\vphi_1, \ldots, \vphi_n)$ of functions is sometimes also regarded as a set of functions.} $\Phi = \{\varphi_1, \ldots, \varphi_n\}$   
of holomorphic functions on $P$, if 
elements of $\Phi$ have pairwise log-canonical Poisson brackets with respect to $\pi$ and if
$d\vphi_1 \wedge \cdots \wedge d\vphi_n \neq 0$, we call $\Phi$ a {\it log-canonical system on $(P, \pi)$}.
A log-canonical system on an open neighborhood of $p_0$ in $P$ is called a {\it log-canonical system on $(P, \pi)$ at $p_0$}. 
The above observation leads to the following natural question. 

\medskip
\noindent
{\bf Question.} 
{\rm
Given a log-canonical system $\Phi = \{\vphi_1, \ldots, \vphi_n\}$ on $(P, \pi)$ at $p_0$,
does the set 
\[
\Phi^\low = \{\vphi_1^\low, \ldots, \vphi_n^\low\}
\]
contain a polynomial integrable system on $(T_{p_0}P, \pi_0)$, i.e., a subset of
algebraically independent 
polynomials on $T_{p_0}P$ with cardinality 
$n - \frac{1}{2} {\rm rk}(\pi_0)$? 
}

\medskip
To answer the above question, we present in this paper both a general theory ($\S$\ref{s:int-linear}) and a detailed study of concrete examples from Lie theory
($\S$\ref{s:Lie-prelim}-$\S$\ref{sect:GLndual}). 
We remark that the operation of taking lowest degree terms has been used in other areas of mathematics under different names, such as deformations to the normal cone in intersection theory \cite{Ful} and weighted normal bundles in differential geometry \cite{LM23}.  The connections to integrable systems and cluster theory explored in our paper are new. Applying 
the operation to generalized minors on a complex semi-simple Lie group $G$, 
we are led to a class of homogeneous polynomials on the Lie algebra $\g$ of $G$, 
which we call {\it signed generalized minors} on $\g$, and which we use  
 to describe some of the polynomial integrable systems 
in the Lie theoretical examples
associated to $G$. 
As a further application, we give a formula for the indices of the 
nilpotent Lie sub-algebras of $\g$ associated to elements in the Weyl group of $G$ using the homogeneous degrees of certain
signed generalized minors.

We now give more details of our results. 

\subsection{The general theory}\label{ss:outine-general} As part of our general theory, we introduce a 
sufficient condition, to be referred to as {\it Property $\calI$},
 for a positive answer to the question  in $\S$\ref{ss:intro}.

{\bf Property $\calI$ of log-canonical systems}. 
Let $(P, \pi)$ be an $n$-dimensional
complex Poisson manifold  and let $p_0 \in P$ be such that
$\pi(p_0) = 0$. For a log-canonical system 
$\Phi = \{\vphi_1, \ldots, \vphi_n\}$ on $(P, \pi)$ at $p_0$,
define the meromorphic $n$-form $\mu_\Phi$ on $P$ by
\[
\mu_\Phi = \frac{d\vphi_1 \wedge \cdots \wedge d\vphi_n}{\vphi_1 \cdots \vphi_n} = d(\log \vphi_1) \wedge \cdots \wedge 
d(\log \vphi_n),
\]
which will also be called the {\it log-volume form of $\Phi$}. 
Then $\mu_\Phi$ has a well-defined {\it lowest degree term} $\mu_{\Phi}^\low$ at $p_0$  (see $\S$\ref{ss:phi-low}), 
which is a non-zero homogeneous rational $n$-form on $T_{p_0}P$. Let  
$\deg (\mu_{\Phi}^\low) \in \ZZ$ be the 
homogeneous degree of $\mu_{\Phi}^\low$ (see \eqref{eq:deg-alpha-low}). 
The following is the first main result of the paper.

\medskip   
\noindent
{\bf Theorem A} (Corollary \ref{cor:I} and Theorem \ref{thm:main-0}). {\it For a
log-canonical system $\Phi$ on $(P, \pi)$ at $p_0$, if 
\begin{equation}\label{eq:deg-mu-Phi-r0}
\deg (\mu_{\Phi}^\low) \leq \frac{1}{2} \, {\rm rk}(\pi_0), \hs \mbox{equivalently,} \hs
\deg (\mu_{\Phi}^\low) = \frac{1}{2} \, {\rm rk}(\pi_0),
\end{equation}
then the set $\Phi^\low$ 
contains a polynomial integrable system on $(T_{p_0}P, \pi_0)$. 
}

\medskip
\noindent
{\bf Definition A}.
If \eqref{eq:deg-mu-Phi-r0} holds, we say that the log-canonical system $\Phi$ on $(P, \pi)$ at $p_0$
{\it has Property $\calI$ at $p_0$}.  

\medskip
{\bf Property $\calI$ of compatible cluster structures}.
Examples of Poisson varieties with log-canonical systems have recently emerged in the study of {\it cluster varieties}.
Let $(P, \pi)$ be a smooth rational quasi-affine Poisson variety over $\CC$, and let $\CC(P)$ 
be the field of rational functions on $P$. By a {\it (generalized) cluster structure in $\CC(P)$} we mean
an equivalence class of seeds in $\CC(P)$  under either
the ordinary mutation rule as in \cite{FZ:I, FZ:II} or the generalized mutation rule 
as in \cite{GSV:2012, GSV:2018} (see $\S$\ref{ss:cluster}). 

Assume that
$\calC$ is  a  (generalized) cluster structure in $\CC(P)$ 
that is {\it regular on $P$ and compatible with $\pi$} in the sense 
that all the extended clusters of $\calC$
consist of regular functions\footnote{This condition is weaker than
$\calC$ being a (generalized) cluster structure on $P$.  See  Remark \ref{rem:weaker}.}
on $P$ and are log-canonical with respect to
$\pi$. Let ${\rm Froz}$ be the set of all frozen variables of $\calC$ and let $m = |{\rm Froz}|$. 
Motivated by the examples studied in the second part of the paper, we define a {\it regular $\pi$-compatible frozen variable modification of $\calC$} to be an algebraically independent  subset $\overline{{\rm Froz}}$
of $\CC({\rm Froz})\cap \CC[P]$ of cardinality $m$ 
with certain additional conditions (see Definition \ref{defn:modify} and 
Definition \ref{defn:regular} for detail). Given such a 
$\overline{{\rm Froz}}$, for an extended cluster $\Phi$ of $\calC$ we define 
the {\it modification}
of $\Phi$ by $\overline{{\rm Froz}}$ to be
\[
\overline{\Phi} = (\Phi\backslash {\rm Froz}) \sqcup \overline{{\rm Froz}}.
\]
The following statements are proved in $\S$\ref{ss:cluster}. 

\medskip   
\noindent
{\bf Theorem B} (Lemma-Definition \ref{lem:mu-modify}, Theorem \ref{thm:main-cluster}). {\it Let $(P, \pi)$ be a smooth 
rational quasi-affine Poisson  variety over $\CC$. Let  $\calC$ be a 
 (generalized) cluster structure in  $\CC(P)$ that is {\it regular on $P$ and compatible with $\pi$}, and let  
 $\overline{\rm Froz}$ 
 be any regular $\pi$-compatible frozen variable modification of $\calC$.
 
 1) For every extended cluster $\Phi$ of $\calC$, the modification $\overline{\Phi}$ of ${\Phi}$ is 
 again a log-canonical system on $(P, \pi)$, and 
the meromorphic form $\overline{\mu}_{\calC} \stackrel{{\rm def}}{=} \mu_{\overline{\Phi}}$ on $P$ 
is, up to a sign, independent of the extended cluster $\Phi$ of $\calC$;

2) Suppose that $p_0\in P$ is such that $\pi(p_0) = 0$, and let $\pi_0$ be the linearization of $\pi$ at $p_0$. 
If the lowest degree term $(\overline{\mu}_{\calC})^\low$ of $\overline{\mu}_{\calC}$ at $p_0$ satisfies
\begin{equation}\label{eq:deg-mu-C-0}
\deg \left((\overline{\mu}_{\calC})^\low\right) = \frac{1}{2} {\rm rk}(\pi_0),
\end{equation}
then for every extended cluster $\Phi$ of $\calC$, 
the modification $\overline{\Phi}$ of $\Phi$ 
has Property $\calI$ at $p_0$, so  the set
$(\overline{\Phi})^{\low}$ of lowest degree terms of elements in $\overline{\Phi}$ at $p_0$
contains a polynomial integrable system on $(T_{p_0}P, \pi_0)$.
}

\medskip 
For a smooth Poisson variety $(P, \pi)$ with $p_0 \in P$ such that $\pi(p_0) = 0$ and 
 a (generalized) cluster structure $\calC$ in $\CC(P)$ that is regular
on $P$ and compatible with $\pi$, 
Theorem B says that
Property $\calI$ at $p_0$, or the lack thereof, is shared by all
(the simultaneous frozen variable modifications of) the extended clusters of $\calC$. 

\medskip
\noindent
{\bf Definition B.} Let $(P, \pi)$ be a smooth 
rational quasi-affine Poisson  variety over $\CC$, and let $p_0 \in P$ be such that $\pi(p_0) = 0$.
For a (generalized) cluster structure $\calC$ in $\CC(P)$ that is regular
on $P$ and compatible with $\pi$, we say that  {\it $\calC$ has Property $\calI$ at $p_0$} 
if there exists a frozen variable modification of $\calC$ such that 
\eqref{eq:deg-mu-C-0} holds. 

\medskip
{\bf Property $\calI$ via $\TT$-Pfaffian}.
In $\S$\ref{ss:Pfaffian}, we investigate another setting in which Property $\calI$ at $p_0 \in P$ can be simultaneously 
verified for a class of log-canonical systems on $(P, \pi)$. Namely, 
assume that $\TT$ is a complex torus and $(P, \pi)$ is an $n$-dimensional connected complex $\TT$-Poisson manifold, i.e., $(P, \pi)$ has  an action of $\TT$ by Poisson automorphisms. Assume, in addition, that $(P, \pi)$ has an open dense $\TT$-leaf, i.e., 
an open dense subset $L$ of the form
$L =\cup_{t \in \TT} t S$, where $S$ is a symplectic leaf of $(P, \pi)$. In such a case, one has, up to 
non-zero scalar multiples, 
a well-defined {\it $\TT$-Pfaffian} (see $\S$\ref{ss:Pfaffian} for detail)
\[
{\rm Pf}_\TT(\pi) = \wedge^r \pi \wedge \sigma(x_1) \wedge \cdots \wedge \sigma(x_{n-2r}) \in {\mathfrak{X}}^n(P),
\]
where $r = \frac{1}{2} {\rm rk}(\pi)$,   and the $\sigma(x_j)$'s are certain 
generating vector fields of the $\TT$-action on $P$. 
Define a {\it $\TT$-log-canonical system} on
$(P, \pi)$ to be any log-canonical system $\Phi = \{\vphi_1, \ldots, \vphi_n\}$ on $(P, \pi)$ 
with the additional property that each $\vphi_j$ is a $\TT$-weight vector with respect to the $\TT$-action.

As is with differential forms,  the $\TT$-Pfaffian 
${\rm Pf}_\TT(\pi)$ has a lowest degree term ${\rm Pf}_\TT(\pi)^\low$  at any $p_0 \in P$, which is a polynomial $n$-vector field on $T_{p_0}P$ and has
a homogeneous degree
$\deg \left({\rm Pf}_\TT(\pi)^\low\right) \in \ZZ$  
(see $\S$\ref{ss:phi-low}).
The following statements are proved in $\S$\ref{ss:Pfaffian}.

\medskip   
\noindent
{\bf Theorem C} (Lemma \ref{lem:mu-Pfaff}, Theorem \ref{thm:mu-Pfaff}, and Theorem \ref{thm:TT-fixed}). {\it 
Let $\TT$ be a complex torus and let 
$(P, \pi)$ be a connected complex $\TT$-Poisson manifold with  an open dense $\TT$-leaf. 

1) For every 
$\TT$-log-canonical system $\Phi$ on $(P, \pi)$, taking the lowest degree terms 
at any $p_0 \in P$ one has
\[
\deg \left(\mu_\Phi^\low\right) =-\deg \left({\rm Pf}_\TT(\pi)^\low\right).
\]
Consequently, if  $\pi(p_0) = 0$, and if  
\begin{equation}\label{eq:deg-Pfaffian}
\deg \left({\rm Pf}_\TT(\pi)^\low\right)=-\frac{1}{2} {\rm rk}(\pi_0),
\end{equation}
where $\pi_0$ is the linearization of $\pi$ at $p_0$, then every $\TT$-log-canonical system 
$\Phi$ on $(P, \pi)$ has Property $\calI$ at $p_0$. 

2) Assume that $\TT$-log-canonical systems on $(P, \pi)$ exist. Then the 
identity \eqref{eq:deg-Pfaffian}  automatically holds for every $p_0 \in P$ that is a $\TT$-fixed point and is such that $\pi(p_0)=0$.
Consequently, 
all $\TT$-log-canonical systems have Property $\calI$ at such a $p_0 \in P$. 
}

\subsection{Examples from Lie theory}\label{ss:three-examples}
We consider four examples from Lie theory.

Let $G$ be a connected and simply connected complex semi-simple Lie group
with a fixed pair $(B, B_-)$
of opposite Borel subgroups, and let $\g$, $\b$, and $\b_-$ be the respective Lie algebras of $G$, $B$, 
and $B_-$. The choice of $(B, B_-)$ gives rise to a so-called 
{\it standard multiplicative Poisson structure $\pi_{\rm st}$}, 
which vanishes at the identity element $e \in G$. The linearization of $(G, \pi_{\rm st})$ at $e \in G$ is
\begin{equation}\label{eq:g-pi0}
(\g, \;\pi_{0, \g_{\rm st}^*}),
\end{equation}
where 
$\g_{\rm st}^*$ is a Lie sub-algebra of $\b \times \b_-$ given in \eqref{eq:gsts}, identified with $\g^*$ 
via \eqref{eq:g-gsts}.
On the other hand, the upper cluster structure on the open double Bruhat cell of $G$ defined by
A. Berenstein, S. Fomin, and A. Zelevinsky  in \cite{BFZ:III}, regarded as a cluster
structure in $\CC(G)$ and denoted as $\calC_{\rm BFZ}(G)$, is regular on $G$ (see \cite[Theorem 5.1]{GSV:2012} and \cite{Qin-Yakimov:G}) and compatible with  $\pist$ 
(see \cite[Theorem 4.18]{GSV:book}). We refer to $\calC_{\rm BFZ}(G)$ as the {\it BFZ cluster structure in $\CC(G)$}. 

The Borel subgroup $B$ is a Poisson Lie subgroup of $(G, \pi_{\rm st})$, and we also write $\pi_{\rm st} =
\pi_{\rm st}|_B$. 
Using the Killing form of $\g$ to identify $\b \cong (\b_-)^*$ (see \eqref{eq:bb-pair}), the linearization of 
$(B, \pi_{\rm st})$ at $e \in B$ is 
\begin{equation}\label{eq:b-pi0}
(\b, \;\pi_{0, \b_-}).
\end{equation}
On the other hand, 
let $W$ be the Weyl group of $G$ with respect to $T = B \cap B_-$, and let
$w_0 \in W$ be the longest element. 
The Berenstein-Fomin-Zelevinsky cluster structure on the double Bruhat cell
$G^{e, w_0} = B \cap B_-w_0B_-$, regarded as a cluster structure in $\CC(B) = \CC(G^{e, w_0})$
and denoted as $\calC_{\rm BFZ}(B)$, 
is regular and compatible with $\pi_{\rm st}$ (see Lemma 
 \ref{lem:comp-T-B}). We refer to $\calC_{\rm BFZ}(B)$ as the {\it BFZ cluster structure in $\CC(B)$}. 
 
Consider now the flag variety $G/B$ of $G$
and its decomposition
\[
G/B = \bigsqcup_{w \in W} BwB/B
\]
 into Schubert cells.
 It is well-known \cite{GY09, LM:mixed}  that
the Poisson structure $\pi_{\rm st}$ on $G$ projects to a well-defined Poisson structure $\piGB$ on $G/B$, 
and that  each Schubert cell $BwB/B$ is
a Poisson sub-manifold of $(G/B, \piGB)$ and  $\piGB(w_\cdot B) = 0$. 
We write $\pi_w = \piGB|_{BwB/B}$ for each $w \in W$ and call $\pi_w$ the 
{\it standard Poisson structure on $BwB/B$}.
Let $\n \subset \b$ and $\n_-\subset \b_-$ be the respective nilpotent radicals of $\b$ and $\b_-$. For $w \in W$, 
and for any representative $\dot{w}$ of $w$ in the normalizer subgroup $N_G(T)$ of $T$ in $G$, let
\begin{equation}\label{def:nw-nw}
\n_w = \n \cap {\rm Ad}_{\dot{w}}(\n_-)\hs \mbox{and} \hs
\n_{w, -} = \n_- \cap {\rm Ad}_{\dot{w}}(\n).
\end{equation}
Identifying  $T_{w_\cdot B}(BwB/B) \cong \n_w \cong (\n_{w, -})^*$ (see $\S$\ref{ss:linear-Cw}), the linearization of
$(BwB/B, \pi_w)$ at $w_\cdot B$ is then
\begin{equation}\label{eq:nw-pi0}
(\n_w, \; \pi_{0, \n_{w, -}})
\end{equation}
up to a factor of $-2$ (see Lemma \ref{lem:pi0-nw}). 
On the other hand, the Schubert cell $BwB/B$ carries the so-called {\it standard cluster structure}, denoted as
$\calC^w_{\rm st}$, 
which is compatible with $\pi_w$ and with the $T$-action on $BwB/B$ by left translation (in the sense that all extended 
cluster variables are $T$-weight vectors). 
The cluster structure $\calC^w_{\rm st}$ 
was first introduced by 
C. Geiss, B. Leclerc, and J. Schr\"oer 
\cite{GLS:partial} when $G$ is simply-laced, and the general case follows from  the 
Goodearl-Yakimov theory on {\it symmetric Poisson CGL extensions} in \cite{GY:Poi-CGL} (see 
$\S$\ref{ss:st-cluster-BwB} for detail). 

Finally, consider the dual Poisson Lie group $(\GL(n, \CC)^*, \pi_{\rm st}^*)$ of the 
Poisson Lie group $(\GL(n, \CC), \pi_{\rm st})$, 
where $\GL(n, \CC)^*$ is a certain closed subgroup of $B \times B_-$ and $B$ (resp. $B_-$)
consists of all the upper triangular (resp. lower triangular) matrices in $\GL(n, \CC)$
(see $\S$\ref{ss:GLn-star}).
Let ${\mathbbm{1}}_n$ be the $n \times n$ identity matrix. The Poisson structure $\pi_{\rm st}^*$ vanishes at $({\mathbbm{1}}_n, {\mathbbm{1}}_n) \in \GL(n, \CC)^*$, and the linearization of $\pi_{\rm st}^*$ at $({\mathbbm{1}}_n, {\mathbbm{1}}_n)$ is
\begin{equation}\label{eq:gl-pi0}
({\mathfrak{gl}}(n, \CC), \; \pi_{0, {\mathfrak{gl}}(n, \CC)}),
\end{equation}
where we identify ${\mathfrak{gl}}(n, \CC)\cong {\mathfrak{gl}}(n, \CC)^*$ using the trace form.
On the other hand, M. Gekhtman, M. Shapiro, and A. Vainshtein constructed in \cite{GSV:related} a generalized upper
cluster structure on $\GL(n, \CC)^*$, denoted as $\calC_{\rm GSV}$, which is compatible with the Poisson structure $\pi_{\rm st}^*$ (see 
$\S$\ref{ss:cluster-G-star} for detail). We prove

\medskip
\noindent
{\bf Theorem D}. 
 The following hold:

($\S$\ref{s:GB}) {\it The standard cluster structure $\calC^w_{\rm st}$ on each Schubert cell $BwB/B$ has Property $\calI$ at $w_\cdot B$};

($\S$\ref{s:G}) {\it The  cluster structure $\calC_{\rm BFZ}(G)$ in $\CC(G)$ has Property $\calI$ at $e \in G$;}

($\S$\ref{s:B}) {\it The  cluster structure $\calC_{\rm BFZ}(B)$ in $\CC(B)$ has Property $\calI$ at $e \in B$;}

($\S$\ref{sect:GLndual}) {\it The generalized cluster structure $\calC_{\rm GSV}$ in $\CC(\GL(n, \CC)^*)$ has Property $\calI$ at $({\mathbbm{1}}_n, {\mathbbm{1}}_n) \in \GL(n, \CC)^*$}.

\medskip 
We note that for 
$\calC_{\rm BFZ}(G)$ and $\calC_{\rm BFZ}(B)$ and when $w_0\neq -1$ on the root system of $G$, as well as for
$\calC_{\rm GSV}$, we need to do frozen variable modifications for Property $\calI$ as stated in Theorem D. No frozen variable modification is needed, and Theorem C applies, in the case of Schubert cells.

In each of the cases in Theorem D and for every extended cluster $\Phi$ 
in the respective cluster structures, it thus follows from Theorem B that 
the set $\Phi^\low$ of lowest degree terms at the 
indicated points  contains  
a polynomial integrable system on the linearizations respectively given in \eqref{eq:g-pi0}, \eqref{eq:b-pi0}, 
\eqref{eq:nw-pi0}, and \eqref{eq:gl-pi0}. In each case and for some extended clusters $\Phi$, 
we describe explicit subsets of $\Phi^\low$ which are polynomial integrable 
systems on the linearizations (Theorem \ref{thm:choice-GB}, Theorem \ref{thm:psi}, Theorem \ref{thm:choice-G}, 
Theorem \ref{thm:choice-B}, and Theorem \ref{thm:choice-GLn}). 

While in this paper we only examine  polynomial integrable systems obtained from {\it some}
initial extended clusters, 
it is natural to ask how these integrable systems {\it mutate} as the extended clusters mutate, a project we will carry out in the future. Another interesting future project is to investigate the quantum analogs of the results in this paper. 

We also remark that the integrable systems on $({\mathfrak{gl}}(n, \CC), \; \pi_{0, {\mathfrak{gl}}(n, \CC)})$
that we obtain from the generalized cluster structure $\calC_{\rm GSV}$ in $\CC(\GL(n, \CC)^*)$ are in general different from the celebrated {Gelfand-Zeitlin integrable system} (see Remark \ref{rem:different}). 

\subsection{Signed generalized minors on \texorpdfstring{$\g$}{g}}\label{ss:signed-intro}
Let again $G$ be a connected and simply connected complex semi-simple Lie group with Lie algebra $\g$. 
Initial extended clusters constructed in \cite{BFZ:III} for 
the BFZ
cluster structure $\calC_{\rm BFZ}(G)$  consist of {\it generalized minors} $\Delta_{u\omega_\alpha, v\omega_\alpha}$ on $G$, where $u, v \in W$ and $\omega_\alpha$ is a fundamental weight (see $\S$\ref{ss:minors-G}).  
For such $u, v \in W$ and $\omega_\alpha$, we introduce 
\begin{equation}\label{eq:delta-Delta-intro}
\delta_{u\omega_\alpha, v\omega_\alpha} \stackrel{\rm def}{=} 
\Delta_{u\omega_\alpha, v\omega_\alpha}^\low \in \CC[\g],
\end{equation}
where the lowest degree term is taken  at the identity element $e \in G$. For $G = \SL(n, \CC)$, it is well-known that every $\Delta_{u\omega_\alpha, v\omega_\alpha}(g)$ 
is an ordinary minor of $g \in \SL(n, \CC)$, and we prove in Lemma \ref{lem:GLn-minor} that 
$\delta_{u\omega_\alpha, v\omega_\alpha}(x)$ is an ordinary minor of $x \in \sl(n, \CC)$ but 
possibly with a negative sign.
For an arbitrary $G$, we thus call 
$\delta_{u\omega_\alpha, v\omega_\alpha}$ a {\it signed generalized minor on $\g$}. Each $\delta_{u\omega_\alpha, v\omega_\alpha}$ is, by definition,  a 
homogeneous polynomial on $\g$.

Given the key role played by generalized minors in the BFZ cluster structures, 
we believe that signed generalized 
minors on $\g$ are interesting  on their own and warrant attention.  
We prove in 
Definition-Lemma  \ref{de-lem:delta} that $\delta_{u\omega_\alpha, v\omega_\alpha}$
coincides with the {\it co-degree term},  as defined by Kostant \cite[$\S$2.7]{K13},
of a (linear) matrix coefficient $\widehat{\Delta}_{u\omega_\alpha, v\omega_\alpha} \in (U\g)^*$ in the 
highest weight representation of $\g$ with highest weight $\omega_\alpha$, where $U\g$ is
the universal enveloping algebra of $\g$ 
 (see Definition-Lemma \ref{de-lem:co-deg}). In $\S$\ref{ss:signed-minor} we establish some 
basic properties of signed generalized minors which are parallel to some of the properties
of generalized minors on $G$ proved by Fomin and Zelevinsky in \cite{FZ:double}. 
The explicit polynomial integrable systems on 
\eqref{eq:g-pi0}, \eqref{eq:b-pi0} and 
\eqref{eq:nw-pi0}  coming from the extended clusters in $\calC_{\rm BFZ}(G)$, $\calC_{\rm BFZ}(B)$ and  
$\calC^w_{\rm st}$ defined by (double) reduced words of $w_0$ and  $w \in W$
all consist of signed generalized minors (except in the case when 
$w_0\neq -1$ on the root system of $\g$; see  
Theorem \ref{thm:choice-GB}, Theorem \ref{thm:psi}, Theorem \ref{thm:choice-G}, and 
Theorem \ref{thm:choice-B}).


We now give more details on our results in the case of Schubert cells.

\subsection{Schubert cells and applications to the nilpotent Lie algebra \texorpdfstring{$\n_{w, -}$}{nw-}}\label{ss:app-n-w}
For an $n$-dimensional Lie algebra $\l$, the cardinality of any polynomial integrable system on $(\l^*, \pi_{0, \l})$,  i.e., the integer 
\[
{\rm mag} (\l) \stackrel{\rm def}{=} n-\frac{1}{2} {\rm rk}(\pi_{0, \l}),
\]
is called the {\it magic number}
of $\l$ or of $(\l^*, \pi_{0, \l})$, and the integer 
\[
\ind(\l) \stackrel{\rm def}{=} n-{\rm rk}(\pi_{0, \l}) = 2\;{\rm mag}(\l)-n,
\]
is called the {\it index of $\l$} \cite[11.1.6]{Diximir:book}. To construct polynomial integrable systems on $(\l^*, \pi_{0, \l})$, one
needs to compute ${\rm mag}(\l)$, or, equivalently, $\ind(\l)$. There is an extensive literature on
computing $\ind(\l)$ for various $\l$ (see, for example, 
\cite{B-D:index, DK:index, Panyushev:index, RT:index} and references therein).
For $\l = \g_{\rm st}^*$, or $\b_-$,
or $\gl(n, \CC)$ that appear in the linear Poisson structures in \eqref{eq:g-pi0}, \eqref{eq:b-pi0}, and 
\eqref{eq:gl-pi0}, the indices $\ind(\l)$ are known or easy to compute.  Indeed, one has
(see Proposition
\ref{prop:ind-gsts}, Lemma \ref{lem:ind-bm}, and Lemma \ref{lem:rkBB-})
\[
{\ind}(\g_{\rm st}^*) = r=\dim (\b\cap \b_-), \hs 
{\ind}(\b_-) = \dim {\rm im} (1 + w_0), \hs \mbox{and} \hs
\ind(\gl(n, \CC)) = n.
\]
For the Lie algebra $\n_{w, -} = \n_- \cap {\rm Ad}_{\dot{w}} (\n)$ in \eqref{def:nw-nw} for $w \in W$, however, 
there does not seem to be any general result in the literature on $\ind(\n_{w, -})$ except for the case of $w = w_0$, for which $\n_{w_0, -} = \n_-$ and it is known from \cite{K12} (see Proposition \ref{prop:w0}) that 
$\ind(\n_-) = \dim \ker (1+w_0)$. In this paper we give a formula for  $\ind(\n_w)$ for arbitrary $w \in W$ using signed 
generalized minors. More precisely, for $w \in W$, let
${\rm supp}(w)$ be the set of all simple roots $\alpha$ such that $w\omega_\alpha \neq \omega_\alpha$, and let
\[
d_w = \sum_{\alpha \in {\rm supp}(w)} \deg (\delta_{\omega_\alpha, w\omega_\alpha}),
\]
where $\deg (\delta_{\omega_\alpha, w\omega_\alpha})$ is the homogeneous degree of 
$\delta_{\omega_\alpha, w\omega_\alpha}$.
We prove in Proposition \ref{prop:index-nw-0} that 
\[
{\rm mag}(\n_{w, -}) = d_w, \hs  
  \ind(\n_{w, -}) = 2d_w - \ell(w), \hs \mbox{and} \hs {\rm rk}(\pi_{0, \n_{w,-}}) = 2(\ell(w)-d_w)
\]
for every $w \in W$. We note (see Remark \ref{rem:nw-Casimir}) that the polynomials 
$\{\delta_{\omega_\alpha, w\omega_\alpha}|_{\n_w}: \alpha \in {\rm supp}(w)\}$ on $\n_w \cong (\n_{w, -})^*$
can be identified with the 
lowest degree terms at $w_\cdot B \in BwB/B$ of the frozen variables of the standard cluster structure
$\calC^w_{\rm st}$ on $BwB/B$. 

For $w \in W$ and any reduced word
${\bf w} = (s_{i_1}, \ldots, s_{i_l})$ of $w$, by taking the lowest degree terms of
the extended cluster in $\calC^w_{\rm st}$ associated to $\bfw$ (see $\S$\ref{ss:st-cluster-BwB}), 
we obtain the set
\begin{equation}\label{eq:delta-k-intro}
\{\delta_{\omega_{i_k}, s_{i_1}\cdots s_{i_k}\omega_{i_k}}|_{\n_w} \in \CC[\n_w] \colon k \in [1, l]\}
\end{equation}
of polynomials on $\n_w$ that pairwise Poisson commute with respect to $\pi_{0, \n_{w, -}}$. 
We also explain in $\S$\ref{ss:Thimm-flow} the set in \eqref{eq:delta-k-intro} in the context of Thimm's method
\cite{Guillemin-Sternberg:collective}. We prove in Proposition \ref{prop:quasi-poly-GB} that the Hamiltonian flow of every polynomial in 
\eqref{eq:delta-k-intro} is quasi-polynomial (Definition \ref{defn:quasi-poly}) and thus complete.  
In Theorem \ref{thm:choice-GB} we 
identify a subset $K_\bfw$ of $[1, l]$ of cardinality $d_w$ such that
\[
\{\delta_{\omega_{i_k}, s_{i_1}\cdots s_{i_k}\omega_{i_k}}|_{\n_w}: k \in K_\bfw\}
\]
is algebraically 
independent and thus a
polynomial integrable system on $(\n_w, \pi_{0, \n_{w, -}})$. We will further study these integrable systems, including their mutations and  quantum analogs, in  separate papers.

\subsection{Acknowledgments}
We would like to thank Anton Alekseev, Arkady Berenstein, Sam Evens, Misha Gekhtman, Victor Ginzburg, Joel Kamnitzer, Eckhard Meinrenken, Lenya Rybnikov, Dima Voloshyn, Yuancheng Xie, Xiaomeng Xu and Milen Yakimov for stimulating discussions at various stages of this project.
Yanpeng Li's research has been partially supported by the National Natural Science Foundation of China (No.12201438). Part of this work was done during Yu Li's stay at the Max Planck Institute for Mathematics and visits to the University of Hong Kong and Sichuan University, 
the hospitality of these institutions is gratefully acknowledged.
Jiang-Hua Lu's research has been partially supported by the Research Grants Council of the Hong Kong SAR, 
China (GRF 17306621 and 17306724).

\section{Integrable systems on linearization of Poisson manifolds}\label{s:int-linear}

\subsection{Integrable systems on Poisson manifolds}\label{ss:int-sys}
Let $P$ be a complex manifold and $TP$ its holomorphic tangent bundle.  
Recall \cite{LGPV:book} that a \textit{Poisson structure} on $P$ is a holomorphic section $\pi$ of $\wedge^2 TP$ such that the binary operation
$\{\varphi_1, \varphi_2\}_{\pi} = \pi(d \varphi_1, d \varphi_2)$
on the sheaf of  holomorphic functions on $P$ is a Lie bracket.   
The pair $(P, \pi)$ is referred to as a \textit{complex Poisson manifold} and $\pi$ its {\it Poisson bi-vector field}.
For $p \in P$, let
\[
\pi(p)^\#: \; T^*_pP \longrightarrow T_pP, \;\; ( \pi(p)^\#(\alpha), \, \beta ) = \pi(p)(\alpha, \beta), \hs \alpha, \beta \in T^*_pP.
\]
Then ${\rm im} (\pi(p)^\#) \subset T_pP$ is the tangent space at $p$ to the {\it symplectic leaf of $\pi$ through $p$}, and the \textit{rank} of 
$\pi$ at $p$ is defined as (the even integer)
${\rm rk} (\pi(p)) = \dim ({\rm im}(\pi(p)^\#))$. 
Define the {\it rank of $\pi$ in $P$} as
\[
{\rm rk}(\pi) = {\rm max}\{{\rm rk}(\pi(p)): p \in P\}.
\]

\begin{defn}\label{defn:indep-0}
Let $\Phi =\{\varphi_1, \ldots, \varphi_m\}$ be any finite set of holomorphic functions on $P$.

1) $\Phi$ is said to be \textit{independent} if the $m$-form 
$d \varphi_1 \wedge \cdots \wedge d \varphi_m$ is non-zero;

2) $\Phi$ is said to be {\it $\pi$-involutive} if
$\{\varphi_i, \varphi_j\}_{\pi} = 0$  for all $i,j \in [1,m]$. 
\end{defn}

\begin{lem}\label{lem:m-atmost}
When $P$ is connected, the cardinality of any set of independent and $\pi$-involutive holomorphic functions on $P$ is at most 
$n - \frac{1}{2} {\rm rk}(\pi)$, where $n = \dim P$.
\end{lem}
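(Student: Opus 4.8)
The plan is to reduce to a pointwise linear-algebra statement at a single well-chosen point of $P$. First I would observe that the quantity $n - \frac{1}{2}{\rm rk}(\pi)$ is computed from the generic rank of $\pi$, so I want to work near a point where $\pi$ attains its maximal rank. Concretely, let $\Phi = \{\varphi_1, \ldots, \varphi_m\}$ be independent and $\pi$-involutive, and let $U \subset P$ be the (open, dense since $P$ is connected) locus where both ${\rm rk}(\pi(p)) = {\rm rk}(\pi)$ and $d\varphi_1 \wedge \cdots \wedge d\varphi_m(p) \neq 0$; the first set is open by lower semicontinuity of rank, the second is open by continuity, and each is dense (the second because $P$ is connected and the form is a nonzero holomorphic section, so its zero locus is a proper analytic subset). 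Fix $p_0 \in U$. It now suffices to prove that $m \leq n - \frac{1}{2}{\rm rk}(\pi)$ using only the linear algebra of $\pi(p_0)^\#$ and the covectors $d\varphi_i(p_0)$.

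Next I would set up the linear-algebra step. Write $r = \frac{1}{2}{\rm rk}(\pi)$, so $\dim {\rm Im}(\pi(p_0)^\#) = 2r$ and $\dim \ker \pi(p_0)^\# = n - 2r$. The $\pi$-involutivity $\{\varphi_i,\varphi_j\}_\pi = 0$ says precisely that $\pi(p_0)(d\varphi_i(p_0), d\varphi_j(p_0)) = 0$ for all $i,j$, i.e. the subspace $W := \Span\{d\varphi_1(p_0), \ldots, d\varphi_m(p_0)\} \subset T^*_{p_0}P$ is isotropic for the skew form $\pi(p_0)$ on $T^*_{p_0}P$. By independence $\dim W = m$. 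The form $\pi(p_0)$ on the $n$-dimensional space $T^*_{p_0}P$ has radical of dimension $n - 2r$ (this radical is the annihilator of ${\rm Im}(\pi(p_0)^\#)$, equivalently it is $(\ker\pi(p_0)^\#)$ viewed appropriately) and descends to a nondegenerate symplectic form on the $2r$-dimensional quotient. An isotropic subspace $W$ then has dimension at most $(n - 2r) + r = n - r$, since its image in the $2r$-dimensional symplectic quotient is isotropic hence of dimension $\leq r$, while the part of $W$ meeting the radical contributes at most $n - 2r$. Therefore $m = \dim W \leq n - r = n - \frac{1}{2}{\rm rk}(\pi)$, which is the claim.

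The only genuine content is the elementary bound "an isotropic subspace of a skew bilinear form on an $n$-dimensional space with radical of dimension $n - 2r$ has dimension at most $n - r$"; I would prove it by passing to the quotient by the radical, where it becomes the standard fact that isotropic subspaces of a $2r$-dimensional symplectic vector space have dimension at most $r$. I expect the main (minor) obstacle to be purely bookkeeping: carefully identifying the radical of the form $\pi(p_0)$ on the cotangent space with the annihilator of the image of $\pi(p_0)^\#$, and making sure the dimension count of the isotropic subspace is split correctly between its intersection with the radical and its image in the quotient. The density argument ensuring that a suitable $p_0$ exists is routine given connectedness of $P$, and everything else is linear algebra at the single point $p_0$.
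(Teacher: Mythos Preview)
Your proof is correct and follows essentially the same approach as the paper's: establish that the locus where the $d\varphi_i$ are independent is open and dense (by holomorphicity and connectedness), then apply a pointwise bound at a generic point. The paper simply cites \cite[Proposition 4.12]{AMVan:book} for the latter step, whereas you supply the elementary linear-algebra argument (isotropic subspaces of a degenerate skew form have dimension at most the half-rank plus the dimension of the radical) directly; your version is thus more self-contained but not different in substance.
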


\begin{proof} Let $\Phi = \{\varphi_1, \ldots, \varphi_m\}$ be a set of independent and $\pi$-involutive holomorphic functions on $P$. As the $m$-form
$d \varphi_1 \wedge \cdots \wedge d \varphi_m$ is holomorphic, the subset  
${\mathcal{U}}_\Phi = \{p \in P: d_p \varphi_1 \wedge \cdots \wedge d_p \varphi_m\neq 0\}$
of $P$ is open and dense in $P$. The statement now follows from  \cite[Proposition 4.12]{AMVan:book}.
\end{proof}

\begin{defn}\label{defn:int}
Let $(P, \pi)$ be a connected complex Poisson manifold of dimension $n$.
The integer 
\[
m=n -\frac{1}{2} {\rm rk}(\pi)
\]
is called the \textit{magic number} of $(P, \pi)$, and a
set of $m$ independent and $\pi$-involutive holomorphic functions on $P$ is called an \textit{integrable system} on $(P, \pi)$. 
\end{defn}

We refer to  \cite{AMVan:book, LGPV:book} for more details on integrable systems. 

\begin{nota-defn}\label{nota:pi0-l}
{\rm
For a finite dimensional Lie algebra $\l$ over $\CC$, denote by $\pi_{0, \l}$ the linear 
Poisson structure, also called the {\it Kirillov-Kostant-Souriau Poisson structure}, on $\l^*$ given by
\[
\{x, y\}_{\pi_{0, \l}} = [x, y], \hs x, \, y \in \l, 
\]
where $x, y \in \l$ are 
regarded as linear functions on $\l^*$. 
For $\xi \in \l^*$, let 
\[
\Stab_{\l}(\xi) = \{x\in \l: (\xi, [x, y]) = 0,\, \forall \, y \in \l\}
\]
be the stabilizer Lie sub-algebra of $\l$ at $\xi$ for the co-adjoint representation of $\l$ on $\l^*$. The integer
\begin{equation}\label{eq:ind-l}
\ind (\l) = \min \{\dim \Stab_{\l}(\xi): \xi \in \l^*\}
\end{equation}
is called the {\it index} of $\l$ (see, for example, \cite[11.1.6]{Diximir:book}). Denote by ${\rm mag}(\l)$
the magic number of $(\l^*, \pi_{0, \l})$ and also call it the {\it magic number of the Lie algebra $\l$}. 
If $\dim \l = n$, then 
\begin{equation}\label{eq:pi0-l-all}
{\rm rk}(\pi_{0, \l}) = n - \ind(\l) \hs \mbox{and} \hs 
{\rm mag}(\l)=n-\frac{1}{2} {\rm rk} (\pi_{0, \l}) = \frac{1}{2}(n + \ind (\l)).
\end{equation}
By a polynomial integrable system on $(\l^*, \pi_{0, \l})$ we mean
an integrable system on $(\l^*, \pi_{0, \l})$ consisting of polynomial functions on $\l^*$. 
}
\end{nota-defn}

\subsection{Linearization of Poisson structures}\label{ss:linPoi}
Let $(P, \pi)$ be a complex Poisson manifold, and suppose 
that $p_0 \in P$ is such that $\pi(p_0) = 0$.  
Then the cotangent space $T^{\ast}_{p_0} P$ has the well-defined Lie bracket $[\, ,\, ]$
such that, for any local holomorphic functions $\vphi_1$ and $\vphi_2$ on $P$ defined near $p_0$, one has
\[
[d_{p_0}\vphi_1, \, d_{p_0}\vphi_2] = d_{p_0} \bigl( \{\varphi_1, \varphi_2\}_{\pi} \bigr).
\]

\begin{defn}\label{defn:linearization}
The induced linear Poisson structure on $T_{p_0}P$, denoted by $\pi_0$,  is
called the linearization of $\pi$ at $p_0$, and the  Poisson manifold $(T_{p_0} P, \pi_0)$ is called the {\it linearization of $(P, \pi)$ at $p_0$.}  
\end{defn}

\subsection{Lowest degree terms of tensor fields at \texorpdfstring{$p_0$}{p0}}\label{ss:phi-low}
Let $P$ be a complex manifold and let $p_0 \in P$.  Let $z = (z_1, \ldots, z_n)$ be a local holomorphic coordinate system on $P$ near $p_0$ with $z(p_0) = 0 \in \CC^n$. 
We write  $(v_1, \ldots, v_n) = (d_{p_0}z_1, \ldots, d_{p_0}z_n)$ 
and regard $(v_1, \ldots, v_n)$ as a linear coordinate system  on $T_{p_0} P$.  

Consider first a non-zero holomorphic function $\varphi$ on $P$ defined near $p_0$ with Taylor expansion
\[
\varphi (z_1, \ldots, z_n) = \sum \limits_{\epsilon \in (\ZZ_{\geq 0})^n} c_{\epsilon} z^{\epsilon}
\]
at $0$, where $c_{\epsilon} \in \CC$ and $z^{\epsilon}=z_1^{\epsilon_1} \ldots z_n^{\epsilon_n}$ for 
$\epsilon = (\epsilon_1, \ldots, \epsilon_n) \in (\ZZ_{\geq 0})^n$.  Let $e$ be the smallest integer such that there exists $\epsilon \in (\ZZ_{\geq 0})^n$ with
$c_{\epsilon} \neq 0$ and $\epsilon_1 + \cdots + \epsilon_n = e$.

\begin{defn}\label{defn:phi-low}
With the notation as above, the {\it lowest degree term of $\varphi$ at $p_0$} is the homogeneous polynomial function 
    $\vphi^\low$ on $T_{p_0} P$ given by
    \[
    \varphi^{\low}(v_1, \ldots, v_n) = \sum \limits_{\substack {\epsilon \in (\ZZ_{\geq 0})^n \\ \epsilon_1 + \cdots + \epsilon_n = e}} c_{\epsilon} v^{\epsilon}. 
    \]
\end{defn}

\begin{prop} \label{Pro:indep}
    The polynomial function $\varphi^{\low}$ on $T_{p_0} P$ is well-defined, i.e., it is independent of the choice of the local holomorphic coordinate system $(z_1, \ldots, z_n)$ on $P$ near $p_0$.
\end{prop}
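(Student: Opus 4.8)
The plan is to show that the lowest-degree term is intrinsic by expressing it coordinate-freely and then checking that a change of coordinates does not affect it. First I would recall that a local holomorphic coordinate system $(z_1,\ldots,z_n)$ near $p_0$ with $z(p_0)=0$ gives, via $v_i = d_{p_0}z_i$, a linear isomorphism $T_{p_0}P \cong \CC^n$, and that the graded algebra of polynomial functions on $T_{p_0}P$ is canonically $\bigoplus_{k\ge 0}\Sym^k(T^*_{p_0}P)$. The key observation is that the Taylor expansion of $\varphi$ truncated at total degree $k$ has an invariant meaning: the degree-$k$ part of the Taylor polynomial, viewed as an element of $\Sym^k(T^*_{p_0}P)$, is well-defined \emph{modulo} the contributions of lower-order derivatives only when those lower parts vanish. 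Concretely, if $\varphi$ vanishes to order $e$ at $p_0$ (meaning all partial derivatives of order $<e$ vanish), then the order-$e$ term of the Taylor expansion is the $e$-th derivative $D^e\varphi(p_0)$, a well-defined symmetric $e$-tensor on $T_{p_0}P$, and the homogeneous polynomial $\varphi^{\low}(v) = \tfrac{1}{e!}D^e\varphi(p_0)(v,\ldots,v)$ manifestly depends only on $\varphi$ and the point $p_0$, not on coordinates.

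The main steps I would carry out are: (1) define $e = \operatorname{ord}_{p_0}(\varphi)$, the order of vanishing, and show it matches the integer $e$ in Definition~\ref{defn:phi-low}, i.e.\ that $e$ is characterized coordinate-freely as the largest integer such that all derivatives of $\varphi$ at $p_0$ of order strictly less than $e$ vanish; this already shows the \emph{degree} of $\varphi^{\low}$ is well-defined. (2) Show that the coefficients $\{c_\epsilon : |\epsilon|=e\}$ assemble into $\tfrac{1}{e!}D^e\varphi(p_0) \in \Sym^e(T^*_{p_0}P)$ under the identification $v_i = d_{p_0}z_i$: this is just the multivariate Taylor formula $c_\epsilon = \tfrac{1}{\epsilon!}\partial^\epsilon\varphi(p_0)$ together with the fact that $\{v^\epsilon : |\epsilon|=e\}$ is the monomial basis of $\Sym^e(T^*_{p_0}P)$ dual (up to the multinomial factors) to the basis of symmetrized partial derivatives. (3) Conclude: if $(z'_1,\ldots,z'_n)$ is another coordinate system near $p_0$ with $z'(p_0)=0$, then $v'_i := d_{p_0}z'_i$ is related to the $v_i$ by the (invertible) Jacobian of the transition map at $p_0$, and the symmetric tensor $\tfrac{1}{e!}D^e\varphi(p_0)$ is the same object evaluated in the two bases; hence the homogeneous polynomial it defines on $T_{p_0}P$ is literally the same. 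A clean way to package steps (2)--(3) is to prove directly that for \emph{any} germ of biholomorphism $\psi$ fixing $p_0$ with $d_{p_0}\psi = A$, one has $(\varphi\circ\psi^{-1})$ has the same order of vanishing $e$ and $(\varphi\circ\psi^{-1})^{\low} = \varphi^{\low}\circ A^{-1}$, and then observe that changing coordinates is exactly precomposing with such a $\psi$ while the identification of $T_{p_0}P$ with $\CC^n$ also changes by $A$, so the two effects cancel.

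The only slightly delicate point — and the step I would flag as the main obstacle, though it is routine — is bookkeeping the chain rule for higher derivatives: the full Fa\`a di Bruno expansion of $D^k(\varphi\circ\psi)(p_0)$ for $k > e$ involves lower-order derivatives of $\varphi$, but since those all vanish at $p_0$ by the definition of $e$, only the top term $D^e\varphi(p_0)\circ(d_{p_0}\psi)^{\otimes e}$ survives, which is precisely the linear substitution $v \mapsto Av$ applied to the symmetric tensor. So the argument reduces to: lower-order Taylor data vanishes $\Rightarrow$ the first nonvanishing Taylor datum transforms tensorially $\Rightarrow$ the associated homogeneous polynomial is coordinate-independent. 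I would also remark that this simultaneously shows $\varphi^{\low}$ is well-defined as a nonzero element, since $D^e\varphi(p_0) \ne 0$ by the minimality of $e$, and that the analogous statement for the degree $e$ itself gives a well-defined notion of order of vanishing used implicitly throughout.
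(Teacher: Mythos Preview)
Your proposal is correct. Both your argument and the paper's hinge on the same fact: under a change of coordinates only the linear Jacobian of the transition affects the lowest-degree Taylor term, because all lower-order derivatives of $\varphi$ at $p_0$ vanish. The paper verifies this by a direct substitution --- writing $z_i$ as a power series in the $z'_j$, plugging into the Taylor expansion of $\varphi$, and reading off that the degree-$e$ part is obtained by the linear substitution $v_i = \sum_j J_{ji} v'_j$ --- whereas you package the same computation invariantly, identifying $\varphi^{\low}$ with $\tfrac{1}{e!}D^e\varphi(p_0) \in \Sym^e(T^*_{p_0}P)$ and invoking Fa\`a di Bruno (with the lower-order vanishing) to see that this symmetric tensor transforms by $(d_{p_0}\psi)^{\otimes e}$. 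Your approach has the advantage of explaining \emph{why} the result is true (the first nonvanishing jet is tensorial) and generalizes cleanly to the tensor-field case treated later in~\S\ref{ss:tensor-low}; the paper's bare-hands computation is shorter and requires no appeal to Fa\`a di Bruno or the language of symmetric tensors.
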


\begin{proof}
    Suppose that $(z'_1, \ldots, z'_n)$ is another local holomorphic coordinate system on $P$ near $p_0$ with $z'(p_0) = 0$, and let
    $(v'_1, \ldots, v'_n) = (d_{p_0} z_1^\prime, \ldots, d_{p_0}z_n^\prime)$
    be the induced linear coordinate system on $T_{p_0} P$.  Set
   $J_{ji} =  \frac{\partial z_i}{\partial z_j^\prime}(0)$ for $i, j \in [1, n]$.
Then  $v_i = \sum \limits_{j=1}^n J_{ji} v'_j$ for $i \in [1,n]$, and one has
    \begin{align*}
       \vphi 
        = & \bigg( \sum \limits_{\substack{\epsilon \in (\ZZ_{\geq 0})^n \\ \epsilon_1 + \cdots + \epsilon_n = e}} c_{\epsilon} z^{\epsilon} \bigg) + O(z^{e+1}) =
         \bigg( \sum \limits_{\substack{\epsilon \in (\ZZ_{\geq 0})^n \\ \epsilon_1 + \cdots + \epsilon_n = e}} c_{\epsilon} z_1(z'_1, \ldots, z'_n)^{\epsilon_1} \cdots z_n(z'_1, \ldots, z'_n)^{\epsilon_n} \bigg) + O \bigl( (z')^{e+1} \bigr) \\
        = & \bigg( \sum \limits_{\substack{\epsilon \in (\ZZ_{\geq 0})^n \\ \epsilon_1 + \cdots + \epsilon_n = e}} c_{\epsilon} (J_{11} z'_1 + \cdots + J_{n1} z'_n)^{\epsilon_1} \cdots (J_{1n} z'_1 + \cdots + J_{nn} z'_n)^{\epsilon_n} \bigg) + O \bigl( (z')^{e+1} \bigr).
    \end{align*}
    Since the last line is the Taylor expansion of $\varphi$ at $0$ with respect to $(z'_1, \ldots, z'_n)$, it follows that
 \begin{align*}
    \varphi^{\low}(v'_1, \ldots, v'_n) &= \sum \limits_{\substack{\epsilon \in (\ZZ_{\geq 0})^n \\ \epsilon_1 + \cdots + \epsilon_n = e}} c_{\epsilon} (J_{11} v'_1 + \cdots + J_{n1} v'_n)^{\epsilon_1} \cdots (J_{1n} v'_1 + \cdots + J_{nn} v'_n)^{\epsilon_n}\\
     & = \sum \limits_{\substack{\epsilon \in (\ZZ_{\geq 0})^n \\ \epsilon_1 + \ldots + \epsilon_n = e}} c_{\epsilon} 
       v_1^{\epsilon_1} \cdots v_n^{\epsilon_n} =
     \varphi^{\low} (v_1, \ldots, v_n). \tag*{\qedhere}
\end{align*}
\end{proof}

The following fact is proved using the definitions, and we omit the proof.

\begin{lem}\label{lem:comp}
For any local holomorphic diffeomorphism $\sigma$ on $P$ such that $\sigma(p_0) = p_0$ and for any local holomorphic 
function $\vphi$ on $P$ defined near $p_0$, one has $(\vphi \circ \sigma)^\low = \vphi^\low\circ (d_{p_0}\sigma)$.
\end{lem}

\begin{rem}\label{rem:v-z}
If $z = (z_1, \ldots, z_n)$ is a holomorphic local coordinate system near $p_0 \in P$ with $z(p_0) = 0$, we will abuse notation and also denote $z_k^\low = d_{p_0} z_k$ by $z_k$ for $k \in [1, n]$, so that for any
holomorphic function $\vphi$ on $P$ defined near $p_0$, we have 
$\vphi^\low = \varphi^{\low}(z_1, \ldots, z_n)  \in \CC[z_1, \ldots, z_n]$.
\end{rem}

In the rest of the paper we will always make clear the point $p_0 \in P$ at which we take the lowest degree terms, so 
   for ease of notation we do not include $p_0$ in the symbol $\vphi^\low$.  The same convention will be applied when taking the lowest degree terms of tensor fields at $p_0$, which we now explain.

Consider a non-zero holomorphic $k$-form $\alpha$ on $P$ defined near $p_0$ with Taylor expansion 
\begin{equation}\label{eq:alpha}
\alpha 
= \sum \limits_{1 \leq i_1 < \cdots < i_k \leq n} \;\,
\sum \limits_{\epsilon \in (\ZZ_{\geq 0})^n} c_{i_1, \ldots, i_k, \epsilon} z^{\epsilon}\;
d z_{i_1} \wedge \cdots \wedge d z_{i_k}
\end{equation}
at $0$. Let $e$ be the smallest integer such that
there exist $1 \leq i_1 < \cdots < i_k \leq n$ and $\epsilon = (\epsilon_1, \ldots, \epsilon_n) \in (\ZZ_{\geq 0})^n$ with
$c_{i_1, \ldots, i_k, \epsilon} \neq 0$ and $\epsilon_1 + \cdots + \epsilon_n = e$.
Define the {\it lowest degree term of $\alpha$ at $p_0$}  to be
\[
\alpha^{\low} = \sum \limits_{1 \leq i_1 < \cdots < i_k \leq n} 
\;\,
\sum \limits_{\substack{\epsilon \in (\ZZ_{\geq 0})^n \\ \epsilon_1 + \cdots + \epsilon_n = e}} c_{i_1, \ldots, i_k, \epsilon} z^{\epsilon} \;d_{p_0} z_{i_1} \wedge \cdots \wedge d_{p_0} z_{i_k},
\]
where, as in Remark \ref{rem:v-z}, $(z_1, \ldots, z_n)$ is viewed as a linear coordinate system on $T_{p_0}P$,
and 
$\alpha^{\low}$ is regarded as a polynomial $k$-form on $T_{p_0} P$. An argument similar to that in
the proof of Proposition \ref{Pro:indep} shows that $\alpha^{\low}$ is independent of the choice of the coordinate system 
$(z_1, \ldots, z_n)$ near $p_0$ with $z(p_0) = 0$.
Define 
\begin{equation}\label{eq:deg-alpha-low}
\deg (\alpha^{\low}) = e+k.
\end{equation}

Similarly, for a
non-zero $k$-vector field $\partial$, i.e., a holomorphic section of $\wedge^k TP$, defined near $p_0$,  let
\[
\partial = 
 \sum \limits_{1 \leq i_1 < \cdots < i_k \leq n} \;\,
\sum \limits_{\epsilon \in (\ZZ_{\geq 0})^n} c_{i_1, \ldots, i_k, \epsilon} z^{\epsilon}
\frac{\partial}{\partial z_{i_1}} \wedge \cdots \wedge \frac{\partial}{\partial z_{i_k}}
\]
be the Taylor expansion of $\partial$ at $0$. Let $e$ be the smallest integer given in exactly the same way as for 
the $k$-form $\alpha$
in \eqref{eq:alpha}, and
define the {\it lowest degree term of $\partial$ at $p_0$}  to be
\[
\partial^{\low} = \sum \limits_{1 \leq i_1 < \cdots < i_k \leq n} 
\sum \limits_{\substack{\epsilon \in (\ZZ_{\geq 0})^n \\ \epsilon_1 + \cdots + \epsilon_n = e}} 
c_{i_1, \ldots, i_k, \epsilon} z^{\epsilon} \left(\left.\frac{\partial}{\partial z_{i_1}}\right|_{p_0}\right) \wedge \cdots \wedge \left(\left.\frac{\partial}{\partial z_{i_k}}\right|_{p_0}\right),
\]
where $\left.\frac{\partial}{\partial z_1}\right|_{p_0}, \ldots, \left.\frac{\partial}{\partial z_n}\right|_{p_0}$ 
are regarded as constant vector fields on $T_{p_0} P$, so $\partial^{\low}$ is a polynomial $k$-vector field on $T_{p_0} P$, 
which is again independent of the choice of the coordinate system $(z_1, \ldots, z_n)$ near $p_0$ with $z(p_0) = 0$.
Define the \textit{degree} of $\partial^{\low}$ to be $$\deg (\partial^{\low}) = e-k.$$

\begin{example} \label{Ex:lin&low}
    Assume that $\pi$ is a holomorphic Poisson structure on $P$ such that $\pi({p_0}) = 0$, and
    let $\pi_0$ be the linearization of $\pi$ at $p_0$ as in $\S$\ref{ss:linPoi}.  If $\pi_0 \neq 0$, then
    $\pi^{\low} = \pi_0$.  
\end{example}

Meromorphic tensor fields on $P$ also have well-defined lowest degree terms at $p_0$.
Consider first a non-zero meromorphic function $\varphi = a/b$ on $P$, where $a$ and $b$ are
local non-zero holomorphic functions  defined near $p_0$.
Define the {\it lowest degree term of $\varphi$ at $p_0$} to be the rational function on $T_{p_0} P$ given by
\[
\varphi^{\low} = \frac{a^{\low}}{b^{\low}}.
\]
 If $\varphi = c/d$ is another presentation of $\varphi$, then
 taking the lowest degree terms of both sides of $ad=bc$  gives $a^\low/b^\low = c^\low/d^\low$.
  Thus the rational function $\varphi^{\low} $ is independent of the presentation $\varphi =a/b$.

Similarly, let $\alpha$ (resp. $\partial$) be a non-zero meromorphic $k$-form (resp. $k$-vector field), expressed as
\begin{equation}\label{eq:alpha-partial}
\alpha = \frac{\alpha_1}{\varphi} \hs \left(\mbox{resp.} \; \partial = \frac{\partial_1}{\varphi}\right) 
\end{equation}
for some non-zero holomorphic $k$-form $\alpha_1$ (resp. $k$-vector field $\partial_1$)  and non-zero holomorphic function $\varphi$. Define the {\it lowest degree term of $\alpha$ (resp. $\partial$) at $p_0$} to be the rational 
$k$-form (resp. $k$-vector field)
\[
\alpha^{\low} = \frac{\alpha_1^{\low}}{\varphi^{\low}}  \hs 
\left(\text{resp. } \partial^{\low} = \frac{\partial_1^{\low}}{\varphi^{\low}}\right)
\]
 on $T_{p_0}P$. Same arguments as above for meromorphic functions show that 
$\alpha^{\low}$ (resp. $\partial^{\low}$)
is independent of the presentation of $\alpha$ (resp. $\partial$) in \eqref{eq:alpha-partial}.
Define the \textit{degree} of $\alpha^{\low}$ (resp. $\partial^{\low}$) to be
\[
\deg (\alpha^{\low}) = \deg (\alpha_1^{\low}) - \deg (\varphi^{\low}) \quad \quad (\text{resp. } 
\deg (\partial^{\low}) = \deg (\partial_1^{\low}) - \deg (\varphi^{\low})).
\]

\begin{example}\label{ex:dlog-degree}
For any non-constant local holomorphic function $\vphi$ defined near $p_0$, it follows from the definitions that
$\deg ((d\vphi)^\low) \geq \deg (\vphi^\low)$, so 
\[
\deg \left(\left(\frac{d\vphi}{\vphi}\right)^\low\right) = 
\deg \left(\frac{(d\vphi)^\low}{\vphi^\low}\right) \geq   0.   
\]
Moreover, the last inequality is an equality if and only if  $\vphi(p_0) = 0$.
\end{example}

\subsection{\texorpdfstring{$\pi_0$}{pi0}-involutivity}
In this section we assume that $(P, \pi)$ is a complex Poisson manifold and $p_0 \in P$ is such that $\pi(p_0) = 0$.  Let $(T_{p_0} P, \pi_0)$ be the linearization of $(P, \pi)$ at $p_0$.

\begin{lem} \label{Lem:mainlem}
    Let $\varphi, \psi$ be non-zero local holomorphic functions on $P$ defined near $p_0$. If 
   $\{\varphi, \psi\}_{\pi} =\lambda \varphi \psi$  for some $\lambda \in \CC$, then 
   $\{\varphi^{\low}, \psi^{\low}\}_{\pi_0} = 0$.  
\end{lem}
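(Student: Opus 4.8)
The plan is to work in a well-chosen local holomorphic coordinate system $z = (z_1, \ldots, z_n)$ centered at $p_0$ and compare lowest-degree terms on both sides of the identity $\{\varphi, \psi\}_\pi = \lambda\,\varphi\psi$. First I would record the basic behavior of $(-)^\low$ under products: for nonzero local holomorphic functions $f, g$ one has $(fg)^\low = f^\low g^\low$, and in particular $\deg(fg)^\low = \deg f^\low + \deg g^\low$, where here ``degree'' means the order of vanishing at $p_0$ (the integer $e$ in Definition \ref{defn:phi-low}). Let $a = \deg\varphi^\low$ and $b = \deg\psi^\low$, so the right-hand side $\lambda\varphi\psi$ has lowest-degree term $\lambda\,\varphi^\low\psi^\low$ of degree $a + b$ (this is the lowest-degree term of the RHS provided $\lambda \neq 0$; if $\lambda = 0$ the argument only simplifies).

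Next I would analyze the left-hand side. Write $\pi = \sum_{i<j} \pi_{ij}(z)\,\partial_{z_i}\wedge\partial_{z_j}$; since $\pi(p_0) = 0$, each $\pi_{ij}$ vanishes at $0$, and by Example \ref{Ex:lin&low} the linearization $\pi_0$ is precisely $\pi^\low$, the sum of the degree-$1$ parts of the $\pi_{ij}$. Then
\[
\{\varphi, \psi\}_\pi = \sum_{i<j} \pi_{ij}(z)\left(\frac{\partial\varphi}{\partial z_i}\frac{\partial\psi}{\partial z_j} - \frac{\partial\varphi}{\partial z_j}\frac{\partial\psi}{\partial z_i}\right).
\]
Each term $\pi_{ij}\,\partial_{z_i}\varphi\,\partial_{z_j}\psi$ has order of vanishing at least $1 + (a-1) + (b-1) = a + b - 1$, so a priori the LHS could have lowest-degree term of degree $a+b-1$. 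Comparing with the RHS, which has degree $a+b$, the key point is that the total degree-$(a+b-1)$ contribution on the left must vanish. The degree-$(a+b-1)$ part of the LHS is obtained by replacing each $\pi_{ij}$ by its linear part $(\pi_{ij})^{[1]}$ (the coefficient of $\pi^\low = \pi_0$), each $\partial_{z_i}\varphi$ by $\partial_{z_i}\varphi^\low$, and each $\partial_{z_j}\psi$ by $\partial_{z_j}\psi^\low$ — i.e., it is exactly $\{\varphi^\low, \psi^\low\}_{\pi_0}$. Hence $\{\varphi^\low, \psi^\low\}_{\pi_0} = 0$, which is the claim. (A small case check: if $a = 0$ or $b = 0$, say $\varphi(p_0) \neq 0$, then $\varphi^\low$ is a nonzero constant and $\{\varphi^\low, \psi^\low\}_{\pi_0} = 0$ trivially; the degree bookkeeping above still goes through, so this case needs no separate treatment, but it is worth a remark.)

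The only genuine subtlety — and the step I would be most careful about — is the claim that the degree-$(a+b-1)$ homogeneous component of $\{\varphi,\psi\}_\pi$ equals $\{\varphi^\low,\psi^\low\}_{\pi_0}$ and is not accidentally killed by cancellation among the $\partial_{z_i}\varphi\,\partial_{z_j}\psi$ terms before one even invokes the identity. The clean way to handle this is to observe that $\{\varphi^\low, \psi^\low\}_{\pi_0}$, being a Poisson bracket of homogeneous polynomials of degrees $a$ and $b$ under a linear (degree-preserving-up-to-shift) Poisson structure, is itself homogeneous of degree $a+b-1$ — possibly zero — and that it is precisely the ``leading symbol'' of the left-hand side in this weighted sense; then the vanishing of everything in degree $< a+b$ on the right forces this leading symbol to vanish. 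I would phrase this via the filtration of $\calO_{P,p_0}$ by order of vanishing at $p_0$ and the induced graded structure, where $\{\,\cdot\,,\,\cdot\,\}_\pi$ has ``order $-1$'' with associated graded bracket given by $\{\,\cdot\,,\,\cdot\,\}_{\pi_0}$ on the lowest-degree terms — this makes the comparison of leading terms automatic and coordinate-independent, the coordinate-independence of each $\varphi^\low$ being already supplied by Proposition \ref{Pro:indep}.
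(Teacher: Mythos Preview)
Your proof is correct and follows essentially the same approach as the paper: both compare the lowest-degree homogeneous components of the two sides in local coordinates, observing that the degree-$(a+b-1)$ part of $\{\varphi,\psi\}_\pi$ is $\{\varphi^\low,\psi^\low\}_{\pi_0}$ while the right-hand side has no terms below degree $a+b$. Your treatment is somewhat more explicit about edge cases ($\lambda = 0$, $a$ or $b$ equal to $0$) and adds the filtration/associated-graded perspective, but the core argument is the same.
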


\begin{proof}
    We assume that $\pi_0 \neq 0$, for the conclusion is trivial otherwise.  
    Let $(z_1, \ldots, z_n)$ be a local holomorphic coordinate system on $P$ near $p_0$ with $z(p_0) = 0$. 
    It follows from $\pi^{\low} = \pi_0$  and $\{\varphi, \psi\}_{\pi} =\lambda \varphi \psi$ that 
\begin{equation}\label{eq:phi-phi}
\{\varphi^{\low}, \psi^{\low}\}_{\pi_0} + O \bigl( z^{\deg (\varphi^{\low}) + \deg (\psi^{\low})} \bigr)=
\lambda \varphi^{\low} \psi^{\low} + O \bigl( z^{\deg (\varphi^{\low}) + 
    \deg (\psi^{\low}) +1} \bigr).
\end{equation}
If $\{\varphi^{\low}, \psi^{\low}\}_{\pi_0} \neq 0$, then the lowest degree of the terms of  the left hand side
of \eqref{eq:phi-phi} is $\deg (\varphi^{\low}) + \deg (\psi^{\low})-1$, a contradiction. Thus 
   $\{\varphi^{\low}, \psi^{\low}\}_{\pi_0} = 0$.
\end{proof}

\begin{rem}\label{rem:higher}
The assumption that $\{\vphi, \psi\}_{\pi} = \lambda \vphi \psi$ in Lemma \ref{Lem:mainlem} can be relaxed in various ways.  For example, the same proof as that of Lemma \ref{Lem:mainlem} shows that one still has $\{\varphi^{\low}, \psi^{\low}\}_{\pi_0} =0$
if $\{\varphi, \psi\}_\pi = f(\varphi, \psi)$, where $f \in \CC[x, y]$ and is divisible by $xy$ in $\CC[x,y]$. 
\end{rem}

\begin{defn}\label{defn:system-0} 
1) A set $\Phi$ of holomorphic functions on $P$ is said to be {\it log-canonical} with respect to $\pi$ if 
$\{\vphi, \psi\}_\pi \in \CC \vphi \psi$ for all $\vphi, \psi \in \Phi$;

2) A set of $n = \dim P$ holomorphic functions on $P$ that is independent and 
log-canonical with respect to $\pi$ is called a {\it log-canonical system on $(P, \pi)$};

3) A log-canonical system on 
a neighborhood of $p_0$ is called a {\it log-canonical system on $(P, \pi)$ at $p_0$}.
\end{defn}

In view of Lemma \ref{Lem:mainlem}, every log-canonical system $\Phi$ on $(P, \pi)$ at $p_0$ 
 gives rise to
a $\pi_0$-involutive set $\Phi^\low = \{\vphi^\low: \vphi \in \Phi\}$ of homogeneous polynomials on $T_{p_0}P$. It is then natural to ask for the maximal
number of independent elements contained in $\Phi^\low$. We now address this question in $\S$\ref{ss:indep}.

\subsection{Independence}\label{ss:indep}
Let $P$ be a complex manifold of dimension $n$ and let $p_0 \in P$. In this section, the lowest degree terms of functions or forms 
will be taken at $p_0$.

Recall that a set $\Phi = \{\varphi_1, \ldots, \varphi_n\}$ of local holomorphic functions on $P$ near $p_0$
is said to be independent if 
$d \varphi_1 \wedge \cdots \wedge d \varphi_n\neq 0$.
For such a $\Phi$, introduce 
 the meromorphic $n$-form
\[
\mu_{\Phi} = \frac{d \varphi_1 \wedge \cdots \wedge d \varphi_n}{\varphi_1 \cdots \varphi_n} =
d(\log \vphi_1) \wedge \cdots \wedge d(\log \vphi_n)
\]
on $P$ near $p_0$, and we also call $\mu_\Phi$ the {\it log-volume form associated to $\Phi$}.  Recall by definition that
$$\deg (\mu_{\Phi}^{\low}) = \deg \bigl( (d \varphi_1 \wedge \cdots \wedge d \varphi_n)^{\low} \bigr) - \deg \bigl( (\varphi_1 \cdots \varphi_n)^{\low} \bigr).$$
By Example \ref{ex:dlog-degree}, $\deg (\mu_{\Phi}^{\low})\geq 0$, so we always have 
$n - \deg (\mu_{\Phi}^{\low}) \leq n$.

\begin{prop} \label{prop:F-low}
For any independent set $\Phi=\{\varphi_1, \ldots, \varphi_n\}$ of local holomorphic functions on $P$ near $p_0$, 
the set 
$\Phi^{\low}= \{\varphi_1^\low, \ldots, \varphi_n^\low\}$ contains at least $n - \deg (\mu_{\Phi}^{\low})$ independent polynomials on $T_{p_0}P$.
\end{prop}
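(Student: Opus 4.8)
The plan is to show that among the homogeneous polynomials $\varphi_1^{\low}, \ldots, \varphi_n^{\low}$ on $T_{p_0}P$, at least $n - \deg(\mu_\Phi^{\low})$ are algebraically independent, and since these are polynomials, algebraic independence coincides with independence in the sense of Definition \ref{defn:indep-0}, i.e., with the non-vanishing of the differential of the corresponding tuple. The key is to relate $d\varphi_1^{\low} \wedge \cdots \wedge d\varphi_n^{\low}$ to the lowest degree term $\mu_\Phi^{\low}$ of the log-volume form.

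First I would fix a local holomorphic coordinate system $(z_1, \ldots, z_n)$ near $p_0$ with $z(p_0) = 0$, and compare the expansions of $d\varphi_j$ and of $d\varphi_j^{\low}$: since $\deg((d\varphi_j)^{\low}) \geq \deg(\varphi_j^{\low})$ with equality exactly when $\varphi_j(p_0) = 0$ (Example \ref{ex:dlog-degree}), the dominant contribution to $d(\log \varphi_j) = d\varphi_j/\varphi_j$ has degree $\deg((d\varphi_j/\varphi_j)^{\low}) \geq 0$, and this lowest term is a rational $1$-form whose numerator, after clearing, involves $d\varphi_j^{\low}$ when $\varphi_j(p_0) = 0$, or else has strictly positive degree. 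Taking the wedge $\mu_\Phi = d(\log\varphi_1) \wedge \cdots \wedge d(\log\varphi_n)$ and extracting the lowest degree term, one obtains that $\mu_\Phi^{\low}$ is, up to a nonzero scalar, the lowest degree term of $\frac{d\varphi_1^{\low} \wedge \cdots \wedge d\varphi_n^{\low}}{\varphi_1^{\low}\cdots\varphi_n^{\low}}$ — more precisely, $\mu_\Phi^{\low}$ equals the lowest degree term of $d(\log\varphi_1^{\low}) \wedge \cdots \wedge d(\log\varphi_n^{\low})$, a rational $n$-form on $T_{p_0}P$ of degree $\deg(\mu_\Phi^{\low})$. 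The point is that passing to lowest degree terms commutes appropriately with $d$, with products, and with wedge, up to the bookkeeping of degrees; this is the computation I would carry out carefully.

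Next I would invoke the algebraic fact that if $f_1, \ldots, f_n$ are homogeneous polynomials on an $n$-dimensional vector space, then the order of vanishing at the origin of the rational $n$-form $\frac{df_1 \wedge \cdots \wedge df_n}{f_1 \cdots f_n}$ (equivalently, the degree of its lowest homogeneous part) is $n$ minus the transcendence degree of $\CC(f_1, \ldots, f_n)$ over $\CC$ — or at least bounds the transcendence degree from below: $\operatorname{trdeg}_\CC \CC(f_1^{\low}, \ldots) \geq n - \deg(\mu_\Phi^{\low})$. Concretely, if only $k$ of the $f_i$'s are algebraically independent, then $df_1 \wedge \cdots \wedge df_n = 0$ identically, so one must work with a maximal independent subcollection and track the Jacobian; the logarithmic derivative form $\mu = \bigwedge d(\log f_i)$ has a pole of high order along the coordinate hyperplanes precisely when there are algebraic relations, which forces $\deg(\mu^{\low})$ large. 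I would make this precise by choosing a maximal algebraically independent subset, say $\{\varphi_{i}^{\low}\}_{i \in I}$ with $|I| = k$, extending a coordinate system adapted to the $\varphi_i^{\low}$, and estimating $\deg(\mu_\Phi^{\low}) \geq n - k$ directly from the structure of $d(\log \varphi_j^{\low})$ for $j \notin I$.

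The main obstacle I anticipate is the first step: showing rigorously that taking lowest degree terms commutes with forming the log-volume form, i.e., that $\mu_\Phi^{\low} = \pm\, (d(\log\varphi_1^{\low}) \wedge \cdots \wedge d(\log\varphi_n^{\low}))^{\low}$ up to a nonzero scalar, and in particular that no cancellation of leading terms occurs in the wedge product. Cancellations among the leading parts of $d\varphi_1^{\low}, \ldots, d\varphi_n^{\low}$ are exactly what happen when the $\varphi_j^{\low}$ fail to be independent, so the degree $\deg(\mu_\Phi^{\low})$ automatically jumps up to absorb this — the inequality $n - \deg(\mu_\Phi^{\low}) \leq (\text{number of independent elements of }\Phi^{\low})$ is thus really an inequality, and proving it amounts to a careful valuation/order-of-vanishing argument on the meromorphic $n$-form, organized by first reducing to a monomial coordinate situation and then counting. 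Once this bookkeeping is in place, the rest is a standard transcendence-degree/Jacobian argument.
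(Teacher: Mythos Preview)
Your proposal has a genuine gap at precisely the point you flag as the main obstacle. The identity you hope for,
\[
\mu_\Phi^{\low} \;=\; \bigl(d(\log\varphi_1^{\low})\wedge\cdots\wedge d(\log\varphi_n^{\low})\bigr)^{\low},
\]
is simply false in general: if $\varphi_1 = z_1$ and $\varphi_2 = z_1 + z_2^2$ on $\CC^2$, then $\varphi_1^{\low}=\varphi_2^{\low}=z_1$, so the right-hand side is undefined (the form is identically zero), whereas $\mu_\Phi^{\low}$ is a perfectly good nonzero rational $2$-form of degree $1$. You acknowledge that cancellation is the issue and that one only needs an inequality, but the suggested fix --- ``reduce to a monomial coordinate situation and then count'' --- is not a strategy: there is no coordinate change that simultaneously monomializes the $\varphi_j^{\low}$ without destroying the homogeneity that makes degrees meaningful, and the transcendence-degree argument you sketch presupposes exactly the bound you are trying to prove.

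The paper's proof sidesteps the log-volume form of $\Phi^{\low}$ entirely. It works directly with the Jacobian matrix $J = \partial(\varphi_1,\ldots,\varphi_n)/\partial(z_1,\ldots,z_n)$ and its columnwise decomposition $J^i = K^i + L^i$, where $K^i$ is the gradient of $\varphi_i^{\low}$. Setting $d_j=\deg(\varphi_j^{\low})$ and $d=\deg(\det(J)^{\low})$, one has $n-\deg(\mu_\Phi^{\low}) = d_1+\cdots+d_n - d =: n'$. The key step is a scaling trick: for $c\in\CC^\times$,
\[
\det(J)(cz) \;=\; c^{d_1+\cdots+d_n - n}\,\det\bigl(K^1(z)+O(c),\,\ldots,\,K^n(z)+O(c)\bigr),
\]
and if $K=(K^1,\ldots,K^n)$ had rank $<n'$, then every $n'\times n'$ minor of $K$ would vanish, forcing every term in the multilinear expansion of the right-hand side to pick up at least $n-n'+1$ factors of $O(c)$; this would make $\det(J)$ vanish at the origin to order $\geq d+1$, contradicting the definition of $d$. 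This argument is short and avoids any attempt to compare $\mu_\Phi^{\low}$ with an analogous object built from $\Phi^{\low}$.
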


\begin{proof}
 Set $n' = n - \deg (\mu_{\Phi}^{\low})$.   Let $z=(z_1, \ldots, z_n)$ be a local holomorphic coordinate system on $P$ near $p_0$ such that
    $z(p_0) = 0$. Consider the matrix $K = (K^1, \ldots, K^n)$ of polynomials in $z$, where 
\[
K^i = \left(\frac{\partial (\varphi_i^{\low})}{\partial z_1}, \ldots, \frac{\partial (\varphi_i^{\low})}{\partial z_n}\right)^T,
\hs i \in [1, n],
\]
and $T$ stands for transpose. 
We show that the rank of $K$ 
is at least $n'$. Note that 
\[
d\vphi_1 \wedge \cdots \wedge d\vphi_n = \det (J) \,dz_1 \wedge \cdots \wedge dz_n,
\]
where $J=(J^1, \ldots, J^n)$ with 
$J^i = \left(\frac{\partial \varphi_i}{\partial z_1}, \ldots, \frac{\partial \varphi_i}{\partial z_n}\right)^T$
for $i \in [1, n]$.
Set
  $d = \deg ((\det(J))^\low)$ and $d_i = 
\deg (\varphi_i^{\low})$ for $i \in [1, n]$. Then
$\deg (\mu_{\Phi}^{\low}) = n + d -(d_1+\cdots + d_n)$, so 
$n' =d_1+\cdots + d_n -d$. Writing
$J^i = K^i + L^i$ for $i \in [1, n]$, 
then for $c \in \CC^{\times}$, 
    \begin{align*}
\det(J)(cz) 
        &=  \det (K^1(cz) + L^1(cz), \ldots, K^n(cz) + L^n(cz)) \\
         & =\det (c^{d_1-1} K^1(z) + O(c^{d_1}), \ldots, 
         c^{d_n-1} K^n(z) + O(c^{d_n})) \\
         & =c^{-n+d_1+ \cdots + d_n} \det (K^1(z) + O(c), \ldots, K^n(z) + O(c)).
    \end{align*}
Suppose that the rank of $K$ is less than $n'$. Then all 
    $n' \times n'$ minors of $K$ are zero. It follows that every monomial term of the Taylor expansion of 
    $\det(J)(cz)$ in $c$ has degree at least 
\[
(-n+d_1+ \cdots + d_n) + (n-n'+1) = d+1,
\]
contradicting the definition of $d = \deg ((\det(J))^{\low})$.  Thus the rank of $K$ is at least $n'$.
\end{proof}

\begin{rem}\label{rem:no-meaning}
In Proposition \ref{prop:F-low}, the integer
$n - \deg (\mu_{\Phi}^{\low})$ may be negative, in which case Proposition \ref{prop:F-low} does not give
any information on the least number of independent elements in $\Phi^\low$.  
Even when $n - \deg (\mu_{\Phi}^{\low})$ is non-negative, 
the set $\Phi^{\low}$ may contain more than
$n - \deg (\mu_{\Phi}^{\low})$ independent elements. For example, consider
$\CC^2$ with $\Phi = \{z_1, 1+z_2^{k+1}\}$, where
$k$ is any non-negative integer. Then 
\[
\mu_\Phi = (k+1)\frac{z_2^k}{z_1(1+z_2^{k+1})} dz_1 \wedge dz_2.
\]
Taking lowest degree terms at $p_0 =(0, 0)$, we have  $\Phi^\low = \{z_1, 1\}$, while
$n - \deg (\mu_{\Phi}^{\low}) = 2-(k+1) = 1-k$.
Thus Proposition \ref{prop:F-low} is meaningful in this example only when $k = 0$.
\end{rem}

Let  now $\pi$ be a holomorphic Poisson structure on $P$, and assume that  $\pi({p_0}) = 0$.
Let $(T_{p_0} P, \pi_0)$ be the linearization of $(P, \pi)$ at $p_0$.

\begin{cor}\label{cor:I}
For every log-canonical system $\Phi$ on $(P, \pi)$ at $p_0$, one has
\[
\deg (\mu_\Phi^\low) \geq \frac{1}{2}{\rm rk}(\pi_0).
\]
Consequently,
$\deg (\mu_{\Phi}^\low) \leq \frac{1}{2}{\rm rk} (\pi_0)$ if and only if  
$\deg (\mu_{\Phi}^\low) = \frac{1}{2}{\rm rk} (\pi_0)$.
\end{cor}

\begin{proof}
By Lemma \ref{Lem:mainlem} and Proposition \ref{prop:F-low}, the set $\Phi^\low$ contains at least $n - \deg (\mu_\Phi^\low)$ 
independent  polynomials on $T_{p_0}P$ that are $\pi_0$-involutive. By Lemma
\ref{lem:m-atmost}, one has
$n - \deg (\mu_\Phi^\low) \leq n - \frac{1}{2}{\rm rk}(\pi_0)$, so
$\deg (\mu_\Phi^\low) \geq \frac{1}{2}{\rm rk}(\pi_0)$. The rest of Corollary \ref{cor:I} now follows. 
\end{proof}

\begin{defn}\label{defn:I} 
A log-canonical system $\Phi$ on $(P, \pi)$ at $p_0$ is said to have {\it Property $\mathcal I$} 
at $p_0$ if    
\[
    \deg (\mu_{\Phi}^\low) = \frac{1}{2}{\rm rk} (\pi_0).   
\]
\end{defn}

We can now prove the first main result of our paper.

\begin{thm} \label{thm:main-0}
    Let $\Phi = (\vphi_1, \ldots, \vphi_n)$ be a log-canonical system on $(P, \pi)$ at $p_0$.  If $\Phi$ has Property $\mathcal I$ at $p_0$, then $\Phi^{\low} = (\vphi_1^\low, \ldots, \vphi_n^\low)$ contains a polynomial integrable system on $(T_{p_0} P, \pi_0)$.
\end{thm}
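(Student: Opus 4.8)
The plan is to assemble Theorem~\ref{thm:main-0} from the two pieces already in hand. By Lemma~\ref{Lem:mainlem}, since $\Phi = \{\vphi_1, \ldots, \vphi_n\}$ is log-canonical with respect to $\pi$ at $p_0$, every pair $\vphi_i^\low, \vphi_j^\low$ satisfies $\{\vphi_i^\low, \vphi_j^\low\}_{\pi_0} = 0$; that is, the set $\Phi^\low$ of homogeneous polynomials on $T_{p_0}P$ is $\pi_0$-involutive. By Proposition~\ref{prop:F-low}, $\Phi^\low$ contains at least $n - \deg(\mu_\Phi^\low)$ independent polynomials on $T_{p_0}P$. Invoking the hypothesis that $\Phi$ has Property~$\mathcal I$ at $p_0$, namely $\deg(\mu_\Phi^\low) = \frac{1}{2}\mathrm{rk}(\pi_0)$, we get that $\Phi^\low$ contains at least $n - \frac{1}{2}\mathrm{rk}(\pi_0)$ independent polynomials.

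The second step is to observe that $n - \frac{1}{2}\mathrm{rk}(\pi_0)$ is exactly the magic number of $(T_{p_0}P, \pi_0)$ in the sense of Definition~\ref{defn:int}, since $\dim T_{p_0}P = n$. Therefore a $\pi_0$-involutive set containing $n - \frac{1}{2}\mathrm{rk}(\pi_0)$ independent elements contains, by Definition~\ref{defn:int}(3), an integrable system on $(T_{p_0}P, \pi_0)$. Concretely, one picks $n - \frac{1}{2}\mathrm{rk}(\pi_0)$ of the $\vphi_i^\low$ whose differentials are linearly independent; these remain pairwise $\pi_0$-involutive, so they form the desired integrable system. This completes the argument.

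There is essentially no obstacle here: the theorem is a direct corollary of the results established in $\S$\ref{ss:indep}, and the only thing to be careful about is the bookkeeping of degrees and the fact that $\mathrm{rk}(\pi_0)$ is computed on the vector space $T_{p_0}P$, which has dimension $n = \dim P$, so that the magic numbers of $(P,\pi)$ at $p_0$ and of its linearization match up as needed. One subtlety worth noting explicitly is that Lemma~\ref{Lem:mainlem} is applied to the $\lambda$ coming from $\{\vphi_i, \vphi_j\}_\pi = \lambda_{ij}\vphi_i\vphi_j$ for each pair, and that a zero lowest-degree bracket is automatic when $\pi_0 = 0$, so no case distinction is truly needed. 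Since both Proposition~\ref{prop:F-low} and Lemma~\ref{Lem:mainlem} have already been proved in the text, the proof of Theorem~\ref{thm:main-0} is just a one-line combination of them with Definition~\ref{defn:int}.

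\begin{proof}
By Lemma~\ref{Lem:mainlem}, the set $\Phi^\low$ of homogeneous polynomials on $T_{p_0}P$ is $\pi_0$-involutive.
By Proposition~\ref{prop:F-low}, $\Phi^\low$ contains at least $n - \deg(\mu_\Phi^\low)$ independent polynomials on $T_{p_0}P$.
Since $\Phi$ has Property $\mathcal I$ at $p_0$, we have $\deg(\mu_\Phi^\low) = \frac{1}{2}\mathrm{rk}(\pi_0)$, so $\Phi^\low$ contains at least $n - \frac{1}{2}\mathrm{rk}(\pi_0)$ independent polynomials.
As $\dim T_{p_0}P = n$, the number $n - \frac{1}{2}\mathrm{rk}(\pi_0)$ is the magic number of $(T_{p_0}P, \pi_0)$, and by Definition~\ref{defn:int}(3) the $\pi_0$-involutive set $\Phi^\low$ contains an integrable system on $(T_{p_0}P, \pi_0)$.
\end{proof}
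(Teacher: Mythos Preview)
Your proof is correct and follows essentially the same approach as the paper's own proof: combine Lemma~\ref{Lem:mainlem} for $\pi_0$-involutivity with Proposition~\ref{prop:F-low} for the count of independent elements, then invoke Property~$\calI$ and Definition~\ref{defn:int}. The paper's version is slightly terser (it does not re-cite Lemma~\ref{Lem:mainlem} explicitly, having already used it in the proof of Lemma~\ref{lem:I}), but the logic is identical.
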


\begin{proof}
 By Proposition \ref{prop:F-low}, Property $\calI$ of $\Phi$ implies that
$\Phi^\low$ contains 
$n - \deg (\mu_\Phi^\low) = n-\frac{1}{2}{\rm rk}(\pi_0)$
independent and $\pi_0$-involutive polynomials on $T_{p_0}P$, which form 
an integrable system on $(T_{p_0}P, \pi_0)$.
\end{proof}

\subsection{Integrable systems from compatible (generalized) cluster structures}\label{ss:cluster}
Following \cite{FZ:I, BFZ:III, GSV:2012, GSV:2018}, we now review some concepts on
cluster structures.

Let $P$ be an $n$-dimensional irreducible rational quasi-affine variety over $\CC$, and let $\CC[P] \subset \CC(P)$
be respectively the algebra of regular functions on $P$ and the field of rational functions on $P$.
A {\it seed in $\CC(P)$} is a triple  $\Sigma = (\Phi, {\rm ex}, M)$, where

1) $\Phi = (\vphi_1, \ldots, \vphi_n)$ is a free transcendental basis of $\CC(P)$ over $\CC$,

2) ${\rm ex}$ is a subset of $[1, n]$, called the {\it exchange set},  and

3) $M = (m_{i, j})$ is an integral matrix with rows indexed by $i \in [1, n]$ and columns by 
$j \in {\rm ex}$ and such that the submatrix 
$M_{{\rm ex} \times {\rm ex}} := (m_{i, j})_{i, j \in {\rm ex}}$ of $M$ is skew-symmetrizable \cite{BFZ:III}.

Given a seed $\Sigma = (\Phi, {\rm ex}, M)$ in $\CC(P)$, the $n$-tuple $\Phi$ is called 
the   {\it extended cluster} of $\Sigma$, each 
$\vphi_j$, for $j \in {\rm ex}$, is 
called a {\it cluster variable} of $\Sigma$, and each $\vphi_j$, for $j \in [1, n]\backslash {\rm ex}$, is
called a {\it frozen variable} of $\Sigma$. 
For $k \in {\rm ex}$, the 
{\it mutation of $\Sigma$ in the direction of $k$} is the seed 
\begin{equation}\label{eq:Sigma-mutation}
\mu_k(\Sigma) = \left(\Phi^\prime = (\vphi_1, \ldots, \vphi_{k-1}, \vphi_k^\prime, \vphi_{k+1}, \ldots, \vphi_n), \,
{\rm ex}, \,M^\prime=( m_{i, j}^\prime)\right),
\end{equation}
where 
\begin{align}\label{eq:phik}
    \vphi_k^\prime &= \frac{\prod \limits_{m_{j, k}>0} \vphi_j^{m_{j, k}} + \prod \limits_{m_{j, k}<0} \vphi_j^{-m_{j, k}}}{\vphi_k},  \\ 
    m_{i,j}^\prime &= \begin{cases} -m_{i,j}, & \hs i = k, \; \mbox{or} \; j = k, \vspace{.05in}\\
m_{i,j} + \frac{1}{2} (|m_{i,k}| m_{k, j}+m_{i,k}|m_{k, j}|), & \hs \mbox{otherwise}.\end{cases}\nonumber
\end{align}
Note that, as $m_{k, k} = 0$,  $\vphi_k \vphi_k^\prime$ is equal to the sum of two monomials in $\Phi\backslash \{\vphi_k\} =
\Phi^\prime \backslash \{\vphi_k^\prime\}$. Two seeds in $\CC(P)$ related by a sequence of mutations are said to be {\it mutation equivalent}.

In \cite{GSV:2012, GSV:2018}, the authors introduced {\it generalized mutation rule} of seeds in $\CC(P)$,
for which a one-step mutation in the direction $k \in {\rm ex}$ still replaces $\Phi$ with $\Phi^\prime$ 
 by only changing the variable $\vphi_k\in \Phi$ to a new variable $\vphi_k^\prime \in \Phi^\prime$ as in 
 \eqref{eq:Sigma-mutation}.  We do not need the detailed formula for $\vphi_k^\prime$, and we only need to note that 
in the generalized seed mutation rule, the product $\vphi_k \vphi_k^\prime$ may now be a sum of {\it more than two} monomials in $\Phi\backslash\{\vphi_k\}=\Phi^\prime\backslash\{\vphi_k^\prime\}$. See \cite[$\S$2.1]{GSV:2018} for details.

Although the generalized seed mutation rule covers the 
(ordinary) mutation rule in \eqref{eq:phik} as special cases, 
one may want to distinguish the two especially in examples. 
In the rest of $\S$\ref{ss:cluster}, we put the prefix ``generalized'' in 
parentheses in the definitions and statements to indicate that they are valid for the generalized 
mutation rule of seeds but the word ``generalized'' should be dropped when the mutation rule is the ordinary one.

\begin{defn} 
By a {\it (generalized) cluster
structure in $\CC(P)$} we mean a (generalized) mutation equivalence class of seeds in $\CC(P)$. For such a 
$\calC$, the extended cluster of any 
seed in $\calC$ is called an {\it extended cluster of $\calC$}, a  cluster variable in any seed in $\calC$ is called a 
{\it cluster variable of $\calC$}, and the common set of 
frozen variables in all the seeds in $\calC$, denoted as ${\rm Froz}(\calC)$,
is called the set of {\it frozen variables of $\calC$}. 
\end{defn}

Let $\calC$ be any (generalized) cluster
structure in $\CC(P)$. Associated to each 
extended cluster $\Phi = (\vphi_1, \ldots, \vphi_n)$ of $\calC$, we then have the log-volume form 
\[
\mu_\Phi = \frac{d\vphi_1 \wedge \cdots \wedge d\vphi_n}{\vphi_1 \cdots \vphi_n} = d(\log \vphi_1) \wedge \cdots \wedge 
d(\log \vphi_n).
\]
We now have the following key observation.

\begin{lem-de}\label{lem:same-volume}
For any irreducible rational quasi-affine variety $P$ over $\CC$, and for any (generalized) cluster
structure $\calC$ in $\CC(P)$, all the extended clusters of $\calC$ have, 
up to a sign, the same log-volume form, which will be denoted by $\mu_\calC$ and called the {\it log-volume form of $\calC$}.
\end{lem-de}

\begin{proof}
We only need to consider the one-step mutation 
\[
  \Phi = (\vphi_1,  \ldots, \vphi_n) \rightsquigarrow 
  \Phi^\prime = (\vphi_1, \ldots, \vphi_{k-1}, \vphi_k^\prime, \vphi_{k+1}, \ldots, \vphi_n) 
\]
for any $k \in {\rm ex}$.  
Let  $\alpha = d(\log \vphi_1) \wedge \cdots d(\log \vphi_{k-1}) \wedge d(\log \vphi_{k+1}) \wedge \cdots \wedge d(\log \vphi_n)$. 
Since
$\vphi_k \vphi_k^\prime$ is a sum of  monomials in $\Phi \setminus \{\vphi_k\}$,  we have
\[
 \mu_{\Phi^\prime}  =(-1)^{n-k}  \alpha \wedge d(\log \vphi_k^\prime) =(-1)^{n-k} 
 \alpha \wedge d(-\log \vphi_k )\\
 =   -\mu_{\Phi}.  \tag*{\qedhere}
\]
\end{proof} 


Not all (generalized) cluster structures that we will consider later in this paper satisfy Property $\mathcal{I}$.  It is sometimes helpful to modify the original extended cluster $\Phi$ to form a new system $\overline{\Phi}$ for which the degree of the log-volume form is reduced: $\deg(\mu_{\overline{\Phi}}) < \deg(\mu_{\Phi})$.  This motivates the following

\begin{defn}\label{defn:modify}
Let $\calC$ be a (generalized) cluster
structure in $\CC(P)$, and let $\CC({\rm Froz}(\calC))$ be the subfield of $\CC(P)$ generated by 
${\rm Froz}(\calC)$. By a {\it frozen variable modification of $\calC$} we mean an independent subset 
$\overline{{\rm Froz}(\calC)}$ of $\CC({\rm Froz}(\calC))$ 
 that has the same cardinality as ${\rm Froz}(\calC)$. 
The {\it modification} of an extended cluster 
$\Phi$  of $\calC$ by $\overline{{\rm Froz}(\calC)}$ is defined as
\begin{equation}\label{eq:overline-Phi}
\overline{\Phi} = (\Phi \backslash {\rm Froz}(\calC)) \sqcup \overline{{\rm Froz}(\calC)}.
\end{equation}
\end{defn}

\begin{lem-de}\label{lem:mu-modify} For any (generalized) cluster structure $\calC$ in $\CC(P)$
and for any frozen variable modification
$\overline{{\rm Froz}(\calC)}$ of $\calC$, the meromorphic form $\overline{\mu}_{\calC}$ on $P$ given by
\begin{equation}\label{eq:mu-C-modify}
\overline{\mu}_{\calC} = \mu_{\overline{\Phi}},
\end{equation}
where $\Phi$ is any extended cluster of $\calC$ and $\overline{\Phi}$ is given in \eqref{eq:overline-Phi},
is, up to a sign, independent of $\Phi$.  We call $\overline{\mu}_{\calC}$ the {\it modified log-volume form of $\calC$
by $\overline{{\rm Froz}(\calC)}$}.
\end{lem-de}

\begin{proof}
Let ${\rm Froz}(\calC) = \{f_1, \ldots, f_m\}$ and $\overline{{\rm Froz}(\calC)} = 
\{\overline{f}_1, \ldots, \overline{f}_m\}$. Then there exists a non-zero $F \in \CC(P)$ such that 
$d\overline{f}_1 \wedge \cdots \wedge d\overline{f}_m = Fdf_1 \wedge \cdots \wedge df_m$. Thus $\mu_{\overline{\Phi}} = F \mu_{{\Phi}}$ for any extended cluster $\Phi$ of $\calC$. The statement now follows from 
Lemma-Definition \ref{lem:same-volume}.
\end{proof}

Following \cite{GSV:2012, GSV:2018}, we now state some (Poisson geometric) properties of cluster structures needed for the considerations in this paper.
Recall first that if $(P, \pi)$ is an irreducible rational quasi-affine Poisson variety, the Poisson bracket on 
$\CC[P]$ extends to one on $\CC(P)$. A set of elements of $\CC(P)$ is said to be log-canonical with respect to 
$\pi$ if its elements have pairwise log-canonical Poisson brackets with respect to $\pi$.

\begin{defn}\label{defn:regular} 
Let $P$ be a smooth rational quasi-affine variety over $\CC$. 

1) A (generalized) cluster structure
$\calC$ in $\CC(P)$ is said to be {\it regular on $P$} if all cluster variables and frozen variables of $\calC$ are regular functions on $P$; A frozen variable modification $\overline{{\rm Froz}(\calC)}$ of $\calC$ is said to be {\it regular} if 
$\overline{{\rm Froz}(\calC)} \subset \CC[P]$;

2) Given a Poisson structure $\pi$ on $P$, a (generalized) cluster structure
$\calC$  in $\CC(P)$ is said to be {\it compatible with  $\pi$} if every extended cluster of $\calC$ is log-canonical with respect to $\pi$; A frozen variable modification $\overline{{\rm Froz}(\calC)}$ of $\calC$ is said to be
{\it $\pi$-compatible} if every $\overline{\vphi} \in \overline{{\rm Froz}(\calC)}$ is of the form
\begin{equation}\label{eq:cjMj}
\overline{\vphi} = c_{\overline{\vphi}}M_{\overline{\vphi}}
\end{equation}
for some Casimir function $c_{\overline{\vphi}} \in \CC(P)$ with respect to $\pi$ and some monomial 
$M_{\overline{\vphi}}$ in ${\rm Froz}(\calC)$. 
\end{defn}

We can now prove our main result on polynomial integrable systems and cluster structures.

\begin{thm}\label{thm:main-cluster} Let $(P, \pi)$ be a smooth rational quasi-affine Poisson variety over $\CC$. 
Let $\calC$ be a (generalized) cluster structure 
in $\CC(P)$ that is regular on $P$ and
compatible with $\pi$, and let $\overline{{\rm Froz}(\calC)}$ be 
a regular $\pi$-compatible frozen variable modification of $\calC$. 

1) The modification $\overline{\Phi}$ in \eqref{eq:overline-Phi} of every extended cluster $\Phi$ of $\calC$ 
is a log-canonical system on $(P, \pi)$.

2) Suppose that $p_0\in P$ is such that $\pi(p_0) = 0$, and let
$\pi_0$ be the linearization of $\pi$ at $p_0$. If
\begin{equation}\label{eq:deg-mu-C-1}
\deg \left((\overline{\mu}_{\calC})^\low\right) = \frac{1}{2} {\rm rk}(\pi_0),
\end{equation}
where $\overline{\mu}_{\calC}$ is the modified log-volume form of $\calC$ by $\overline{{\rm Froz}(\calC)}$ and
$(\overline{\mu}_{\calC})^\low$ is taken at $p_0$,
then for every extended cluster $\Phi$ of $\calC$, the modification $\overline{\Phi}$ of 
$\Phi$ 
has Property $\calI$ at $p_0$, so the set
$(\overline{\Phi})^{\low}$ of lowest degree terms of elements in $\overline{\Phi}$ at $p_0$
contains a polynomial integrable system on $(T_{p_0}P, \pi_0)$.
\end{thm}

\begin{proof}
1) Let $\Phi$ be any extended cluster of $\calC$. As $\Phi \subset \CC[P]$ and 
$\overline{{\rm Froz}(\calC)} \subset \CC[P]$, we
have $\overline{\Phi} \subset \CC[P]$. The condition in \eqref{eq:cjMj} on each 
$\overline{\vphi} \in \overline{{\rm Froz}(\calC)}$ implies that $\overline{\Phi}$ is log-canonical with respect to $\pi$.

2) is an immediate consequence of  Lemma-Definition \ref{lem:mu-modify} and Theorem \ref{thm:main-0}.
\end{proof}

\begin{defn}\label{defn:C-I}
In the setting of Theorem \ref{thm:main-cluster}, when \eqref{eq:deg-mu-C-1} holds, we say that  {\it $\calC$ has Property $\calI$ at $p_0$ after the frozen variable modification $\overline{{\rm Froz}(\calC)}$}.

\end{defn}

Given a smooth rational quasi-affine Poisson variety $(P, \pi)$ and a cluster structure $\calC$ that is regular on $P$ and 
compatible with $\pi$, Theorem \ref{thm:main-cluster} then says that Property $\calI$ at $p_0$ is a property of  the pair
$(\calC, \overline{{\rm Froz}(\calC)})$ rather than one of its individual extended clusters. In particular, 
one (modified) extended cluster of $\calC$ has Property $\calI$ at $p_0$ if and only if {\it every} (modified) extended cluster of $\calC$ has that property. 
We also say that 
{\it $\calC$ has Property $\calI$ at $p_0$} if it does after some regular $\pi$-compatible
frozen variable modification. In $\S$\ref{s:GB}-$\S$\ref{sect:GLndual}, we will give four examples of (generalized) cluster structures from Lie theory 
that have Property $\calI$ at certain points of interest on the respective varieties.

\begin{rem}\label{rem:weaker}
Let again $P$ be an irreducible rational quasi-affine variety over $\CC$. To accurately quote the literature, we follow 
\cite{GSV:book, GSV:2018} and clarify the notion of (generalized) cluster and upper cluster structures {\it
on the variety $P$}. 
Let $\calC$ be a (generalized) cluster structure  in $\CC(P)$, and 
let ${\rm inv}\subset {\rm Froz}(\calC)$.

1) The
{\it cluster algebra associated to $\calC$ and ${\rm inv}$} is the sub-algebra $\calA(\calC; {\rm inv})$ of $\CC(P)$
 generated over $\CC$ by all the cluster variables of $\calC$, all the frozen variables of $\calC$, and 
all $\vphi^{-1}$ for $\vphi \in {\rm inv}$; The
{\it upper cluster algebra associated to $\calC$ and ${\rm inv}$} is 
the intersection
\[
{\calU}(\calC; {\rm inv}) = \bigcap_{\Sigma \in \calC} L(\Sigma; {\rm inv}),
\]
where $L(\Sigma; {\rm inv}) = 
\CC[\vphi_1, \ldots, \vphi_n][\vphi_i^{- 1}: i \in {\rm ex}][\vphi^{-1}: \vphi \in {\rm inv}]$
for a seed $\Sigma \in \calC$ with extended cluster  $(\vphi_1, \ldots, \vphi_n)$.
By the strong Laurent phenomenon of cluster algebras of geometric type \cite[Proposition 11.2]{FZ:II} (and
\cite{Chekhov-Shapiro} for the generalized case), one always has
\[
\calA(\calC; {\rm inv}) \subset \calU(\calC; {\rm inv}).
\]

When ${\rm Froz}(\calC) \subset \CC[P]$, let
${\rm inv}(P)$ be
the set of all $\vphi \in {\rm Froz}(\calC)$ that vanish nowhere on $P$.

2) A {\it (generalized) cluster structure on $P$} is a (generalized) cluster structure $\calC$  in 
$\CC(P)$ such that ${\rm Froz}(\calC) \subset \CC[P]$ and 
$\calA(\calC, {\rm inv}(P)) = \CC[P]$;
A {\it (generalized) upper cluster structure on $P$} is a (generalized) cluster structure $\calC$
in $\CC(P)$ such that ${\rm Froz}(\calC) \subset \CC[P]$ and 
$\calU(\calC, {\rm inv}(P)) = \CC[P]$.

3) A (generalized) cluster  or upper cluster structure on $P$ is in particular
regular on $P$.
On the other hand, a free generating set $\Phi$ of $\CC(P)$ over $\CC$ consisting of regular functions on $P$
defines a seed $\Sigma$ in $\CC(P)$ with exchange set ${\rm ex} = \emptyset$, so the (generalized) 
mutation equivalence class of 
 $\Sigma$, composed of $\Sigma$ only, is regular on $P$ but may not be a (generalized) cluster structure on $P$.
\end{rem}

\subsection{\texorpdfstring{$\TT$}{T}-Pfaffians and polynomial integrable systems}\label{ss:Pfaffian}
In this section, we look at another class of Poisson manifolds on which Property $\calI$ of certain families of
log-canonical systems may be checked simultaneously. 

Let $\TT$ be a complex torus. By a {\it $\TT$-Poisson manifold} we mean a 
complex Poisson manifold  $(P, \pi)$ with a holomorphic $\TT$-action by Poisson automorphisms.

\begin{defn}\label{defn:TT-system}
Let $(P, \pi)$ be an $n$ dimensional $\TT$-Poisson manifold.  By a {\it $\TT$-log-canonical system} on $(P, \pi)$ we mean a 
log-canonical system 
$\Phi = \{\varphi_1, \ldots, \varphi_n\}$ 
on $(P, \pi)$ (recall Definition \ref{defn:system-0}) such that every $\vphi_i$ is a $\TT$-weight vector for the induced $\TT$-action on the algebra of holomorphic functions on $P$. 
\end{defn}

\begin{rem}\label{rem:T-comp}
Suppose that $(P, \pi)$ is a smooth rational quasi-affine $\TT$-Poisson variety with  a 
cluster structure $\calC$ in $\CC(P)$ that is regular on $P$ and compatible with $\pi$. If $\calC$ is 
also compatible with the $\TT$-action in the sense that every extended cluster of $\calC$ consists of $\TT$-weight vectors, 
then every extended cluster of $\calC$ is a $\TT$-log-canonical system on $(P, \pi)$. We will look at such examples in 
$\S$\ref{s:GB} and $\S$\ref{s:G}.
\end{rem}

Let $(P, \pi)$ be a $\TT$-Poisson manifold. 
Let $\t$ be the Lie algebra of $\TT$, and let $\sigma: \t \to \mathfrak{X}^1(P)$ be the induced action of $\t$ on $P$,
where $\mathfrak X^k(P)$ for $1 \leq k \leq \dim P$ is the space of holomorphic $k$-vector fields on $P$.
For $p \in P$, we also write
\[
\sigma_p: \; \t \lrw T_p P, \;\; \sigma_p(x) = \sigma(x)(p).
\]
As shown in \cite{Lu:Lie-Duke}, one has the Lie algebroid $A_\sigma = (P \times \t) \bowtie T^{\ast} P$ over $P$, which has $(P \times \t) \oplus T^{\ast} P$ as the underlying vector bundle and $-\sigma + \pi^\#$ as the anchor map.  Following \cite[$\S$2.1]{LM:T-leaves}, the {\it orbits} of the Lie algebroid $A_\sigma$ in $P$ are called the {\it $\TT$-orbits of symplectic leaves}, or simply {\it $\TT$-leaves} of $(P, \pi)$.  By definition, the tangent space to the $\TT$-leaf of $(P, \pi)$ through $p \in P$ is ${\rm im}(\sigma_p) +{\rm im}(\pi(p)^\#)$.

Let $L \subset P$ be a $\TT$-leaf of $(P, \pi)$. For any symplectic leaf $S$ of $(P, \pi)$ through some point in $L$, it is shown in \cite[Lemma 2.3]{LM:T-leaves} that $S \subset L$ and that the map
$\TT \times S \rightarrow L, (t, p) \mapsto t \cdot p$,
is a surjective submersion.  In particular,  $\pi$ has constant rank in $L$. By \cite[Lemma 2.5]{LM:T-leaves}, the {\it leaf stabilizer} of $\t$ at $p \in L$, defined as 
$\t_p = \{x \in \t: \sigma_p(x) \in {\rm im}(\pi(p)^\#)\}$,
depends only on $L$ and not on $p \in L$.  Set $\t_L = \t_p$ for any $p \in L$.  By the definition of $\t_L$, one has the vector space isomorphism
\[
\t/\t_L \lrw T_p L/{\rm im}(\pi(p)^\#), \;\; x+\t_L \Maps \sigma_p(x) + {\rm im}(\pi(p)^\#)
\]
for any $p \in L$. It then follows that $\dim \t - \dim \t_L = \dim L -{\rm rk}(\pi|_L)$.

\begin{lem}\label{lem:rk-L-P}
For any connected $\TT$-Poisson manifold $(P, \pi)$ with an open dense $\TT$-leaf $L$, one has
\[
{\rm rk}(\pi|_L) = {\rm rk}(\pi).
\]
\end{lem}

\begin{proof} Let $2r ={\rm rk}(\pi|_L)$.  Then 
$\wedge^r \pi \neq 0 \in {\mathfrak{X}}^{2r}(P)$ and  
$\wedge^{r+1}\pi = 0 \in {\mathfrak{X}}^{2r+2}(P)$.
Thus, $2r= {\rm rk}(\pi)$.
\end{proof}

Let $(P, \pi)$ be a connected $\TT$-Poisson manifold with an open dense $\TT$-leaf $L$, and let 
$2r = {\rm rk}(\pi)$.  Let $\t_L^\prime$ be any subspace of $\t$ complementary to $\t_L$
and $x_1, \ldots, x_{n-2r}$ any basis of $\t_L^\prime$, where $n = \dim P$. Then 
\begin{equation}\label{eq:eta-L}
{\rm Pf}_\TT(\pi) = \wedge^r \pi \wedge \sigma(x_1) \wedge \cdots \wedge \sigma(x_{n-2r}) \in {\mathfrak{X}}^n(P)
\end{equation}
is an anti-canonical section of $P$ that is nowhere vanishing on $L$ and is, up to non-zero scalar multiples, independent of the choices of $\t_L^\prime$ and of the basis $x_1, \ldots, x_{n-2r}$ of $\t_L^\prime$. 

\begin{defn} 
Assuming that $P$ is connected and
$(P, \pi)$ has an open dense $\TT$-leaf, the anti-canonical section ${\rm Pf}_\TT(\pi)$ of $P$ in \eqref{eq:eta-L} is called a {\it $\TT$-Pfaffian of the Poisson structure $\pi$ on $P$}, which is unique up to non-zero scalar multiples (see \cite[Remark 2.7]{LM:T-leaves}).  
\end{defn}

\begin{lem}\label{lem:mu-Pfaff} Let $(P, \pi)$ be a connected $\TT$-Poisson manifold with an open dense $\TT$-leaf.
For any $\TT$-log-canonical system $\Phi = \{\varphi_1, \ldots, \varphi_n\}$ on $(P, \pi)$, one has
\begin{equation}\label{eq:Pf-Phi}
\langle {\rm Pf}_\TT(\pi), \; d\vphi_1 \wedge \cdots \wedge d\vphi_n \rangle = c \,\vphi_1 \cdots \vphi_n
\end{equation}
for some $c \in \CC^\times$. Consequently, for any $p_0 \in P$, taking lowest degree terms at $p_0$, one has
\[
    \deg (\mu_{\Phi}^\low) = - \deg \bigl( {\rm Pf}_\TT(\pi)^\low \bigr).
 \]
\end{lem}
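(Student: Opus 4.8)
The plan is to establish the pairing identity \eqref{eq:Pf-Phi} first, and then extract the statement about lowest degree terms as a formal consequence. For the first part, I would argue that the left-hand side $f := \langle {\rm Pf}_\TT(\pi), \, d\vphi_1 \wedge \cdots \wedge d\vphi_n \rangle$ is a holomorphic function on $P$ (since ${\rm Pf}_\TT(\pi)$ is a holomorphic $n$-vector field and the $d\vphi_i$ are holomorphic $1$-forms), and that it is nowhere zero on the open dense $\TT$-leaf $L$. The latter is the crux: on $L$ one can pick, near any point, Darboux-type coordinates adapted to the symplectic foliation together with the transverse directions spanned by the $\sigma(x_j)$, and one uses that $\Phi$ is a log-canonical system — so $d\vphi_1\wedge\cdots\wedge d\vphi_n$ is nonvanishing there and pairs nondegenerately against the top-degree multivector ${\rm Pf}_\TT(\pi)$ which is nowhere vanishing on $L$ by construction. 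Hence $f$ is a nonvanishing holomorphic function on the complement of a proper closed subset, i.e.\ $f$ and $\vphi_1\cdots\vphi_n$ have the same zero divisor.

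\textbf{The main obstacle.} The hard part is showing that $f$ is exactly a scalar multiple of $\vphi_1\cdots\vphi_n$, rather than merely having the same support. Here the $\TT$-weight structure does the work. Since each $\vphi_i$ is a $\TT$-weight vector, say of weight $\lambda_i \in \t^*$, the $n$-form $d\vphi_1\wedge\cdots\wedge d\vphi_n$ is a $\TT$-weight vector of weight $\lambda_1+\cdots+\lambda_n$ (the de Rham differential is $\TT$-equivariant). On the other hand, $\wedge^r\pi$ is $\TT$-invariant because $\TT$ acts by Poisson automorphisms, and each $\sigma(x_j)$ is $\TT$-invariant because $\t$ is abelian, so ${\rm Pf}_\TT(\pi)$ is a $\TT$-invariant $n$-vector field; therefore $f$ is itself a $\TT$-weight vector of weight $\lambda_1+\cdots+\lambda_n$. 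A holomorphic function $f$ on an irreducible variety whose divisor equals that of the monomial $\vphi_1\cdots\vphi_n$ differs from it by an invertible holomorphic function $u$; then $u = f/(\vphi_1\cdots\vphi_n)$ is $\TT$-invariant (weights cancel), and an invertible $\TT$-invariant holomorphic function on an irreducible variety with a suitable $\TT$-action is constant — this last step may require a small density/connectedness argument, or one may instead argue directly by comparing the two sides on a single $\TT$-orbit slice. I expect this multiplicative-comparison step to be where the genuine content lies; everything else is bookkeeping.

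\textbf{Deducing the degree identity.} Once \eqref{eq:Pf-Phi} is in hand, the second assertion is a routine application of the formalism of $\S$\ref{ss:tensor-low} and $\S$\ref{ss:mero-low}. Taking lowest degree terms at $p_0$ of both sides of \eqref{eq:Pf-Phi} and using that the pairing of a $k$-vector field with a $k$-form respects lowest-degree-term extraction in the sense that the lowest degree term of the pairing equals the pairing of the lowest degree terms (which follows from the coordinate formulas, provided the pairing of the leading terms does not vanish — and it does not, since the right-hand side $c\,\vphi_1\cdots\vphi_n$ has lowest degree term $c\,\vphi_1^\low\cdots\vphi_n^\low \neq 0$), one gets
\[
\deg\bigl({\rm Pf}_\TT(\pi)^\low\bigr) + \deg\bigl((d\vphi_1\wedge\cdots\wedge d\vphi_n)^\low\bigr) = \deg\bigl((\vphi_1\cdots\vphi_n)^\low\bigr).
\]
Combining this with the definition $\deg(\mu_\Phi^\low) = \deg\bigl((d\vphi_1\wedge\cdots\wedge d\vphi_n)^\low\bigr) - \deg\bigl((\vphi_1\cdots\vphi_n)^\low\bigr)$ from $\S$\ref{ss:indep} yields $\deg(\mu_\Phi^\low) = -\deg\bigl({\rm Pf}_\TT(\pi)^\low\bigr)$, as claimed. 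One should double-check the sign and degree conventions for vector fields versus forms (recall $\deg(\partial^\low) = e - k$ whereas $\deg(\alpha^\low) = e + k$), since that asymmetry is precisely what makes the signs work out in the final formula.
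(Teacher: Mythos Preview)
Your approach to the first part (the pairing identity) has a genuine gap. You claim that $f$ and $\vphi_1\cdots\vphi_n$ have the same zero divisor, but you never establish this: you only argue that $f$ is nonvanishing on $L$ (and even that is shaky, since $d\vphi_1\wedge\cdots\wedge d\vphi_n$ can vanish on parts of $L$). There is no reason whatsoever for the vanishing locus of $\vphi_1\cdots\vphi_n$ to coincide with the complement of $L$ --- the $\vphi_i$ are arbitrary weight vectors in a log-canonical system and could, for instance, all be units. Your fallback (``an invertible $\TT$-invariant holomorphic function \ldots\ is constant'') is also false without additional hypotheses: take $\CC^\times$ with trivial $\TT$-action.

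The paper's argument bypasses all of this with a single observation you missed: \emph{use the $\vphi_i$ themselves as local coordinates}. On the open set $U$ where $(\vphi_1,\ldots,\vphi_n)$ is a coordinate chart and none of the $\vphi_i$ vanish, the log-canonical condition $\{\vphi_i,\vphi_j\}_\pi = \lambda_{ij}\vphi_i\vphi_j$ says exactly that $\pi = \sum_{i<j}\lambda_{ij}\,\partial_{\vphi_i}\wedge\partial_{\vphi_j}$ where $\partial_{\vphi_i} := \vphi_i\,\partial/\partial\vphi_i$; and the $\TT$-weight condition says each generating vector field is $\sigma(x) = \sum_i \chi_{\vphi_i}(x)\,\partial_{\vphi_i}$. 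Thus ${\rm Pf}_\TT(\pi)$ is a \emph{constant} multiple of $\partial_{\vphi_1}\wedge\cdots\wedge\partial_{\vphi_n} = \vphi_1\cdots\vphi_n\,\partial/\partial\vphi_1\wedge\cdots\wedge\partial/\partial\vphi_n$, and the pairing identity is immediate on $U$, hence on $P$ by connectedness. The constant is nonzero because ${\rm Pf}_\TT(\pi)$ is nonzero on $L$. No divisor comparison, no invertible-function argument.

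Your treatment of the second part is essentially the same as the paper's, though the paper makes it more concrete by writing ${\rm Pf}_\TT(\pi) = P(z)\,\partial_{z_1}\wedge\cdots\wedge\partial_{z_n}$ and observing that the pairing identity becomes $P\cdot J = c\,\vphi_1\cdots\vphi_n$ for $J$ the Jacobian determinant; taking lowest degree terms of a product of \emph{functions} is safely multiplicative, which sidesteps your worry about whether the pairing of leading terms might vanish.
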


\begin{proof}
Let $U$ be an open subset of $P$ on which $(\vphi_1, \ldots, \vphi_n)$ is a local holomorphic coordinate system
and $\vphi_1 \vphi_2\cdots \vphi_n$ is nowhere zero.  
Consider the holomorphic vector fields on $U$ given by
$\partial_{\vphi_i} = \vphi_i \frac{\partial}{\partial \vphi_i}$ for $i \in [1, n]$.
Let $\{\varphi_i, \varphi_j\}_{\pi} = \lambda_{i,j} \varphi_i \varphi_j$, where $\lambda_{i,j} \in \CC$
for $i, j \in [1, n]$. Then on $U$ one has 
\[
\pi = \sum_{1 \leq i < j \leq n} \lambda_{i,j} \partial_{\vphi_i} \wedge \partial_{\vphi_j}, 
\]
and for every $x \in \t$ one has 
\[
\sigma(x) = \sum_{i=1}^n \sigma(x)(\vphi_i) \frac{\partial}{\partial \vphi_i} =
\sum_{i=1}^n \chi_{\vphi_i}(x) \vphi_i \frac{\partial}{\partial \vphi_i} = 
\sum_{i=1}^n \chi_{\vphi_i}(x) \partial_{\vphi_i},
\]
where $\chi_{\vphi_i}$ is the $\TT$-weight of $\vphi_i$ regarded as an element of $\t^{\ast}$.  Thus
there exists $c \in \CC^{\times}$ such that
\[
{\rm Pf}_\TT(\pi) = c \,\partial_{\vphi_1} \wedge \cdots \wedge \partial_{\vphi_n} = c \,\vphi_1\cdots \vphi_n 
\frac{\partial}{\partial \vphi_1} \wedge \cdots \wedge \frac{\partial}{\partial \vphi_n}
\]
on $U$,  so  \eqref{eq:Pf-Phi} holds on $U$. As $P$ is connected, 
\eqref{eq:Pf-Phi} holds everywhere on $P$.
Let now $p_0 \in P$, and let 
$z =(z_1, \ldots, z_n)$ be a local holomorphic coordinate system on $P$ near $p_0$ with $z(p_0) = 0$. Write
    \[
    {\rm Pf}_{\TT}(\pi) = P(z) \frac{\partial}{\partial z_1} \wedge \cdots \wedge \frac{\partial}{\partial z_n}
    \hs \mbox{and} \hs J(z) = \det \bigg( \frac{\partial(\varphi_1, \ldots, \varphi_n)}{\partial(z_1, \ldots, z_n)} \bigg).
\]
Then $P J= c\, \varphi_1 \cdots \varphi_n$ by \eqref{eq:Pf-Phi}.
Taking the lowest degree terms at $p_0$ gives
$P^{\low} J^{\low} = c \,(\varphi_1 \cdots \varphi_n)^{\low}$.
It then follows from 
$\mu_{\Phi} = (\varphi_1 \cdots \varphi_n)^{-1}d \varphi_1 \wedge \cdots \wedge d \varphi_n = J(z)
(\varphi_1 \cdots \varphi_n)^{-1}dz_1 \wedge \cdots\wedge dz_n$ that
\[
        \deg (\mu_{\Phi}^\low) =  \deg (J^\low) + n - \deg \bigl( (\varphi_1 \cdots \varphi_n)^{\low} \bigr) =
        -\deg (P^\low) + n =
 - \deg \bigl( {\rm Pf}_{\TT}(\pi)^{\low} \bigr). \tag*{\qedhere}
 \]
\end{proof}

We now have the following immediate consequence of Theorem \ref{thm:main-0} and Lemma \ref{lem:mu-Pfaff}.

\begin{thm}\label{thm:mu-Pfaff}
Suppose $(P, \pi)$ is a connected $\TT$-Poisson manifold with an open dense $\TT$-leaf. Let
$p_0 \in P$ be such that $\pi(p_0) = 0$, and let $\pi_0$ be the linearization of $\pi$ at $p_0$. If 
\begin{equation}\label{eq:Pfaff-I}
\deg \bigl( {\rm Pf}_{\TT}(\pi)^{\low} \bigr) = -\frac{1}{2} {\rm rk}(\pi_0),
\end{equation}
where ${\rm Pf}_{\TT}(\pi)^{\low}$ is taken at $p_0$, then every $\TT$-log-canonical system 
has Property $\calI$ at $p_0$.
\end{thm}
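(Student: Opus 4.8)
The plan is to show that the hypothesis \eqref{eq:Pfaff-I} transfers, via Lemma \ref{lem:mu-Pfaff}, into the degree condition on log-volume forms that Theorem \ref{thm:main-0} requires, uniformly over all $\TT$-log-canonical systems. First I would take an arbitrary $\TT$-log-canonical system $\Phi = \{\vphi_1, \ldots, \vphi_n\}$ on $(P, \pi)$. By Lemma \ref{lem:mu-Pfaff}, with lowest degree terms taken at $p_0$, one has
\[
\deg (\mu_{\Phi}^\low) = -\deg \bigl( {\rm Pf}_\TT(\pi)^\low \bigr).
\]
Combining this with the assumed equality \eqref{eq:Pfaff-I}, namely $\deg \bigl( {\rm Pf}_{\TT}(\pi)^{\low} \bigr) = -\tfrac{1}{2}{\rm rk}(\pi_0)$, immediately yields $\deg (\mu_{\Phi}^\low) = \tfrac{1}{2}{\rm rk}(\pi_0)$, which is precisely Property $\calI$ of $\Phi$ at $p_0$ in the sense of Definition \ref{defn:I}.

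Before invoking Lemma \ref{lem:mu-Pfaff}, I would check that its hypotheses hold in the present situation: $(P,\pi)$ is a connected $\TT$-Poisson manifold with an open dense $\TT$-leaf, which is exactly what we assumed, and $\Phi$ is a $\TT$-log-canonical system, so \eqref{eq:Pf-Phi} applies and the degree identity follows. I would also note that $\mu_\Phi^\low$ is well defined precisely because $\Phi$ is independent (so $d\vphi_1 \wedge \cdots \wedge d\vphi_n \neq 0$), and that the pairing identity \eqref{eq:Pf-Phi} guarantees ${\rm Pf}_\TT(\pi)^\low \neq 0$, so all the degrees in question are genuine integers. With Property $\calI$ established, Theorem \ref{thm:main-0} then gives that $\Phi^\low$ contains an integrable system on $(T_{p_0}P, \pi_0)$, but for the statement as phrased it suffices to record that $\Phi$ has Property $\calI$ at $p_0$.

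There is really no hard part here: the statement is a formal combination of the earlier Lemma \ref{lem:mu-Pfaff} (relating the lowest-degree term of the log-volume form of any $\TT$-log-canonical system to that of the Pfaffian) and Definition \ref{defn:I}. The only point requiring a moment's care is the \emph{uniformity} claim --- that the conclusion holds for \emph{every} $\TT$-log-canonical system --- but this is automatic, since the key identity $\deg(\mu_\Phi^\low) = -\deg({\rm Pf}_\TT(\pi)^\low)$ from Lemma \ref{lem:mu-Pfaff} holds for each such $\Phi$ with the \emph{same} right-hand side, namely a quantity depending only on $(P,\pi)$, $\TT$, and $p_0$, and not on $\Phi$. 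Thus the single hypothesis \eqref{eq:Pfaff-I} controls the whole family at once, and the proof is a two-line deduction.
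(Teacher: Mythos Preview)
Your proposal is correct and follows the same route as the paper: the paper states that Theorem \ref{thm:mu-Pfaff} is an immediate consequence of Theorem \ref{thm:main-0} and Lemma \ref{lem:mu-Pfaff}, and you carry out exactly this deduction, using Lemma \ref{lem:mu-Pfaff} to convert the Pfaffian degree hypothesis into the log-volume degree condition defining Property $\calI$. Your added remarks on well-definedness and uniformity are accurate but not strictly needed.
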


We now describe a setting in which \eqref{eq:Pfaff-I} always holds.

\begin{prop}\label{prop:Pfaff-atleast}
Let $(P, \pi)$ be a connected $\TT$-Poisson manifold with an open dense $\TT$-leaf, and let $p_0 \in P$ be such that
$\pi(p_0) = 0$.  Let $\pi_0$ be the linearization of $\pi$ at $p_0$ and let ${\rm Pf}_{\TT}(\pi)^{\low}$
be taken at $p_0$.

1) If $(P, \pi)$ admits a $\TT$-log-canonical system, then 
$\deg \bigl( {\rm Pf}_{\TT}(\pi)^{\low} \bigr) \leq  -\frac{1}{2} {\rm rk}(\pi_0)$;

2) If $p_0 \in P$ is a $\TT$-fixed point, then 
$\deg \bigl( {\rm Pf}_{\TT}(\pi)^{\low} \bigr) \geq -\frac{1}{2} {\rm rk}(\pi_0)$.
\end{prop}

\begin{proof}
1) Let $\Phi$ be any $\TT$-log-canonical system on $(P, \pi)$. By Corollary \ref{cor:I} and Lemma \ref{lem:mu-Pfaff},
\[
\deg \bigl( {\rm Pf}_{\TT}(\pi)^{\low} \bigr) = -\deg \left(\mu_\Phi^\low\right) \leq  -\frac{1}{2} {\rm rk}(\pi_0). 
\]

2) Let $r = \frac{1}{2}{\rm rk}(\pi)$, and let $x_1, \ldots, x_{n-2r} \in \t$ be such that
${\rm Pf}_\TT(\pi) = \wedge^r \pi \wedge \sigma(x_1) \wedge \cdots \wedge \sigma(x_{n-2r})$
as in \eqref{eq:eta-L}. 
Let $z = (z_1, \ldots, z_n)$ be any local holomorphic coordinate system near $p_0$ with $z(p_0) = 0$. 
Since $p_0$ is a $\TT$-fixed point, 
we have $\sigma(x_j)(p_0) = 0$ and thus 
\begin{equation}\label{eq:sigma-x}
\deg (\sigma(x_j)^\low) \geq 0, \hs j \in [1, n-2r].
\end{equation}
Let $r_0 = \frac{1}{2}{\rm rk}(\pi_0)$. If $\pi_0 = 0$, then  
either $\pi = 0$, or $\pi \neq 0$ and $\deg (\pi^\low) \geq 0$, so 2) holds. Assume that $\pi_0 \neq 0$.
Then $\pi_0 = \pi^\low$, and it follows from $\wedge^{r_0} \pi_0 \neq 0$ that $\wedge^{r_0} \pi \neq 0$, so $r \geq r_0$.
Write $\pi = \pi_0 + \pi^\prime$, where 
\[
\pi^\prime = \sum_{1 \leq i < j \leq n} a_{i,j}(z) \frac{\partial}{\partial z_i} \wedge \frac{\partial}{\partial z_j}
\]
and each $a_{i, j}$ has lowest degree at least $2$ at $z = 0$. We then have
\begin{align*}
\wedge^r \pi  = \wedge^r (\pi_0 + \pi^\prime) = \sum_{k=0}^{r} \binom{r}{k}
(\wedge^k \pi_0) \wedge (\wedge^{r-k} \pi^\prime)
= \sum_{k=0}^{r_0} \binom{r}{k}
(\wedge^k \pi_0) \wedge (\wedge^{r-k} \pi^\prime).
\end{align*}
For every $k \in [0, r_0]$, the $2r$-vector field $(\wedge^k \pi_0) \wedge (\wedge^{r-k} \pi^\prime)$ either is $0$ or,
when non-zero, has lowest degree at least $-k\geq -r_0$ at $z =0$. It follows that $(\wedge^r \pi)^\low$ has degree at least 
$-r_0$. By \eqref{eq:sigma-x}, 
\[
\deg \bigl( {\rm Pf}_{\TT}(\pi)^{\low} \bigr) \geq -r_0 = -\frac{1}{2} {\rm rk}(\pi_0). \tag*{\qedhere}
\]
\end{proof}

We now have the following immediate consequence of Theorem \ref{thm:mu-Pfaff} and Proposition \ref{prop:Pfaff-atleast}.

\begin{thm}\label{thm:TT-fixed}
Let $(P, \pi)$ be a connected $\TT$-Poisson manifold with an open dense $\TT$-leaf, 
and assume that $(P, \pi)$ admits $\TT$-log-canonical systems. Let $p_0 \in P$ be any $\TT$-fixed point such that
$\pi(p_0) = 0$. Then 
\[
\deg \bigl( {\rm Pf}_{\TT}(\pi)^{\low} \bigr)  = -\frac{1}{2} {\rm rk}(\pi_0),
\]
where $\pi_0$ be the linearization of $\pi$ at $p_0$ and ${\rm Pf}_{\TT}(\pi)^{\low}$ is taken at $p_0$.
Consequently,  all $\TT$-log-canonical systems 
on $(P, \pi)$ have Property $\calI$ at $p_0$.
\end{thm}

\begin{cor}\label{cor:pi-T-I}
Let $(P, \pi)$ be a smooth rational quasi-affine $\TT$-Poisson variety with an open dense $\TT$-leaf, and 
let $p_0 \in P$ be any $\TT$-fixed point such that
$\pi(p_0) = 0$. Then every cluster structure $\calC$
in $\CC(P)$ that is regular on $P$  and compatible with both $\pi$ and 
the $\TT$-action (see Remark \ref{rem:T-comp}) has Property $\calI$ at $p_0$. 
\end{cor}

\begin{proof}
As every extended cluster of $\calC$ is a $\TT$-log-canonical system 
on $(P, \pi)$, Corollary \ref{cor:pi-T-I} follows directly from Theorem \ref{thm:TT-fixed}.
\end{proof}

\section{Lie theory preliminaries}\label{s:Lie-prelim}
\subsection{Notation}\label{ss:Lie-nota}
Throughout $\S$\ref{s:Lie-prelim}, we fix a connected and simply connected complex semi-simple Lie group $G$ with Lie algebra $\mathfrak{g}$ and a 
choice of opposite Borel subgroups $(B,B_-)$ of $G$ with 
respective Lie algebras $\b$ and $\b_-$. Let $N\subset B$ and $N_-\subset B_-$ be their unipotent radicals
with respective Lie algebras $\n$ and $\n_-$.  Let $T=B\cap B_-$, 
a maximal torus of $G$, and let $\mathfrak{t}$ be
the Lie algebra of $T$.
Let $R\subset \mathfrak{t}^*$ and $R_+\subset R$ be the respective sets of roots in $\mathfrak{g}$ and in $\mathfrak{n}$
 with respect to $T$, and let 
\[
\mathfrak{g} = \t + \n + \n_- = \mathfrak{t}+ \sum_{\alpha \in R_+} \mathfrak{g}_{\alpha} + \sum_{\alpha \in R_+} \mathfrak{g}_{-\alpha}
\]
be the corresponding root decomposition of $\g$,
where $\g_{\alpha}$ for $\alpha \in R$ is the root space of $\alpha$.

Let $\Gamma \subset R_+$ be the set of all simple roots. 
Let $r = \dim \t$, and fix a listing of elements in $\Gamma$ as $\alpha_1, \ldots, \alpha_r$. 
Denote the linear pairing between $\t$ and $\t^*$ by $(\, , \, )$.
For each $i \in [1, r]$, fix root vectors $e_i \in \g_{\alpha_i}$ and $e_{-i} \in \g_{-\alpha_i}$ 
such that $[e_i, e_{-i}] = \alpha_i^\sv$, where
$\alpha_i^\sv$ is the unique element in $[\g_{\alpha_i}, \g_{-\alpha_i}] \subset \t$ 
such that $(\alpha_i^\vee, \alpha_i) = 2$. 
Abusing notation, for $i \in [1, r]$ we also denote by  $\alpha_i^\vee:  \CC^\times \to T$ and 
$e_i: \CC \to N$ and $e_{-i}: \CC \to N_-$ the  group homomorphisms respectively given by
\[
\alpha_i^\vee(e^a) = \exp(a\alpha_i^\vee), \hs
e_i(a)=\exp(ae_i)\in N, \quad \mbox{and} \quad  e_{-i}(a)=\exp(ae_{-i})\in N_-, \quad a \in \CC.
\]
For $i, j \in [1, r]$, the Cartan integer $a_{i, j}\in \ZZ$ is defined as
\[
a_{i, j} = (\alpha_i^\vee, \; \alpha_j).
\]

Let $W = N_G(T)/T$ be the Weyl group, where $N_G(T)$ is the normalizer subgroup of $T$ in $G$. For 
$\alpha \in R$, let $s_\alpha \in W$ be the reflection defined by $\alpha$ and set $s_i = s_{\alpha_i}$
for $i \in [1, r]$. For each $i \in [1, r]$, 
the element 
\[
\overline{s_i} =e_i(-1)e_{-i}(1)e_i(-1)
\]
is then a representative of $s_i \in W$ in $N_G(T)$. 
Let $\ell: W \to \ZZ_{\geq 0}$ be the length function on $W$. For $w \in W$ and $l = \ell(w)$,
using any reduced decomposition $w = s_{i_1}s_{i_2}\cdots s_{i_l}$ one obtains  the representative
\[
\overline{w} = \overline{s_{i_1}} \; \overline{s_{i_2}} \;\cdots\; \overline{s_{i_l}} \in N_G(T)
\]
of $w$ which is 
independent of the choice of the reduced decomposition \cite{FZ:double}. For $w \in W$ and $t \in T$, let
\[
t^w = \ow^{\, -1} t \ow \in T.
\]
Recall also \cite{Tits} that the Tits group is the subgroup $\calT$ of $N_G(T)$ generated by $\{\overline{s_i}: i \in [1, r]\}$.

Let $\{\omega_i: i \in [1, r]\}\subset \t^*$ be the fundamental weights, i.e., 
the basis of $\t^*$ dual to the basis $\{\alpha_i^\sv: i \in [1, r]\}$ of $\t$, and let
$\calP = \sum_{i=1}^r \ZZ \omega_i \subset \t^*$ be the weight lattice. 
We identify $\calP$ with the character lattice of $T$, and for $\lambda \in \calP$ denote by 
$T \to \CC^\times, t \mapsto t^\lambda,$
the corresponding group homomorphism.
  Let $\calQ^\vee =\sum_{i=1}^r \ZZ \alpha_i^\vee\subset \t$ be the co-root
lattice, and for $\beta \in R_+$, let $\beta^\vee \in \calQ^\vee$ be the corresponding co-root.
Let $\rho^\vee =\frac{1}{2}\sum_{\beta \in R_+} \beta^\vee\in \t$.
For $w \in W$, introduce
\begin{equation}\label{eq:tw}
t_w = \overline{w^{-1}}\, \ow \in T \cap \calT.
\end{equation}
We give a proof of the following fact as we could not find a reference in the literature.

\begin{lem}\label{lem:tw}
For any $w \in W$ and $\lambda \in \calP$, one has 
$(t_w)^{\lambda} = (-1)^{(\rho^\vee, \, \lambda-w\lambda)} \in \{1, -1\}$.
\end{lem}

\begin{proof}
Let $w = s_{i_1}\cdots s_{i_l}$ be any reduced decomposition of $w$. As $\overline{s_i}^2 = \alpha_i^\vee(-1)$ for $i \in [1, r]$, one has
\[
t_w = \overline{s_{i_l}\cdots s_{i_2}} \, \overline{s_{i_1}}^2 \, \overline{s_{i_2}\cdots s_{i_l}} = 
\overline{s_{i_l}\cdots s_{i_2}} \; \overline{s_{i_2}\cdots s_{i_l}} \,(\alpha^\vee_{i_1}(-1))^{s_{i_2}\cdots s_{i_l}} =\cdots =
\prod_{k=1}^l( \alpha_{i_k}^\vee(-1))^{s_{i_{k+1}} \cdots s_{i_l}}.
\]
Using the fact that $\{s_{i_l}\cdots s_{i_{k+1}} \alpha_{i_k}: k \in [1, l]\} = R_+ \cap w^{-1}(-R_+)$,  for every
$\lambda \in \calP$ one has
\begin{align*}
(t_w)^{\lambda} &= \prod_{k=1}^l (-1)^{(s_{i_l}\cdots s_{i_{k+1}} \alpha_{i_k}^\vee, \, \lambda)}
=(-1)^{\left(\sum_{\beta \in R_+\cap w^{-1}(-R_+)}\beta^\vee, \; \lambda\right)}\\
&= (-1)^{(\rho^\vee-w^{-1}\rho^\vee, \, \lambda)}=(-1)^{(\rho^\vee, \, \lambda-w\lambda)}.
\end{align*}
Since $\lambda-w\lambda$ lies in the root lattice, and since $(\rho^\vee, \alpha) =1$ for every simple root
$\alpha$, one has 
$(\rho^\vee, \, \lambda-w\lambda) \in \ZZ$, and thus
$(t_w)^{\lambda} = \pm 1$.
\end{proof}

\begin{rem}\label{rem:w0-center}
Let $w_0$ be the longest element in $W$. It follows from $\rho^\vee-w_0\rho^\vee = 2\rho^\vee$ that
for every simple root $\alpha$ one has $(t_{w_0})^{\alpha} = (-1)^{2(\rho^\vee, \alpha)} = 1$. 
One thus arrives at the well-known fact (see, for example, \cite[Section 3]{Adam-He:lifts}) that 
$\overline{w_0}^{\, 2} \in T$ lies in the center of $G$.
\end{rem}

Let again $w_0 \in W$ be the longest element.
For $w \in W$, one  has $\overline{w_0} =\overline{w^{-1}} \, \overline{ww_0}$, and it follows that
\begin{equation}\label{eq:ww0}
\ow \, \overline{w_0} = \ow \, \left(\overline{w^{-1}}\, \ow \right) \, \ow^{\, -1}\,  \overline{ww_0}
= (t_w)^{w^{-1}} \,  \overline{ww_0}\, =  \overline{ww_0}\, (t_w)^{w_0}.
\end{equation}
Define $w^* = w_0ww_0$ for $w \in W$ and note that
$\overline{w_0} = \overline{w^*} \, \overline{(w^*)^{-1}w_0} = \overline{w^*} \, \overline{w_0w^{-1}} =
\overline{w^*} \, \overline{w_0} \, \overline{w}^{\, -1}$. 
Thus
\begin{equation}\label{eq:www}
\overline{w^*} = \overline{w_0} \, \overline{w} \, \overline{w_0}^{\,-1} = \overline{w_0}^{\, -1} \, 
\overline{w} \, \overline{w_0},\hs w \in W,
\end{equation}
where in the second identity we used the fact  that $\overline{w_0}^{\, 2}$ lies in the center of $G$.

\subsection{Generalized minors}\label{ss:minors-G}
For any finite dimensional representation $V$ of $G$, if $\eta \in \calP$ is a weight of $V$, denote by 
$V_\eta$ the $\eta$-weight subspace of $V$. For $i \in [1, r]$, let $i^* \in [1, r]$ be such that 
$\omega_{i^*} = -w_0\omega_i$. 

Let $i \in [1, r]$ and let $V(\omega_i)$ be the
highest weight representation of $G$ with highest weight $\omega_i$. Then the dual space $V(\omega_i)^*$ of $V(\omega_i)$,
equipped with the representation of $G$ such that
\[
(g\zeta, \, g\nu) = (\zeta, \, \nu), \hs \;\;  \zeta \in V(\omega_i)^*, \, \nu \in V(\omega_i),\, g \in G,
\]
is a highest weight representation of $G$ with highest weight $-w_0\omega_i = \omega_{i^*}$, where $(\, , \, )$ denotes the pairing
between $V(\omega_i)^*$ and $V(\omega_i)$.
Let $\nu_{\omega_i} \in V(\omega_i)_{\omega_i}$ be any (non-zero)
highest weight vector in $V(\omega_i)$. Then there is a unique lowest weight vector 
$\zeta_{w_0\omega_{i^*}} \in (V(\omega_{i})^*)_{w_0\omega_{i^*}} = (V(\omega_{i})^*)_{-\omega_{i}}$ in $(V(\omega_i))^*$
such that 
\begin{equation}\label{eq:zeta-nu}
(\zeta_{w_0\omega_{i^*}}, \; \nu_{\omega_i}) = 1.
\end{equation}
Let $u \in W$. Then
$\overline{u} \,\zeta_{w_0\omega_{i^*}} \in V(\omega_i)^*$ 
is the unique element in $(V(\omega_{i})^*)_{uw_0\omega_{i^*}} = (V(\omega_{i})^*)_{-u\omega_{i}}$ such that
$(\overline{u} \,\zeta_{w_0\omega_{i^*}}, \; \overline{u}\nu_{\omega_i}) = 1$.
For $u, v \in W$ and $i \in [1, r]$, define $\Delta_{u\omega_i, v\omega_i} \in \CC[G]$ by \cite[Definition 1.4]{FZ:double}
\[
\Delta_{u\omega_i, v\omega_i}(g) = (\overline{u}\, \zeta_{w_0\omega_{i^*}}, \; g\overline{v}\nu_{\omega_i})  =
(\zeta_{w_0\omega_{i^*}}, \; \overline{u}^{\, -1} g\overline{v}\nu_{\omega_i}) =
\Delta_{\omega_i, \omega_i}(\overline{u}^{\, -1} g \overline{v}), \hs g \in G.
\]
The definition of $\Delta_{u\omega_i, v\omega_i}$ depends only on the weights $u\omega_i$ and $v\omega_i$ of
$V(\omega_i)$ and not on 
the choice of the highest weight 
vector $\nu_{\omega_i}$ (see \cite[Proposition 2.3]{FZ:double} and \cite[Lemma 6.1 and Definition 6.2]{MR:flag}).



The following facts are well-known (\cite[Proposition 2.2 and Theorem 1.17]{FZ:double}, \cite[Remark 7.5]{BZ:tensor}).

\begin{lem}\label{lem:gig} 1) Let $i, i' \in [1, r]$, $\delta \in W\omega_i$, and $i'\neq i$. Then for 
any $a \in \CC$ and $g \in G$,
\begin{align}\label{eq:gqq}
&\Delta_{\delta, \,\omega_i}(g \,e_{i}(a) \overline{s_i})
= a \,\Delta_{\delta, \,\omega_i}(g) + \Delta_{\delta, \,s_i\omega_i}(g)
\hs \mbox{and} \hs 
\Delta_{\delta, \,\omega_i}(g \,e_{i'}(a) \overline{s_{i'}}) = \Delta_{\delta, \, \omega_i}(g),\\
\label{eq:gp}
&\Delta_{\delta, \, \omega_i}(ge_{-i}(a))= \Delta_{\delta, \, \omega_i}(g) + a\Delta_{\delta, \, s_i\omega_i}(g)
\hs \mbox{and} \hs 
\Delta_{\delta, \, \omega_i}(ge_{-i'}(a))= \Delta_{\delta, \, \omega_i}(g),\\
\label{eq:qqg}
&\Delta_{\omega_i, \, \delta}(e_{i}(a) \overline{s_i} \,g)
= a \,\Delta_{\omega_i, \,\delta}(g) - \Delta_{s_i\omega_i, \, \delta}(g)
\hs \mbox{and} \hs 
\Delta_{\omega_i, \, \delta}(e_{i'}(a) \overline{s_{i'}}\,g) = \Delta_{\omega_i, \, \delta}(g),\\
\label{eq:pg}
&\Delta_{\omega_i, \, \delta}(e_{i}(a) \,g)
= \Delta_{\omega_i, \,\delta}(g) +a \Delta_{s_i\omega_i, \, \delta}(g)
\hs \mbox{and} \hs 
\Delta_{\omega_i, \, \delta}(e_{i'}(a) \,g) = \Delta_{\omega_i, \, \delta}(g).
\end{align}

2) If $u, v \in W$ and $i \in [1, r]$ are such that
$\ell(us_i) = \ell(u) +1$ and $\ell(vs_i) = \ell(v)+1$, then
\begin{equation}\label{eq:master}
\Delta_{u\omega_i, \, v\omega_i} \Delta_{us_i\omega_i, \, vs_i\omega_i} =
\Delta_{us_i\omega_i, \, v\omega_i} \Delta_{u\omega_i, \, vs_i\omega_i} +\prod_{j \neq i} \Delta_{u\omega_j, \, v\omega_j}^{-a_{j, i}}.
\end{equation} 
\end{lem}


\begin{lem}\label{lem:w0-g}
For every $i \in [1, r]$, $u, v \in W$,  and all $g \in G$, one has
\begin{equation}\label{eq:w0-g}
\Delta_{u\omega_i, v\omega_i}(g) 
= \Delta_{v^*\omega_{i^*}, u^*\omega_{i^*}}(\overline{w_0}\, g^{-1}\, \overline{w_0}^{\, -1})
=(-1)^{(\rho^\vee, \, u\omega_i-v\omega_i)}\Delta_{vw_0 \omega_{i^*}, \, uw_0\omega_{i^*}}(g^{-1}).
\end{equation}
\end{lem}

\begin{proof}
Note first that 
$\overline{w_0}\,\zeta_{w_0\omega_{i^*}} \in V(\omega_i)^*$ is a highest weight vector of highest weight $\omega_{i^*}$, $\overline{w_0}\, \nu_{\omega_i} \in V(\omega_i)$ is a lowest weight vector of lowest weight $w_0 \omega_i = -\omega_{i^*}$, and $(\overline{w_0}\,\nu_{\omega_i}, \, 
\overline{w_0}\, \zeta_{w_0\omega_{i^*}}) = 1$. Using \eqref{eq:www}, one has
\begin{align*}
\Delta_{u\omega_i,\, v\omega_i}(g)& = (\overline{u} \zeta_{w_0\omega_{i^*}}, \; g \overline{v}\nu_{\omega_i}) = 
(g\overline{v} \, \overline{w_0}^{\, -1} (\overline{w_0} \,\nu_{\omega_i}), \; \overline{u}\,  \overline{w_0}^{\, -1} (\overline{w_0}\,\zeta_{w_0\omega_{i^*}}))\\
& = (\overline{w_0}\, g\overline{v} \, \overline{w_0}^{\, -1} (\overline{w_0} \nu_{\omega_i}), \; \overline{w_0}\,\overline{u}\,  \overline{w_0}^{\, -1} (\overline{w_0}\,\zeta_{w_0\omega_{i^*}}))= 
(\overline{w_0}\, g \, \overline{w_0}^{\, -1}\, \overline{v^*}\, (\overline{w_0} \nu_{\omega_i}), \; 
\overline{u^*}\, (\overline{w_0} \,\zeta_{w_0\omega_{i^*}}))\\
&=(\overline{v^*}\, (\overline{w_0} \,\nu_{\omega_i}), \; \overline{w_0}\, g^{-1} \, \overline{w_0}^{\, -1}\,
\overline{u^*}\, (\overline{w_0} \,\zeta_{w_0\omega_{i^*}}))=
\Delta_{v^* \omega_{i^*}, \, u^*\omega_{i^*}}(\overline{w_0}\, g^{-1} \, \overline{w_0}^{\, -1}).
\end{align*}
Similarly, using \eqref{eq:ww0} and the fact that 
$(\overline{w_0}^{\, -1}\,\nu_{\omega_i}, \, \overline{w_0}^{\, -1}\, \zeta_{w_0\omega_{i^*}}) = 1$, one also has
\begin{align*}
\Delta_{u\omega_i,\, v\omega_i}(g)& = (\overline{u}\, \zeta_{w_0\omega_{i^*}}, \; g\, \overline{v}\nu_{\omega_i}) = 
(g \,\overline{v}\, \overline{w_0}\, (\overline{w_0}^{\, -1}\nu_{\omega_i}), \; 
\overline{u} \, \overline{w_0}\, (\overline{w_0}^{\, -1}\zeta_{w_0\omega_{i^*}}))\\
& = 
t_u^{-\omega_{i}} t_v^{\omega_i} (\overline{vw_0} \,  (\overline{w_0}^{\, -1}\nu_{\omega_i}), \;
g^{-1} \overline{uw_0}\, (\overline{w_0}^{\, -1}\zeta_{w_0\omega_{i^*}}))\\
&=t_u^{-\omega_{i}} t_v^{\omega_{i}} \Delta_{vw_0 \omega_{i^*}, \, uw_0\omega_{i^*}}(g^{-1})
=(-1)^{(\rho^\vee, \, u\omega_i-v\omega_i)}
\Delta_{vw_0 \omega_{i^*}, \, uw_0\omega_{i^*}}(g^{-1}). \tag*{\qedhere}
\end{align*}
\end{proof}

\begin{rem}\label{rem:inv}
The first identity in \eqref{eq:w0-g} can also be derived from \cite[(2.15) and (2.25)]{FZ:double}. Compare also the last expression in \eqref{eq:w0-g} with the identity $\Delta_{u\omega_i, v\omega_i}(g) 
 =\Delta_{vw_0 \omega_{i^*}, \, uw_0\omega_{i^*}}(g^\iota)$ from \cite[(2.25)]{FZ:double},
where $\iota$ is the 
involutive anti-automorphism on $G$ given in \cite[Section 2.1]{FZ:double}. Note that $\iota$ preserves the totally positive part of 
$G$ defined by Lusztig (see \cite{FZ:double} for more detail) while the map $g \mapsto g^{-1}$ does not.
\end{rem}

\subsection{The signed generalized minors \texorpdfstring{$\delta_{u\omega_i, v\omega_i}$}{duv} on \texorpdfstring{$\g$}{g}}\label{ss:signed-minor}
Assume first that $P$ is any complex Lie group with Lie algebra $\p$, and let $e \in P$ be the identity element. 
Let $V$ be a finite dimensional representation of $P$ over $\CC$. For $v \in V$ and $\xi \in V^*$,  we then have the 
{\it matrix coefficient}
$\vphi_{\xi, v}$, defined as the 
holomorphic function  on $P$ given by
\[
\vphi_{\xi, v}(p) = (\xi, \; p v), \hs p \in P.
\]
Consider also the linear function $\widehat{\vphi}_{\xi, v}$ on the universal enveloping algebra
$U\p$ of $\p$ given by 
\[
\widehat{\vphi}_{\xi, v}(X) = (\xi, \; Xv), \hs X \in U\p.
\]
As $\vphi_{\xi, v}$ is holomorphic, $\vphi_{\xi, v} \neq 0$ as a function on $P$ 
if and only if $\widehat{\vphi}_{\xi, v}\neq 0$ as a linear function on $U\p$.
Let
\[
\CC = U_{0}\p \subset \CC \oplus \p = U_1 \p \subset U_2 \p \subset \cdots \subset  U\p
\]
be the standard filtration of $U\p$. The following definition is due to Kostant \cite[$\S$2.7]{K13}.

\begin{de-lem}\label{de-lem:co-deg}
For a non-zero linear function $f$ on $U\p$, define
the {\it co-degree} of $f$, denoted by ${\rm cdeg}(f)$, to be 
the smallest integer $d \geq 0$ such that $f|_{U_d \p} \neq 0$. If $d = {\rm cdeg}(f)$, define
the {\it co-degree term} of $f$ to be the non-zero
homogeneous polynomial function $f_{(d)}$ on $\p$ of degree $d$ given by
\[
f_{(d)}(x) =\frac{1}{d!}f(x^d), \hs x \in \p.
\]
If $(e_1, \ldots, e_n)$ is a basis of $\p$, by writing $x =x_1e_1 + \cdots + x_n e_n \in \p$ and using the
definition of $d = {\rm cdeg}(f)$, we see that $f_{(d)} \in \CC[\p]$ is explicitly given by
\[
f_{(d)}(x) = \sum_{j_1, \ldots, j_n \geq 0, \,j_1 + \cdots +j_n = d} 
\frac{1}{j_1! \cdots j_n!} x_1^{j_1} \cdots x_n^{j_n} f(e_1^{j_1} \cdots e_n^{j_n}).
\]
\end{de-lem}

\begin{lem}\label{lem:co-low}
Assume that $v \in V$ and $\xi \in V^*$ are such that the function ${\vphi}_{\xi, v}$ on $P$ is non-zero, and let
$\vphi_{\xi, v}^\low \in \CC[\p]$ be the  
lowest degree term of $\vphi_{\xi, v}$ at the identity element $e \in P$. Let
$d = {\rm cdeg} (\widehat{\vphi}_{\xi, v})$. Then
\[
\vphi_{\xi, v}^\low  = (\widehat{\vphi}_{\xi, v})_{(d)} \in \CC[\p].
\]
\end{lem}

\begin{proof}
Let $(e_1, \ldots, e_n)$ be a basis of $\p$.   Then the map 
\[
\CC^n \ni x=(x_1, \ldots, x_n) \longmapsto \exp(x_1e_1) \exp(x_2e_2) \cdots \exp(x_ne_n) \in P
\]
defines a local holomorphic coordinate system on $P$ at $e$ with $x(e)=0$.
By the definition of $d = {\rm cdeg}(\widehat\vphi_{\xi, v})$, the lowest degree term of the Taylor expansion of
$\vphi_{\xi, v}$ in $x$ at $0$  is precisely 
the polynomial $(\widehat{\vphi}_{\xi, v})_{(d)}(x)$.
\end{proof}

We now return to the  simply connected complex semi-simple Lie group $G$ and continue 
with the notation in $\S$\ref{ss:Lie-nota}.
The following statement follows directly from Lemma \ref{lem:co-low}.

\begin{de-lem}\label{de-lem:delta} 
{\rm
Let $i \in [1, r]$ and let $\nu_{\omega_i} \in V(\omega_i)_{\omega_i}$ and $\zeta_{w_0\omega_{i^*}} \in (V(\omega_i)^*)_{w_0\omega_{i^*}}$ be as in \eqref{eq:zeta-nu}. For $u, v \in W$, define the linear 
function  $\widehat{\Delta}_{{u\omega_i, v\omega_i}}$ on $U\g$ by
\[
\widehat{\Delta}_{{u\omega_i, v\omega_i}} (X) = (\overline{u}\hspace{.009in}\zeta_{w_0\omega_{i^*}}, \; X\hspace{.009in}\overline{v}\hspace{.009in}\nu_{\omega_i}), \hs X \in U\g,
\]
and define $\delta_{u\omega_i, \, v\omega_i} \in \CC[\g]$ to be 
the co-degree term of $\widehat{\Delta}_{{u\omega_i, v\omega_i}}$ as in Definition-Lemma \ref{de-lem:co-deg}. Then
\[
\delta_{u\omega_i, \, v\omega_i} = \Delta_{u \omega_i, \, v\omega_i}^\low \in \CC[\g],
\]
where the right hand side denotes the lowest degree term of $\Delta_{u\omega_i, \, v\omega_i} \in \CC[G]$ at the 
identity element $e \in G$. For $x \in \g$, we call $\delta_{u\omega_i, \, v\omega_i}(x)$ a 
{\it signed generalized minor of $x$}.
}
\end{de-lem}

\begin{rem}\label{rm:signed}
The term signed generalized minor comes from 
the case of $\SL(n, \CC)$ (see Lemma \ref{lem:GLn-minor}).
\end{rem}

Note that for $i \in [1, r]$, one has
\[
(x \zeta, \, \nu) + (\zeta, \, x\nu) = 0, \hs \forall\; x \in \g, \, \zeta \in V(\omega_i)^*, \, \nu \in V(\omega_i).
\]
Replacing the map $G \to G, g \mapsto g^{-1}$ by the antipode $S: U\g \to U\g$ of the Hopf algebra $U\g$ and noting
that $S$ preserves the filtration of $U\g$  and 
$S(x^d) = (-1)^d x^d$ for all $x \in \g$ and $d \in \ZZ_{\geq 0}$, we can modify 
the proof of Lemma \ref{lem:w0-g} 
and get the following analog of \eqref{eq:w0-g}.

\begin{lem}\label{lem:delta-w0}
For any $i \in [1, r]$ and $u, v \in W$, the three homogeneous polynomials $\delta_{u\omega_i, v\omega_i}, 
\delta_{v^*\omega_{i^*}, u^*\omega_{i^*}}$, and $\delta_{vw_0\omega_{i^*}, uw_0\omega_{i^*}}$ on $\g$ have the same 
degree, and, denoting their degree by $d$, one has
\[
\delta_{u\omega_i, v\omega_i}(x) = (-1)^d \delta_{v^*\omega_{i^*}, u^*\omega_{i^*}}({\rm Ad}_{\overline{w_0}} x)
= (-1)^{d+(\rho^\vee, \, u\omega_i-v\omega_i)}\delta_{vw_0\omega_{i^*}, uw_0\omega_{i^*}}(x), \hs x \in \g.
\]
\end{lem} 

Generalized minors of the form $\Delta_{\omega_i, w\omega_i}$ or 
$\Delta_{w\omega_i, \omega_i}$, where $i \in [1, r]$ and $w \in W$,
are
called {\it flag minors}.  We now prove some properties of 
the corresponding  $\delta_{\omega_i, w\omega_i}\in \CC[\g]$ to be used in $\S$\ref{s:GB}. 
For $w \in W$, set
\begin{equation}
\n_w = \n \cap \Ad_{\ow} \n^-,  \hs  \n_w^\prime = \n \cap \Ad_{\ow} \n, \hs \mbox{and} \hs
N_w = N \cap \ow N_-\ow^{\, -1},
\end{equation}
and denote the projection $\g \to \n_w$ given by the decomposition $\g = \b_- +\n_w^\prime + \n_w$ by
\begin{equation}\label{eq:pr-nw}
{\rm pr}_{\n_w}:\;\; \g \longrightarrow \n_w, \;\; x \longmapsto x_{\n_w}.
\end{equation}

\begin{lem}\label{lem:delta-1}
For any $i \in [1, r]$ and $w \in W$, one has $\delta_{\omega_i, w\omega_i}(x) = 
\delta_{\omega_i, w\omega_i}(x_{\n_w})$ for all $x \in \g$, and
\[
\delta_{\omega_i, w\omega_i}|_{\n_w} = \left(\Delta_{\omega_i, w\omega_i}|_{N_w}\right)^\low,
\]
where $\left(\Delta_{\omega_i, w\omega_i}|_{N_w}\right)^\low$ is the lowest degree term of 
$\Delta_{\omega_i, w\omega_i}|_{N_w}
\in \CC[N_w]$ at the identity element  $e \in N_w$.
\end{lem}

\begin{proof} Let $d = {\rm cdeg}(\widehat{\Delta}_{\omega_i, w\omega_i})$. Then $d = 0$ if and only if $w\omega_i
= \omega_i$, and in this case $\delta_{\omega_i, w\omega_i} \in \CC[\g]$ is the constant function with value $1$
and there is nothing to prove. We thus assume that $\omega_i \neq w\omega_i$ and thus $d \geq 1$.

Let $x \in \g$ and write $x = x^\prime +x_{\n_w}$, where $x' \in \b_- +\n_w^\prime$. By the Poincar\'e-Birkhoff-Witt theorem, 
\[
x^d = (x_{\n_w})^d + X + X' \in U_d(\g),
\]
where $X \in \b_- U_{d-1}\g + (U_{d-1}\g) \,\n_w^\prime$, and $X' \in U_{d-1}\g$. 
By the definition of $\widehat{\Delta}_{\omega_i, w\omega_i} \in (U\g)^*$ and the fact that $\n_w' \overline{w} \nu_{\omega_i} = 0$, 
\[
\widehat{\Delta}_{\omega_i, w\omega_i}((x_0+x_-)Y + Zx') = \omega_i(x_0)\widehat{\Delta}_{\omega_i, w\omega_i}(Y), 
\hs \forall\; x_0 \in \t, \, x_- \in \n_-, \, x' \in \n_w^\prime, \, Y, \, Z \in U\g.
\]
It follows from $\widehat{\Delta}_{\omega_i, w\omega_i}|_{U_{d-1}\g} = 0$ that 
$\widehat{\Delta}_{\omega_i, w\omega_i}(x^d) 
=\widehat{\Delta}_{\omega_i, w\omega_i}((x_{\n_w})^d)$. Thus $\delta_{\omega_i, w\omega_i}(x) = 
\delta_{\omega_i, w\omega_i}(x_{\n_w})$.

Set $N_w^\prime = N \cap \ow N \ow^{\, -1}$. Writing every $g \in B_-N$ uniquely as 
$g = t n_- n_w n_w^\prime$ with $t \in T$, $n_- \in N_-$, $n_w \in N_w$ and $n_w^\prime \in N_w^\prime$, we have
$\Delta_{\omega_i, \, w\omega_i}(g) = t^{\omega_i} \Delta_{\omega_i, \, w\omega_i}(n_w)$. 
It follows that
\[
\left(\Delta_{\omega_i, w\omega_i}|_{N_w}\right)^\low = \left(\Delta_{\omega_i, \, w\omega_i}^\low\right)|_{\n_w} =
\delta_{\omega_i, w\omega_i}|_{\n_w} \in \CC[\n_w]. \tag*{\qedhere}
\]
\end{proof}

For $w \in W$ and $i \in [1, r]$, consider now the restriction $\delta_{\omega_i, w\omega_i}|_{\n_w} \in \CC[\n_w]$ of $\delta_{\omega_i, w\omega_i}$ to
$\n_w$. Let 
\[
N_{w, -} = N_- \cap \ow N \ow^{\, -1} \hs \mbox{and} \hs \n_{w,-} = \n_- \cap {\rm Ad}_{\overline{w}} \n,
\]
and let $\lara_\g$ be any symmetric and non-degenerate invariant bilinear form on $\g$. Then
$\b_- +\n_w^\prime$, being the annihilator of 
$\n_{w, -}$ in $\g$ with respect to $\lara_\g$, is invariant under the adjoint action of $N_{w, -}$ on $\g$. 
One thus has the left action of $N_{w, -}$ on $\n_w$ given by
\begin{equation}\label{eq:Nwm-nw}
N_{w, -} \times \n_w \longrightarrow \n_w, \;\; (g, \, x) = (\Ad_g x)_{\n_w},
\end{equation}
which, under the identification $\n_w \cong \n_{w, -}^*$ via the bilinear form $\lara_\g$,
becomes the co-adjoint action of $N_{w, -}$ on $\n_{w, -}^*$.
Let $\CC[\n_w]^{N_{w, -}}$ be the algebra of $N_{w, -}$-invariant polynomial functions on $\n_w$.

\begin{lem}\label{lem:Nwm-inv}
For any $i \in [1, r]$ and $w \in W$, one has $\delta_{\omega_i, w\omega_i}|_{\n_w} \in \CC[\n_w]^{N_{w, -}}$.
\end{lem}

\begin{proof}
The adjoint action of $G$ on $\g$ extends to an action of $G$ on $U\g$ which we
denote as $X \mapsto {\rm Ad}_g (X) = g X g^{-1}$ for $g \in G$ and $X \in U\g$. It follows from the definitions of
$\widehat{\Delta}_{\omega_i, , w\omega_i}$ and $N_{w, -}$ that 
\begin{equation}\label{eq:gXg}
\widehat{\Delta}_{\omega_i, \, w\omega_i}(gXg^{-1}) = \widehat{\Delta}_{\omega_i, \, w\omega_i}(X), \hs \forall \, g \in N_{w, -}, \, 
X \in U\g.
\end{equation}
Let now $x \in \n_w$ and $g \in N_{w, -}$,  and 
let $d = {\rm cdeg}(\widehat{\Delta}_{\omega_i, w\omega_i})$. By Lemma \ref{lem:delta-1} and \eqref{eq:gXg}, we have
\[
\delta_{\omega_i, w\omega_i}(({\rm Ad}_g x)_{\n_w}) = \delta_{\omega_i, w\omega_i}({\rm Ad}_g x)
=\frac{1}{d!} \widehat{\Delta}_{\omega_i, \, w\omega_i}(gx^dg^{-1})=\frac{1}{d!} \widehat{\Delta}_{\omega_i, \, w\omega_i}(x^d)
=\delta_{\omega_i, w\omega_i}(x).  \tag*{\qedhere}
\]
\end{proof}

For subsets $I, J$ of $[1,n]$ with $|I| = |J|$ and any $n \times n$ matrix $g$, we write $\Delta_{I,J}(g)$ for the determinant of the submatrix of $g$ whose rows are indexed by $i \in I$ and columns by $j \in J$, and it is understood that
$\Delta_{I, J} = 1$ if $I = J = \emptyset$. Consider $G = \GL(n,\CC)$ or $G = \SL(n,\CC)$, where $n \geq 2$. Denote by $\g$ either $\gl(n,\CC)$ or $\sl(n,\CC)$.  Let $\delta_{I, J} =\Delta_{I,J}^\low \in \CC[\g]$ be  the lowest degree term of $\Delta_{I,J}$ at the identity matrix $\bbone_n \in G$.

\begin{lem} \label{lem:GLn-minor}
For $G = \GL(n,\CC)$ or $G = \SL(n,\CC)$ with $n \geq 2$, and for $I, J \subset [1,n]$ with $|I| = |J|$, one has
\[
\delta_{I, J} = \Delta_{I,J}^\low = (-1)^{\sign(\sigma_I)+\sign(\sigma_J)} \Delta_{I \setminus K, J \setminus K},
\]
where $K=I \cap J$, and, writing $K=\{k_1 < \cdots < k_m\}$, $I\setminus K=\{i_1 < \cdots < i_l\}$, $J\setminus K=\{j_1 < \cdots < j_l\}$, $\sigma_I$ (resp. $\sigma_J$) is the permutation of $I$ (resp. $J$) sending its elements listed in increasing order to the ordered set
\[
\{k_1,\ldots,k_m,i_1,\ldots,i_l\}\hs \Big(\text{resp.~}\{k_1,\ldots,k_m,j_1,\ldots,j_l\}\Big).
\]
\end{lem}

\begin{proof}
    Let $x \in \gl(n,\CC)$.  The entries $x_{i,j}$ of $x$ form a local holomorphic coordinate system on $\GL(n,\CC)$ near $\bbone_n \in \GL(n,\CC)$ via the map
    \[
    x \longmapsto \exp(x) = \bbone_n + x + \cdots.
    \]
    Since the diagonal $1$'s of $\bbone_n$ are of degree $0$ and the entries $x_{i,j}$ of $x$ are of degree $1$, the more diagonal entries of $\exp(x)$ a term of $\Delta_{I,J}(\exp(x))$ contains, the lower the degree of its lowest degree term.  The assertion for $\GL(n, \CC)$ follows.  The proof of the assertion for $\SL(n, \CC)$ is analogous.
\end{proof}

\subsection{Kostant's cascade of roots and the homogeneous degree of \texorpdfstring{$\delta_{\omega_i, w_0\omega_i}$}{d(e,w0)}
}\label{ss:cascade} 
Consider now the special case of
$w = w_0$.
By \eqref{eq:Nwm-nw}  we have the left $N_-$-action on $\n$ by 
\begin{equation}\label{eq:N-minus-n}
g \cdot x = ({\rm Ad}_g x)_\n, \hs g \in N_-, \, x \in \n,
\end{equation}
where $y \mapsto y_{\n}$ for $y \in \g$ denotes the projection $\g = \b_- + \n \to \n$. 
For every $i \in [1, r]$, we then have 
\[
\delta_{\omega_i, w_0\omega_i}(x) = \delta_{\omega_i, w_0\omega_i}(x_{\n}), \hs x \in \g,
\]
and
$\delta_{\omega_i, w_0\omega_i}|_{\n} \in \CC[\n]^{N_-}$ (see Lemma \ref{lem:delta-1} and Lemma \ref{lem:Nwm-inv}). 
Set,
for $i \in [1, r]$,
\begin{equation}\label{eq:di-defn}
d_i = \mbox{homogeneous degree of}\; (\delta_{\omega_i, w_0\omega_i} \in \CC[\g]) = 
\mbox{homogeneous degree of} \; 
(\delta_{\omega_i, w_0\omega_i}|_{\n} \in \CC[\n]).
\end{equation}
By Lemma \ref{lem:delta-w0} and the fact that $w_0 \rho^\vee = -\rho^\vee$, for $i \in [1, r]$ one has
\begin{equation}\label{eq:delta-di}
\delta_{\omega_i, \,w_0\omega_i}(x) = (-1)^{d_i} \delta_{w_0\omega_{i^*}, \,\omega_{i^*}}({\rm Ad}_{\overline{w_0}} x)
= (-1)^{d_i + (2\rho^\vee, \,\omega_i)}\delta_{\omega_{i^*}, \,w_0\omega_{i^*}}(x), \hs x \in \g.
\end{equation}
To compute the numbers $d_i$ for $i \in [1, r]$, and to 
compute the index of several Lie algebras, we now recall 
Kostant's cascade $\calB$ of roots \cite{K12}, iteratively defined as follows.

For each connected component of the 
Dynkin diagram of $\g$, put in $\calB$ the highest root $\beta$ of the Lie sub-algebra of $\g$ defined by that connected 
component and remove all simple roots  that are not orthogonal to $\beta$. Do the same to the resulting 
Dynkin sub-diagram and iterate the procedure until it cannot be performed any further. By 
\cite[Theorem 1.8]{K12}, $\calB$ is a maximal set of strongly orthogonal positive roots, i.e., $\beta + \beta'$ and $\beta -\beta^\prime$ are not roots for any $\beta, \beta' \in \calB$.
By \cite[Propostion 1.10]{K12}, $\calB$ is a basis for $\ker (1+w_0) \subset \t^*$. 
Choose root vectors $e_{\pm \beta} \in \g_{\pm \beta}$ 
for each $\beta \in \calB$, and let 
\begin{equation}\label{eq:rrr}
\r = \sum_{\beta \in \calB} \CC  e_\beta \subset \n, \hs \r^\times = \sum_{\beta \in \calB} \CC^\times  e_\beta \subset \r,
\hs \mbox{and}  \hs 
\r_- = \sum_{\beta \in \calB} \CC e_{-\beta}  \subset \n_-.
\end{equation}
Let $R_- =\exp(\r_-)\subset N_-$ and 
$X = N_-\cdot \r^\times \subset \n$, where $N_-$ acts on $\n$ as in \eqref{eq:N-minus-n}.

\begin{lem}\label{lem:Kostant} \cite[Theorem 2.3 and Theorem 2.8]{K12}
 The set $X$ is 
Zariski open in $\n$, and every $N_-$-orbit in $X$ passes through exactly one point in $\r^\times$. Moreover,
the stabilizer subgroup of $N_-$ at every $x \in \r^\times$ is $R_-$. 
\end{lem}

Recall that  $\calQ^\vee =\sum_{\alpha \in \Gamma} \ZZ \alpha^\vee \subset \t$ is the co-root lattice. Set
\begin{equation}\label{eq:kappa-0}
\kappa^\vee = \sum_{\beta \in \calB} \beta^\vee \in \calQ^\vee.
\end{equation}

\begin{lem}\label{lem:di}
For any $i \in [1, r]$, one has
$d_i = (\kappa^\vee, \, \omega_i)$, and
$\delta_{\omega_i, w_0\omega_i}= (-1)^{(2\rho^\vee-\kappa^\vee, \, \omega_i)} \delta_{\omega_{i^*}, w_0\omega_{i^*}} \in \CC[\g]$.
\end{lem}

\begin{proof}
Let $\vphi \in \CC[\n]^{N_-}$ be homogeneous with $\deg(\vphi) \geq 0$ and assume that $\vphi$ is also a $T$-weight vector with $T$-weight $\chi_\vphi$.  It is known that (\cite[Theorem 3.5]{K12}) $\chi_{\vphi}$ is in the $\ZZ$-span of $\mathcal B$.   Write $\chi_{\vphi} = \sum_{\beta \in \mathcal B} [\chi_{\vphi}: \beta] \beta$.  By (\cite[Theorem 3.11]{K12}),
\begin{equation}\label{eq:deg-chi}
\deg(\vphi) = \sum_{\beta \in \calB} [\chi_\vphi:\beta].
\end{equation}
Consider $\vphi =\delta_{\omega_i, w_0\omega_i}|_{\n}  \in \CC[\n]^{N_-}$, whose $T$-weight is $\omega_i -w_0\omega_i = \omega_i + \omega_{i^*}$. 
We have
$2 [\omega_i - w_0 \omega_i: \beta] = (\beta^\vee, \, \omega_i - w_0 \omega_i) =2(\beta^\vee, \,\omega_i)$,
so
 $d_i= \sum_{\beta \in \calB} (\beta^\vee, \omega_i)=(\kappa^\vee, \, \omega_i)$. The second assertion follows from 
\eqref{eq:delta-di}.
\end{proof}

\begin{rem}\label{rem:2rho}
We have seen that under the identification $\n \cong \n_-^*$ via any 
$\lara_\g$, 
the action of $N_-$ on $\n$ becomes the co-adjoint action of $N_-$ on $\n_-^*$, so
the algebra $\CC[\n]^{N_-}$ can be identified with the Poisson center of $\CC[\n_-^*]$ with the Poisson bracket coming from the
Kirillov-Kostant-Souriau Poisson structure $\pi_{0, \n_-}$ on $\n_-^*$.  
Kostant \cite[Theorem 3.6]{K12} proved that the Poisson center of $\CC[\n_-^*]$ is a polynomial ring in $|\mathcal B|$ generators, although he did not explicitly construct those generators.  Lemma \ref{lem:Nwm-inv} leads to an explicit presentation of them, thus strengthening Kostant's result.  In fact,  by Lemma \ref{lem:delta-w0} 
and for each $i \in [1, r]$,
the two polynomials
$\delta_{\omega_i, w_0 \omega_i}|_{\n}$ and $\delta_{\omega_{i^*}, w_0 \omega_{i^*}}|_{\n}$ 
are either the same or differ by a negative sign. 
One can prove that, for each $i \in [1,r]$, $\omega_i + \omega_{i^*} \in \Span_{\ZZ} \{\beta \colon \beta \in \mathcal B\}$ is either primitive, in the sense that the coefficients form a primitive vector in $\ZZ^{\mathcal B}$, or $2$ times a primitive element.  In the latter case, $\delta_{\omega_i, w_0 \omega_i}|_{\n}$ is the square of a Poisson central polynomial, to be denoted as $\sqrt{\delta_{\omega_i, w_0 \omega_i}|_{\n}}$, on $\n_-^*$.  It is then possible to prove that any set that consists of exactly one element of $\{\delta_{\omega_i, w_0 \omega_i}|_{\n}, \delta_{\omega_{i^*}, w_0 \omega_{i^*}}|_{\n}\}$ such that $\omega_i + \omega_{i^*}$ is primitive, and exactly one element of $\{\sqrt{\delta_{\omega_i, w_0 \omega_i}|_{\n}}, \sqrt{\delta_{\omega_{i^*}, w_0 \omega_{i^*}}|_{\n}}\}$ such that $\omega_i + \omega_{i^*}$ is not primitive, is a set of free generators of the Poisson center of $\CC[\n_-^*]$.  Details will be given in a separate paper.
\end{rem}

\subsection{The standard Poisson Lie group \texorpdfstring{$(G, \pist)$}{G}}\label{ss:pist}
Let $\langle\, , \, \rangle_\g$ be the symmetric and non-degenerate invariant bilinear form on $\g$ such that the induced non-degenerate symmetric bilinear form $\langle\,, \, \rangle$ on $\t^*$ has the 
property\footnote{This choice of $\langle\,, \, \rangle_\g$ is to ensure that when $G$ is simple
the Poisson structure on $G$ we define in this paper coincides with the one used in \cite{EL:BS}.} that
$\langle \alpha, \alpha \rangle =2$ for the short roots in $R_+$. Let $\{h_i: i \in [1, r]\}$ be an orthonormal
basis of $\t$ with respect to $\lara_\g|_\t$, and choose, for each $\alpha \in R_+$, 
root vectors $e_\alpha \in \g_\alpha$ and $e_{-\alpha} \in \g_{-\alpha}$ such that
$\alpha ([e_\alpha, e_{-\alpha}]) = 2$. One then has the quasi-triangular $r$-matrix \cite{Etingof-Schiffman, LM:mixed} 
\[
r_{\rm st} = \sum_{i=1}^r h_i \otimes h_i +\sum_{\alpha \in R_+} \langle \alpha, \, \alpha \rangle e_{-\alpha} \otimes e_\alpha
\in \g \otimes \g
\]
on $\g$ whose skew-symmetric part is given by 
\[
\Lambda_{\rm st} = \sum_{\alpha \in R_+} \frac{\langle \alpha, \alpha \rangle}{2} (e_{-\alpha} \otimes e_\alpha -
e_\alpha \otimes e_{-\alpha}) \in \wedge^2 \g.
\]
It is well-known \cite{dr:Hamil, Etingof-Schiffman} that $r_{\rm st}$ is 
independent of the choice of $\{h_i: i \in [1, r]\} \sqcup \{(e_\alpha, e_{-\alpha}): \alpha \in R_+\}$, and the bi-vector field
$\pist$ on $G$ given by
\[
\pist(g) = L_g r_{\rm st} - R_g r_{\rm st} = L_g \Lambda_{\rm st} - R_g \Lambda_{\rm st},
\]
is Poisson, 
where $L_g$ and $R_g$ for $g \in G$ denote the respective left and right translation on $G$ by $g$. Moreover, $\pist$ is 
multiplicative in the sense that the map
$(G, \pist) \times (G, \pist) \rightarrow (G, \pist), (g_1, g_2) \mapsto g_1g_2$,
is Poisson. The pair $(G, \pist)$ is also referred to as a {\it standard complex semi-simple Poisson Lie group}
\cite{LM:mixed}. 
See \cite{dr:Hamil, Etingof-Schiffman, LM:mixed} for the general theory on Poisson Lie groups.

\begin{example}\label{ex:SLn-GLn}
{\rm For $G=\SL(n, \CC) = \{X = (x_{i,j})_{i, j \in [1,n]}: \det X = 1\}$ and the 
standard choices of $B \subset G$ and $B_- \subset G$ consisting respectively of upper triangular
and lower triangular matrices in $G$, the Poisson structure $\pist$ on $\SL(n, \CC)$ is given, up to a non-zero scalar multiple,
by 
\[
\{x_{i,j}, \, x_{p,q}\}_{\pist} =({\rm sign}(p-i) + {\rm sign}(q-j)) x_{i,q}x_{p,j}, \hs i, j, p,q \in [1, n].
\]
The same formulas also define a Poisson structure on $\GL(n, \CC)$, which is 
also called the {\it standard multiplicative
Poisson structure} on $\GL(n, \CC)$.
}
\end{example}

Coming back to $(G, \pist)$, as ${\rm Ad}_t \Lambda_{\rm st} = \Lambda_{\rm st}$ for every $t \in T$,
the Poisson structure $\pist$ is invariant under the $T$-action by left (as well as  right) translation,
and the $T$-leaf
decomposition of $(G, \pist)$ is  given by \cite{KZ:leaves}
\[
G = \bigsqcup_{u, v \in W} G^{u, v},
\]
where $G^{u, v} = BuB \cap B_-vB_-$ is called \cite{FZ:double} the {\it double Bruhat cell} of $G$ associated to $u, v$. 
In particular, $(G, \pist)$ has an open $T$-leaf $G^{w_0, w_0}$, where again $w_0$ is the longest element of $W$.

Note that $\pist(e) = 0$, where $e$ is the identity element of $G$. To see the linearization of $\pist$ at $e$,
consider the direct product Lie algebra 
$\g \oplus \g$ with 
the non-degenerate invariant symmetric 
bilinear form\footnote{See \cite[$\S$3.2]{LM:T-leaves} for the explanation of the factor $\frac{1}{2}$.} 
\begin{equation}\label{eq:lara-gog}
\langle (x_1, x_2), \;(y_1, y_2) \rangle_{\g \oplus \g} 
=\frac{1}{2}( \langle x_1, y_1 \rangle_{\g} - \langle x_2, y_2 \rangle_{\g}), \hs x_1, x_2, y_1, y_2 \in \g.
\end{equation}
Let $\g_{\rm diag}
=\{(x, x): x \in \g\}$ be the diagonal Lie sub-algebra of $\g \oplus \g$, and identify $\g \cong \g_{\rm diag}$. Let 
\begin{equation}\label{eq:gsts}
\g_{\rm st}^*=\{(\xi_0 + \xi_+, \, -\xi_0 + \xi_-): \, \xi_0 \in \t,\; \xi_+ \in \n, \;\xi_- \in \n_-\} \subset \b \times \b_-.
\end{equation}
Then 
$((\g \oplus \g, \langle ~, ~ \rangle_{\g \oplus \g}), \g_{\rm diag}, \gsts)$ is a Manin triple \cite{dr:Hamil, LM:mixed}
associated to the Poisson Lie group $(G, \pist)$, and, under the identification 
$\g \cong  (\g_{\rm st}^{\ast})^{\ast}$ via 
\begin{equation}\label{eq:g-gsts}
\g \longrightarrow (\g_{\rm st}^{\ast})^{\ast}, ~ x \longmapsto \langle (x,x), - \rangle_{\g \oplus \g},
\end{equation}
the linearization $\pi_0$ of $\pist$ at $e$ coincides with the Kirillov-Kostant-Souriau Poisson structure 
$\pi_{0, \gsts}$ on $(\gsts)^*$ defined by 
the Lie algebra $\gsts$ (see \cite{dr:Hamil, LM:mixed, LM:T-leaves} for detail). We thus write
\[
(\g, \, \pi_0) \cong ((\gsts)^*, \; \pi_{0, \gsts}).
\]
Recall that $r = \dim \t$. 

\begin{prop}\label{prop:ind-gsts}
One has  $\ind(\gsts) = r$. Equivalently, ${\rm rk}(\pi_{0, \gsts}) = 2\ell(w_0)$.
\end{prop}

\begin{proof}
By \cite[Proposition 2.4]{KZ:leaves}, the rank of $\pist$ in the open double Bruhat cell 
$G^{w_0, w_0}$ in $G$ is $2\ell(w_0)$. By Lemma \ref{lem:rk-L-P},   
 the rank of $\pist$ in $G$ is also $2\ell(w_0)$. 
As $\pi_{0, \gsts}$ is the linearization of $\pist$ at $e$, we have ${\rm rk}(\pi_{0, \gsts}) \leq 2\ell(w_0)$, so
$\ind(\g_{\rm st}^*) \geq  r$. We now prove that 
$\ind(\g_{\rm st}^*) = r$ by finding an $x \in \g$ such that
$\dim ({\rm Stab}_{\g_{\rm st}^*}(x)) = r$, where recall that ${\rm Stab}_{\g_{\rm st}^*}(x)$ is the stabilizer Lie sub-algebra of
$\g_{\rm st}^*$ at $x$ for the co-adjoint representation of $\g_{\rm st}^*$ on $\g \cong (\g_{\rm st}^*)^*$.
By the ad-invariance of the bilinear form 
$\langle ~, ~ \rangle_{\g \oplus \g}$, for any $x \in \g$, we have
\begin{equation}\label{eq:stab-x}
{\rm Stab}_{\gsts}(x) = \{(\xi_1, \xi_2) \in \gsts: [(\xi_1, \xi_2), (x, x)] \in \gsts\}.
\end{equation}
Consider Kostant's cascade $\mathcal B \subseteq R_+$. 
Choose again root vectors
$e_{\pm \beta}$ for $\pm \beta$ and  let 
\begin{equation}\label{eq:e-pm}
    e_+ = \sum \limits_{\beta \in \mathcal B} e_{\beta}\in \n \quad \mbox{and} \quad  
    e_- = \sum \limits_{\beta \in \mathcal B} e_{-\beta} \in \n_-.
\end{equation}
We would deduce that $\dim (\Stab_{\g_{\rm st}^{\ast}}(e_+ + e_-)) = r$, and thus $\ind(\gsts) = r$, once we prove that
\begin{equation}\label{eq:stab-ee}
 \Stab_{\g_{\rm st}^{\ast}}(e_+ + e_-) =\{(\xi_0, -\xi_0):\, \xi_0 \in \t, \beta(\xi_0) = 0, \forall \, \beta \in \calB\}
 + {\rm Span}_{\CC}\{(e_\beta, e_{-\beta}): \beta\in \calB\}.
 \end{equation}
To prove \eqref{eq:stab-ee}, let $\xi_0 \in \t, \, \xi_+ \in \n, \, 
\xi_- \in \n_-$, and let $(\xi_1, \xi_2) = (\xi_0 + \xi_+, -\xi_0 +\xi_-)$. Since
\[
[(\xi_1, \xi_2), \; (e_++e_-, \, e_++e_-)] = ([\xi_1, e_+] + [\xi_1, e_-], \; [\xi_2, e_+] + [\xi_2, e_-]),
\]
by \eqref{eq:stab-x}, we have $(\xi_1, \xi_2) \in \Stab_{\g_{\rm st}^{\ast}}(e_+ + e_-)$ if and only if 
\[
[\xi_0 +\xi_+, \;e_-]\in \b, \hs [-\xi_0 + \xi_-,\; e_+]\in \b_-, \hs  [\xi_+, \;e_-] + [\xi_-,\; e_+] \in \n + \n_-.
\]
Write $\xi_+ = \sum \limits_{\alpha \in R_+} c_{\alpha} e_{\alpha}$ for some $c_{\alpha} \in \CC$. Then
\[
    [\xi_0 +\xi_+,\;  e_-] = - \sum \limits_{\beta \in \mathcal B} \beta(\xi_0) e_{- \beta} + 
    \sum \limits_{\substack {\alpha \in R_+ \\ \beta \in \mathcal B}} c_{\alpha} [e_{\alpha}, e_{- \beta}].
\]
As $\alpha -\beta \neq -\beta^\prime$ for any $\alpha \in R_+$ and
$\beta, \beta' \in \calB$, we have $[\xi_0 +\xi_+,\;  e_-] \in \b$ if and only if $\beta(\xi_0) = 0$ for every $\beta \in \calB$
and $[\xi_+,\;  e_-] \in \b$, while the latter,  by \cite[Theorem 2.3]{K12}, is equivalent to
$\xi_+ =\sum_{\beta \in \calB} c_{\beta} e_\beta$.
Similarly,  $[-\xi_0 + \xi_-,\; e_+]\in \b_-$ if and only if $\beta(\xi_0) = 0$ for every $\beta \in \calB$ and
$\xi_- = \sum_{\beta \in \calB} c_{-\beta} e_{-\beta}$ for some $c_{-\beta} \in \CC$ for each $\beta \in \calB$. 
Again using the fact that roots in $\calB$ are strongly orthogonal, we have 
\[
[\xi_+, \;e_-] + [\xi_-,\; e_+] = \sum_{\beta \in \calB}(c_\beta-c_{-\beta}) [e_\beta, \, e_{-\beta}],
\]
so $[\xi_+, \,e_-] + [\xi_-,\, e_+]\in \n + \n_-$ if and only if $c_\beta = c_{-\beta}$ for
every $\beta \in \calB$. We have thus proved   \eqref{eq:stab-ee}. 
\end{proof}

\section{Schubert cells in the flag variety \texorpdfstring{$G/B$}{G/B}} \label{s:GB}
\subsection{The standard Poisson structure on \texorpdfstring{$BwB/B$}{BwB/B}}\label{ss:piGB}
Let $G$ be a connected and simply connected complex semi-simple Lie group, equipped with the 
standard multiplicative Poisson structure $\pi_{\rm st}$ as $\S$\ref{ss:pist}, 
and let the notation be as in $\S$\ref{s:Lie-prelim}. 
 As $B$ is a Poisson Lie subgroup of $(G, \pist)$,
the Poisson structure $\pist$ on $G$ 
projects to a well-defined Poisson structure on the flag variety $G/B$, which we denote by $\piGB$ and call it the 
{\it standard Poisson structure on $G/B$}. Note that
$\piGB$ is $T$-invariant, where $T$ acts on $G/B$ by 
left translation. 
By \cite{GY09, LM:T-leaves}, the $T$-leaf decomposition of $(G/B, \piGB)$ is 
\[
G/B = \bigsqcup_{v, w \in W, v \leq w} R^w_v,
\]
where $\leq$ is the Bruhat order on $W$, and $R^w_v =(B_-vB/B) \cap (BwB/B)$, for $v \leq w$, is
called an {\it open Richardson variety}. 
It follows in particular that for each $w \in W$, the {\it Schubert cell} 
$BwB/B \subset G/B$
is a $T$-invariant Poisson sub-manifold of $(G/B, \piGB)$. For $w \in W$, set
\[
\pi_w = \piGB|_{BwB/B}.
\]
Then $(BwB/B, \pi_w)$ is a $T$-Poisson variety with a closed 
$T$-leaf $R^w_w$ consisting of the single point $\wB$ and 
an open $T$-leaf $R^w_e$, where $e$ again denotes the
identity element of $W$. 
In particular,
\[
\pi_w(w_\cdot B)=0.
\]
We will refer to $\pi_w$ as the {\it standard Poisson structure} on the Schubert cell
$BwB/B$ and denote by $\pi_{w, 0}$ the linearization of $\pi_w$ at $w_\cdot B \in BwB/B$.
As $w_\cdot B \in BwB/B$ is a $T$-fixed point, we know by 
Theorem \ref{thm:TT-fixed} that all $T$-log-canonical systems on $(BwB/B, \pi_w)$, if exist, 
have Property $\calI$ at $\wB$.

In the rest of $\S$\ref{s:GB}, we show that $T$-log-canonical systems on $(BwB/B, \pi_w)$ exist 
for every $w \in W$, and we review the {\it standard cluster structure} $\calC^w_{\rm st}$ on  $BwB/B$.  We then
study in details some integrable systems 
on $(T_{w_\cdot B}(BwB/B), \pi_{w, 0})$ coming from certain extended clusters of $\calC^w_{\rm st}$.
We first set up some notation.

Fix $w \in W$. Recall that 
$N_w = N \cap \ow N_- \ow^{\, -1}$. The unique decomposition $BwB = N_w \ow B$ 
 (see \cite[Proposition 2.9]{FZ:double}) then gives
 the isomorphism
\begin{equation}\label{eq:Nw-BwB}
N_w \longrightarrow BwB/B:\;\;   g \longmapsto g \ow_\cdot B.
\end{equation}

Let $w = s_{i_1} \cdots s_{i_l}$ be any reduced decomposition, where $l = \ell(w)$, and let 
$\bfw = (s_{i_1}, \ldots, s_{i_l})$, called a reduced word of $w$. 
For  $z = (z_1, \ldots, z_l)\in \CC^l$, set
\begin{equation}\label{eq:g-bfw}
g_\bfw(z) = 
e_{i_1}(z_1)\overline{s_{i_1}}\,e_{i_2}(z_2)\overline{s_{i_2}}\,\cdots \,e_{i_l}(z_l)\overline{s_{i_l}}
\in N_w \ow = N \ow \cap \ow N_-.
\end{equation}
One then has the {\it Bott-Samelson parametrizations} (\cite[Proposition 2.11]{FZ:double} and  \cite{EL:BS})
\begin{equation}\label{eq:BS-para}
{\mathbb{C}}^l \longrightarrow N_w: \;\; 
z \longmapsto g_\bfw(z) \ow^{\, -1} \hs \mbox{and} \hs 
{\mathbb{C}}^l \longrightarrow BwB/B:\;\;  z  \longmapsto g_\bfw(z)_\cdot B.
\end{equation}
The resulting coordinates $(z_1, \ldots, z_l)$ on $N_w$ (resp. on $BwB/B$) will be called  
the {\it Bott-Samelson coordinates} on $N_w$ (resp. on $BwB/B$) associated to $\bfw$.
Let $T$ act on $\CC[N_w]$ and $\CC[BwB/B]$ respectively by 
\begin{align}\label{eq:T-on-phi-1}
(t\cdot \varphi)(g) &= \varphi(tgt^{-1}), \hs t \in T, \; \varphi \in \CC[N_w], \; g \in N_w, \\
\label{eq:T-on-phi-0}
(t\cdot \varphi)(g_\cdot B)& = \varphi(tg \ow_\cdot B), \hs t \in T, \; \varphi \in \CC[BwB/B], \; g \in B.
\end{align}
Then  
$z_k \in \CC[N_w]$ (resp. $z_k \in\CC[BwB/B]$) is a $T$-weight vector with $T$-weight 
\begin{equation}\label{eq:beta-bfw-k}
\beta_{\bfw, k} =s_{i_1}\cdots s_{i_{k-1}} \alpha_{i_k} \in R_+, \hs k \in [1, n].
\end{equation}
We will also regard $\pi_w$ as a Poisson structure on $N_w$ via the isomorphism $N_w \to BwB/B$ in \eqref{eq:Nw-BwB}
and still denote it as $\pi_w$. For a reduced word $\bfw$ of $w$, the induced Poisson
structure on $\CC^l$ via the Bott-Samelson parametrizations in \eqref{eq:BS-para} will also be denoted as $\pi_{w}$.

For any reduced word $\bfw$ of $w$, the Poisson brackets 
$\{z_j, z_k\}_{\pi_w}$ between the
Bott-Samelson coordinates $(z_1, \ldots, z_l)$
are computed in \cite{EL:BS}.  More precisely, for any $1 \leq j < k \leq l$, one has 
\begin{equation}\label{eq:zjk-piw}
\{z_j, \, z_k\}_{\pi_w} = -\langle \beta_{\bfw, j}, \, \beta_{\bfw, k} \rangle z_j z_k + f_{j, k},
\end{equation}
where $f_{j, k} \in \CC[z_{j+1}, \ldots, z_{k-1}]$ (we set $\CC[z_{j+1}, \ldots, z_{k-1}] = 0$ when $k = j+1$),
and the explicit formula for $f_{j, k}$ are given in \cite[Theorem 5.15]{EL:BS} 
in terms of certain root strings and structure constants of $\g$. 

For $\h \in \t$,  let $\partial_h$ be the derivation of $\CC[z_1, \ldots, z_l]$ such that
$\partial_h (z_j) = \beta_{\bfw, j}(h) z_j$ for $j \in [1, l]$.
For  $k \in [1, l]$, let
$h_k \in \t$ be such that $\lambda(h_k) = \langle \lambda, \, \beta_{\bfw, k}\rangle$ for $\lambda \in \h^*$, and
let $\delta_k$ be the derivation on $\CC[z_1, \ldots, z_{k-1}]$ given by
\[
\delta_k(z_j) = -f_{j, k} \in \CC[z_{j+1}, \ldots, z_{k-1}] \subset \CC[z_1, \ldots, z_{k-1}].
\] 
Then \eqref{eq:zjk-piw}  can be re-written as
(see also 
\cite[Theorem 5.12]{EL:BS}) 
\begin{equation}\label{eq:zk-a}
\{z_k, \, a\}_{\pi_w}  =z_k\partial_{h_k} (a) + \delta_k(a), \hs k \in [1, l],\, \; a \in \CC[z_1, \ldots, z_{k-1}],
\hs k \in [2, l],
\end{equation}
which, by \cite[Definition 6.1]{GY:Poi-CGL},  says that 
the Poisson algebra $(\CC[BwB/B], \{\, ,\, \}_{\pi_w})$ is now presented as a 
{\it symmetric Poisson CGL extension} in the Bott-Samelson coordinates 
$(z_1, \ldots, z_l)$
(see also \cite[Proposition 4.26]{L} for the case of $w = w_0$).

\begin{rem}\label{rem:delta-k}

For $k \in [1, l]$, let $w_{k-1} = s_{i_1} \cdots s_{i_{k-1}}$, and regard $(z_1, \ldots, z_{k-1})$
as Bott-Samelson coordinates on $Bw_{k-1}B/B$ defined by the reduced word 
${\bf w}_{k-1} = (s_{i_1}, \ldots, s_{i_{k-1}})$ of $w_{k-1}$. 
The derivation $\delta_k$ on $\CC[z_1,\ldots, z_{k-1}]$, regarded as a vector field on 
$Bw_{k-1}B/B$, has the following interpretation 
given in \cite[(92)]{EL:BS}:  parameterizing both $Bw_{k-1}B/B$ and $B_-\backslash B_-w_{k-1}B_-$
by $N\overline{w_{k-1}} \cap \overline{w_{k-1}}\, N_-$, one has the 
isomorphism 
\[
\nu_k: \;\; Bw_{k-1}B/B \longrightarrow B_-\backslash B_-w_{k-1}B_-, \;\; g_\cdot B \longmapsto B_-{}_\cdot g, \hs
g \in N \,\overline{w_{k-1}} \cap \overline{w_{k-1}}\, N_-.
\]
Then $\delta_k$ is the vector field on $Bw_{k-1}B/B$ generating the induced right action of $B_-$ on
$Bw_{k-1}B/B$ in the direction of $-\langle \alpha_{i_k}, \alpha_{i_k}\rangle e_{-i_k} \in \n_-$, i.e., 
\begin{equation}\label{eq:delta-k-vector-field}
\delta_k(g_\cdot B) = -\langle \alpha_{i_k}, \alpha_{i_k}\rangle  
\frac{d}{dc}|_{c=0} \nu_k^{-1} \left(B_-{}_\cdot g \,
e_{-i_k}(c)\right)
\in T_{g_\cdot B} (Bw_{k-1}B/B), 
\end{equation}
for $g \in N \,\overline{w_{k-1}} \cap \overline{w_{k-1}}\, N_-$.
\end{rem}


\subsection{The standard cluster structure \texorpdfstring{$\calC^w_{\rm st}$}{Cwst} on \texorpdfstring{$BwB/B$}{BwB/B} and the \texorpdfstring{$T$}{T}-Pfaffian of \texorpdfstring{$\pi_w$}{piw}}
\label{ss:st-cluster-BwB}
Let again $w \in W$ and let $\bfw = (s_{i_1}, \ldots, s_{i_l})$ be a reduced word of $w$. 
For $k \in [1, l]$, 
denote by $\vphi_{\bfw, k}$ both the function on $N_w$ and 
the function on $BwB/B$, given by
\begin{equation}\label{eq:vphi-bfw-k}
\vphi_{\bfw, k}(g) = \vphi_{\bfw, k} (g\wB) = \Delta_{\omega_{i_k}, \, s_{i_1} \cdots s_{i_k} \omega_{i_k}}(g), \hs g \in N_w.
\end{equation}
 By further abusing notation, for $k \in [1, l]$, we also set
 \begin{equation}\label{eq:vphi-k-z}
\vphi_{\bfw, k}(z) = \vphi_{\bfw, k} ((e_{i_1}(z_1)\overline{s_{i_1}}\,\cdots\, e_{i_l}(z_l)\overline{s_{i_l}})_\cdot B) \in \CC[z_1, \ldots, z_l].
 \end{equation}
For $k \in [1,l]$ and $z = (z_1, \ldots, z_l) \in\CC^l$, since 
$e_{i_{k+1}}(z_{k+1})\overline{s_{i_{k+1}}}\cdots e_{i_l}(z_l)\overline{s_{i_l}}\in N\, \overline{s_{i_{k+1}}\cdots s_{i_l}}$, one has
\begin{equation}\label{eq:useful}
\vphi_{\bfw, k}(z) 
=\Delta_{\omega_{i_k}, \, s_{i_1}\cdots s_{i_k}\omega_{i_k}} (e_{i_1}(z_1)\overline{s_{i_1}}\,\cdots \,e_{i_l}(z_l)\overline{s_{i_l}}\,\ow^{\, -1})\\
=\Delta_{\omega_{i_k}, \, \omega_{i_k}} (e_{i_1}(z_1)\overline{s_{i_1}}\,
\cdots \,e_{i_k}(z_k)\overline{s_{i_k}}).
\end{equation}
For $k \in [1, l]$, set $k^- = \max\{j \in [1,k-1]: s_{i_j} = s_{i_k}\}$
if $\{j \in [1,k-1]: s_{i_j} = s_{i_k}\}\neq \emptyset$, and $k^-=-\infty$ and $\vphi_{\bfw, k^-}=1$ otherwise.
By \eqref{eq:gqq}, for each $k \in [1, l]$, one has
\begin{equation}\label{eq:phik-CGL}
\vphi_{\bfw, k}(z) = z_k \vphi_{\bfw, k^-}+
\Delta_{\omega_{i_k}, \, s_{i_k}\omega_{i_k}} (e_{i_1}(z_1)\overline{s_{i_1}}\,\cdots\,
e_{i_{k-1}}(z_{k-1})\overline{s_{i_{k-1}}}).
\end{equation}
Moreover,  $\vphi_{\bfw, k} \in \CC[BwB/B]$ is a $T$-weight vector with
$T$-weight  (see also \cite[Lemma 2.18]{LMi:Kostant})
\[
\chi_{\vphi_{\bfw, k}}=\sum_{j \in [1, k], \,i_j = i_k} \beta_{\bfw, j} = \omega_{i_k}-s_{i_1}\cdots s_{i_k}\omega_{i_k}.
\]
We also note that \eqref{eq:phik-CGL} can be re-written as 
\begin{equation}\label{eq:phi-k-delta-k}
\vphi_{\bfw, k} = z_k \vphi_{\bfw, k^-} -
\frac{\delta_k(\vphi_{\bfw, k^-})}{\langle \alpha_{i_k}, \alpha_{i_k}\rangle},
\hs k \in [1, l].
\end{equation}
Indeed, for $c \in \CC$ and $(z_1, \ldots, z_{k-1}) \in \CC^{k-1}$, we have (see Remark \ref{rem:delta-k})
\[
e_{i_1}(z_1)\overline{s_{i_1}} \cdots e_{i_{k-1}}(z_{k-1})\overline{s_{i_{k-1}}}  \, e_{-i_k}(c) =
n_-(c) \, e_{i_1}(z_1(c))\overline{s_{i_1}} \cdots e_{i_{k-1}}(z_{k-1}(c))\overline{s_{i_{k-1}}}
\]
for some $n_-(c) \in N_-$ and $z_1(c), \ldots, z_{k-1}(c) \in \CC$, 
and by \eqref{eq:delta-k-vector-field} and \eqref{eq:gp}, we have
\begin{align*}
-\frac{\delta_k(\vphi_{\bfw, k^-})}{\langle \alpha_{i_k}, \alpha_{i_k}\rangle}
&= \frac{d}{dc}|_{c=0} 
\Delta_{\omega_{i_k}, \omega_{i_k}}(e_{i_1}(z_1(c))\overline{s_{i_1}} 
\cdots e_{i_{k-1}}(z_{k-1}(c))\overline{s_{i_{k-1}}})\\
& = \frac{d}{dc}|_{c=0}\Delta_{\omega_{i_k}, \omega_{i_k}} 
(e_{i_1}(z_1)\overline{s_{i_1}} \cdots e_{i_{k-1}}(z_{k-1})\overline{s_{i_{k-1}}}  \, e_{-i_k}(c)) \\
& = \Delta_{\omega_{i_k}, \, s_{i_k}\omega_{i_k}} (e_{i_1}(z_1)\overline{s_{i_1}}\,e_{i_2}(z_2)\overline{s_{i_2}}\,\cdots\,
e_{i_{k-1}}(z_{k-1})\overline{s_{i_{k-1}}}).
\end{align*}
Taking $a = \vphi_{\bfw, k^-}$ in \eqref{eq:zk-a}, we get the following recursive formula for the $\vphi_{\bfw, k}$'s 
using $\{\, , \, \}_{\pi_w}$:
\begin{equation}\label{eq:vphik-recursive}
\langle \alpha_{i_k}, \, \alpha_{i_k}\rangle \vphi_{\bfw, k} = \{\vphi_{\bfw, k^-}, \, z_k\}_{\pi_w} +
\langle s_{i_1}\cdots s_{i_{k-1}}\alpha_{i_k}, \;\omega_{i_k}-s_{i_1}\cdots s_{i_k}\omega_{i_k}\rangle z_k\vphi_{\bfw, k^-}.
\end{equation}

\begin{lem}\label{lem:Phiw-T-log}
For any $w \in W$ with $l = \ell(w)$ and any reduced word $\bfw$ of $w$, the set 
\[
\Phi_\bfw =  \{\vphi_{\bfw, k} \in \CC[BwB/B]: k \in [1, l]\}
\]
is a $T$-log-canonical system on $(BwB/B, \pi_w)$.
\end{lem}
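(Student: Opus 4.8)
## Proof proposal

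The plan is to verify the two defining properties of a $\TT$-log-canonical system (Definition \ref{defn:TT-system}) for $\Phi_\bfw = \{\vphi_{\bfw, 1}, \ldots, \vphi_{\bfw, l}\}$: that each $\vphi_{\bfw, k}$ is a $T$-weight vector for the left-translation action of $T$ on functions on $BwB/B$, that the set is log-canonical with respect to $\pi$, and that the set is independent, i.e., $d\vphi_{\bfw,1} \wedge \cdots \wedge d\vphi_{\bfw, l} \neq 0$.

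\textbf{$T$-weight property.} First I would show each $\vphi_{\bfw, k}$ is a $T$-weight vector. Recalling that $\vphi_{\bfw, k}(g \wB) = \Delta_{\omega_{i_k},\, s_{i_1}\cdots s_{i_k}\omega_{i_k}}(g)$ on $N_w$, and that the left $T$-action on $G/B$ corresponds under \eqref{eq:Nw-BwB} to $t \cdot (g \wB) = (tgt^{-1})(tw_\cdot B)$, the transformation behaviour of generalized minors under left and right multiplication by $T$ gives $\Delta_{\omega_{i_k},\, \delta}(tg) = t^{\omega_{i_k}}\Delta_{\omega_{i_k},\,\delta}(g)$ for $t \in T$; combined with the normalizer element $\ow$ this produces an overall character of $T$. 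Concretely, I would use the Bott-Samelson description $\vphi_{\bfw,k} \in \CC[z_1,\ldots,z_k]$ together with the $T$-weights of the Bott-Samelson coordinates $z_1, \ldots, z_l$ (each $z_j$ is a $T$-weight vector because $e_{i_j}(z_j)$ conjugates to $e_{i_j}(t^{\alpha_{i_j}} z_j)$ and the $\overline{s}_{i_j}$'s permute weights), and the recursion \eqref{eq:phik-CGL} to confirm by induction on $k$ that $\vphi_{\bfw,k}$ is homogeneous of a well-defined $T$-weight.

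\textbf{Log-canonicity and independence.} For log-canonicity, the cleanest route is to invoke the known description of the cluster structure on $BwB/B$: the functions $\vphi_{\bfw,k}$ are (up to frozen-variable normalization) precisely the initial extended cluster of the standard cluster structure on $BwB/B$ associated to the reduced word $\bfw$, as developed in \cite{GLS:partial, GY:Poi-CGL, EL:BS, L}, and that cluster structure is compatible with $\pi$ in the sense of Definition \ref{defn:compatible}; hence any two $\vphi_{\bfw,j}, \vphi_{\bfw,k}$ have $\{\vphi_{\bfw,j}, \vphi_{\bfw,k}\}_\pi \in \CC\,\vphi_{\bfw,j}\vphi_{\bfw,k}$. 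Alternatively — and this is the part I expect to require the most care — one can give a direct computation: pull back to $N_w$, use the known formula for $\pist$ on double Bruhat cells together with the twist/factorization identities for generalized minors (as in \cite{FZ:double}), and check the log-canonical brackets term by term; this is where the routine but lengthy calculation lies, and where citing the established literature is the sensible shortcut. Independence is immediate from \eqref{eq:phik-CGL}: the Jacobian of $(\vphi_{\bfw,1}, \ldots, \vphi_{\bfw,l})$ with respect to the Bott-Samelson coordinates $(z_1, \ldots, z_l)$ is lower triangular with diagonal entries $\vphi_{\bfw, k^-}$ (or $1$), which are nonzero regular functions, so $d\vphi_{\bfw,1}\wedge \cdots \wedge d\vphi_{\bfw,l}$ is a nonzero multiple of $dz_1 \wedge \cdots \wedge dz_l$ and in particular nonvanishing on the open set where all $\vphi_{\bfw,k^-}$ are nonzero, hence nonzero as a form.

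\textbf{Main obstacle.} The genuine content is the log-canonicity of the pairwise Poisson brackets; everything else ($T$-weight homogeneity, independence) follows from the explicit triangular structure of the recursion \eqref{eq:phik-CGL}. I would therefore organize the write-up so that log-canonicity is handled first by appeal to the compatibility of the standard cluster structure on $BwB/B$ with $\pi$ (citing $\S$\ref{ss:T-sys-GB} and the references therein), and then dispatch the weight and independence claims in a short paragraph each using the Bott-Samelson coordinates.
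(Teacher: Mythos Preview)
Your proposal is correct and follows essentially the same three-step structure as the paper: verify $T$-weight, log-canonicity, and independence. The paper's proof is shorter because it cites \cite[Lemma 2.18]{LMi} directly for the $T$-weight (yielding the explicit weight $\omega_{i_k}-s_{i_1}\cdots s_{i_k}\omega_{i_k}$) and \cite[Lemma 2.21]{LMi} directly for the log-canonical bracket (yielding the explicit coefficient $\lambda_{jk} = \langle \omega_{i_j}-s_{i_1}\cdots s_{i_j}\omega_{i_j}, \; \omega_{i_k}+s_{i_1}\cdots s_{i_k}\omega_{i_k}\rangle$), rather than going through the cluster-compatibility literature or an inductive Bott-Samelson argument; independence via the triangular Jacobian from \eqref{eq:phik-CGL} is handled identically (note: the Jacobian is upper, not lower, triangular in the natural ordering, but this does not affect the argument).
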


\begin{proof} By \eqref{eq:phik-CGL}, 
the set $\Phi_{\bfw}$ is independent, and we already know that each $\vphi_{\bfw, k}$ is a $T$-weight vector. 
By \cite[Corollary 2.15]{LMi:Kostant}, for
$1 \leq j \leq k \leq l$ and for any $f \in \CC[z_1, \ldots, z_j] \subset \CC[z_1, \ldots, z_l]$ which is a $T$-weight vector with $T$-weight $\chi_f$, one has 
\begin{equation}\label{eq:f-vphi}
\{f, \, \vphi_{\bfw, k}\}_{\pi_w} = \langle \chi_f, \,  \omega_{i_k}+s_{i_1}\cdots s_{i_k}\omega_{i_k}\rangle f\vphi_{\bfw, k}.
\end{equation}
In particular, for all $1 \leq j < k \leq l$ one has
\[
\{\vphi_{\bfw, j}, \, \vphi_{\bfw, k}\}_{\pi_w} = \lambda_{j,k} \vphi_{\bfw, j} \vphi_{\bfw, k},
\]
where $\lambda_{j,k} = \langle \omega_{i_j}-s_{i_1}\cdots s_{i_j}\omega_{i_j}, \;
\omega_{i_k}+s_{i_1}\cdots s_{i_k}\omega_{i_k}\rangle$. Thus $\Phi_\bfw$ is $T$-log-canonical with respect to $\pi_w$.
\end{proof}

\begin{cor}\label{cor:Cw-all}
For every $w \in W$, $T$-log-canonical systems  on $(BwB/B, \pi_w)$ exist and all of them have Property $\calI$
at $\wB$.
\end{cor}

\begin{proof}
The statement is a direct consequence of Lemma \ref{lem:Phiw-T-log} and Theorem \ref{thm:TT-fixed}.
\end{proof}
Consider now the sequence of Poisson sub-algebras of 
$(\CC[z_1, \ldots, z_l],\, \{\, , \, \}_{\pi_w})$
\[
R_k = \CC[z_1, \ldots, z_k] \subset R_l = \CC[z_1, \ldots, z_l], \hs k \in [1, l].
\]
The sequence 
$(\vphi_{\bfw, 1}, \ldots, \vphi_{\bfw, l}\}$ of elements in $\CC[z_1, \ldots, z_l]$ then has the 
following properties:

1) for each $k \in [1, l]$, $\vphi_{\bfw, k}$ is a $T$-weight vector and a prime element in $R_k$ which is not contained in $R_{k-1}$;

2) for each $k \in [1, l]$, $\vphi_{\bfw, k}$ a Poisson element of $R_k$ in the sense that 
$\{\varphi_{\bfw, k}, R_k\}_{\pi_w} \subset \varphi_{\bfw, k} R_k$, as seen by taking $f = z_j$ in \eqref{eq:f-vphi} for all $j \in [1, k]$.

In the 
terminology of \cite[Theorem 5.5]{GY:Poi-CGL}, $(\vphi_{\bfw, 1}, \ldots, \vphi_{\bfw, l})$ is the sequence of
{\it homogeneous Poisson prime elements} associated to the symmetric Poisson CGL extension 
$(\CC[z_1, \ldots, z_l], \, \{\, , \, \}_{\pi_w})$. 
One can further show \cite{Lu-Mi:matrices} that the symmetric Poisson CGL extension 
$(\CC[z_1, \ldots, z_l], \, \{\, , \, \}_{\pi_w})$ is normal in the sense of \cite[Chapter 9]{GY:Poi-CGL}.
Set  ${\rm ex} = \{k \in [1, l]: \, k^+ \neq +\infty\}$, where 
similar to the definition of $k^-$ for $k \in [1, l]$, 
\begin{equation}\label{eq:k-plus}
k^+ = \begin{cases} \min\{j \in [k+1,l]: s_{i_j} = s_{i_k}\}, & \hs \mbox{if}\;\; \{j \in [k+1,l]: s_{i_j} = s_{i_k}\}\neq \emptyset,\\
+\infty, & \hs {\rm otherwise}.\end{cases}
\end{equation}
By \cite[Theorem 11.1]{GY:Poi-CGL}, there is a unique mutation matrix $M$, with rows 
indexed by $i \in [1, l]$ and columns by $k \in {\rm ex}$, such that the mutation equivalence class 
$[(\Phi_\bfw, \, {\rm ex}, \, M)]$ 
of seeds in $\CC(BwB/B)$ is compatible with both the
Poisson structure $\pi_w$ (Definition \ref{defn:regular}) and the $T$-action (Remark \ref{rem:T-comp}), and
that $[(\Phi_\bfw, \, {\rm ex}, \, M)]$ is both a cluster and and upper cluster structure on $BwB/B$. 
One also knows (see \cite[Paragraph after Theorem 1.1]{ShenWeng:DBS}) that $[(\Phi_\bfw, \, {\rm ex}, \, M)]$ is independent of the choice of the reduced word $\bfw$ of $w$. We set
\[
\calC^w_{\rm st} = [(\Phi_\bfw, \, {\rm ex}, \, M)]
\]
using any reduced word $\bfw$ of $w$ and refer to $\calC^w_{\rm st}$  as the  {\it standard cluster structure on $BwB/B$}. 
Thus all the extended clusters of $\calC^w_{\rm st}$ are
$T$-log-canonical systems on $(BwB/B, \pi_w)$. By \cite[Theorem 11.1]{GY:Poi-CGL}, all the Bott-Samelson coordinates
associated to all reduced words of $w$ are cluster or frozen variables of $\calC^w_{\rm st}$.

\begin{rem}\label{rem:GLS-KM}
When $G$ is a 
Kac-Moody group of simply-laced type,
the
cluster structure $\calC_{\rm st}^w$ was first studied  in \cite{GLS:partial}. See also 
\cite{ShenWeng:DBS}.
\end{rem}

We now have the following immediate consequence of Corollary \ref{cor:pi-T-I}.

\begin{thm}\label{thm:main-GB}
For every $w \in W$,  the standard
cluster structure $\calC_{\rm st}^w$ on $BwB/B$ has, without any frozen variable modification, Property $\calI$ at $\wB$.
\end{thm}

 As ${\rm ex} = \{k \in [1, l]: k^+ \in [1, l]\}$, the set of frozen variables of $\calC_{\rm st}^w$ is
\begin{equation}\label{eq:frozen-0}
{\rm Froz}(\calC_{\rm st}^w) =\{\vphi_{\bfw, k}: k^+=+\infty\} = \{\vphi_{\bfw, k}: k \in [1, l], k \neq j^- \,
\mbox{for any}\; j \in [1, l]\}.
\end{equation}
To describe ${\rm Froz}(\calC_{\rm st}^w)$ without using reduced words of $w$ nor any listing 
of the set $\Gamma$ of simple roots, let
\[
{\rm supp}(w) = \{\alpha \in \Gamma: s_\alpha \leq w\} = \{\alpha \in \Gamma: w\omega_{\alpha} \neq \omega_\alpha\},
\]
where $\leq$ is the Bruhat order on $W$ and 
 $\omega_\alpha$ for $\alpha \in \Gamma$ is the fundamental weight corresponding to $\alpha$. For $\alpha \in {\rm supp}(w)$, 
denote by $\vphi_{w, \alpha}$ 
both the  function on
$N_w$ and the function on $BwB/B$ given by
\begin{equation}\label{eq:vphi-w-alpha}
\vphi_{w, \alpha}(g) = \vphi_{w, \alpha}(g\wB) = \Delta_{\omega_\alpha, \, w\omega_\alpha}(g), \hs
g \in N_w.
\end{equation}
If $\bfw = (s_{i_1}, \ldots, s_{i_l})$ is a reduced word of $w$, and if 
$k_\alpha = {\rm max}\{k \in [1, l]: \alpha_{i_k} = \alpha\}$ for $\alpha \in {\rm supp}(w)$, 
then
$\vphi_{\bfw, k_\alpha} = \vphi_{w, \alpha} \in \CC[BwB/B]$ by Lemma \ref{lem:gig}. As
$\{k_\alpha: \alpha \in {\rm supp}(w)\} = [1, l]\backslash {\rm ex}$, we have
\begin{equation}\label{eq:frozen}
{\rm Froz}(\calC_{\rm st}^w) = \{\vphi_{w, \alpha} \in \CC[BwB/B]: \;\alpha \in {\rm supp}(w)\}.
\end{equation}

The following statement will be used in $\S$\ref{ss:Thimm-flow}.

\begin{lem} \label{lem:low-of-frozen}
    For any $w \in W$ and $\alpha \in \supp(w)$, the lowest degree term 
    $\vphi_{w, \alpha}^\low$ of $\vphi_{w, \alpha}$ at $\wB$ is a Casimir function on the linearization 
    $(T_{w_\cdot B}(BwB/B), \pi_{w, 0})$ of 
    $(BwB/B, \pi_w)$ at $\wB$.
\end{lem}

\begin{proof} Take any reduced word of $w$ and
let $(z_1, \ldots, z_k)$ be the corresponding Bott-Samelson coordinates on $BwB/B$.
Taking $f = z_k$ in \eqref{eq:f-vphi} for every $k \in [1, l]$, one has 
$\{z_k, \, \vphi_{w, \alpha}\}_{\pi_w} \in \CC\vphi_{w, \alpha} z_k$. It follows that 
$\{\vphi_{w,\alpha}^\low, z_k^\low\}_{\pi_{w, 0}} = 0$.
\end{proof}

We now turn to the log-volume form $\mu_{\calC^w_{\rm st}}$, defined up to the multiplication by $-1$ 
(see Lemma-Definition \ref{lem:same-volume}), and the 
$T$-Pfaffian of $\pi_w$.

\begin{cor} \label{cor:TPf}
Let $w \in W$ with $l = \ell(w)$ and let $(z_1, \ldots, z_l)$ be the Bott-Samelson coordinates on $BwB/B$ associated to any reduced 
word ${\bf w}$ of $w$. Then, up to multiplication by non-zero scalars, one has
\begin{equation}\label{eq:mu-phi-zzz}
\mu_{\calC^w_{\rm st}} = 
\left( \prod_{\alpha \in {\rm supp}(w)} \vphi_{w, \alpha}^{-1} \right) dz_1 \wedge \cdots \wedge dz_l, \;\;\; \hs
\Pf_T(\pi_w) =\left( \prod_{\alpha \in {\rm supp}(w)} \vphi_{w, \alpha} \right)\frac{\partial}{\partial z_1} \wedge \cdots \wedge \frac{\partial}{\partial z_l}.
\end{equation}
\end{cor}

\begin{proof} 
    By \eqref{eq:phik-CGL},  the Jacobi matrix 
$\frac{\partial (\varphi_{\bfw,1}, \ldots, \varphi_{\bfw, l})}{\partial (z_1, \ldots, z_l)}$
is upper triangular with $\varphi_{\bfw, k^-}$ as its $(k,k)$-entry for $k \in [1, l]$ 
(recall that $\varphi_{\bfw, k^-} = 1$ if $k^- =-\infty$).  The identity for $\mu_{\calC^w_{\rm st}}$
follows from Lemma-Definition \ref{lem:same-volume} and \eqref{eq:frozen}, and that for $\Pf_T(\pi_w)$
follows from Lemma \ref{lem:mu-Pfaff}.
\end{proof}



\subsection{The linearization  of \texorpdfstring{$(BwB/B, \pi_w)$}{(Bwb/B,piw)} at \texorpdfstring{$\wB$}{wB}}\label{ss:linear-Cw} Fix $w \in W$. Set again 
\[
\n_w = \n \cap \Ad_{\ow}\n_- \hand \n_{w, -} = \n_- \cap \Ad_{\ow}\n,
\]
and recall that $N_w = N \cap \ow N_- \ow^{\, -1}$. The isomorphism $N_w \to BwB/B$ in \eqref{eq:Nw-BwB} gives the identification
\begin{equation}\label{eq:nw-tangent}
\n_w \cong T_{w_\cdot B}(BwB/B).
\end{equation}

\begin{nota}\label{nota:nw-pi0} Recall that $\pi_{w, 0}$ denotes the linearization of 
$\pi_w$  at $\wB$. 
Using the identification in \eqref{eq:nw-tangent}, we also regard $\pi_{w, 0}$ as 
 a linear Poisson structure on $\n_w$ and still denote it by $\pi_{w, 0}$.  
\end{nota}

On the other hand, using the non-degenerate symmetric bilinear form 
$\lara_\g$ on $\g$ fixed at the beginning of $\S$\ref{ss:pist}, we have  the vector space isomorphism
\[
\zeta_w:\;\; \n_w \longrightarrow \n_{w, -}^*, \;\; \zeta_w (x) = \langle x, \, -\rangle_\g.
\]
Let $\pi_{0, \n_{w, -}}$ be the Kirillov-Kostant-Souriau  Poisson structure on 
$\n_{w, -}^*$ defined by the Lie algebra $\n_{w, -}$.

\begin{lem}\label{lem:pi0-nw} One has the Poisson isomorphism  
$\zeta_w: (\n_w, \, \pi_{w, 0})  \rightarrow (\n_{w, -}^*, \,  -2\pi_{0, \n_{w, -}})$.
\end{lem}

\begin{proof}
We first identify the Lie bialgebra \cite{LM:mixed} of the Poisson Lie subgroup $(B, \pist)$ of $(G, \pist)$.
With respect to the non-degenerate pairing  between $\g_{\rm diag}$ and $\gsts$ via 
the symmetric bilinear form $\lara_{\gog}$ on $\gog$ given in \eqref{eq:lara-gog},
the annihilator of $\b \cong \b_{\rm diag}$ in $\gsts$ is the Lie ideal $\n \oplus 0$ of $\gsts$, 
and the embedding 
\[
\b_- \longrightarrow \gsts, \;\; \xi_0 +\xi_- \longmapsto (-\xi_0, \; \xi_0 + \xi_-), \hs \xi_0 \in \t, \, \xi_- \in \n_-,
\]
gives a Lie algebra isomorphism $\b_- \cong \gsts/(\n\oplus 0)$. The pairing  between $\g_{\rm diag}$ and $\gsts$ via $\lara_{\gog}$ restricts to a non-degenerate pairing 
$\langle\, , \, \rangle_{(\b, \b_-)}$ between $\b\cong \b_{\rm diag} \subset \g_{\rm diag}$ and $\b_-\hookrightarrow \gsts$ 
given by
\begin{equation}\label{eq:bb-pair}
\langle x_+ + x_0, \; \xi_0 + \xi_-\rangle_{(\b, \b_-)} 
=-\frac{1}{2}\langle x_+, \; \xi_-\rangle_\g - \langle x_0, \;\xi_0\rangle_\g, \hs x_+ \in \n,\,
\, x_0, \,\xi_0 \in \t, \, \xi_- \in \n_.
\end{equation}
It follows that under the identification $\b^*\cong \b_-$ via $\langle\, , \, \rangle_{(\b, \b_-)}$, the Lie bialgebra of the Poisson Lie group $(B, \pist)$ is the pair $(\b, \b_-)$. Consequently, under the identification $\b \cong \b_-^*$ via 
$\langle\, , \, \rangle_{(\b, \b_-)}$, 
the linearization of $(B, \pist)$ at $e$ becomes
$(\b_-^*, \pi_{0, \b_-})$, where recall from Notation-Definition \ref{nota:pi0-l} that
$\pi_{0, \b_-}$ is the Kirillov-Kostant-Souriau  Poisson structure on
$\b_-^*$ defined by the Lie algebra $\b_-$. Consider now the Poisson map
\[
(B, \,\pist) \longrightarrow (BwB/B, \, \pi_w), \;\; b \longmapsto b\wB,
\]
whose differential at $e \in B$ gives a linear Poisson map $(\b, \pi_{0, \b_-}) \to (\n_w, \pi_{w, 0})$. 
Thus under the identification of $\n_w$ with $\n_{w, -}^*$ using the pairing
$-\frac{1}{2} \lara_\g$ between $\n_w$ and $\n_{w, -}$, the linear Poisson structure $\pi_{w, 0}$ on $\n_w$ 
becomes the Kirillov-Kostant-Souriau  Poisson structure $\pi_{0, \n_{w, -}}$ on
$\n_{w, -}^*$, i.e., for $\xi_-, \eta_- \in \n_{w, -}$, one has 
\[
\left\{-\frac{1}{2} \zeta_w^*(\xi_-), \; -\frac{1}{2} \zeta_w^*(\eta_-)\right\}_{\pi_{w, 0}} 
= -\frac{1}{2} \zeta_w^*([\xi_-, \eta_-]) \in 
\n_w^*,
\]
which gives $\{\zeta_w^*(\xi_-), \;\zeta_w^*(\eta_-)\}_{\pi_{w, 0}} = -2 \zeta_w^*([\xi_-, \eta_-])$
for $\xi_-, \eta_-\in \n_-$. 
Thus $\zeta_w(\pi_{w, 0}) = -2\pi_{0, \n_{w, -}}$.
\end{proof}

\begin{example}\label{ex:nn}
When $w = w_0$, the longest element in $W$, the linearization of $\pi_{w_0}$ at 
${w_0}_\cdot B \in G/B$ is then $-2$ times the
Kirillov-Kostant-Souriau  Poisson structure $\pi_{0, \n_-}$ on $\n \cong \n_-^*$.
\end{example}

\begin{cor}\label{cor:int-sys-GB}
For every extended cluster $\Phi$ of the standard cluster structure $\calC_{\rm st}^w$ on $BwB/B$, the set 
$\Phi^\low$ of lowest degree terms of elements in $\Phi$ at $\wB \in BwB/B$ contains a homogeneous polynomial integrable
system on $(T_{\wB}(BwB/B), \pi_{w, 0})\cong (\n_w, \pi_{w, 0}) \cong (\n_{w, -}^*, -2\pi_{0, \n_{w, -}})$.
\end{cor}

\begin{rem}\label{rem:different-0} 
 The integrable systems on $(\n_w, \pi_{w, 0})$  obtained from different extended clusters of $\calC_{\rm st}^w$
    are in general different.  In fact, the Poisson commutative sub-algebras of 
    $(\CC[\n_w], \{\, , \, \}_{\pi_{w,0}})$ generated by $\Phi^\low$ for different extended clusters 
    $\Phi$ of $\calC_{\rm st}^w$ are in general different.  To see this, recall 
    that  the Bott-Samelson coordinates $z_1, \ldots, z_l$ 
    are all cluster or frozen variables of $\calC_{\rm st}^w$, hence the sub-algebra of $\CC[\n_w]$ generated by all the $\Phi^\low$, where $\Phi$ ranges over all extended clusters, is $\CC[\n_w]$, which in general is not Poisson commutative.  
\end{rem}

\begin{nota-rem}\label{nota-rem:BS-linear}
For $w \in W$, recall that $\pi_w$ also denotes the induced Poisson structure on $N_w$ via the isomorphism $N_w \to BwB/B$ 
in \eqref{eq:Nw-BwB}. 
The linearization of $(N_w, \pi_w)$ at $e \in N_w$ is then $(\n_w, \pi_{w, 0})$.
Let $\bfw = (s_{i_1}, \ldots, s_{i_l})$ be any reduced word of $w$. By viewing the Bott-Samelson coordinates on $BwB/B$ associated to $\bfw$ as coordinates on $N_w$ via the parametrization 
\begin{equation}\label{eq:BS-Nw}
\CC^l \ni z = (z_1, \ldots, z_l) \longmapsto g(z) 
=(e_{i_1}(z_1)\overline{s_{i_1}}\,e_{i_2}(z_2)\overline{s_{i_2}}\,\cdots \,e_{i_l}(z_l)\overline{s_{i_l}}) \overline{w}^{\, -1},
\end{equation}
for every pair $1 \leq j < k \leq l$, we then have $\{z_j, z_k\}_{\pi_w} \in \CC[z_1, \ldots, z_l]$ with $0$ constant term.
As in Remark \ref{rem:v-z}, we will abuse notation and denote the linear function $d_ez_k =z_k^\low$
on $T_eN_w = \n_w$ also as $z_k$ for $k \in [1, l]$. 
For $1 \leq j < k \leq l$,
the linear function $\{z_j, z_k\}_{\pi_{w, 0}}$ on $\n_w$ is then the linear term of the polynomial 
$\{z_j, z_k\}_{\pi_w}$.
To describe $\{z_j, z_k\}_{\pi_{w, 0}}$ using  Lemma \ref{lem:pi0-nw}, 
for $k \in [1, l]$, let $w_k = s_{i_1} \cdots s_{i_k} \in W$, let
\[
 E_{\bfw, k} = \Ad_{\overline{w_{k-1}}} e_{i_k}\in \g_{\beta_{\bfw, k}}
\]
and  let
$E_{{\bfw, k, -}} \in \g_{-\beta_{\bfw, k}}$ be such that 
$\langle E_{\bfw, k}, \, E_{\bfw, k,-}\rangle_\g = 1$, where $\beta_{\bfw, k}$ is given in
\eqref{eq:beta-bfw-k}. 
Then $\{E_{{\bfw, k}}: k \in [1, l]\}$ is a basis of
$\n_w$, which is dual to the basis $\{E_{{\bfw, k,-}}: k \in [1, l]\}$ of $\n_{w, -}$ under the pairing $\lara_\g$
between $\n_w$ and $\n_{w, -}$. On the other hand, as 
\[
g(z) =(e_{i_1}(z_1)\overline{s_{i_1}}\,e_{i_2}(z_2)\overline{s_{i_2}}\,\cdots \,
e_{i_l}(z_l)\overline{s_{i_l}})\, \overline{w}^{\, -1} 
= \exp(z_1 E_{\bfw, 1}) \exp(z_2 E_{\bfw, 2}) \cdots \exp(z_l E_{\bfw, l}),
\]
the element $d_ez_k \in T_e^*N_w$ for $k \in [1, l]$ 
corresponds to $E_{\bfw, k, -}$ under the identification $T_e^*N_w = \n_w^* \cong \n_{w, -}$. 
It thus follows from Lemma \ref{lem:pi0-nw} that for $1 \leq j < k \leq l$,
\begin{equation}\label{eq:zjk}
\{z_j, z_k\}_{\pi_{w, 0}} =\begin{cases} -2 c_{j, k} z_{p(j, k)}, &\hs \mbox{if}\; \beta_{\bfw, j} + \beta_{\bfw, k} \;\mbox{is a root},\\
0, & \hs \mbox{otherwise},\end{cases}
\end{equation}
where  when
$\beta_{\bfw, j} + \beta_{\bfw, k}$ is a root, $p(j, k) \in [j+1, k-1]$ and $c_{j,k} \in \CC$ are such that
$\beta_{\bfw, j} + \beta_{\bfw, k}= \beta_{\bfw, p(j,k)}$ and 
$[E_{\bfw, j, -}, E_{\bfw, k, -}] 
=c_{j,k} E_{\bfw, p(j,k), -}$.
Our choice of $\lara_\g$ in fact ensures that every 
$c_{j, k}$ appearing in \eqref{eq:zjk} is an integer (see \cite[Theorem 5.21]{EL:BS}). 

\end{nota-rem}

\subsection{The index of the Lie algebra \texorpdfstring{$\n_{w, -}$}{nw-}}\label{ss:index-nw} For a Lie algebra $\l$,
recall from \eqref{eq:ind-l} and
\eqref{eq:pi0-l-all} the definitions of  
$\ind(\l)$, the index of $\l$, and of ${\rm mag}(\l)$, the magic number of $\l$.  

Fix $w \in W$. For $\alpha \in {\rm supp}(w)$, let $\vphi_{w,\alpha} \in \CC[N_w]$ be as defined in 
\eqref{eq:vphi-w-alpha}, and consider the homogeneous polynomial $\vphi_{w, \alpha}^\low \in \CC[\n_w]$, the 
 lowest degree term of $\vphi_{w, \alpha}$ at $e \in N_w$. 
The following 
Lemma \ref{lem:deg-cdeg} points to the 
representation theoretical nature of the integer $\deg(\vphi_{w, \alpha}^\low)$.
Recall the signed generalized minor 
$\delta_{\omega_\alpha, w\omega_\alpha}$ on $\g$, which, by Definition-Lemma \ref{de-lem:delta},
is also the co-degree term of the linear function
$\widehat{\Delta}_{\omega_\alpha, w\omega_\alpha}$ on $U\g$.

\begin{lem}\label{lem:deg-cdeg}
For any $w \in W$ and $\alpha \in {\rm supp}(w)$, as polynomial functions on $\n_w$, one has 
\[
\vphi_{w,\alpha}^\low  =\delta_{\omega_\alpha, \, w\omega_\alpha}|_{\n_w} =
\mbox{the co-degree term of}\; \widehat{\Delta}_{\omega_\alpha, w\omega_\alpha}|_{U\n_w},
\]
and $\deg(\vphi_{w, \alpha}^\low)$ is also equal to the degree of
$\delta_{\omega_\alpha, w\omega_\alpha}$ as a homogeneous polynomial on $\g$. 
\end{lem}

\begin{proof}
By  \eqref{eq:vphi-w-alpha}, 
$\vphi_{w, \alpha}= \Delta_{\omega_\alpha, w\omega_\alpha}|_{N_w}$, so 
$\vphi_{w,\alpha}^\low \in \CC[\n_w]$ coincides with the co-degree term of 
$\widehat{\Delta}_{\omega_\alpha, w\omega_\alpha}|_{U\n_w} \in (U\n_w)^*$. By Lemma \ref{lem:delta-1}, one has
$\vphi_{w,\alpha}^\low = \delta_{\omega_\alpha, \, w\omega_\alpha}|_{\n_w}
\in \CC[\n_w]$ and 
$\deg (\vphi_{w, \alpha}^\low) = \deg (\delta_{\omega_\alpha, w\omega_\alpha})$.
\end{proof}

\begin{rem}\label{rem:nw-Casimir}
For $w \in W$ and $\alpha \in {\rm supp}(w)$, it follows from 
Lemma \ref{lem:Nwm-inv} that  
\[
\vphi_{w, \alpha}^\low =\delta_{\omega_\alpha, \, w\omega_\alpha}|_{\n_w} \in \CC[\n_w]^{N_{w, -}}
\]
is a Casimir function for $(\n_w, \pi_{w, 0})$, a fact we already know from  Lemma \ref{lem:low-of-frozen}.
\end{rem}

For $w \in W$, set
\begin{equation}\label{eq:dw}
d_w = \sum_{\alpha \in {\rm supp}(w)} \deg (\vphi_{w,\alpha}^\low) = 
\sum_{\alpha \in {\rm supp}(w)} \deg (\delta_{\omega_\alpha, w\omega_\alpha}).
\end{equation}

\begin{prop}\label{prop:index-nw-0} For any  $w \in W$, the rank 
${\rm rk}(\pi_{0, \n_{w,-}})$ of the Kirillov-Kostant-Souriau  Poisson structure 
$\pi_{0, \n_{w, -}}$ on $\n_{w, -}^*$,  
the index  and the magic number  of the Lie algebra 
$\n_{w, -}$  are respectively given by
\[
{\rm rk}(\pi_{0, \n_{w,-}}) = 2(\ell(w)-d_w), \hs   \ind(\n_{w, -}) = 2d_w - \ell(w), \hs \mbox{and} \hs 
{\rm mag}(\n_{w, -}) = d_w.
\]
\end{prop}

\begin{proof} Consider the $T$-Pfaffian ${\rm Pf}_T(\pi_w)$ of $(BwB/B, \pi_w)$
and its lowest degree term ${\rm Pf}_T(\pi_w)^\low$ at $\wB$.  
By Corollary \ref{cor:TPf}, Lemma \ref{lem:pi0-nw},  and Theorem \ref{thm:TT-fixed}, one has
\begin{equation}\label{eq:Pf-dw}
\deg ({\rm Pf}_T(\pi_w)^\low) = d_w-\ell(w) = -\frac{1}{2}{\rm rk}(\pi_{0, \n_{w, -}}).
\end{equation}
The assertions for $\ind(\n_{w, -})$ and ${\rm mag}(\n_{w, -})$ now follow from the definitions and $\eqref{eq:Pf-dw}$.
\end{proof}

In the following result, for $w \in W$, $\ker(1+w)$ stands for the kernel of the linear map $1+w: \t^* \rightarrow \t^*$.

\begin{lem}\label{lem:dw}
For any $w \in W$, one has $\ind(\n_{w, -}) \geq \dim \ker (1+w)$,
\begin{equation}\label{eq:ind-nw-dw}
{\rm rk}(\pi_{0, \n_{w,-}}) \leq \ell(w) - \dim \ker (1+w) \hs \mbox{and} \hs 
2d_w \geq \ell(w) + \dim \ker (1+w).
\end{equation}
\end{lem}

\begin{proof}
By \cite[Theorem 1.1]{LM:T-leaves}, the rank of the Poisson structure $\pi_w$ on $N_w$ is $\ell(w) - \dim \ker (1+w)$.
Thus the rank of the linearization $(\n_w,\pi_{w, 0})$ of $(N_w, \pi_w)$ at $e \in N_w$ satisfies
${\rm rk}(\pi_{0, \n_{w,-}}) \leq \ell(w) - \dim \ker (1+w)$, which  gives the rest of the statements in Lemma 
\ref{lem:dw}. 
\end{proof}

\begin{example}\label{ex:s1s2}
Let $w = s_is_j$, where $i, j \in [1, r]$, $i \neq j$, and $\langle \alpha_i, \alpha_j \rangle \neq 0$. Then in the
Bott-Samelson coordinates $(z_1, z_2)$, we have $\{z_1, z_2\}_{\pi_w} = \langle \alpha_i, \alpha_j \rangle z_1z_2$ (see (\ref{eq:zjk-piw})), while
$\{z_1, z_2\}_{\pi_{w, 0}} = 0$, so $2 = \ind (\n_{w,-}) > \dim \ker (1 + w) = 0$. The inequalities in Lemma \ref{lem:dw} in this case are thus strict.
\end{example}

When $w = w_0$, the three inequalities in Lemma \ref{lem:dw} become equalities, and
we have a second explicit formula $d_{w_0}={\rm mag}(\n_-)$. For the precise statement,
let $\rho =\frac{1}{2}\sum_{\beta \in R_+}\beta = \sum_{\alpha \in \Gamma} \omega_\alpha$,
where  recall that $R_+$ is the set of all positive roots. Recall also 
$\kappa^\vee$ given in \eqref{eq:kappa-0}.

\begin{prop}\label{prop:w0} One has $\ind(\n_-) = \dim (\ker(1+w_0))$, 
${\rm rk}(\pi_{0, \n_-}) = \elw - \dim \ker (1+w_0)$, and  
\begin{equation}\label{eq:rho-kappa}
{\rm mag}(\n_-) = d_{w_0} =  \frac{1}{2}(\elw +\dim \ker (1+w_0))= (\kappa^\vee, \, \rho).
\end{equation}
\end{prop}

\begin{proof}
By Lemma \ref{lem:Kostant}, 
$\ind(\n_-) \leq \dim \r_-=|\calB| =\dim (\ker(1+w_0))$. Thus by Lemma \ref{lem:dw},
$\ind(\n_-) = \dim (\ker(1+w_0))$. Consequently
${\rm rk}(\pi_{0, \n_-}) = \elw - \dim \ker (1+w_0)$ and 
$2d_{w_0} = \elw + \dim \ker (1+w_0)$.
On the other hand, 
$d_{w_0} = \sum_{\alpha \in \Gamma} \deg (\vphi_{w_0, \alpha}^\low)$ by definition,
and it follows from Lemma 
\ref{lem:deg-cdeg}  and Lemma \ref{lem:di} that $d_{w_0} = (\kappa^\vee, \, \rho)$.
\end{proof}

As a consequence of Property $\calI$ of $\calC_{\rm st}^{w_0}$ at ${w_0}_\cdot B$ and Proposition
\ref{prop:w0}, for any Bott-Samelson coordinates
$(z_, \ldots, z_{\ell(w_0)})$ on $Bw_0B/B$ and taking lowest degree terms at $(0, \ldots, 0)$, we have
\begin{equation}\label{eq:low-mu-w0}
\deg \left(\left(\frac{dz_1 \wedge \cdots \wedge dz_{\ell(w_0)}}{\prod_{\alpha \in \Gamma} \vphi_{w_0, \alpha}(z)}\right)^\low \right)= \frac{1}{2}{\rm rk}(\pi_{0, \n_-})= \frac{1}{2}(\ell(w_0) - \dim \ker (1+w_0)).
\end{equation}

\begin{rem}\label{rem:another-proof}
While we have arrived at the last identity 
\eqref{eq:rho-kappa} using the $T$-Pfaffian of the standard Poisson structure $\pi_{w_0}$ on $Bw_0B/B$, 
it can also be proved by directly using the cascade $\calB$. Indeed, by construction each $\beta \in \calB$ is the highest root of some simple Lie sub-algebra $\g(\beta)$ of $\mathfrak g$, and we write $E(\beta)$ for the set of positive roots of $\g(\beta)$ which are not orthogonal to $\beta$. Then by \cite[Proposition 1.1]{K12}, one has 
$\n = \bigoplus \limits_{\beta \in \calB} {\mathfrak{H}}(\beta)$, where for each $\beta \in \calB$,
$ {\mathfrak{H}}(\beta) = \bigoplus \limits_{\nu \in E(\beta)} \mathfrak g_{\nu}$
is a Heisenberg Lie algebra of dimension $2 h_{\beta}^{\vee} - 3$, and
$h_{\beta}^{\vee}$ is the dual Coxeter number of $\g(\beta)$.  Thus
$\elw = \dim \n = \sum \limits_{\beta \in \calB} (2h_{\beta}^{\vee} - 3)$, and it follows that 
\[
\elw + \dim \ker (1+w_0) =\sum \limits_{\beta \in \calB} (2h_{\beta}^{\vee} - 3) + |\calB| = 2\sum_{\beta \in \calB}
(h_\beta^\vee -1).
\]
On the other hand,  by one of the equivalent definitions of the dual Coxeter number, for each $\beta \in \calB$ one has 
\[
h_\beta^\vee -1 = \text{sum of the coefficients of } \beta^{\vee} = \sum_{i=1}^r (\beta^{\vee},\omega_i) =(\beta^\vee, \rho),
\]
This gives another proof of the last identity of \eqref{eq:rho-kappa}.
\end{rem}

\subsection{Thimm's method and Hamiltonian flows of \texorpdfstring{$\vphi_{\bfw, k}^\low$}{phi-low}}\label{ss:Thimm-flow}
Let again $w \in W$, and let $\bfw = (s_{i_1}, \ldots, s_{i_l})$ be any reduced word of $w$.
Recall from \eqref{eq:vphi-bfw-k} that we have
\[
\Phi_\bfw = \{\vphi_{\bfw, k} = \Delta_{\omega_{i_k}, \, s_{i_1} \cdots s_{i_k} \omega_{i_k}}|_{N_w}: k \in [1, l]\} \subset \CC[N_w],
\]
where each $k \in [1, l]$, 
$\vphi_{\bfw, k}(g) = \Delta_{\omega_{i_k}, \, s_{i_1} \cdots s_{i_k}\omega_{i_k}}(g)$ for $g \in N_w$.
By Lemma \ref{Lem:mainlem}, the set 
\begin{equation}\label{eq:varphi-w-k-low}
\Phi_\bfw^\low = \{\vphi_{\bfw, k}^\low = \delta_{\omega_{i_k}, s_{i_1}\cdots s_{i_k}\omega_{i_k}}|_{\n_w}: k \in [1, l]\} \subset \CC[\n_w]
\end{equation}
of lowest degree terms at $e \in N_w$ is $\pi_{w, 0}$-involutive. 
In this section, we  give 
a second proof of the $\pi_0$-involutivity of $\Phi_\bfw^\low$ which is representation theoretical and is
in the sprit of the well-known Thimm's method.  
We also show that all the functions in $\Phi_{\bfw}^\low$ have complete Hamiltonian flows with respect 
to $\pi_{w, 0}$, or, more specifically, have quasi-polynomial Hamiltonian flows.

For $k \in [1,l]$, let $w_k = s_{i_1} \cdots s_{i_k}$.  We then have the sequence of Lie sub-algebras
\[
0 \subset \n_{w_1, -} \subset \n_{w_2, -}  \subset \cdots \subset \n_{w_{l-1}, -} \subset \n_{w_l, -} = \n_{w, -},
\]
with $\dim \n_{w_k, -} =k$ for every $k \in [1, l]$. Identifying $\n_{w_k} \cong \n_{w_k, -}^*$ via 
$\lara_\g$, we get the sequence of projections
\begin{equation}\label{eq:pr-n}
\n_w = \n_{w_l}\, {\longrightarrow} \,\n_{w_{l-1}} \,\longrightarrow \, \cdots \,
\,{\longrightarrow} \,\n_{w_2}\,
{\longrightarrow}\, \n_{w_1}\, \longrightarrow \,0,
\end{equation}
which gives rise to the sequence of Poisson sub-algebras 
\begin{equation}\label{eq:sequence-CC}
0\, \longhookrightarrow\, \CC[\n_{w_1}] \,{\longhookrightarrow} \,\CC[\n_{w_2}] \,
{\longhookrightarrow} \,\cdots \,\longhookrightarrow
\,\CC[\n_{w_{l-1}}] \, {\longhookrightarrow} \,(\CC[\n_w], \, \{\, , \, \}_{\pi_{w,0}}).
\end{equation}
Note that  for each $k \in [1, l-1]$, the projection 
$\n_{w_{k+1}} \to \n_{w_{k}}$ in \eqref{eq:pr-n} 
is the restriction of ${\rm pr}_{\n_{w_{k}}}: \g \to \n_{w_{k}}$  to
$\n_{w_{k+1}}$. Thus under the identification of $\CC[\n_{w_k}]$ as a sub-algebra of $\CC[\n_w]$ in 
\eqref{eq:sequence-CC}, we have  
\[
\vphi_{\bfw, k}^\low =\vphi_{w_k, \alpha_{i_k}}^\low =\delta_{\omega_{i_k}, \, w_k \omega_{i_k}}|_{\n_{w_k}} \in \CC[\n_{w_k}].
\]
By Lemma \ref{lem:low-of-frozen}, $\vphi_{w_k, \alpha_{i_k}}^\low $ is a Casimir element of $\CC[\n_{w_k}]$ 
for each $k \in [1, l]$, giving another explanation of the $\pi_0$-involutivity of the 
set $\{\vphi_{\bfw, k}^\low: k \in [1, l]\}$ as per Thimm's method \cite{Guillemin-Sternberg:collective}.

Turning to the Hamiltonian flows of the functions $\vphi_{{\bf w}, k}^\low$ for $k \in [1, l]$, 
let $z = (z_1, \ldots, z_l)$ be the Bott-Samelson coordinates on $N_w$ defined by $\bfw$  in 
\eqref{eq:BS-Nw}, and we regard each $\vphi_{\bfw, k}$ as a polynomial in $z$ as in 
\eqref{eq:vphi-k-z}. As noted in Notation-Remark \ref{nota-rem:BS-linear}, by denoting 
$d_ez_k = z_k^\low \in \n_w^*$ also by $z_k$ for
$k \in [1, l]$, we also regard $(z_1, \ldots, z_l)$ as 
a linear  coordinate system on $\n_w$, and the linear Poisson bracket $\{z_j, z_k\}_{\pi_{w, 0}}$ for $1 \leq j < k \leq l$ is then the linear term of
$\{z_j, z_k\}_{\pi_w} \in \CC[z]$. 

Recall that a quasi-polynomial in a variable $t$ is a  $\mathbb C$-linear combination of functions of 
$t$ of the form $q(t) e^{at}$, where $q(t)$ is a polynomial in $t$ and $a \in \mathbb C$ is a constant. 
We say that a curve $t \mapsto \gamma(t)$ in a vector space $V$ is
{\it quasi-polynomial} if it is defined for all $t \in \CC$ and if, when written in one, equivalently in any, linear coordinate system on $V$, all of its
components are quasi-polynomials in $t$. 

\begin{defn}\label{defn:quasi-poly}
If $V$ is a vector space with an algebraic Poisson structure $\pi$, a polynomial function $\vphi$ on $V$
is said to have {\it quasi-polynomial Hamiltonian flow with respect to $\pi$} if all the integral curves of the Hamiltonian vector field of $\vphi$ with respect to $\pi$ are quasi-polynomial.
\end{defn}

\begin{prop} \label{prop:quasi-poly-GB}
For every reduced word $\bfw$ of $w$ and for every $k \in [1, l]$, where $l = \ell(w)$, the polynomial function $\varphi_{\bfw, k}^{\low}$ on $\n_w$  has quasi-polynomial Hamiltonian flow with respect to $\pi_{w, 0}$.
\end{prop}

\begin{proof}
Fix $k\in [1, l]$, and let $X_{\varphi_{\bfw, k}^{\low}}$ be 
the Hamiltonian vector field of $\varphi_{\bfw, k}^{\low}$ with respect to $\pi_{w, 0}$.
 Let $x \in \n_w$ and write the integral curve of $X_{\varphi_{\bfw, k}^{\low}}$ through $x$ in the 
  linear coordinates $(z_1, \ldots, z_l)$ as $(z_1(t), \ldots, z_l(t))$.  By Lemma \ref{lem:low-of-frozen},
$\{z_j, \, \vphi_{\bfw, k}^\low\}_{\pi_{w,0}} = 0$ for all $j \in [1, k]$, so 
  $z_j(t)$ is constant in $t$ for every $j \in [1,k]$. Let $j \in [k+1, l]$ and assume that $z_1(t), \ldots, z_{j-1}(t)$
are all quasi-polynomials in $t$. Since $\vphi_{\bfw, k} \in \CC[z_1, \ldots, z_k]$, by \eqref{eq:zjk}, 
$\{z_j, \vphi_{\bfw, k}\}_{\pi_{w,0}} \in \CC[z_1, \ldots, z_{j-1}]$. By the induction assumption, 
$\frac{d z_j(t)}{dt}$ is a quasi-polynomial in $t$, so $z_j(t)$, being an anti-derivative of $\frac{d z_j(t)}{dt}$, is a quasi-polynomial in $t$. 
\end{proof}

\subsection{Examples of integrable systems on \texorpdfstring{$(\n_w, \pi_{w, 0})$}{(nw,piw)}}\label{ss:choice-GB}
For any $w \in W$ and  any reduced word ${\bf w}$ of $w$, we now describe an explicit integrable system on $(\n_w, \pi_{w, 0})$ contained in $\Phi_\bfw^\low$ in \eqref{eq:varphi-w-k-low}. 

\begin{lem} \label{lem:degjump}
Let $w \in W$ with $l = \ell(w)$ and let $\bfw$ be any reduced word of $w$. For every $k \in [1,l]$ one has
\begin{equation}\label{eq:deg-phi-phi}
\deg (\varphi_{\bfw, k^-}^{\low})\leq \deg (\varphi_{\bfw, k}^{\low})\leq 1+\deg (\varphi_{\bfw, k^-}^{\low}),
\end{equation}
and the following conditions on $k \in [1, l]$ are equivalent:

1) $\deg (\varphi_{\bfw, k}^{\low})=1+\deg (\varphi_{\bfw, k^-}^{\low})$;

2) $\{\varphi_{\bfw, k^-}^{\low},\; z_k\}_{\pi_{w, 0}}= 0$;

3) $\varphi_{\bfw, k^-}^{\low}$ is a Casimir function of 
$(\CC[\n_w], \{\, , \, \}_{\pi_{w, 0}}) = (\CC[z_1, \ldots, z_k], \{\, , \, \}_{\pi_{w, 0}})$;

4) $\varphi_{\bfw, k}^{\low} = z_k\varphi_{k^-}^{\low} + \varsigma_{k}$ for some 
$\varsigma_{k} \in \CC[z_1, \ldots, z_{k-1}]$;

5) $\varphi_{\bfw, k}^{\low} \notin \CC[z_1, \ldots, z_{k-1}]$.

\noindent
When none of these equivalent conditions
holds, one has $\vphi_{\bfw, k}^\low = 
\frac{1}{\langle \alpha_{i_k}, \, \alpha_{i_k}\rangle} \{\varphi_{\bfw, k^- }^{\low}, \, z_k\}_{\pi_{w, 0}}
\in \CC[z_1, \ldots, z_{k-1}]$.
\end{lem}

\begin{proof} Let $k \in [1,l]$. By \eqref{eq:phik-CGL}, $\vphi_{\bfw, k} = z_k \vphi_{\bfw, k^-}+\phi_{\bfw, k}$
for some 
$\phi_{\bfw,k} \in \CC[z_1, \ldots, z_{k-1}]$. It follows that $\deg (\varphi_{\bfw, k}^{\low}) \leq 1+
\deg (\varphi_{\bfw, k^-}^{\low})$. By \eqref{eq:vphik-recursive}, one has
\begin{equation}\label{eq:phi-ab}
\vphi_{\bfw, k} = \frac{1}{\lambda_k} \{\vphi_{\bfw, k^-}, \, z_k\}_{\pi_w} + \xi_k z_k \vphi_{\bfw, k^-},
\end{equation}
where $\lambda_k ={\langle \alpha_{i_k}, \, \alpha_{i_k}\rangle}$ and
$\xi_k = (\langle s_{i_1}\cdots s_{i_{k-1}}\alpha_{i_k},\omega_{i_k}-s_{i_1}\cdots s_{i_k}\omega_{i_k}\rangle)/\lambda_k$. 

If $\{\varphi_{\bfw, k^- }^{\low}, \,z_k\}_{\pi_{w,0}}=0$, then all the monomial terms on the 
right-hand side of \eqref{eq:phi-ab} have degree at least $1+\deg (\vphi_{\bfw, k^-}^\low)$, so 
$\deg (\varphi_{\bfw, k}^{\low}) = 1+
\deg (\varphi_{\bfw, k^-}^{\low})$. 
If $\{\varphi_{\bfw, k^- }^{\low}, \, z_k\}_{\pi_{w,0}}\neq 0$, then  
$\vphi_{\bfw, k}^\low = \frac{1}{\lambda_k} \{\varphi_{\bfw, k^- }^{\low}, \, z_k\}_{\pi_{w, 0}}$, 
so $\deg(\vphi_{\bfw, k}^\low) = \deg(\vphi_{\bfw, k^-}^\low)$.  We have thus proved 
\eqref{eq:deg-phi-phi} and the equivalence of 1) and 2). 

By Lemma \ref{lem:low-of-frozen}, $\{\vphi_{\bfw, k^-}^\low, z_j\}_{\pi_{w,0}} =0$ for every $j \in [1, k-1]$.
Thus 2) and 3) are equivalent. 

 It follows from $\vphi_{\bfw, k} = z_k \vphi_{\bfw, k^-}+\phi_{\bfw, k}$ that 1) implies 4), and it is evident that
 4) implies 5). If 5) holds, since $\phi_{\bfw,k} \in \CC[z_1, \ldots, z_{k-1}]$,
 $\vphi_{\bfw, k}^\low$ must contain $z_k\vphi_{\bfw, k^-}^\low$ as a summand, 
 so 1) holds. 
\end{proof}

For $w \in W$ and any reduced word $\bfw = (s_{i_1}, \cdots, s_{i_l})$ of $w$, let 
\begin{equation}\label{eq:K-bfw}
K_{\bf w} = \{k \in [1, l]:\, \deg \varphi_{\bfw, k}^\low = 1+\deg \varphi_{\bfw, k^-}^\low\}.
\end{equation}

\begin{thm} \label{thm:choice-GB}
For $w \in W$ and any reduced word $\bfw = (s_{i_1}, \cdots, s_{i_l})$ of $w$, the set 
$\{\varphi_{\bfw, k}^{\low}:  k \in K_{\bfw}\}$ 
is a polynomial integrable system on $(\n_w, \pi_{w, 0})$.
\end{thm}

\begin{proof}
 As $\vphi_{\bfw, k}^\low \in \CC[z_1, \ldots, z_k]$ for each $k \in [1,l]$, 
 Condition 5) in Lemma \ref{lem:degjump} implies that the functions in $\{\varphi_{\bfw, k}^{\low}:  k \in K_\bfw\}$ 
 are independent, and Condition 1) in Lemma \ref{lem:degjump} implies that 
\[
|K_\bfw|=\sum_{k \in [1, l]\backslash {\rm ex}} \deg (\varphi_{\bfw, k}^\low),
\]
which, by Proposition \ref{prop:index-nw-0}, is equal to the magic number of $(\n_w, \pi_{w, 0})$. 
\end{proof}

\begin{example}\label{ex:An-0}
Consider $G = \SL(n+1,\CC)$ and the reduced word
\begin{equation}\label{eq:spw0}
\bfwz = (s_1, s_2, \ldots, s_{n-1}, s_n, s_1, s_2, \ldots, s_{n-1}, \ldots, s_1, s_2, s_1)
\end{equation}
of the longest element $w_0$.  Up to a sign, the set $\{\varphi_{\bfwz, k}^{\low}: 1 \le k \le \frac{1}{2}n(n+1)\} = \{\varphi_{\bfwz, k}^{\low} \colon k \in K_{\bfwz}\}$ 
consists of the determinants of solid square submatrices that contain the first row and do not contain zero entries, depicted as follows (the matrices being a generic element of $\n$):
    \begin{equation*}
        \tikz[baseline=(M.west)]{%
            \node[matrix of math nodes,matrix anchor=west,left delimiter={[},right delimiter={]},ampersand replacement=\&] (M){%
                      0 \& * \& * \& * \& \cdots \& * \& * \& * \\
                      0 \& 0 \& * \& * \& \cdots \& * \& * \& * \\
                      0 \& 0 \& 0 \& * \& \cdots \& * \& * \& * \\
                      0 \& 0 \& 0 \& 0 \& \cdots \& * \& * \& * \\
                      \vdots \& \vdots \& \vdots \& \vdots \& \ddots \& \vdots \& \vdots \& \vdots \\
                      0 \& 0 \& 0 \& 0 \& \cdots \& 0 \& * \& * \\
                      0 \& 0 \& 0 \& 0 \& \cdots \& 0 \& 0 \& * \\
                      0 \& 0 \& 0 \& 0 \& \cdots \& 0 \& 0 \& 0 \\
                };
            \node[draw,fit=(M-1-2), inner sep=-1pt] {};
            \node[draw,fit=(M-1-3), inner sep=-1pt] {};
            \node[draw,fit=(M-1-4), inner sep=-1pt] {};
            \node[draw,fit=(M-1-6), inner sep=-1pt] {};
            \node[draw,fit=(M-1-7), inner sep=-1pt] {};
            \node[draw,fit=(M-1-8), inner sep=-1pt] {};
        },
        \tikz[baseline=(M.west)]{%
            \node[matrix of math nodes,matrix anchor=west,left delimiter={[},right delimiter={]},ampersand replacement=\&] (M){%
                      0 \& * \& * \& * \& \cdots \& * \& * \& * \\
                      0 \& 0 \& * \& * \& \cdots \& * \& * \& * \\
                      0 \& 0 \& 0 \& * \& \cdots \& * \& * \& * \\
                      0 \& 0 \& 0 \& 0 \& \cdots \& * \& * \& * \\
                      \vdots \& \vdots \& \vdots \& \vdots \& \ddots \& \vdots \& \vdots \& \vdots \\
                      0 \& 0 \& 0 \& 0 \& \cdots \& 0 \& * \& * \\
                      0 \& 0 \& 0 \& 0 \& \cdots \& 0 \& 0 \& * \\
                      0 \& 0 \& 0 \& 0 \& \cdots \& 0 \& 0 \& 0 \\
                };
            \node[draw,fit=(M-1-3)(M-2-4), inner sep=1pt] {};
            \node[draw,fit=(M-1-6)(M-2-7), inner sep=-1pt] {};
            \node[draw,fit=(M-1-7)(M-2-8), inner sep=1pt] {};
        },
        \tikz[baseline=(M.west)]{%
            \node[matrix of math nodes,matrix anchor=west,left delimiter={[},right delimiter={]},ampersand replacement=\&] (M){%
                      0 \& * \& * \& * \& \cdots \& * \& * \& * \\
                      0 \& 0 \& * \& * \& \cdots \& * \& * \& * \\
                      0 \& 0 \& 0 \& * \& \cdots \& * \& * \& * \\
                      0 \& 0 \& 0 \& 0 \& \cdots \& * \& * \& * \\
                      \vdots \& \vdots \& \vdots \& \vdots \& \ddots \& \vdots \& \vdots \& \vdots \\
                      0 \& 0 \& 0 \& 0 \& \cdots \& 0 \& * \& * \\
                      0 \& 0 \& 0 \& 0 \& \cdots \& 0 \& 0 \& * \\
                      0 \& 0 \& 0 \& 0 \& \cdots \& 0 \& 0 \& 0 \\
                };
            \node[draw,fit=(M-1-6)(M-3-8), inner sep=-1pt] {};
            \draw(M-1-4.north west) -- (M-3-4.south west);
            \draw(M-1-4.north west) -- (M-1-5.north west);
            \draw(M-3-4.south west) -- (M-3-5.south west);
        }, \cdots.
    \end{equation*}
    Here, when $n$ is odd, the last minor is of size $\frac{n+1}{2} \times \frac{n+1}{2}$; when $n$ is even, the last minor is of size $\frac{n}{2} \times \frac{n}{2}$.
\end{example}

\begin{example}\label{ex:G2-0}
 Consider $G_2$ with simple roots $\alpha_1$ and $\alpha_2$ such that 
$3\langle \alpha_1, \alpha_1\rangle = \langle \alpha_2, \alpha_2\rangle = 6$, and let $w_0$
be the longest element in $W$. Writing $\pi = \pi_{w_0}$, in the Bott-Samelson coordinates $z = (z_1, \ldots, z_6)$ on $N =N_{w_0}$ associated to 
the reduced word 
${\bf w}_0 = (s_1, s_2, s_1, s_2, s_1, s_2)$ of $w_0$, we have (\cite[Example 5.23]{EL:BS} and \cite[Proposition 4.10]{L})
\begin{align*}
\{z_{1},z_{2}\}_\pi &= -3z_{1}z_{2}, \hs 
\{z_{1},z_{3}\}_\pi = -z_{1}z_{3}-2z_{2}, \hs 
\{z_{1},z_{4}\}_\pi = -6z_{3}^2, \\
\{z_{1},z_{5}\}_\pi &= z_{1}z_{5}-4z_{3}, \hs  
\{z_{1},z_{6}\}_\pi = 3z_{1}z_{6}-6z_{5}, \hs
\{z_{2},z_{3}\}_\pi = -3z_{2}z_{3},\\ 
\{z_{2},z_{4}\}_\pi &= -6z_{3}^3-3z_{2}z_{4}, \hs 
\{z_{2},z_{5}\}_\pi = -6z_{3}^2, \hs 
\{z_{2},z_{6}\}_\pi = 3z_{2}z_{6}-18z_{3}z_{5}+6z_{4},\\ 
\{z_{3},z_{4}\}_\pi &= -3z_{3}z_{4}, \hs 
\{z_{3},z_{5}\}_\pi = -z_{3}z_{5}-2z_{4}, \hs 
\{z_{3},z_{6}\}_\pi = -6z_{5}^2,\\ 
\{z_{4},z_{5}\}_\pi &= -3z_{4}z_{5}, \hs 
\{z_{4},z_{6}\}_\pi = -6z_{5}^3-3z_{4}z_{6}, \hs 
\{z_{5},z_{6}\}_\pi = -3z_{5}z_{6}.
\end{align*}
By repeatedly using \eqref{eq:gqq} and \eqref{eq:master}, or by \eqref{eq:vphik-recursive}, we get
\begin{align*}
&\vphi_{\bfw_0, 1}=z_1, \hs \vphi_{\bfw_0, 2}=z_2, \hs \vphi_{\bfw_0, 3}=z_1z_3-z_2, 
\hs \vphi_{\bfw_0, 4}=z_2z_4-z^3_3,\\
&\vphi_{\bfw_0, 5}=z_1z_3z_5-z_2z_5-z_1z_4+z^2_3, \hs
\vphi_{\bfw_0, 6}=z_2z_4z_6-z^3_3z_6-z_2z^3_5-3z_3z_4z_5+3z^2_3z^2_5+z^2_4,
\end{align*}
and the frozen variables are $\vphi_{\bfw_0, 5}$ and $\vphi_{\bfw_0, 6}$. Correspondingly, we have
\begin{align*}
&\vphi_{\bfw_0, 1}^\low=z_1, \hs \vphi_{\bfw_0, 2}^\low=z_2, \hs \vphi_{\bfw_0, 3}^\low=-z_2, 
\hs \vphi_{\bfw_0, 4}^\low=z_2z_4,\\
&\vphi_{\bfw_0, 5}^\low=-z_2z_5-z_1z_4+z^2_3, \hs
\vphi_{\bfw_0, 6}^\low=z^2_4.
\end{align*}
The integrable system on the linearization $(\n, \pi_{w_0, 0})$  constructed in 
Theorem \ref{thm:choice-GB} is
\[
\{\vphi_{\bfw_0, k}^\low: \,k =1,2,4,5\} = \{z_1, \;z_2, \;z_2z_4, \;z_3^2-z_2z_5-z_1z_4\}.
\]
\end{example}

\subsection{Another integrable system and the Poisson center of \texorpdfstring{$(\n_w, \pi_{w, 0})$}{(nw,pi)}}\label{ss:Poi-center}
Let again $w \in W$ with $l = \ell(w)$ and let $\pi_{w, 0}$ be the linear Poisson structure on $\n_w$ as in Notation 
\ref{nota:nw-pi0}. For a reduced word $\bfw$ of $w$, we saw in Example \ref{ex:G2-0} that 
the homogeneous polynomials $\{\vphi_{{\bf w}, k}^\low: k \in [1, l]\}$ on $\n_w$ 
are not necessarily irreducible.
Consider also the Poisson center (see \eqref{eq:Nwm-nw} for the action of $N_{w, -}$ on $\n_w$)  
\[
C_w=\CC[\n_w]^{N_{w, -}}
\]
 of $(\n_w, \pi_{w, 0})$.
By \cite[Proposition 1]{Dixm:57}, 
the fraction field ${\rm Frac}(C_w)$ of $C_w$ is a transcendental field extension of
$\CC$ with pure transcendental degree equal to $\ind(\n_{w, -})$, the index of the Lie algebra $\n_{w, -}$.
Thus the maximal number of algebraically 
independent elements in $C_w$ is also $\ind(\n_{w, -})$. 
As also seen in Example \ref{ex:G2-0}, since $C_{w_0} = \CC[z_3^2-z_2z_5-z_1z_4, z_4]$ by Remark \ref{rem:2rho}, the integrable system associated to a reduced word of $w$
constructed in Theorem \ref{thm:choice-GB}  does not necessarily contain 
 $\ind(\n_{w, -})$ many Casimir functions for $\pi_{w, 0}$. 
In this section, we explain a method of altering $\{\vphi_{{\bf w}, k}^\low: k \in [1, l]\}$ so that the problem about irreducibility is rectified, and the resulting set contains more Casimir functions in general.

For every $w \in W$, the following statement on $C_w$ follows from the same arguments used in the proof of
\cite[Proposition 3.3]{K12}
and we give a (slightly different) proof for the convenience of the reader.

\begin{lem}\label{lem:pq}
For any $w \in W$,  prime factors of any non-zero $q\in 
C_w$ are also in $C_w$. 
\end{lem}

\begin{proof}
Let $q \in C_w$ be non-zero and let $p$ be a prime factor of
$q$. Write $q = p^mp_1$, where $m \geq 1$ and $p_1$ is co-prime with $p$. Let $k \in [1, l]$ be arbitrary.
It follows from 
$\{z_k, q\}_{\pi_{w, 0}} = 0$ that 
\[
0 = \{z_k, \; p^mp_1\}_{\pi_{w, 0}} = mp^{m-1}p_1\{z_k, \, p\}_{\pi_{w, 0}}+p^m\{z_k, \, p_1\}_{\pi_{w, 0}},
\]
so $p|\{z_k, p\}_{\pi_{w, 0}}$. If $\{z_k, p\}_{\pi_{w, 0}} \neq 0$, then by comparing the 
largest degree of the
monomials in $\{z_k, p\}_{\pi_{w, 0}}$ with that in $p$, we must have $\{z_k, p\}_{\pi_{w, 0}} = c_k p$
for some $c_k \in \CC^\times$, contradicting the fact that $\{z_k, -\}_{\pi_w, 0}$ is a locally nipotent 
operator on $\CC[\n_w]$. Thus $\{z_k, p\}_{\pi_{w, 0}}=0$.  As $k \in [1, l]$ is arbitrary, we have 
$p \in C_w$.
\end{proof}

Let now ${\bf w} = (s_{i_1}, \ldots, s_{i_l})$ be any reduced word of $w$. For $k \in [1, l]$, let again  
$w_k = s_{i_1}\cdots s_{i_k}$, and let
\begin{equation}\label{eq:ACFZ}
R_k = \CC[\n_{w_k}] \cong \CC[z_1, \ldots, z_k] \hs \mbox{and} \hs C_k =C_{w_k}.
\end{equation}
For each $k \in [1, l]$, we identify 
$(R_k, \,  \{\,, \, \}_{\pi_{w_k, 0}})$ 
as a Poisson sub-algebra of $(R_l=\CC[\n_w],\,  \{\,, \, \}_{\pi_{w, 0}})$, so 
$\{\vphi, \, \phi\}_{\pi_{w_k, 0}} = \{\vphi, \phi\}_{\pi_{w, 0}}$ for all $\vphi, \phi \in R_k$.
Let $\{\varphi_{{\bf w}, k}^\low: k \in [1, l]\}$ be as in \eqref{eq:varphi-w-k-low}. Recall from
\eqref{eq:K-bfw} that $K_{\bf w}$ is the set of all $k \in [1, l]$ satisfying any of the equivalent conditions in 
Lemma \ref{lem:degjump}. 

\begin{lem-de}\label{lem-de:psi-k}
For each $k \in K_{\bf w}$ the element 
$\vphi_{{\bf w}, k}^\low \in C_k$ has a prime factor in $R_k$, denoted by $\psi_{\bfw, k}$, which is
unique up to a non-zero scalar multiple and is
of the form
\begin{equation}\label{eq:psi-bfw-k}
\psi_{\bfw, k} = z_k c_{k}+b_{k} \in R_k,
\end{equation}
where $c_{k} \in R_{k-1}\cap C_k$ is a non-zero factor of $\vphi_{\bfw, k^-}^\low$ in 
$R_{k-1}$, and $b_{k}  \in R_{k-1}$. 
\end{lem-de}

\begin{proof}
Let $k \in K_{\bfw}$. By Lemma \ref{lem:degjump},  
$\varphi_{\bfw, k}^{\low} = z_k\varphi_{k^-}^{\low} + \varsigma_{k}$ for some 
$\varsigma_{k} \in R_{k-1}$. If $\varphi_{\bfw, k}^{\low} \in R_k$
is irreducible, we take $\psi_{\bfw, k} = \varphi_{\bfw, k}^{\low}$. Otherwise by Gauss Lemma 
$\varphi_{\bfw, k}^{\low}$ has a prime factor $\psi_{\bfw, k}$ in  $R_k$ of the 
form in \eqref{eq:psi-bfw-k}, unique up to a non-zero scalar multiple, where $c_{k}\in R_{k-1}$ is a factor of
$\vphi_{\bfw, k^-}^\low$, $b_{k}\in R_{k-1}$ is a factor of $\varsigma_{k}$, and $c_k$ and $b_k$ are coprime in
$R_{k-1}$.
By Lemma \ref{lem:pq}, $c_{k} \in C_{k}$. 
\end{proof}

\begin{thm}\label{thm:psi}
For any $w \in W$ of length $l = \ell(w)$ and any reduced word $\bfw$ of $w$, the set 
\[
\{\psi_{\bfw, k}: k \in K_\bfw\}
\]
is a polynomial integrable system on $(\n_w, \pi_{w, 0})$, and  for each $k \in K_w$, the polynomial
$\psi_{\bfw, k} \in R_k$ is homogeneous, irreducible, and its Hamiltonian flow in $\n_w$ with respect to
$\pi_{w, 0}$ is
quasi-polynomial.
\end{thm}

\begin{proof} 
That each $\psi_{\bfw, k}$ is homogeneous and  irreducible follows from the definition of $\psi_{\bfw, k}$. 
By Lemma \ref{lem:pq}, $\{\psi_{\bfw, j}, \psi_{\bfw, k}\}_{\pi_{w, 0}} = 0$ for all $j, k \in
K_\bfw$ and $j < k$. Moreover, as $\psi_{\bfw, k} \in R_k \backslash R_{k-1}$, the set
$\{\psi_{\bfw, k}: k \in K_\bfw\}$ is independent. 
That each $\psi_{\bfw, k}$ has quasi-polynomial Hamiltonian flow with respect to
$\pi_{w, 0}$ is proved exactly that same way as that for $\vphi_{\bfw, k}^\low$ in 
Proposition  \ref{prop:quasi-poly-GB}.
\end{proof}

\begin{example}\label{ex:G2-1}
Continuing with Example \ref{ex:G2-0}, we have $K_{\bfw_0} = \{1, 2, 4, 5\}$ and 
\[
\psi_{\bfw_0, 1} = z_1, \hs \psi_{\bfw_0, 2} = z_2, \hs 
\psi_{\bfw_0, 4} = z_4, \hs \psi_{\bfw_0, 5} = \vphi_{{\bf w}_0, 5}^\low = -z_2z_5-z_1z_4+z^2_3.
\]
Notice that $\psi_{\bfw_0, 4}$ and $\psi_{\bfw_0, 5}$ are free generators of $C_{w_0}$ and $\psi_{\bfw_0, 4} \notin \{\vphi_{\bfw_0, k}^\low \colon k = 1,2,4,5\}$.
\end{example}

\section{The standard complex semi-simple Poisson Lie group \texorpdfstring{$(G, \pist)$}{(G,pist)}} \label{s:G}
Consider now the standard complex semi-simple Poisson Lie group $(G,\pist)$ (see $\S$\ref{ss:pist}), and
recall that $\pist$ vanishes at the identity element $e$ of $G$. We have seen in $\S$\ref{ss:pist} that the linearization 
$\pi_0$ of $\pist$ at $e$ can be identified with the Kirillov-Kostant-Souriau  Poisson structure
$\pi_{0, \gsts}$ on $(\gsts)^*$, where $\gsts$ is the Lie sub-algebra of $\gog$ given in  
\eqref{eq:gsts}, and we identify $\g \cong \g_{\rm diag}$ with $(\gsts)^*$  using the bilinear form $\lara_{\gog}$ on
$\gog$ in \eqref{eq:lara-gog}.
In this section, we show that the 
Berenstein-Fomin-Zelevinsky cluster structure $\calC_{\rm BFZ}(G)$ in $\CC(G)$,
with a frozen variable modification when $w_0$ does not act on $\t$ as $-1$, has Property $\calI$ at $e$. We also study in 
more detail 
some polynomial integrable systems on  $(\g, \pi_0)$ obtained from some of the
extended clusters of $\calC_{\rm BFZ}(G)$.


\subsection{The BFZ cluster structure \texorpdfstring{$\mathcal{C}_{\rm BFZ}(G)$}{Cbfz}  in \texorpdfstring{$\CC(G)$}{C(G)}} \label{sect:logcansysG}
 Let again $w_0 \in W$ be the longest element and choose any two reduced words 
${\bf w}_0 = (i_1, \,i_2, \, \ldots, \, i_{\elw})$ and  
${\bf w}_0^\prime = (j_1, \,j_2, \, \ldots, \, j_{\elw})$
of $w_0$. In the notation of \cite{BFZ:III} and by \cite[$\S$2.3]{BFZ:III}, the sequence
\begin{equation}\label{eq:bfi}
{\bf i} = (i_{\elw}, \, \ldots, \,i_2, \, i_1,\, -j_1, \,-j_2, \, \ldots, \, -j_{\ell(w_0)})
\end{equation}
 gives rise to a set $\Phi({\bf i})$ of  $r + 2\elw$ independent regular functions on $G$ given by
\begin{align} \nonumber
    \Phi({\bf i}) &= \{\Delta_{\omega_i,\, w_0\omega_i}: i \in [1, r]\} \sqcup 
    \{\Delta_{\omega_{i_k}, \; s_{i_1} \cdots s_{i_{k-1}} \omega_{i_k}}, \;
    \Delta_{s_{j_1} \cdots s_{j_{k}} \omega_{j_k}, \; \omega_{j_k}}: k \in [1,{\ell(w_0)}]\}\\
\label{eq:Phi-G}
&=\{\Delta_{\omega_i,\,\omega_i}: i \in [1, r]\} \sqcup 
    \{\Delta_{\omega_{i_k}, \; s_{i_1} \cdots s_{i_{k}} \omega_{i_k}}, \;
    \Delta_{s_{j_1} \cdots s_{j_{k}} \omega_{j_k}, \; \omega_{j_k}}: k \in [1,{\ell(w_0)}]\}.
\end{align}
For $i \in [1, r]$, let $f_i = \Delta_{\omega_i,\, w_0\omega_i}$ and  $g_i = \Delta_{w_0 \omega_i, \omega_i}$.
Then $f_i \in \Phi({\bf i})$ for each $i \in [1, r]$, and if 
$k \in [1, \elw]$ is the 
largest such that $j_k = i$, then 
$g_i = \Delta_{s_{j_1} \cdots s_{j_{k}} \omega_{j_k}, \; \omega_{j_k}}\in \Phi({\bf i})$.
Note also that all the functions in $\Phi({\bf i})$ are $T$-weight vectors for the $T$-action on $G$ by left translation.

In \cite[$\S$2.3]{BFZ:III}, A. Berenstein, S. Fomin and A. Zelevinsky constructed a seed $\Sigma({\bf i})$ 
in $\CC(G) = \CC(G^{w_0, w_0})$
which has $\Phi({\bf i})$ in \eqref{eq:Phi-G} as an extended cluster and $\{f_1, \ldots, f_r, g_1, \ldots, g_r\}$ as the frozen variables. We denote by $\mathcal{C}_{\rm BFZ}(G)$ the mutation
equivalence class of seeds in $\CC(G)$ defined by $\Sigma({\bf i})$ and call
$\calCBFZ(G)$  the 
{\it BFZ cluster structure in $\CC(G)$}.
By \cite[Paragraph after Theorem 1.1]{ShenWeng:DBS},
$\mathcal{C}_{\rm BFZ}(G)$ is independent of the choices of the two reduced words for $w_0$. 

\begin{lem}\label{lem:comp-T-pist}
The cluster structure $\calCBFZ(G)$ in $\CC(G)$ is regular on $G$
and compatible with both the Poisson structure $\pist$ and the $T$-action.
\end{lem}

\begin{proof}
The first statement is proved in \cite{Qin-Yakimov:G} and  the second in \cite[Theorem 4.18]{GSV:book}.
\end{proof}

In what follows, we introduce a regular $\pist$-compatible 
frozen variable modification of $\mathcal{C}_{\rm BFZ}(G)$ (see Definition \ref{defn:regular}).
Recall the involution $i \mapsto i^{\ast}$ defined by $\omega_{i^{\ast}} = -w_0 \omega_i$.  Fix a decomposition 
\begin{equation}\label{eq:III}
[1, r] = I_0 \sqcup I_1 \sqcup I_2,
\end{equation}
where $I_0 = \{i \in [1, r]: i = i^*\}$, $I_1 \sqcup I_2 = \{i \in [1, r]: i \neq i^*\}$, and $I_1$ contains exactly one element of each pair $(i, i^*)$ such that $i \neq i^*$.  For $i \in I_2$, introduce
\begin{equation*} 
g_i^\prime = f_ig_i-f_{i^*}g_{i^*} = \Delta_{\omega_i, w_0\omega_i} \Delta_{w_0\omega_i, \omega_i} - \Delta_{\omega_{i^*}, w_0\omega_{i^*}}
\Delta_{w_0\omega_{i^*}, \omega_{i^*}}.
\end{equation*}
Setting $c_i=\frac{f_i}{g_{i^*}} \in \CC(G)$ for $i \in [1, r]$, we then have 
$g_i' = (c_i - c_{i^{\ast}}) g_i g_{i^{\ast}}$ for $i \in I_2$. 
Let
\[
{\rm Froz} = \{f_i, g_i: i \in [1, r]\} \hs \mbox{and} \hs
\overline{{\rm Froz}} = \{f_i: i \in [1, r]\} \sqcup \{g_i: i \in I_0 \sqcup I_1\} \sqcup \{g_i': i \in I_2\}.
\]

\begin{lem}
1) Each $c_i$, for $i \in [1, r]$, is a rational Casimir function on 
    $G$ with respect to $\pist$;
    
2) The set $\overline{{\rm Froz}}$ is a regular
    $\pist$-compatible frozen variable modification
    of $\calCBFZ(G)$.
\end{lem}
\begin{proof}
Note that $\overline{{\rm Froz}} \subset \CC[G]$ and freely generates
the subfield $\CC({\rm Froz})$ of $\CC(G)$. 
    By Definition \ref{defn:regular}, we only need prove 1).  
By \cite[Proposition 4.19]{GSV:book}, for all $i, k \in [1, r]$ and $u, v \in W$, one has
\begin{align*}
\{f_i, \; \Delta_{u\omega_k,v\omega_k}\}_{\pist} 
&= \{\Delta_{\omega_i, \, w_0\omega_i}, \; \Delta_{u\omega_k,v\omega_k}\}_{\pist} =\left(\langle \omega_i,\, u\omega_k\rangle-\langle w_0\omega_i, \, v\omega_k\rangle\right) f_i\Delta_{u\omega_k,v\omega_k},\\
\{g_i, \; \Delta_{u\omega_k,v\omega_k}\}_{\pist} 
&= \{\Delta_{w_0\omega_i, \, \omega_i}, \; \Delta_{u\omega_k,v\omega_k}\}_{\pist} = \left(-\langle w_0\omega_i,\, u\omega_k\rangle+\langle \omega_i, \, v\omega_k\rangle\right) g_i\Delta_{u\omega_k,v\omega_k}.
\end{align*}
It follows that for all $i, k \in [1, r]$ and $u, v \in W$, one has
\begin{align*}
\{c_i, \, \Delta_{u\omega_k,v\omega_k}\}_{\pist} & = 
\frac{1}{g_{i^*}} \{f_i, \; \Delta_{u\omega_k,v\omega_k}\}_{\pist} -\frac{f_i}{g_{i^*}^2} 
\{g_{i^*}, \; \Delta_{u\omega_k,v\omega_k}\}_{\pist} \\
& = \left(\langle \omega_i,\, u\omega_k\rangle-\langle w_0\omega_i, \, v\omega_k\rangle +
\langle w_0\omega_{i^*},\, u\omega_k\rangle-\langle \omega_{i^*}, \, v\omega_k\rangle\right) c_i \Delta_{u\omega_k,v\omega_k}\\
& =0.
\end{align*}
In particular, for every $i \in [1, r]$, one has $\{c_i, \vphi\}_{\pist} = 0$ for every $\vphi \in \Phi({\bf i})$, and 
since $\Phi({\bf i})$ is a set of transcendental generators of $\CC(G)$, $c_i$ is a rational Casimir function on $(G, \pist)$.
\end{proof}

We can now state the main result of this section. For any extended cluster
$\Phi$ of $\calC_{{\rm BFZ}}(G)$, let 
\[
\overline{\Phi} = (\Phi\backslash {\rm Froz}) \sqcup \overline{{\rm Froz}}.
\]

\begin{thm} \label{thm:mainG}
The cluster structure $\calC_{\rm BFZ}(G)$ has Property $\calI$ at the identity element $e \in G$
after the frozen variable modification $\overline{{\rm Froz}}$. Consequently, for every extended cluster 
$\Phi$ of $\calC_{{\rm BFZ}}(G)$, the set of lowest degree terms
of the functions in $\overline{\Phi}$ at $e$ contains a polynomial integrable system on $(\g, \pi_0)$.
\end{thm}

\begin{proof} By Lemma-Definition \ref{lem:same-volume} and 
Lemma-Definition \ref{lem:mu-modify}, the log-volume form defined by all the extended clusters, respectively by their
modifications by $\overline{{\rm Froz}}$, agree up to multiplication by $-1$. For ${\bf i}$  in \eqref{eq:bfi}
and the extended cluster $\Phi({\bf i})$ in \eqref{eq:Phi-G}, 
we now compute the log-volume forms $\mu_{\Phi({\bf i})}$ and $\mu_{\overline{\Phi({\bf i})}}$ in some holomorphic coordinates on $G$ near $e$.

Consider the open subset $B_-B$ of $G$ with the unique decomposition $B_-B = N_- N T$. 
Using the double reduced word ${\bf i}$ in \eqref{eq:bfi}, we parametrize $B_-B$ by
$\CC^{2\elw} \times (\CC^\times)^r$ via
\begin{equation}\label{eq:xi-y-z}
\CC^{2\elw} \times (\CC^\times)^r  \ni (y, z, \xi)= 
(y_1, \,\ldots, \,y_{\elw},\, z_1, \,\ldots, \,z_{\elw}, \, \xi_1, \,\ldots, \,\xi_r)\longmapsto 
g_-(y) g_+(z) t,
\end{equation}
where  $t = \alpha_1^\vee(\xi_1) \cdots \alpha_r^\vee(\xi_r)\in T$, and
\[
\overline{w_0}\,  (g_-(y))^{-1}  = e_{j_1^*}(y_1) \overline{s_{j_1^*}} \, \cdots \, 
e_{j_{\elw}^*}(y_{\elw}) \overline{s_{j_{\elw}^*}}, \hs 
g_+ (z)\, \overline{w_0} = e_{i_1}(z_1) \overline{s_{i_1}} \, \cdots \, 
e_{i_{\elw}}(z_{\elw}) \overline{s_{i_{\elw}}}.
\]
In the $(y, z, \xi)$-coordinates on $B_-B$ thus obtained, the restrictions of functions in $\Phi({\bf i})$ to $B_-B$
are given by $\Delta_{\omega_i, \omega_i} = \xi_i$ for every $i \in [1, r]$, and, using
\eqref{eq:useful} and Lemma \ref{lem:w0-g}, for  $k \in [1, \elw]$,
\begin{align}\nonumber
\Delta_{\omega_{i_k}, \; s_{i_1} \cdots s_{i_{k}} \omega_{i_k}}& = t^{s_{i_1} \cdots s_{i_{k}} \omega_{i_k}}
\Delta_{\omega_{i_k}, \; s_{i_1} \cdots s_{i_{k}} \omega_{i_k}}(g_+(z))\\
\label{eq:kk-1}
&=\left(\prod_{i=1}^r \xi_i^{(s_{i_1} \cdots s_{i_{k}} \omega_{i_k}, \, \alpha_i^\vee)}\right) 
\Delta_{\omega_{i_k}, \;  \omega_{i_k}}(e_{i_1}(z_1) \overline{s_{i_1}} \, \cdots \, 
e_{i_k}(z_k) \overline{s_{i_k}}),\\
\nonumber \Delta_{s_{j_1} \cdots s_{j_{k}} \omega_{j_k}, \; \omega_{j_k}} &= t^{\omega_{j_k}}
\Delta_{s_{j_1} \cdots s_{j_{k}} \omega_{j_k}, \; \omega_{j_k}}(g_-(y)) =\xi_{j_k} 
\Delta_{\omega_{j_k^*}, \; s_{j_1^*} \cdots s_{j_{k}^*} \omega_{j_k^*}}(\overline{w_0} \,(g_-(y))^{-1}\,\overline{w_0}^{\, -1})
\\
\label{eq:kk-2}&= \xi_{j_k} 
\Delta_{\omega_{j_k^*}, \;  \omega_{j_k^*}}(e_{j_1^*}(y_1) 
\overline{s_{j_1^*}} \, \cdots \, 
e_{j_k^*}(y_k) \overline{s_{j_k^*}}).
\end{align}
By Corollary \ref{cor:TPf} applied to $w = w_0$, 
\[
\mu_{\Phi({\bf i})} = \frac{d\xi_1 \wedge \cdots \wedge d\xi_r}{\xi_1 \cdots \xi_r} \wedge 
\frac{dy_1 \wedge \cdots \wedge dy_{\elw}}{\prod_{\alpha \in \Gamma} \vphi_{w_0, \alpha}(y)}\wedge
\frac{dz_1 \wedge \cdots \wedge dz_{\elw}}{\prod_{\alpha \in \Gamma} \vphi_{w_0, \alpha}(z)}.
\]
Taking the lowest degree term at $e \in G$, one knows by \eqref{eq:low-mu-w0} that 
\[
\deg \left(\mu_{\Phi({\bf i})}^\low\right) = r+ {\rm rk}(\pi_{0, \n_-}) = r + \elw -\dim \ker (1+w_0).
\]
By Proposition \ref{prop:ind-gsts}, ${\rm rk}(\g, \pi_0) = 2\elw$. If $w_0 = -1$ on $\t$, we have then proved that $\Phi({\bf i})$, and thus 
the  cluster structure $\calC_{\rm BFZ}(G)$ in $\CC(G)$, has Property $\calI$ at $e$.
When $w_0$ does not act as $-1$ on $\t$, we consider the modification $\overline{\Phi({\bf i})}$
of $\Phi({\bf i})$ by 
$\overline{{\rm Froz}}$. It follows from the definitions that, up to a sign,
    \begin{align*}
        {\mu}_{\overline{\Phi({\bf i})}} = \left( \prod \limits_{i \in I_2} \frac{f_ig_i}{g'_i} \right) \mu_{\Phi({\bf i})}
        =\left( \prod \limits_{i \in I_2} \frac{f_ig_i}{f_ig_i-f_{i^*}g_{i^*}} \right) \mu_{\Phi({\bf i})}.
    \end{align*}
For every $i \in [1, r]$, in the
$(\xi, y, z)$ coordinates we have 
$f_i = (\xi_{i^*})^{-1} \vphi_{w_0, \alpha_i}(z)$ and $g_i = \xi_i \vphi_{w_0, \alpha_{i^*}}(y)$, so 
\[
f_i g_i = \frac{\xi_i}{\xi_{i^*}} \vphi_{w_0, \alpha_i}(z) \vphi_{w_0, \alpha_{i^*}}(y)
\hs \mbox{and} \hs 
f_{i^*}g_{i^*}= \frac{\xi_{i^*}}{\xi_{i}} \vphi_{w_0, \alpha_{i^*}}(z) \vphi_{w_0, \alpha_{i}}(y).
\]
For $i \in [1, r]$, write  $\xi_i = 1 + \eta_i$ so that $\eta_i=0$ at $e$. Then for $i \in I_2$, 
\begin{align*}
f_ig_i -f_{i^*} g_{i^*}&= \frac{1}{\xi_i\xi_{i^*}}(\xi_i^2 \vphi_{w_0, \alpha_i}(z) \vphi_{w_0, \alpha_{i^*}}(y)-
\xi_{i^*}^2 \vphi_{w_0, \alpha_{i^*}}(z) \vphi_{w_0, \alpha_{i}}(y))\\
& = \frac{1}{(1+\eta_i)(1+\eta_{i^*})}((\eta_i^2 +2\eta_i+1)\vphi_{w_0, \alpha_i}(z) \vphi_{w_0, \alpha_{i^*}}(y)-
(\eta_{i^*}^2 +2\eta_{i^*}+1) \vphi_{w_0, \alpha_{i^*}}(z) \vphi_{w_0, \alpha_{i}}(y)).
\end{align*}
Same as in Remark \ref{rem:v-z}, 
we abuse notation and write
\[
\eta_i = d_e\eta_i, \hs y_k = d_ey_k, \hs z_k = d_ez_k, \hs i \in [1, r], \, k \in [1, \elw],
\]
and we regard $(\eta, y, z) = (\eta_1, \ldots, \eta_r, y_1, \ldots, y_{\elw}, 
z_1, \ldots, z_{\elw})$ as a linear coordinate system on $\g = T_eG$. 
By Lemma \ref{lem:deg-cdeg}  and Lemma \ref{lem:di}, for $i \in I_2$ one has
$\vphi_{w_0, \alpha_i}^\low = a_i \vphi_{w_0, \alpha_{^*}}^\low$ with  $a_i = (-1)^{(2\rho^\vee - \kappa^\vee, \, \omega_i)}$, so
\[
(f_i g_i)^\low = \vphi_{w_0, \alpha_i}^\low(z) \vphi_{w_0, \alpha_{i^*}}^\low(y)
 =\vphi_{w_0, \alpha_{i^*}}^\low(z) \vphi_{w_0, \alpha_{i}}^\low(y) =(f_{i^*}g_{i^*})^\low.
\]
and since $\eta_i \neq \eta_{i^*}$, $(f_ig_i -f_{i^*} g_{i^*})^\low$ contains the term
\[
2\eta_i \vphi_{w_0, \alpha_i}^\low(z) \vphi_{w_0, \alpha_{i^*}}^\low (y) -2\eta_{i^*} 
\vphi_{w_0, \alpha_{i^*}}^\low(z) \vphi_{w_0, \alpha_{i}}^\low(y)=2(\eta_i-\eta_{i^*})
\vphi_{w_0, \alpha_i}^\low(z) \vphi_{w_0, \alpha_{i^*}}^\low (y),
\]
so there exists a homogeneous polynomial $A_i(y, z) \in \CC[y, z]$ of degree 
$1 + \deg ((f_ig_i)^\low)$ such that 
\begin{equation}\label{eq:gi-low}
(f_ig_i -f_{i^*} g_{i^*})^\low  = 2(\eta_i-\eta_{i^*})
\vphi_{w_0, \alpha_i}^\low(z) \vphi_{w_0, \alpha_{i^*}}^\low (y) + A_i(y, z).
\end{equation}
In particular, $\deg ((f_ig_i -f_{i^*} g_{i^*})^\low) = 1 + \deg ((f_ig_i)^\low)$. It now follows that 
\[
\deg \left({\mu}_{\overline{\Phi({\bf i})}}^\low\right) = \deg \left({\mu}_{{\Phi({\bf i})}}^\low\right)-|I_2|=
\elw +r-\dim \ker (1+w_0)-|I_2| = \elw=\frac{1}{2}{\rm rk}(\g, \pi_0).  \tag*{\qedhere}
\]
\end{proof}


\begin{rem} \label{rem:degofPf}
We see in the proof of Theorem \ref{thm:mainG} that the lowest degree term of the $T$-Pfaffian of $(G, \pist)$ at $e \in G$ is
equal to $-\elw-r + \dim \ker(1+w_0)$.
\end{rem}

\begin{rem} \label{rem:int-delta-G}
{\rm Suppose that  $w_0$ acts $\t$ acts as $-1$, so that  $\overline{\rm Froz} = {\rm Froz}$. 
Recall that the initial extended cluster in $\calC_{\rm BFZ}(G)$ constructed in \cite{BFZ:III} using any double reduced word ${\bf i}$ of $(w_0, w_0)$ consists of generalized minors. Thus the polynomial integrable systems
on $(\g, \pi_0)$ we obtain via Theorem \ref{thm:mainG} from such extended clusters all consist of signed 
generalized minors on $\g$. When $w_0$ does not act $\t$ as $-1$, the polynomials $(g_i^\prime)^\low$ on 
$\g$ for $i \in I_2$ are not necessarily signed minors, as we will see for the case of $\SL(n,\CC)$ in 
Example \ref{ex:An-1}.
}
\end{rem}

\subsection{Rational Casimirs of \texorpdfstring{$(\g, \pi_0)$}{(g,pi0)}}\label{ss:casi-G}
Let $\CC\left(\overline{\rm Froz}^\low\right)$ be the subfield of $\CC(\g)$ generated by 
\[
\overline{\rm Froz}^\low = \{f_i^\low =\delta_{\omega_i, w_0\omega_i}: i\in[1,r]\}\cup\{ 
g_i^\low =\delta_{w_0\omega_i, \omega_i}: i \in I_0 \sqcup I_1\}
\cup\{(g_i^\prime)^\low: i \in I_2\} \subset \CC[\g].
\]
Recall that the index of the Lie algebra $\gsts$ is 
$r = \dim \t$. We now construct $r$ explicit independent rational Casimir functions on 
$(\g, \pi_0) \cong ((\gsts)^*, \, \pi_{0, \gsts})$ that are contained in 
$\CC\left(\overline{\rm Froz}^\low\right)$.

Recall that for each $i \in [1, r]$, we have the 
rational Casimir function $c_i = f_ig_{i^*}^{-1}$ of $(G, \pist)$, and note that the lowest degree term at $e$ of any
non-zero rational functions of $(c_1, \ldots, c_r)$ is a rational Casimir on $(\g, \pi_0)$.
Let the decomposition $[1, r] = I_0 \sqcup I_1 \sqcup I_2$ be as in \eqref{eq:III} and keep the notation as in the 
proof of Theorem \ref{thm:mainG}.
If $i \in I_0 \sqcup I_1$, then 
\[
c_{i^*}^\low = \frac{f_{i^*}^\low}{g_{i}^\low} =\frac{\vphi_{w_0, \alpha_{i^*}}^\low(z)}{\vphi_{w_0, \alpha_{i}}^\low(y)}
\in  \CC\left(\overline{\rm Froz}^\low\right).
\]
If $i \in I_2$, then $c_ic_{i^*}^{-1}-1 = g_i^\prime (f_{i^*}g_{i^*})^{-1}$, so 
\[
(c_ic_{i^*}^{-1}-1)^\low  = \frac{(g_i^\prime)^\low}{f_{i^*}^\low g_{i^*}^\low}=
\frac{2(\eta_i-\eta_{i^*})
\vphi_{w_0, \alpha_i}^\low(z) \vphi_{w_0, \alpha_{i^*}}^\low (y) + A_i(y, z)}{
\vphi_{w_0,\alpha_{i^*}}^\low(z) \vphi_{w_0,\alpha_{i^*}}^\low(y)} \in 
\CC\left(\overline{\rm Froz}^\low\right).
\]
It now follows from the algebraic independence of $\{\delta_{\omega_i, w_0\omega_i}: i \in I_0 \sqcup I_1\}$ and the 
dependence of $(g_i^\prime)^\low$ on $\eta_i-\eta_{i^*}$ for $i \in I_2$ that 
\[
\left\{c_{i^*}^\low: i \in I_0 \sqcup I_1\right\} \sqcup 
\left\{(c_ic_{i^*}^{-1}-1)^\low: i \in I_2\right\}
\]
is a set of $r$ algebraically independent rational Casimir functions on 
$(\g, \pi_0)$ contained in 
$\CC\left(\overline{\rm Froz}^\low\right)$.

\subsection{Examples of polynomial integrable systems on \texorpdfstring{$(\g, \pi_0)$}{(g,pi0)}} Consider any reduced word
${\bf w}_0 = (i_1, \,i_2, \, \ldots, \, i_{\elw})$ 
of $w_0$, and recall that $d_{w_0} = \frac{1}{2}(\elw + \dim \ker(1+w_0))$. Set
\[
\delta({\bf w}_0) = \{\delta_{\omega_{i_k}, \, s_{i_1}\cdots s_{i_k} \omega_{i_k}} \in \CC[\g]: k \in [1, \elw]\},
\hs
\delta({\bf w}_0)|_\n = \{\delta_{\omega_{i_k}, \, s_{i_1}\cdots s_{i_k} \omega_{i_k}}|_\n \in \CC[\n]: k \in [1, \elw]\}.
\]
By Theorem  \ref{thm:choice-GB} applied to $w = w_0$, all maximal independent subsets of $\delta({\bf w}_0)|_\n$ 
have cardinality $d_{w_0}$, which, by Lemma \ref{lem:delta-1}, implies that
all maximal independent subsets of $\delta({\bf w}_0)$ 
have cardinality $d_{w_0}$. Similarly, let ${\bf w}_0^\prime = (j_1, \,j_2, \, \ldots, \, j_{\elw})$ be any other 
reduced word of $w_0$ and set
\[
\delta^\prime ({\bf w}_0^\prime)=\{\delta_{s_{j_1}\cdots s_{j_k} \omega_{j_k}, \, \omega_{j_k}} \in \CC[\g]: k \in [1, \elw]\}.
\]
Then  all maximal independent subsets of $\delta^\prime ({\bf w}_0^\prime)$ have cardinality $d_{w_0}$
by Lemma \ref{lem:delta-w0}.
Let  
\[
S_0 = \{(g_i^\prime)^\low = (\Delta_{\omega_i, w_0\omega_i} \Delta_{w_0\omega_i, \omega_i} -
\Delta_{\omega_{i^*}, w_0\omega_{i^*}} \Delta_{w_0\omega_{i^*}, \omega_{i^*}})^\low:  i \in I_2\},
\]
where the lowest degree term is taken at $e \in G$. Consider the set 
\begin{equation}\label{eq:set}
\delta(\bfw_0) \sqcup \delta^\prime(\bfw_0^\prime) \sqcup S_0\subset \CC[\g]
\end{equation}

\begin{thm}\label{thm:choice-G}
For any two reduced words $\bfw_0$ and $\bfw_0^\prime$ of $w_0$, the set in \eqref{eq:set}
of homogeneous polynomial functions on $\g$ is $\pi_0$-involutive, and all of its elements 
have quasi-polynomial Hamiltonian flows with respect to $\pi_0$ (see Definition \ref{defn:quasi-poly}). Moreover,  for any maximal independent subset 
$S$ of $\delta(\bfw_0)$ and any maximal independent subset 
$S'$ of $\delta^\prime(\bfw_0^\prime)$, the set
$S \sqcup S' \sqcup S_0$
is a polynomial integrable system on $(\g, \pi_0)$. 
\end{thm}

\begin{proof}
The set in \eqref{eq:set}, being a subset of $\overline{\Phi({\bf i})}^\low$, is $\pi_0$-involutive by Theorem \ref{thm:mainG}.
Consider the coordinates $(\xi, y, z) \in (\CC^\times)^r \times \CC^{2\elw}$ on $B_-B$ defined using the
two reduced words $\bfw_0$ and $\bfw_0^\prime$ of $w_0$ given in \eqref{eq:xi-y-z}. 
 Consider again the linear coordinate system $(\eta, y, z)$ on $\g = T_e(B_-B)$ as in
the proof of Theorem \ref{thm:mainG}. 
Under the identification $\gsts \cong \g^*$ and for $i \in [1, r]$ and $k \in [1, \elw]$,  we then have
\begin{equation}\label{eq:eta-y-z}
\eta_i \in \{(x_0, -x_0): x_0 \in \t\} \subset \gsts, \hs y_k \in \n\oplus 0 \subset \gsts,\hs  z_k \in 0\oplus \n_-  \subset \gsts.
\end{equation}
In the linear coordinates $(\eta, y, z)$ on $\g$, functions in $\delta(\bfw_0)$ (resp. $\delta'(\bfw_0^\prime)$) depend only on $z$ (resp. $y$), 
and recall from  \eqref{eq:gi-low} that 
$(g_i^\prime)^\low = 2(\eta_i-\eta_{i^*})
\vphi_{w_0, \alpha_i}^\low(z) \vphi_{w_0, \alpha_{i^*}}^\low (y)+A(y, z)$
for some $A(y, z)\in \CC[y, z]$. It follows that for any  $S \subset \delta(\bfw_0)$
and $S' \subset \delta'(\bfw_0^\prime)$ as described, elements in $S \sqcup S' \sqcup S_0$ are independent, and by 
Proposition \ref{prop:w0}, the cardinality of $S \sqcup S' \sqcup S_0$
is 
\[
2d_{w_0} + |I_2|= \elw + \dim \ker (1+w_0) + |I_2| = \elw + r,
\]
which, by Proposition \ref{prop:ind-gsts}, is the magic number of the Lie algebra 
$\gsts$.

It remains to prove that every function $\varphi$ in the set in \eqref{eq:set} has quasi-polynomial  Hamiltonian flow.
Note first that the Poisson bracket $\{\,, \, \}_{\pi_0}$ between the linear coordinates
$(\eta, y, z)$ on $\g$ coincides with  their Lie brackets as elements in $\gsts$ via \eqref{eq:eta-y-z}. 
Suppose that $\vphi \in \delta(\bfw_0) \subset \CC[z]$, and let 
$\gamma(t) = (\eta(t), y(t), z(t))$ be an integral curve of the Hamiltonian flow of $\vphi$. Then $y(t)$ is a
constant, and by Proposition \ref{prop:quasi-poly-GB}, $z(t)$ is quasi-polynomial. Moreover, for each $i \in [1, r]$,
$\frac{d\eta_i(t)}{dt}$ is a polynomial in $(y(t), z(t))$, so $\eta_i(t)$ is quasi-polynomial
in $t$. Thus $\gamma(t)$ is
quasi-polynomial. Similarly, every function in $\delta^\prime(\bfw_0^\prime)$ has quasi-polynomial  Hamiltonian flow.

Let now $i \in I_2$, and recall that $g_i' = (c_i - c_{i^{\ast}}) g_i g_{i^{\ast}}$,
where $c_i, c_{i^*} \in \CC(G)$ are Casimirs. It thus follows from $(g_i^\prime)^\low = (c_i - c_{i^{\ast}})^\low 
\vphi_{w_0, \alpha_i}^\low(y)\vphi_{w_0, \alpha_{i^*}}^\low(y)$ that  
$\{z_k, (g_i^\prime)^\low\}_{\pi_0}=0$ for every $k \in [1, \elw]$. By Lemma \ref{lem:low-of-frozen}, one also has
$\{y_k, (g_i^\prime)^\low\}_{\pi_0} =0$ for every $k \in [1, \elw]$. 
Let 
$\gamma(t) = (\eta(t), y(t), z(t))$ be an integral curve of the Hamiltonian vector field of $(g_i^\prime)^\low$.
Then $y(t)$ and $z(t)$ are both constants. For each $i' \in [1, r]$, it follows from \eqref{eq:gi-low} that 
$\{\eta_{i'}, (g_i^\prime)^\low\}_{\pi_0} = 2(\eta_i-\eta_{i^*})A_1(y, z) + A_2(y, z)$ for 
some $A_1(y, z), A_2(y, z) \in \CC[y, z]$. As $y(t)$ and $z(t)$ are constants, $\eta_{i'}(t)$ is quasi-polynomial.
\end{proof}

\begin{rem}
    The maximal independent subsets $S, S'$ in Theorem \ref{thm:choice-G} can be taken, for example, as in Theorem \ref{thm:choice-GB} or in Theorem \ref{thm:psi}.
\end{rem}

\begin{example}\label{ex:An-1}
Let $G = \SL(n+1, \CC)$ and let $\bfw_0 = \bfw_0^\prime = (s_1, \ldots, s_n, s_1, \ldots, s_{n-1}, \ldots, s_1, s_2, s_1)$.
Using the exponential map $\exp: \g \rightarrow G$ we regard entries of $u =(u_{i,j})\in \g = \mathfrak{sl}(n+1, \CC)$ 
as a local holomorphic coordinate system on $G$ near the identity element $e \in G$.   For subsets $I, J$ of $[1,n+1]$ with $|I|=|J|$, we write $\Delta_{I,J}(u)$ for the determinant of the submatrix of $u$ consisting of those rows labeled by $I$ and columns labeled by $J$. Take $I_2 = [\frac{n}{2}+1, n]$ if $n$ is even and $[\frac{n+1}{2}+1, n]$ if $n$ is odd. For $i \in I_2$, a direct calculation gives 
    \begin{align*}
        (g_i')^\low =\quad &\sum \limits_{n-i+1 < k < i+1} \Delta_{[i+1,n+1],[1,n-i+1]}(u) \Delta_{[1,n-i+1] \cup \{k\},[i+1,n+1] \cup \{k\}}(u) \\
         + &\sum \limits_{n-i+1 < k < i+1} \Delta_{[1,n-i+1],[i+1,n+1]}(u) \Delta_{[i+1,n+1] \cup \{k\},[1,n-i+1] \cup \{k\}}(u).
    \end{align*}
\end{example}

\section{The Poisson Lie group \texorpdfstring{$(B, \pist)$}{(B,pi)} }\label{s:B}
\subsection{The index of the Lie algebra \texorpdfstring{$\b_-$}{b-}}\label{ss:ind-b} Recall that through the identification 
$\b^* \cong \b_-$ via the non-degenerate pairing
$\lara_{(\b, \b_-)}$ given in \eqref{eq:bb-pair}, the Lie bialgebra of the Poisson Lie group
$(B, \pist)$ becomes $(\b, \b_-)$, and the linarization $(\b, \pi_0)$  of $(B, \pist)$ at
the identity element
$e \in B$ becomes the 
Kirillov-Kostant-Souriau  Poisson structure $\pi_{0, \b_-}$ on $\b \cong \b_-^*$. Writing
\[
(\b, \, \pi_0) \cong (\b_-^*, \, \pi_{0, \b_-}),
\]
we thus have ${\rm rk}(\b, \pi_0) = \dim \b - \ind(\b_-)$, where $\ind(\b_-)$ is the index of the Lie algebra $\b_-$.
The following statement on $\ind(\b_-)$ is given in \cite[Remark 1.5.1]{Panyushev:index}.  We provide a proof using the Poisson structure
$\pist$.

\begin{lem}\label{lem:ind-bm}
One has $\ind(\b_-) = \dim {\rm im}(1+w_0)$ and  ${\rm rk}(\b, \pi_0)  = \ell(w_0)+\dim {\rm ker}(1+w_0)$.
\end{lem}

\begin{proof}
Note first that since $w_0^2 = 1$, one has ${\rm im}(1-w_0) \subset\ker (1+w_0)$. By writing an arbitrary
$x \in \t$ as $x = \frac{1}{2}(x-w_0 x) + \frac{1}{2}(x + w_0x)$, one sees that $\ker (1+w_0) \subset {\rm im}(1-w_0)$. Thus 
${\rm im}(1-w_0) =\ker (1+w_0)$.
By \cite[Proposition 2.4]{KZ:leaves}, the rank of $\pi$ in $G^{e, w_0}$ is equal to 
\[
\elw + \dim {\rm im}(1-w_0) = \elw + \dim \ker(1+w_0).
\]
As $G^{e, w_0}$ is an open dense  $T$-leaf of $(B, \pist)$, by Lemma \ref{lem:rk-L-P} 
one has ${\rm rk}(B, \pist) = \elw + \dim \ker(1+w_0)$.
Since $(\b, \pi_0)$ is the linearization of $(B, \pist)$ at
$e$, one has
${\rm rk}(\b, \pi_0) \leq {\rm rk}(B, \pist)$, so
\[
\ind(\b_-) = \dim \b - {\rm rk}(\b, \pi_0) \geq  \dim \b - {\rm rk}(B, \pist) =  \dim \b - \elw
-\dim \ker(1+w_0)= \dim {\rm im} (1+w_0).
\]
Under the identification $\b \cong \b_-^*$  via the pairing $\lara_{(\b, \b_-)}$, 
the stabilizer sub-algebra ${\rm Stab}_{\b_-}(x) \subset \b_-$
of the co-adjoint representation of $\b_-$ on $\b$ through any $x = x_+ + x_0\in \b$, where 
$x_+ \in \n$ and $x_0 \in \t$,
is given by
\[
{\rm Stab}_{\b_-}(x_+ + x_0) = \{\xi\in \b_-: [\xi, \, x_+] \in \n_-\}.
\]
Consider again Kostant's cascade $\calB$ of roots, and let 
$e_+ \in \n$ be as in \eqref{eq:e-pm}. The same argument as in the proof of Proposition 
\ref{prop:ind-gsts} shows that ${\rm Stab}_{\b_-}(e_+) = \{\xi_0 \in \t: \beta(\xi_0) = 0\,\forall
\beta \in \calB\}$, 
 which has dimension equal to $r-|\calB| =r-\dim \ker (1+w_0) = \dim {\rm im} (1+w_0)$. We thus conclude that
$\ind(\b_-) = \dim {\rm im} (1+w_0)$.
\end{proof}

Recall that $d_{w_0} = \frac{1}{2}(\elw + \dim \ker (1+w_0))$ is the magic number of $(\n, \pi_{w_0, 0})$ which is also the magic number of  the Lie algebra $\n_-$.

\begin{cor}\label{cor:nagic-bm}
The rank and the magic number of $(\b, \pi_0)$ are respectively given by
\begin{align*}
{\rm rk}(\b, \pi_0) &= 2d_{w_0} = \elw + \dim \ker (1+w_0)\\
{\rm mag}(\b, \pi_0) &= d_{w_0} + \dim {\rm im}(1+w_0) = r + \frac{1}{2}(\elw - \dim \ker (1+w_0)).
\end{align*}
\end{cor}

\subsection{The BFZ cluster structure \texorpdfstring{$\calC_{\rm BFZ}(B)$}{Cbfz(B)}  in \texorpdfstring{$\CC(B)$}{C(B)}}\label{ss:CBFZ-B}
Let $\bfw_0 = (s_{i_1}, \ldots, s_{i_{\elw}})$ be any reduced word of $w_0$. In the notation of \cite{BFZ:III}
and by \cite[$\S$2.3]{BFZ:III}, the sequence ${\bf i} = (i_{\elw}, \ldots, i_2, i_1)$ gives rise to 
the set $\Phi_B({\bf i})$ of $r + \elw = \dim B$
regular functions on $B$, where
\begin{align}\nonumber
\Phi_B({\bf i}) &=\{\Delta_{\omega_i, \, w_0\omega_i}|_B: i \in [1, r]\} \sqcup 
\{\Delta_{\omega_{i_k}, \, s_{i_1} \cdots s_{i_{k-1}} \omega_{i_k}}|_B: k \in [1, \elw]\}\\
\label{eq:Phi-B}&=
\{\Delta_{\omega_i, \, \omega_i}|_B: i \in [1, r]\} \sqcup 
\{\Delta_{\omega_{i_k}, \, s_{i_1} \cdots s_{i_{k}} \omega_{i_k}}|_B: k \in [1, \elw]\}.
\end{align}
In \cite[$\S$2.3]{BFZ:III}, Berenstein, Fomin and Zelevinsky constructed a seed $\Sigma_B({\bf i})$ 
in $\CC(B) = \CC(G^{e, w_0})$
which has $\Phi_B({\bf i})$ in \eqref{eq:Phi-B} as an extended cluster and 
\[
{\rm Froz}_B = \{\Delta_{\omega_i, \, \omega_i}|_B, \; \Delta_{\omega_i, \, w_0\omega_i}|_B: \;i \in [1, r]\}
\]
as the frozen variables. We denote by $\mathcal{C}_{\rm BFZ}(B)$ the mutation
equivalence class of seeds in $\CC(B)$ defined by $\Sigma_B({\bf i})$ and call
$\calCBFZ(B)$  the 
{\it BFZ cluster structure in $\CC(B)$}.
By \cite[Paragraph after Theorem 1.1]{ShenWeng:DBS},
$\mathcal{C}_{\rm BFZ}(B)$ is independent of the choice of the reduced word for $w_0$. 

\begin{lem}\label{lem:comp-T-B}
The cluster structure $\calCBFZ(B)$ in $\CC(B)$ is regular on $B$
and compatible with both the Poisson structure $\pist$ and the $T$-action on $B$.
\end{lem}

\begin{proof}
The second statement is proved in \cite[Theorem 4.18]{GSV:book}. 
We now use the Starfish Lemma \cite[Proposition 6.4.1]{FWZ:online} to prove the first statement. 
As in the proof of Theorem \ref{thm:mainG}, 
we use the reduced word ${\bf w}_0 = (s_{i_1}, \ldots, s_{i_{\elw}})$ of $w_0$ to parametrize $B$ by
$\CC^{\elw} \times (\CC^\times)^r$ via
\begin{equation}\label{eq:xi-z}
\CC^{\elw} \times (\CC^\times)^r  \ni (z, \xi)= 
(z_1, \,\ldots, \,z_{\elw}, \, \xi_1, \,\ldots, \,\xi_r)\longmapsto 
g_+(z) t,
\end{equation}
where  again $t = \alpha_1^\vee(\xi_1) \cdots \alpha_r^\vee(\xi_r)\in T$ and
$g_+ (z)\, \overline{w_0} = e_{i_1}(z_1) \overline{s_{i_1}} \, \cdots \, 
e_{i_{\elw}}(z_{\elw}) \overline{s_{i_{\elw}}}$.
In the $(z, \xi)$-coordinates on $B$ thus obtained, elements in $\Phi_B({\bf i})$ are given by 
$\Delta_{\omega_i, \omega_i}|_B = \xi_i$ for $i \in [1, r]$ and 
\[
\Delta_{\omega_{i_k}, \; s_{i_1} \cdots s_{i_{k}} \omega_{i_k}}|_B=\left(\prod_{i=1}^r \xi_i^{(s_{i_1} \cdots s_{i_{k}} \omega_{i_k}, \, \alpha_i^\vee)}\right) 
\Delta_{\omega_{i_k}, \;  \omega_{i_k}}(e_{i_1}(z_1) \overline{s_{i_1}} \, \cdots \, 
e_{i_k}(z_k) \overline{s_{i_k}}), \hs k \in [1, \elw].
\]
Thus the elements in $\Phi_B({\bf i})$ are all prime elements of $\CC[B]$ and are
pairwise coprime (see, for example, \eqref{eq:phik-CGL}). Let $\psi_k$ be a 
cluster variable in $\Phi_B({\bf i})$ and let $\psi_k^\prime \in \CC(B)$ be the new cluster variable after mutating 
$\Phi_B(\bf i)$ in direction $k$. By \cite[Theorem 4.3]{Z:component} and \cite[Proof of Theorem 2.10]{BFZ:III}, 
$\psi_k^\prime$ is regular on $B$. 
Moreover, by \cite[Theorem 1.3]{GLS:factorial}, $\psi_k^\prime|_{G^{e, w_0}}$ is irreducible in $\CC[G^{e, w_0}]$.
Since  $\CC[G^{e, w_0}]$ is the localization of
$\CC[B]$ at the set  $F=\{\Delta_{\omega_i, w_0\omega_i}|_B: i \in [1, r]\}$, up to associates the element
$\psi_k^\prime \in \CC[B]$ has exactly one prime factor $p$ in $\CC[B]$ that does not lie in $F$. The two prime elements $\psi_k$ and $p$ of $\CC[B]$
can not be associates in $\CC[B]$, for otherwise the mutation relation between $\psi_k$ and $\psi_k^\prime$ would
produce an algebraic relation between the variables in the extended cluster obtained from $\Phi_B({\bf i})$ by mutation in direction $k$, which we know
is algebraically independent. We thus conclude that $\psi_k$ and $\psi_k^\prime$ are coprime in $\CC[B]$.
By Starfish Lemma, all cluster variables of $\calCBFZ(B)$ are regular functions on $B$.
\end{proof}

Similar to the cluster structure $\calCBFZ(G)$ in $\CC(G)$, when $w_0\neq -1$ on $\t$ we need a regular $\pist$-compatible 
frozen variable modification of $\calCBFZ(B)$. To this end, for $i \in [1, r]$, let 
$p_i = \Delta_{\omega_i, \omega_i}|_B$, $q_i = \Delta_{\omega_i, w_0\omega_i}|_B$, and
\[
\theta_i = \frac{p_iq_i}{p_{i^*}q_{i^*}} \in \CC(B).
\]
Let the decomposition $[1, r] = I_0 \sqcup I_1 \sqcup I_2$ be as in \eqref{eq:III}. Then $\theta_i = 1$ for $i \in I_0$
and $\theta_i\theta_{i^*}=1$ for $i \in I_1 \sqcup I_2$. For $i \in I_2$, let 
$a_i = (-1)^{(2\rho^\vee - \kappa^\vee, \, \omega_i)}$ (see notation from Lemma \ref{lem:di}) and define
\[
q_i^\prime = p_iq_i -a_i p_{i^*}q_{i^*} 
= (\theta_i -a_i) p_{i^*}q_{i^*}\in \CC(B).
\]
Recall that ${\rm Froz}_B = \{p_i, q_i: i \in [1, r]\}$. Introduce
\[
\overline{{\rm Froz}_B} = \{p_i: i \in [1, r]\} \sqcup \{q_i: i \in I_0\sqcup I_1\} \sqcup\{q_i^\prime: i \in I_2\}.
\]

\begin{lem}
1) For each $i \in I_2$, $\theta_i \in \CC(B)$ is a rational Casimir function on $(B, \pist)$;

2) The set $\overline{{\rm Froz}_B}$ is a regular $\pist$-compatible frozen variable modification of $\calCBFZ(B)$.
\end{lem}

\begin{proof} Since $\overline{{\rm Froz}_B}$ consists of regular functions on $B$ and freely generates
the subfield $\CC({\rm Froz}_B)$ of $\CC(B)$,  
by Definition \ref{defn:regular}, we only need prove 1).  
By \cite[Proposition 4.19]{GSV:book}, for $i, k \in [1, r]$ and $v \in W$  one has, and as functions on $G$,
\begin{align*}
&\{\Delta_{\omega_i, \,\omega_i}, \; \Delta_{\omega_k,\, v\omega_k}\}_{\pist} = \la \omega_i,\, v \omega_k -\omega_k\ra
\Delta_{\omega_i,\, \omega_i} \Delta_{\omega_k,\, v\omega_k},\\
& \{\Delta_{\omega_i, \,w_0\omega_i}, \; \Delta_{\omega_k,\, v\omega_k}\}_{\pist} 
=(\la \omega_i, \, \omega_k\ra +  \la \omega_{i^*}, \, v \omega_k \ra)
\Delta_{\omega_i,\, w_0\omega_i}\Delta_{\omega_k, v\omega_k},
\end{align*}
so  $\{\Delta_{\omega_i, \,\omega_i}\Delta_{\omega_i, \,w_0\omega_i},\; \Delta_{\omega_k,\, v\omega_k}\}_{\pist} = 
\la \omega_i+\omega_{i^*},\, v \omega_k\ra
\Delta_{\omega_i, \,\omega_i}\Delta_{\omega_i, \,w_0\omega_i} \Delta_{\omega_k,\, v\omega_k}$, and thus 
\[
\left\{\frac{\Delta_{\omega_i, \,\omega_i}\Delta_{\omega_i, \,w_0\omega_i}}{\Delta_{\omega_{i^*}, \,\omega_{i^*}}\Delta_{\omega_{i^*}, \,w_0\omega_{i^*}}}, \; \; \Delta_{\omega_k,\, v\omega_k}\right\}_{\pist} = 0 \in \CC(G).
\]
Restricting to $B$ and noting that the elements in $\Phi_B({\bf i})$ generate $\CC(B)$, we see that each $\theta_i$, 
$i \in [1, r]$, is a rational Casimir function on $B$.
\end{proof}

\begin{thm}\label{thm:mainB}
The cluster structure $\calC_{\rm BFZ}(B)$ has Property $\calI$ at the identity element $e \in B$
after the frozen variable modification $\overline{{\rm Froz}_B}$. Consequently, for every extended cluster 
$\Phi$ of $\calC_{{\rm BFZ}}(B)$, the set of lowest degree terms
of the functions in $\overline{\Phi}$ at $e \in B$ contains a polynomial integrable system on $(\b, \pi_0)$.
\end{thm}

\begin{proof} Take any reduced word $\bfw_0 = (s_{i_1}, \ldots, s_{i_{\elw}})$  
and parametrize 
$B$ again by $\CC^{\elw} \times (\CC^\times)^r$ as in \eqref{eq:xi-z}. Let 
$\Phi_B({\bf i})$ be given in \eqref{eq:Phi-B} and let $\overline{\Phi_B({\bf i})}$ be the modification of
$\Phi_B({\bf i})$ by $\overline{{\rm Froz}_B}$. Similar to the proof of Theorem \ref{thm:mainG}, we
have, up to multiplication by $-1$,
    \begin{align*}
{\mu}_{\overline{\Phi_B({\bf i})}} = \left( \prod \limits_{i \in I_2} \frac{p_iq_i}{q'_i} \right) \mu_{\Phi_B({\bf i})}
 &=\left( \prod \limits_{i \in I_2} \frac{p_iq_i}{p_iq_i-a_ip_{i^*}q_{i^*}} \right) \frac{d\xi_1 \wedge \cdots \wedge d\xi_r}{\xi_1 \cdots \xi_r} \wedge 
\frac{dz_1 \wedge \cdots \wedge dz_{\elw}}{\prod_{\alpha \in \Gamma} \vphi_{w_0, \alpha}(z)}.
\end{align*}
Taking lowest degree terms at $e \in B$ and again 
using arguments similar to that used in the proof of Theorem \ref{thm:mainG}, one checks that 
$\deg ((p_iq_i-a_ip_{i^*}q_{i^*})^\low)= 1 + \deg ((p_iq_i)^\low)$ for every $i \in I_2$
and that 
\[
\deg \left(\mu_{\Phi_B({\bf i})}^\low\right) = r+\ell(w_0)-d_{w_0} = r+
\frac{1}{2} (\elw-\dim \ker (1+w_0)).
\]
Recalling that $|I_2| = r-\dim \ker (1+w_0)$, one thus has
\[
\deg \left({\mu}_{\overline{\Phi_B({\bf i})}}^\low\right) = \deg \left({\mu}_{{\Phi_B({\bf i})}}^\low\right)-|I_2|=
\frac{1}{2}( \elw +\dim \ker (1+w_0))=\frac{1}{2}{\rm rk}(\b, \pi_0),
\]
where in the last step we used Lemma \ref{lem:ind-bm}. 
\end{proof}

\begin{rem}\label{rem:casi-B}
{\rm Note that for $i \in I_2$, one has 
\[
(\theta_i-a_i)^\low  = \frac{(q_i^\prime)^\low}{p_{i^*}^\low q_{i^*}^\low} \in \CC\left(\overline{{\rm Froz}_B}^\low\right),
\]
where $(q_i^\prime)^\low(x_0 + x_+) = 2(\omega_i-\omega_{i^*})(x_0)\delta_{\omega_i, w_0\omega_i}(x_+) + B_i(x_+)$ and $x_0 \in \t, x_+ \in \n$.
Similar to the result for $\calCBFZ(G)$ in $\S$\ref{ss:casi-G}, 
$\{(\theta_i-a_i)^\low: i \in I_2\}$ is a set of $|I_2|=\ind(\b_-)$ algebraically independent rational Casimir functions on 
$(\b, \pi_0)$ contained in 
$\CC\left(\overline{{\rm Froz}_B}^\low\right)$. 
}
\end{rem}

Consider again any reduced word
${\bf w}_0 = (i_1, \,i_2, \, \ldots, \, i_{\elw})$ 
of $w_0$, and recall that $d_{w_0} = \frac{1}{2}(\elw + \dim \ker(1+w_0))$. Set again
\[
\delta({\bf w}_0) = \{\delta_{\omega_{i_k}, \, s_{i_1}\cdots s_{i_k} \omega_{i_k}} \in \CC[\g]: k \in [1, \elw]\}.
\]
By Theorem  \ref{thm:choice-GB} applied to $w = w_0$, all maximal independent subsets of $\delta({\bf w}_0)|_\n$ 
have cardinality $d_{w_0}$.
Let  
\[ 
C_0 = \{(q_i^\prime)^\low = (\Delta_{\omega_i, \omega_i} \Delta_{\omega_i, w_0\omega_i}|_B -a_i
\Delta_{\omega_{i^*}, \omega_{i^*}} \Delta_{\omega_{i^*}, w_0\omega_{i^*}}|_B)^\low:  i \in I_2\},
\]
where the lowest degree term is taken at $e \in B$. The following result is proved similarly as for the case of $G$.

\begin{thm}\label{thm:choice-B}
For any reduced word $\bfw_0$ of $w_0$, the set 
$\delta(\bfw_0)|_{\b} \cup C_0$ 
of homogeneous polynomial functions on $\b$ is $\pi_0$-involutive, and all of its elements 
have quasi-polynomial Hamiltonian flows with respect to $\pi_0$ (see Definition \ref{defn:quasi-poly}). Moreover,  for any maximal independent subset 
$C$ of $\delta(\bfw_0)|_{\b}$, the set
$C  \sqcup C_0$
is a polynomial integrable system on $(\b, \pi_0)$. 
\end{thm}

\section{The dual Poisson Lie group of \texorpdfstring{$\GL(n, \CC)$}{GL(n,C)}} \label{sect:GLndual}
\subsection{The Poisson Lie group \texorpdfstring{$(\GL(n, \CC)^*, \pi_{\rm st}^*)$}{GLn*}}\label{ss:GLn-star}
Throughout $\S$\ref{sect:GLndual}, unless otherwise specified, we let 
\[
G = \GL(n, \CC) \hs \mbox{and} \hs \g = \mathfrak{gl}(n, \CC).
\]
Denote the $n \times n$ identity matrix by $\bbone_n$. Let $B$ (resp. $B_-$) be the subgroup of $G$ consisting of all 
upper (resp. lower) triangular matrices in $G$. 
Write $T \subset G$ for the subgroup of invertible diagonal $n \times n$ matrices.  The Lie algebra of $B$ (resp. $B_-, T$) will be denoted as $\b$ (resp. $\b_-, \t$).
Equip $G$ with the standard multiplicative 
Poisson structure $\pist$ as in Example \ref{ex:SLn-GLn}. The Drinfeld double Poisson Lie group of $(G, \pist)$ is then 
$(G \times G, \Pi)$, where denoting an arbitrary element in $G \times G$ by 
$(X, Y) =((x_{ij}), (y_{pq}))$, one has
\cite[$\S$5.2]{GSV:related}
\begin{align} \label{eqn:Poisbradouble}
&\{x_{ij}, \, x_{pq}\}_{\Pi} = \frac{1}{2} ({\rm sign}(p-i) + {\rm sign}(q-j)) x_{iq}x_{pj}, \nonumber \\
&\{y_{ij}, \, y_{pq}\}_{\Pi} = \frac{1}{2} ({\rm sign}(p-i) + {\rm sign}(q-j)) y_{iq}y_{pj}, \\
&\{y_{ij}, \, x_{pq}\}_{\Pi} = \frac{1}{2} \left( (1+{\rm sign}(q-j))y_{iq}x_{pj}-(1+ {\rm sign}(i-p)) x_{iq}y_{pj} \right). \nonumber
\end{align}
For $(X, Y) \in B \times B_-$, let $[X]_0$ and $[Y]_0$ be the diagonals of $X$ and $Y$ respectively, and set
\[
    G^{\ast} =  \{(X,Y) \in B \times B_- \colon [X]_0[Y]_0 = \bbone_n\}
    \subset  B \times B_-.
\]
Note that $G^*$ has Lie algebra $\g_{\rm st}^*$
defined in (\ref{eq:gsts}). 
It is well-known \cite{GSV:2018, LM:mixed} that both $G^*$ and $B \times B_-$ are Poisson submanifolds of $(G \times G,\Pi)$.
Moreover, setting
\[
\pi_{\rm st}^*=-\Pi|_{G^*},
\]
then $(G^*, \pi_{\rm st}^*=-\Pi|_{G^*})$ is a dual Poisson Lie group of $(G, \pist)$ under the vector space isomorphism
\[
\g_{\rm st}^* \cong \g^*, ~ (x_1,x_2) \longmapsto \bigl( y \mapsto \tr((x_1 - x_2)y) \bigr).
\]
The linearization of $(G^*, \pi_{\rm st}^*)$ at $(\bbone_n, \bbone_n) \in G^*$ is thus $(\g^*, \pi_{0, \g})$, where
$\pi_{0, \g}$ is the Kirillov-Kostant-Souriau Poisson structure defined by $\g$.
Furthermore, $(B \times B_-, \Pi|_{B \times B_-})$ is a Poisson Lie subgroup of $(G \times G, \Pi)$.  The linearization of $(B \times B_-, \Pi|_{B \times B_-})$ at $(\bbone_n, \bbone_n) \in B \times B_-$ is $((\g \oplus \t)^*, \pi_{0, \g \oplus \t})$, where $\pi_{0, \g \oplus \t}$ is the Kirillov-Kostant-Souriau Poisson structure defined by the direct sum Lie algebra $\g \oplus \t$.

\begin{lem} \label{lem:rkBB-}
    The rank of $\pi_{0, \g \oplus \t}$ is $n^2-n$.
\end{lem}

\begin{proof}
    Since $\t$ is an abelian Lie algebra and the Lie bracket of the two direct summands $\g$ and $\t$ is zero, the rank of $\pi_{0, \g \oplus \t}$ is equal to that of $\pi_{0, \g}$, which, as is well-known, is equal to $n^2-n$.
\end{proof}

\subsection{The generalized upper cluster structure \texorpdfstring{$\calC_{\rm GSV}$}{Cgsv} on \texorpdfstring{$B \times B_-$}{B times B-}} \label{sect:log-cansys}

In \cite{GSV:related}, M. Gekhtman, M. Shapiro and A. Vainshtein constructed a generalized cluster structure in $\CC(B \times B_-)$ which is regular on $B \times B_-$ and compatible with $\Pi|_{B \times B_-}$.  We now recall their construction.


Throughout this section, for a matrix $A$ of size $p \times q$ and integers $i,j,k,l$ 
with $1 \le i \le j \le p$ and $1 \le k \le l \le q$, we denote by $A_{[i,j]}^{[k,l]}$ the 
submatrix of $A$ with row index set $[i,j]$ and column index set $[k,l]$.  We also write $A_{[i,j]} = A_{[i,j]}^{[1, q]}$ and $A^{[k,l]} = A_{[1,p]}^{[k,l]}$.


Let $(X,Y) = ((x_{ij}),(y_{pq})) \in B\times B_-$.
Define the staircase matrix
\begin{equation}\label{eq:Psi}
        \Psi = 
    \begin{bmatrix}
        Y_{[2,n]} & & & & &\\
        X_{[2,n]} & Y_{[2,n]} & & & &\\
        & X_{[2,n]} & Y_{[2,n]} & & &\\
        & & \ddots & \ddots & &\\
        & & & X_{[2,n]} & Y_{[2,n]} &\\
        & & & & X_{[2,n]} & Y^{[1,1]}_{[2,n]}
    \end{bmatrix}
\end{equation}
consisting of $n-1$ block rows and $n-1$ block columns, so $\Psi$ is of size $(n-1)^2 \times (n-1)^2$. 
Set
 \begin{equation}\label{eq:varphi-i}
 \varphi_i = \det \Psi_{[i,\,{(n-1)^2}]}^{[i,\,{(n-1)^2}]}, \hs i \in [1,\, {(n-1)^2}].
 \end{equation}
Furthermore, define the polynomial function $c_i(X, Y)$, for $i \in [0, n]$, by
\[
\det(\lambda Y+ X)=\sum \limits_{i=0}^n (-1)^{i(n-1)}c_i(X,Y)\lambda^{n-i}
\]
and introduce the subset $\Phi$ of $\mathbb{C}[B\times B_-]$ given by
\begin{equation}\label{eq:initialG*}
    \Phi=\{ \varphi_i: i\in [1,\, {(n-1)^2}] \} \sqcup \{x_{ii}, y_{ii}: i\in [1,n]\} \sqcup \{c_i: i\in [1,n-1]\}.
\end{equation}  
The following statement is a special case of \cite[Theorem 5.1]{GSV:related}.

\begin{thm} \label{thm:GSV}
    The set $\Phi$ is the extended cluster of a seed in $\mathbb{C}(B\times B_-)$ of a generalized upper cluster structure on $B \times B_-$, denoted as $\mathcal{C}_{\rm GSV}$, which is regular on $B \times B_-$ and compatible with the Poisson structure 
    $\Pi|_{B \times B_-}$ on $B \times B_-$. The frozen variables of $\mathcal{C}_{\rm GSV}$ are ${\rm Froz}(\mathcal C_{\rm GSV}) = \{x_{ii}, y_{ii}: i\in [1,n]\}\sqcup \{c_i: i\in [1,n-1]\}$. 
\end{thm}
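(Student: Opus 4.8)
The plan is to deduce this statement directly from \cite[Theorem 5.1]{GSV:related}, which constructs a generalized cluster structure on the Drinfeld double of $\GL(n,\CC)$ --- equivalently, on the Poisson submanifold $B\times B_-$ of $(G\times G,\Pi)$ recalled at the start of this section --- associated with the standard (trivial) Belavin--Drinfeld data, and to verify that the initial seed exhibited there has $\widetilde{\Phi}$ in \eqref{eq:initialG*} as its extended cluster with the asserted frozen variables. First I would recall the explicit form of the initial seed in \cite{GSV:related}: its cluster variables are the trailing principal minors $\det\Psi_{[i,(n-1)^2]}^{[i,(n-1)^2]}$ of the staircase matrix $\Psi$, which are precisely the functions $\varphi_i$ defined in \eqref{eq:varphi-i}; its frozen variables are the diagonal entries $x_{ii},y_{ii}$ of $X$ and $Y$ together with the coefficients $c_1,\dots,c_{n-1}$ of the matrix pencil $\det(\lambda Y+X)$; and its exchange set and skew-symmetrizable exchange matrix are the ones prescribed by the GSV quiver attached to this data. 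Matching these data with \eqref{eq:initialG*} identifies the GSV seed with the seed whose extended cluster is $\widetilde{\Phi}$.

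Next I would record the two elementary consistency checks that $\widetilde{\Phi}$ is a genuine complete extended cluster: a cardinality count, $|\widetilde{\Phi}| = (n-1)^2 + 2n + (n-1) = n^2+n = \dim(B\times B_-)$, and the algebraic independence of $\widetilde{\Phi}$ over $\CC$, both of which are contained in \cite[Theorem 5.1]{GSV:related}. The assertion that $\mathcal{C}_{\rm GSV}$ is a generalized upper cluster structure on $B\times B_-$, i.e.\ that the corresponding upper cluster algebra equals $\CC[B\times B_-]$ in the sense of Definition \ref{defn:cluster-structures}, and that it is compatible with $\Pi$ in the sense of Definition \ref{defn:compatible}, are exactly the conclusions of the cited theorem; no further argument is needed once the seeds are identified, and one only has to observe that the Poisson structure for which GSV prove compatibility is the same $\Pi$ displayed at the start of this section.

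The only real work is bookkeeping: reconciling the conventions of \cite{GSV:related} with those used here. In particular one must match the ordering of the block rows and columns of $\Psi$ (in \cite{GSV:related} the blocks may be listed in a reversed order, with the truncated block $Y^{[1,1]}_{[2,n]}$ placed accordingly), the sign normalization in $\det(\lambda Y+X)=\sum_{i=0}^n(-1)^{i(n-1)}c_i\lambda^{n-i}$ against GSV's normalization of the pencil coefficients, and the generalized-mutation (``string'') data attached to the exchange directions. None of this introduces any mathematical difficulty; it merely requires care so that the seed written here is literally the initial GSV seed rather than one of its mutations. Once this identification is pinned down, Theorem \ref{thm:GSV} follows immediately from \cite[Theorem 5.1]{GSV:related}.
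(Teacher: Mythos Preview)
Your proposal is correct and matches the paper's treatment exactly: the paper does not give a proof of this theorem but simply records it as a special case of \cite[Theorem 5.1]{GSV:related}, which is precisely the reduction you outline. The additional bookkeeping you mention (cardinality count, matching conventions for $\Psi$ and the pencil coefficients) is more detail than the paper provides, but it is all in the right spirit and introduces no discrepancy.
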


We now define a frozen variable modification of $\mathcal C_{\rm GSV}$.  For $i\in [0,n]$, define
$\overline{c}_i(X,Y)$ by
\begin{align} \label{eq:modifiedCasimirs}
    \det((\lambda-1) {\mathbbm 1}_n + XY^{-1}) = \sum \limits_{i=0}^n \overline{c}_i(X,Y) \lambda^{i}.
\end{align}
Denote by $\overline{y}_{11}=y_{11}\cdots y_{nn}-1$, and for $i \in [2,n]$, let $\overline{x}_{ii} = x_{ii}$ and $\overline{y}_{ii} = x_{ii}y_{ii} - 1$.

\begin{lem}
    The set
    \[
    \overline{{\rm Froz}(\mathcal C_{\rm GSV})} = \{\overline{x}_{ii} \colon i \in [2,n]\} \sqcup \{\overline{y}_{ii} \colon i \in [1,n]\} \sqcup \{\overline{c}_i \colon i \in [0,n-1]\}
    \]
    is a regular $\Pi|_{B \times B_-}$-compatible frozen variable modification of $\mathcal C_{\rm GSV}$.
\end{lem}

\begin{proof}
    By definition, for $i \in [0,n]$,
    \begin{align} \label{eqn:ccbar}
        \overline{c}_i(X,Y) = \frac{1}{y_{11} \cdots y_{nn}} \sum \limits_{j=0}^{n-i} (-1)^{j(n-1)+n-i-j} \binom{n-j}{i} c_j(X,Y).
    \end{align}
    Note that, by definition, $c_0(X,Y) = y_{11} \cdots y_{nn}$ and $c_n(X,Y) = x_{11} \cdots x_{nn}$.  Hence, for $i \in [1,n-1]$,
    \begin{align*}
        \overline{c}_i(X,Y) = (-1)^{n-i} \binom{n}{i} + \frac{1}{y_{11} \cdots y_{nn}} \sum \limits_{j=1}^{n-i} (-1)^{j(n-1)+n-i-j} \binom{n-j}{i} c_j(X,Y).
    \end{align*}
    Moreover,
    \[
    \overline{c}_0 (X,Y) = (-1)^n \binom{n}{0} + \frac{1}{y_{11} \cdots y_{nn}} \sum \limits_{j=1}^{n-1} (-1)^{j(n-1)+n-j} \binom{n-j}{0} c_j(X,Y) + \frac{1}{y_{11} \cdots y_{nn}} x_{11} \cdots x_{nn}.
    \]
    This proves that all elements of $\overline{{\rm Froz}(\mathcal C_{\rm GSV})}$ are in the subfield $\CC({\rm Froz}(\mathcal C_{\rm GSV}))$. Moreover, $\overline{{\rm Froz}(\mathcal C_{\rm GSV})}$ is a free generating set of $\CC({\rm Froz}(\mathcal C_{\rm GSV}))$ and, hence, is a frozen variable modification of $\mathcal C_{\rm GSV}$.

    That $\overline{{\rm Froz}(\mathcal C_{\rm GSV})}$ is regular on $B \times B_-$ is clear from the definitions.  It is well-known that $c_i$ is a Casimir function on $(B \times B_-, \Pi_{B \times B_-})$ for $i \in [0,n]$. Thus $\overline{y}_{11}=c_0-1$ is a Casimir. By (\ref{eqn:ccbar}), for $i \in [0,n-1]$, $\overline{c}_i$ is a $\CC$-linear combination of $c_0/c_0, \ldots, c_n/c_0$, hence is itself a Casimir.  By (\ref{eqn:Poisbradouble}), $x_{ii}y_{ii} - 1$ is also a Casimir function for $i \in [1,n]$.  It now follows 
    that $\overline{{\rm Froz}(\mathcal C_{\rm GSV})}$ is compatible with $\Pi|_{B \times B_-}$.
\end{proof}

\subsection{Property \texorpdfstring{$\calI$}{I}  of \texorpdfstring{$\calC_{\rm GSV}$}{Cgsv} at \texorpdfstring{$(\bbone_n, \bbone_n) \in B \times B_-$}{(1,1) in B*B-}}\label{ss:GSV-I}
We first compute the modified log-volume form $\overline{\mu}_{\mathcal C_{\rm GSV}}$ of $\mathcal C_{\rm GSV}$ by $\overline{{\rm Froz}(\mathcal C_{\rm GSV})}$.

\begin{thm} \label{thm:dcbar}
    As meromorphic differential forms on $B \times B_-$, we have
    \[
    \frac{d \overline{c}_0}{\overline{c}_0} \wedge \cdots \wedge \frac{d \overline{c}_{n-1}}{\overline{c}_{n-1}} = \frac{1}{(y_{11} \cdots y_{nn})^n \overline{c}_0 \cdots \overline{c}_{n-1}} \varphi_1 dx_{11} \wedge dx_{12} \wedge \cdots \wedge dx_{1n} + \cdots,
    \]
    where $\cdots$ stands for forms that are not a $\CC(B \times B_-)$-multiple of $d x_{11} \wedge d x_{12} \wedge \cdots \wedge d x_{1n}$.
\end{thm}

\begin{proof}
    By (\ref{eqn:ccbar}), we have
    \[
    d \overline{c}_0 \wedge \cdots \wedge d \overline{c}_{n-1} = \left( \frac{1}{y_{11} \cdots y_{nn}} \right)^n (-1)^{(n-1)n(n+1)/2} d c_1 \wedge \cdots \wedge d c_n + \cdots.
    \]
    Set $U = XY^{-1}$, so $c_i(X,Y) = y_{11} \cdots y_{nn} c_i(U, \bbone_n)$ for $i \in [0,n]$.  Define $T_i(U) = \tr(U^i)$ for $i \in [1,n]$.  By Newton's identity, for $i \in [1,n]$, we have
    \[
    c_i(U, \bbone_n) = (-1)^{in-1} \frac{1}{i} T_i(U) + \text{ polynomial of } \{T_{i-1}(U), \ldots, T_1(U)\}.
    \]
    Hence,
    \begin{align*}
        d c_1(X,Y) \wedge \cdots \wedge d c_n(X,Y) = & (y_{11} \cdots y_{nn})^n d c_1(U, \bbone_n) \wedge \cdots \wedge d c_n(U, \bbone_n) + \cdots \\
        = & (-1)^{(n-1)n(n+2)/2} \frac{1}{n!} (y_{11} \cdots y_{nn})^n d T_1(U) \wedge \cdots \wedge d T_n(U) + \cdots.
    \end{align*}
    Observe that, for $i,j \in [1,n]$,
    \begin{align*}
        \frac{\partial T_i(U)}{\partial x_{1j}} = \frac{\partial \tr (U^i)}{\partial x_{1j}} = i \tr (\frac{\partial U}{\partial x_{1j}} U^{i-1}) = i \tr (e_{1j} Y^{-1} U^{i-1}) = i (Y^{-1} U^{i-1})_{[j,j]}^{[1,1]},
    \end{align*}
    where $e_{1j}$ stands for the $n \times n$ matrix whose only non-zero entry is in position $(1,j)$, and that non-zero entry is $1$.  It follows that, writing $e_1$ for $[1 ~ 0 ~ \cdots ~ 0]^T$,
    \begin{align*}
        d T_1(U) \wedge \cdots \wedge d T_n(U) & =  \det \left[ (i (Y^{-1} U^{i-1})_{[j,j]}^{[1,1]})_{j \in [1,n]}^{i \in [1,n]} \right] d x_{11} \wedge d x_{12} \wedge \cdots \wedge d x_{1n} + \cdots \\
        & = n! \det \left[ Y^{-1} U^0 e_1 ~ Y^{-1} U^1 e_1 ~ \cdots ~ Y^{-1} U^{n-1} e_1 \right] d x_{11} \wedge d x_{12} \wedge \cdots \wedge d x_{1n} + \cdots \\
        & = n! (\det Y)^{-1} \det \left[ U^0 e_1 ~ U^1 e_1 ~ \cdots ~ U^{n-1} e_1 \right] d x_{11} \wedge d x_{12} \wedge \cdots \wedge d x_{1n} + \cdots \\
        & = (-1)^{n(n-1)/2} n! (\det Y)^{-n} \varphi_1 d x_{11} \wedge d x_{12} \wedge \cdots \wedge d x_{1n} + \cdots,
    \end{align*}
    where the last step follows from \cite[Lemma 3.3]{GSV20}.  Combining everything, we are done.
\end{proof}

The proof of the following Theorem \ref{thm:dphiBB-}, being quite involved, is postponed to $\S$\ref{sect:logvolformexp}.

\begin{thm} \label{thm:dphiBB-}
    As meromorphic differential forms on $B \times B_-$, we have
    \begin{align*}
        \frac{d \varphi_1}{\varphi_1} \wedge \cdots \wedge \frac{d \varphi_{(n-1)^2}}{\varphi_{(n-1)^2}} \wedge & \frac{d x_{22}}{ x_{22} }\wedge \cdots \wedge \frac{d x_{nn}}{ x_{nn} } \wedge \frac{d y_{11}}{ y_{11} } \wedge \frac{d y_{22}}{ y_{22} } \wedge \cdots \wedge \frac{d y_{nn}}{ y_{nn} } \\
    = & \pm \frac{1}{x_{22} \cdots x_{nn} y_{11} \cdots y_{nn} \varphi_1}  \bigwedge \limits_{2 \le i \le j \le n} dx_{ij} \wedge \bigwedge \limits_{1 \le j \leq i \le n} dy_{ij}.
    \end{align*}
\end{thm}

We now have the following easy consequence of Theorem \ref{thm:dcbar} and Theorem \ref{thm:dphiBB-}.

\begin{thm} \label{thm:modifiedlogvol}
    We have
    \[
    \overline{\mu}_{\mathcal C_{\rm GSV}} = \pm \frac{x_{11}}{ (y_{11} \cdots y_{nn})^{n} \overline{c}_0 \cdots \overline{c}_{n-1} (y_{11}\cdots y_{nn} - 1) (x_{22}y_{22} - 1) \cdots (x_{nn}y_{nn} - 1)} \bigwedge \limits_{1 \leq i \leq j \leq n} d x_{ij} \wedge \bigwedge \limits_{1 \leq j \leq i \leq n} d y_{ij}.
    \]
\end{thm}

\begin{proof}
    For $i \in [2,n]$, we have $d \overline{y}_{ii} = x_{ii} d y_{ii} + y_{ii} d x_{ii}$, so $d \overline{x}_{ii} \wedge d \overline{y}_{ii} = x_{ii} d x_{ii} \wedge d y_{ii}$, which implies that
    \[
    \frac{d \overline{x}_{ii}}{\overline{x}_{ii}} \wedge \frac{d \overline{y}_{ii}}{\overline{y}_{ii}} = \frac{1}{x_{ii}y_{ii} - 1} d x_{ii} \wedge d y_{ii}.
    \]
    Now the conclusion follows from Theorem \ref{thm:dcbar} and Theorem \ref{thm:dphiBB-}.
\end{proof}

Let $(x,y) \in \b \oplus \b_-$. The entries of the matrices $x$ and $y$ become a local holomorphic coordinate system on $B \times B_-$ near $(\bbone_n, \bbone_n)$ via the map 
\[
\b \oplus \b_- \longrightarrow B \times B_-, \quad (x,y) \longmapsto (\exp(x), \exp(y)).
\]

\begin{prop} \label{lem:degcas}
    Let $(x,y) \in \b \oplus \b_-$.  For $i \in [0,n-1]$, taking lowest degree terms at $(\bbone_n, \bbone_n)$, we have
    \begin{align*}
        \overline{c}_i (\exp(x), \exp(y))^{\low} = ~ \text{sum of the principal} ~ (n-i) \times (n-i) ~ \text{minors of} ~ x-y.
    \end{align*}
    In particular, we have $\deg (\overline{c}_i^{\low}) = n-i$.
\end{prop}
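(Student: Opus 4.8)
\textbf{Proof plan for Proposition \ref{lem:degcas}.}

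The plan is to compute the lowest degree term of $\overline{c}_i(\exp(x),\exp(y))$ at $({\mathbbm 1}_n,{\mathbbm 1}_n)$ directly from the generating-function definition \eqref{eq:modifiedCasimirs}, namely $\det((\lambda-1){\mathbbm 1}_n + XY^{-1}) = \sum_{i=0}^n \overline{c}_i(X,Y)\lambda^i$, by substituting $X = \exp(x)$, $Y = \exp(y)$ and expanding in the local coordinates $u_{ij}$. First I would observe that, writing $X = {\mathbbm 1}_n + x + O(x^2)$ and $Y^{-1} = {\mathbbm 1}_n - y + O(y^2)$, one has $XY^{-1} = {\mathbbm 1}_n + (x-y) + (\text{terms of order} \ge 2) = {\mathbbm 1}_n + u + (\text{higher order})$, where the higher-order terms are polynomials in the $u_{ij}$ (and in $x,y$, but since $(x,y)\in \g_{\rm st}^*$ the diagonal of $x$ determines that of $y$, and in any case the higher-order contributions have lowest degree $\ge 2$) with lowest degree at least $2$. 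Hence $(\lambda - 1){\mathbbm 1}_n + XY^{-1} = \lambda{\mathbbm 1}_n + u + R$, where $R$ has entries of lowest degree $\ge 2$ in the $u_{ij}$'s.

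Next I would expand $\det(\lambda{\mathbbm 1}_n + u + R)$ as a polynomial in $\lambda$. The coefficient of $\lambda^i$ in $\det(\lambda{\mathbbm 1}_n + M)$ for any matrix $M$ is $(-1)^{?}$ times the sum of the principal $(n-i)\times(n-i)$ minors of $M$ (this is the standard expansion of the characteristic-type polynomial $\det(\lambda I + M) = \sum_{i=0}^n e_{n-i}(M)\lambda^i$, where $e_k(M)$ is the sum of principal $k\times k$ minors of $M$). Applying this with $M = u + R$, the coefficient $\overline{c}_i(\exp(x),\exp(y))$ equals the sum of principal $(n-i)\times(n-i)$ minors of $u + R$. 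Each such minor is $\det((u+R)_S)$ for $S\subset[1,n]$ with $|S| = n-i$; since $u_S$ has entries that are linear (hence lowest degree $1$ or $0$) and $R_S$ has entries of lowest degree $\ge 2$, the lowest degree term of $\det((u+R)_S)$ is exactly $\det(u_S)$, provided $\det(u_S)$ is not identically zero as a polynomial in the $u_{ij}$. But $\det(u_S)$ is a nonzero polynomial of degree exactly $n-i$ (its diagonal monomial $\prod_{j\in S}u_{jj}$ survives), so the lowest degree term of the sum of principal $(n-i)\times(n-i)$ minors of $u+R$ is the sum of the principal $(n-i)\times(n-i)$ minors of $u$, which has homogeneous degree $n-i$. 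This gives both displayed assertions, up to checking that no cancellation occurs among the leading terms of different principal minors — which cannot happen since each $\det(u_S)$ has its own distinct diagonal monomial $\prod_{j\in S} u_{jj}$, so these leading forms are linearly independent and in particular their sum is nonzero of degree $n-i$. In particular $\deg(\overline{c}_i^{\low}) = n-i$.

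The step I expect to require the most care is tracking that the ``higher-order'' remainder $R$ in $XY^{-1} = {\mathbbm 1}_n + u + R$ genuinely has all entries of lowest degree $\ge 2$ in the coordinates $u_{ij}$: one must check that the contributions from $\exp(x) = {\mathbbm 1}_n + x + \tfrac12 x^2 + \cdots$ and $\exp(y)^{-1} = \exp(-y) = {\mathbbm 1}_n - y + \tfrac12 y^2 - \cdots$, when multiplied out, produce $x - y$ at first order and only quadratic-or-higher terms beyond that, and that expressing $x$ and $y$ in terms of the $u_{ij}$ (using $u = x-y$ together with the constraint $[X]_0[Y]_0 = {\mathbbm 1}_n$, i.e. $[x]_0 + [y]_0 = 0$ to first order, which pins down the diagonal) does not lower degrees. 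This is a routine but slightly delicate bookkeeping argument; everything else follows from the elementary expansion of $\det(\lambda I + M)$ in terms of principal minors.
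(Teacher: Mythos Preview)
Your proposal is correct and follows essentially the same approach as the paper's proof: substitute $X=\exp(x)$, $Y=\exp(y)$ into the generating function \eqref{eq:modifiedCasimirs}, use $\exp(x)\exp(-y) = {\mathbbm 1}_n + (x-y) + (\text{degree}\ge 2)$ to reduce to $\det(\lambda{\mathbbm 1}_n + u + \cdots)$, and read off the coefficient of $\lambda^i$ as the sum of principal $(n-i)\times(n-i)$ minors of $u$. The paper's proof is in fact considerably terser than yours---it omits the non-cancellation check via distinct diagonal monomials and the verification that the remainder $R$ has degree $\ge 2$ in the $u_{ij}$'s---so your extra care is welcome rather than superfluous.
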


\begin{proof} 
    By (\ref{eq:modifiedCasimirs}), we have 
    \begin{align*}
        \sum \limits_{i=0}^n \overline{c}_i(\exp (x), \exp (y)) \lambda^i = \det \biggl( \lambda {\mathbbm 1}_n + \bigl( \exp(x)\exp(-y) - {\mathbbm 1}_n \bigr) \biggr) = \det \biggl( \lambda {\mathbbm 1}_n + \bigl( x-y + \cdots \bigr) \biggr),
    \end{align*}
    where $\cdots$ stands for expressions of $x, y$ of degree at least $2$.  The formula for $\overline{c}_i^\low$ follows.
\end{proof}

\begin{thm} \label{thm:mainthmBB-}
    The generalized upper cluster structure $\mathcal C_{\rm GSV}$ on $B \times B_-$
    has Property $\calI$ at $(\bbone_n, \bbone_n) \in B \times B_-$ after the frozen variable modification $\overline{{\rm Froz}(\mathcal C_{\rm GSV})}$.
\end{thm}

\begin{proof}
    Let $(x,y) \in \b \oplus \b_-$ and $(X,Y) = (\exp(x), \exp(y))$.  For $i \in [2,n]$, taking lowest degree terms at $(\bbone_n, \bbone_n)$, we evidently have (note that $x_{ii}$ (resp. $y_{ii}$) is the $(i,i)$-entry of $X$ (resp. $Y$), not $x$ (resp. $y$))
    \[
    x_{ii}y_{ii} - 1 = e^{(i,i) \text{-entry of } x} e^{(i,i) \text{-entry of } y} - 1 = \bigl( (i,i) \text{-entry of } x \bigr) + \bigl( (i,i) \text{-entry of } y \bigr) + \text{ higher degree terms}.
    \]
    Hence, $\deg \bigl( (x_{ii}y_{ii} - 1)^\low \bigr) = 1$. Similarly, $\deg(\overline{y}_{11}^\low)=1$. By Theorem \ref{thm:modifiedlogvol} and Proposition \ref{lem:degcas}, we have
    \[
    \deg (\overline{\mu}_{\mathcal C_{\rm GSV}}^\low) = - (n + (n-1) + \cdots + 1 + n) + \frac{1}{2} n(n+1) + \frac{1}{2} n(n+1) = \frac{1}{2} n(n-1),
    \]
    which is equal to $\frac{1}{2} {\rm rk}(\pi_{0, \g \oplus \t})$ by Lemma \ref{lem:rkBB-}.  We are done by Theorem \ref{thm:main-cluster}.
\end{proof}

\subsection{Restriction to \texorpdfstring{$G^*$}{G*}}\label{ss:cluster-G-star}
We learned from M. Gekhtman (private communication) that the generalized upper cluster structure $\mathcal C_{\rm GSV}$ on $B \times B_-$ restricts to a generalized upper cluster structure $\calC$ on $G^*$ compatible with the Poisson structure $\pi_{\rm st}^*$. Since a reference for $\calC$ is not available in the literature at this point,  we will formulate our results on $(G^*, \pi_{\rm st}^*)$ in terms of the restrictions of the extended clusters of $\mathcal C_{\rm GSV}$ to $G^*$.

For any extended cluster $\Phi$ of $\mathcal{C}_{\rm GSV}$ in $\CC(B \times B_-)$, let
\begin{align} \label{eq:phi'}
    \overline{\Phi}|_{G^*} = \{\varphi|_{G^{\ast}} \colon \varphi \in \Phi \setminus {\rm Froz}(\mathcal C_{\rm GSV})\} \sqcup \{\overline{x}_{ii}|_{G^{\ast}} = x_{ii}|_{G^{\ast}} \colon i\in[2,n] \} \sqcup \{\overline{c}_i|_{G^{\ast}} \colon i\in[0, n-1]\}.
\end{align}
The main theorem of this section is

\begin{thm} \label{thm:mainthmGLn}
    For any extended cluster $\Phi$ of the generalized upper cluster structure $\mathcal{C}_{\rm GSV}$ on $B \times B_-$, the set $\overline{\Phi}|_{G^*}$ of functions on $G^{\ast}$ defined in \eqref{eq:phi'} is a log-canonical system on $(G^*, \pi_{\rm st}^*)$ with  Property $\calI$ at $(\bbone_n, \bbone_n)$.  In particular, the set $(\overline{\Phi}|_{G^*})^{\low}$ contains a polynomial integrable system on $(\g^{\ast}, \pi_{0, \g})$.
\end{thm}

\begin{proof}
    This is an easy consequence of Theorem \ref{thm:mainthmBB-}.  In fact, on $B \times B_-$, since $\deg \bigl( (d \overline{y}_{ii}/\overline{y}_{ii})^\low \bigr) = 0$ for all $i \in [1,n]$, where the lowest degree term is taken at $(\bbone_n, \bbone_n) \in B \times B_-$, inspecting the proof of Theorem \ref{thm:mainthmBB-}, we see that $\deg (\mu_{\overline{\Phi}|_{G^*}}^\low) = n(n-1)/2 - 0 = n(n-1)/2$, where the lowest degree term is taken at $(\bbone_n, \bbone_n) \in G^*$.  Recall the well-known fact that ${\rm rk}(\pi_{0, \g}) = n^2 - n$.  We are done by Theorem \ref{thm:main-0}.
\end{proof}

\subsection{A polynomial integrable system on \texorpdfstring{$(\g^*, \pi_{0, \g})$}{g*}}

For the log-canonical system
\[
\overline{\Phi}|_{G^*} = \{(\varphi_i)|_{G^{\ast}} \colon i \in [1, (n-1)^2]\} \sqcup \{\overline{x}_{ii}|_{G^{\ast}} = x_{ii}|_{G^{\ast}} \colon i\in[2,n] \} \sqcup \{\overline{c}_i|_{G^{\ast}} \colon i\in[0, n-1]\}
\]
on $(G^*, \pist^*)$ which has Property $\calI$ at $(\bbone_n, \bbone_n) \in G^*$ (see Theorem \ref{thm:mainthmGLn}), we present an explicit choice of a subset of $(\overline{\Phi}|_{G^*})^{\low}$ which forms an integrable system on $(\g_{\rm st}^* \cong \g^*,\pi_{0, \g})$.  For $(x,y)\in \g_{\rm st}^*$, set $u=x-y$, so $(\exp(x),\exp(y)) \in G^{\ast}$ and hence the entries of $u$ form a local holomorphic coordinate system on $G^{\ast}$ near $(\bbone_n,\bbone_n)$. 
We shall write down such an integrable system on $(\g_{\rm st}^*,\pi_{0, \g})$ using this local holomorphic coordinate system. For an $n \times n$ matrix $u$ and $I, J \subseteq [1,n]$ with $|I| = |J|$, recall that $\Delta_{I,J}(u)$ is the determinant of the submatrix of $u$ consisting of rows indexed by $I$ and columns by $J$.  Since every $k \in [1, (n-1)^2]$ can be uniquely written as $k = p(n-1)+i$, where $i \in [1,n-1]$ and $p \in [0,n-2]$, we will often write $\varphi_{p(n-1)+i}$ in place of $\varphi_k$.

In what follows, for ease of notation, we simply write $f$ in place of $f|_{G^*}$ for 
$f\in \CC[B \times B_-]$.

\begin{thm}\label{thm:choice-GLn}
Take $(x,y)\in \g_{\rm st}^*$, and $u=x-y$.

1) The following subset  is an integrable system on $(\g_{\rm st}^{\ast}, \pi_{0, \g})$:
\[
    \{ \varphi_{p(n-1)+i}^\low : i\leq p+1\} \cup\{\overline{c}_i^\low: 0 \le i \le n-1 \}.
\]
            
2) Put $F(u)=\begin{bmatrix}
        ue_1 & \cdots & u^n e_1
    \end{bmatrix}$ with $e_1=[1 ~ 0 ~ \cdots ~ 0]^T\in \mathbb{C}^n$. Up to a sign, we have
\begin{align*}
    \varphi_{p(n-1)+i}^\low(u) &=\Delta_{[i+1,n-p+i-1],[1,n-p-1]}(F(u)),&  &\forall\ i \le p+1;\\
    \overline{c}_i^\low (u) &= \text{sum of principal~} (n-i)\times (n-i) \text{~minors of~} u,&  &\forall\ 0 \le i \le n-1.
\end{align*}
\end{thm}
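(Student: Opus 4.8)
The plan is to reduce both parts of the theorem to results already established in the excerpt, primarily Theorem \ref{thm:choice-GB} (in the guise of the known structure of Bott-Samelson coordinates) and the explicit formulas for $\overline{c}_i^\low$ in Proposition \ref{lem:degcas}, together with the degree count carried out in the proof of Theorem \ref{thm:mainthmGLn}. First I would recall that by Theorem \ref{thm:mainthmGLn} the set $\Phi^\low$, with $\Phi$ as in \eqref{eq:phiofGLn}, contains an integrable system on $(\g^*,\pi_0)$, and that $\deg(\mu_\Phi^\low) = \tfrac12 n(n-1)$, so the magic number of $(\g^*,\pi_0)$ is $n^2 - \tfrac12 n(n-1) = \tfrac12 n(n+1)$. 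Hence it suffices to exhibit a subset of $\Phi^\low$ of cardinality $\tfrac12 n(n+1)$ consisting of algebraically independent functions; $\pi_0$-involutivity is automatic from Lemma \ref{Lem:mainlem} since $\Phi$ is log-canonical. The set proposed in part 1) consists of $\{\overline{c}_i^\low : 0\le i\le n-1\}$, which has $n$ elements, together with $\{\varphi_{p(n-1)+i}^\low : 1\le i\le p+1,\ 0\le p\le n-2\}$, which has $\sum_{p=0}^{n-2}(p+1) = \tfrac12 n(n-1)$ elements, for a total of $\tfrac12 n(n+1)$. So the entire content is the algebraic independence of this particular collection.

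The key step is to compute the lowest-degree terms $\varphi_{p(n-1)+i}^\low$ explicitly, which is exactly the assertion of part 2). For this I would expand $\varphi_i = \det\Psi_{[i,(n-1)^2]}^{[i,(n-1)^2]}$ along the staircase matrix $\Psi$, substituting $X = \exp(x) = \bbone_n + x + O(2)$ and $Y = \exp(y) = \bbone_n + y + O(2)$, and track the lowest-order contribution in the coordinates $u_{ij} = x_{ij} - y_{ij}$. Because $Y_{[2,n]}$ and $X_{[2,n]}$ both have the same leading part (the shift-by-one truncation of $\bbone_n$), the leading behavior of these staircase minors is governed by the Krylov-type matrix $F(u) = [\,ue_1\ |\ u^2 e_1\ |\ \cdots\ |\ u^n e_1\,]$; concretely, one shows that the relevant block minors of $\Psi$ collapse, in lowest degree, to the minors $\Delta_{[i+1,n-p+i-1],[1,n-p-1]}(F(u))$. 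This is a block-triangular / resultant-matrix computation: the staircase structure of $\Psi$ is precisely the Sylvester-type matrix whose minors read off the $(1,*)$-entries of powers of $u$, and one must carefully verify the index bookkeeping (why $k = p(n-1)+i$ with $i\le p+1$ is exactly the range in which the leading minor is nonzero, and why it is the $\Delta$ displayed). Once part 2) is in hand, the $\overline{c}_i^\low$ are the coefficients of the characteristic polynomial of $u$ by Proposition \ref{lem:degcas}.

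It then remains to prove that $\{\Delta_{[i+1,n-p+i-1],[1,n-p-1]}(F(u)) : i\le p+1\} \cup \{\text{char.\ poly.\ coefficients of }u\}$ is algebraically independent as a set of regular functions on $\g = \gl(n,\CC)$. Here I would argue by a Jacobian/dimension count: the minors of $F(u)$ are, up to lower-order corrections, the kind of functions appearing in Theorem \ref{thm:choice-GB} for $w=w_0$ in type $A$ (compare Theorem \ref{thm:concreteformulatypeA} and Theorem \ref{thm:Gexp}), so their independence can be transported from there, or established directly by evaluating the Jacobian at a generic $u$ — for instance, at a companion-type matrix where $F(u)$ becomes invertible and the $\Delta$'s become coordinate-like. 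The characteristic-polynomial coefficients are independent among themselves and, being $\mathrm{Ad}$-invariant (Casimirs of $\pi_0$), their differentials are supported on directions transverse to the coadjoint orbit, so they enlarge the independent set; a clean way is to check that the full Jacobian has rank $\tfrac12 n(n+1)$ at one well-chosen point. The main obstacle I expect is the explicit identification in part 2): carefully proving that the lowest-degree term of the staircase minor $\det\Psi_{[k,(n-1)^2]}^{[k,(n-1)^2]}$ is precisely the stated minor of $F(u)$ — including the vanishing of all lower-degree contributions and the nonvanishing of the claimed leading term — is a delicate combinatorial-linear-algebra argument, and everything else (cardinality, involutivity, independence) is comparatively routine given the machinery already developed.
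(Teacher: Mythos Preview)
Your high-level structure (count $\tfrac12 n(n+1)$, involutivity is free, so reduce to algebraic independence) is exactly right, but the two substantive steps are underdeveloped and in places point in the wrong direction compared with what the paper actually does.

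For part 2), you propose to expand $\det\Psi_{[k,(n-1)^2]}^{[k,(n-1)^2]}$ directly after substituting $X=\bbone_n+x+\cdots$, $Y=\bbone_n+y+\cdots$. The paper does \emph{not} do this. It first introduces an auxiliary $n(n-1)\times n(n-1)$ matrix $\Lambda$ and a larger block matrix $\Theta$ with $\lambda_k=\det\Theta_{J_k}^{[k,m]}$, factors $\Theta=L_1L_2$ with $L_2$ block-diagonal, and applies Binet--Cauchy. This yields exact (not just lowest-degree) formulas expressing $\lambda_{p(n-1)+i}$ as a product of a minor of $Y$ and $\det\bigl[(U^{n-p})^{[\cdots]}\ (U^{n-p-1})^{[\cdots]}\ U^{n-p-2}e_1\ \cdots\ e_1\bigr]$ with $U=XY^{-1}$ (Theorem \ref{Thm:expforphi}). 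Only then does one take lowest degree terms: $U=\bbone_n+u+\cdots$, and column operations replace $U^ke_1$ by $u^ke_1+\cdots$ (Theorem \ref{prop:lambdalow}). Nonvanishing of the leading term is not automatic; it is secured by Lemma \ref{Lem:non-zeroA}, proved by conjugating to a nilpotent Jordan block. Your direct $\Psi$-expansion is not obviously workable, and you yourself flag the index bookkeeping as the ``main obstacle''; the paper sidesteps this entirely via the $\Theta=L_1L_2$ trick.

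For part 1), your proposal to transport independence from Theorem \ref{thm:choice-GB}/\ref{thm:concreteformulatypeA} does not work: those results concern minors of $u$ sitting inside the upper-triangular part, whereas here the $\varphi_{p(n-1)+i}^\low$ are minors of the Krylov matrix $F(u)=[ue_1\,|\,u^2e_1\,|\,\cdots]$, a genuinely different family. Your ``differentials of Casimirs are transverse to the orbit'' heuristic is also insufficient as stated, since the $\varphi^\low$'s are not tangent to the orbit either. The paper's argument is much more concrete: by Corollary \ref{cor:lowinminor1} the $\varphi_{p(n-1)+i}^\low$ with $i\le p+1$ are minors of $F(u)$ supported on rows $[i+1,n-p+i-1]\subset[2,n]$, hence do not involve $u_{11},\ldots,u_{1n}$ at all; since the $\overline{c}_i^\low$ do, the Jacobian is block-triangular and one is reduced to independence of the $\varphi$'s alone. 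That is then proved by induction on $n$, restricting to strictly lower-triangular $u$: Lemma \ref{Lem:Ind} shows that for $i\le p$ one has $\varphi_{p(n-1)+i}^\low(u)=\varphi_{(p-1)(n-2)+i}^\low(u')$ with $u'=u_{[1,n-1]}^{[1,n-1]}$ (so these do not involve the last row $u_{n\bullet}$), while the diagonal cases $i=p+1$ are handled by computing the Jacobian $\partial\varphi_{p(n-1)+(p+1)}^\low/\partial u_{nj}$ and showing it factors as a product of two triangular matrices with nonzero determinant.
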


Concretely, the functions $\{ \varphi_{p(n-1)+i}^\low : i\leq p+1\}$ that appear in part 1) of Theorem \ref{thm:choice-GLn} are the following minors of $F(u)$ (the matrices being $F(u)$):
\begin{equation*}
        \tikz[baseline=(M.west)]{%
        \node[matrix of math nodes,matrix anchor=west,left delimiter={[},right delimiter={]},ampersand replacement=\&] (M) {%
          * \& * \& * \& \cdots \& * \& * \\
          * \& * \& * \& \cdots \& * \& * \\
          \vdots \& \vdots \& \vdots \& \iddots \& \vdots \& \vdots \\
          * \& * \& * \& \cdots \& * \& * \\
          * \& * \& * \& \cdots \& * \& * \\
          * \& * \& * \& \cdots \& * \& * \\
        };
        \node[draw,fit=(M-6-1),inner sep=0pt] {};
        \node[draw,fit=(M-5-1)(M-6-2),inner sep=0pt] {};
        \node[draw,fit=(M-4-1)(M-6-3),inner sep=0pt] {};
        \node[draw,fit=(M-2-1)(M-6-5),inner sep=0pt] {};
        } \qquad
        \tikz[baseline=(M.west)]{%
        \node[matrix of math nodes,matrix anchor=west,left delimiter={[},right delimiter={]},ampersand replacement=\&] (M) {%
          * \& * \& * \& \cdots \& * \& * \\
          * \& * \& * \& \cdots \& * \& * \\
          \vdots \& \vdots \& \vdots \& \iddots \& \vdots \& \vdots \\
          * \& * \& * \& \cdots \& * \& * \\
          * \& * \& * \& \cdots \& * \& * \\
          * \& * \& * \& \cdots \& * \& * \\
        };
        \node[draw,fit=(M-5-1),inner sep=0pt] {};
        \node[draw,fit=(M-4-1)(M-5-2),inner sep=0pt] {};
        \node[draw,fit=(M-2-1)(M-5-4),inner sep=0pt] {};
        } \qquad \cdots \qquad
        \tikz[baseline=(M.west)]{%
        \node[matrix of math nodes,matrix anchor=west,left delimiter={[},right delimiter={]},ampersand replacement=\&] (M) {%
          * \& * \& * \& \cdots \& * \& * \\
          * \& * \& * \& \cdots \& * \& * \\
          \vdots \& \vdots \& \vdots \& \iddots \& \vdots \& \vdots \\
          * \& * \& * \& \cdots \& * \& * \\
          * \& * \& * \& \cdots \& * \& * \\
          * \& * \& * \& \cdots \& * \& * \\
        };
        \node[draw,fit=(M-2-1),inner sep=0pt] {};
        }.
\end{equation*}

The proof of Theorem \ref{thm:choice-GLn} is postponed to $\S$\ref{sect:lowdegtermexp}.  The proof of the first statement of Theorem \ref{thm:choice-GLn} involves calculating the rank of the Jacobi matrix of the displayed functions with respect to the entries of $u$.  This makes use of the formulas presented in the second statement.  The second statement follows directly from the next Theorem \ref{prop:lambdalow}, whose proof is also postponed to $\S$\ref{sect:lowdegtermexp}.
\begin{thm} \label{prop:lambdalow}
    Let $\{e_1, \ldots, e_n\}$ be the standard basis of $\CC^n$.  For $i \in [1,n-1]$, $p \in [0,n-2]$, up to a sign,
    \[
         \varphi_{p(n-1)+i}^{\low}(u) =  \begin{cases}
         \vspace{5pt} \det
    \begin{bmatrix}
        e_{n-p+i} & \cdots & e_n & e_i & \cdots & e_2 & u^{n-p-1} e_1 & \cdots & ue_1 & e_1
    \end{bmatrix}, &\text{if~} i \leq p+1;\\ \vspace{5pt}
    \det \begin{bmatrix}
            e_{i-p} & \cdots & e_i & u^{n-p-2} e_1 & u^{n-p-3} e_1 & \cdots & u e_1 & e_1
        \end{bmatrix}, &\text{if~} i > p+1.
    \end{cases}
    \]
\end{thm}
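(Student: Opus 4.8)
The plan is to compute $\varphi_{p(n-1)+i}^{\low}$ directly from the definition $\varphi_{p(n-1)+i}=\det\Psi_{[p(n-1)+i,\,(n-1)^2]}^{[p(n-1)+i,\,(n-1)^2]}$. Near $(\bbone_n,\bbone_n)\in G^*$ we may write $X=\exp(x)=\bbone_n+x+O(\geq 2)$ and $Y=\exp(y)=\bbone_n+y+O(\geq 2)$, where the entries $u_{ij}$ of $u=x-y$ form a local holomorphic coordinate system and all lowest degree terms are taken at $(\bbone_n,\bbone_n)$. First I would substitute these expansions into the staircase matrix $\Psi$ and into its trailing submatrix $\Psi_{[k,(n-1)^2]}^{[k,(n-1)^2]}$, where $k=p(n-1)+i$, and then work modulo the ideal generated by the $u_{ij}$.

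The heart of the argument is a reduction of this trailing submatrix by row and column operations, which alter its determinant only by a sign. Since $\Psi$ is a band (staircase) matrix whose only nonzero blocks are the diagonal $Y_{[2,n]}$-blocks, the subdiagonal $X_{[2,n]}$-blocks, and the terminal block $Y^{[1,1]}_{[2,n]}$, and since each of these blocks is a submatrix of $X$ or $Y$ and hence equals the corresponding submatrix of $\bbone_n$ plus terms of positive degree, one can telescope the staircase: successive block rows are used to clear entries, and the alternating $X$- and $Y$-blocks combine into powers of $Y^{-1}X$ (more precisely, of the matrix obtained from $Y^{-1}X$ by the relevant index shifts, and up to conjugation by the diagonal parts of $X$ and $Y$, which are invertible and contribute only units of the local ring). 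After this telescoping, and a further sequence of column operations of the type ``subtract a combination of earlier columns'', the matrix $\Psi_{[k,(n-1)^2]}^{[k,(n-1)^2]}$ becomes, up to sign and up to an invertible diagonal rescaling, a matrix whose columns are of two kinds: a set of standard basis vectors $e_j$ — the ``constant'' columns coming from the columns $\neq 1$ of the $X$- and $Y$-blocks, which at the identity are standard basis vectors — together with a consecutive run of Krylov-type vectors $(Y^{-1}X-\bbone_n)^{\ell}e_1$ for $\ell=0,1,\dots,m$, produced when the telescoped product reaches the first column of the terminal block $Y^{[1,1]}_{[2,n]}$; this terminal, one-column-wide block is exactly why only $e_1$, and no other $e_j$, feeds the Krylov run.

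With this reduced form in hand, the lowest degree term is read off. Since $Y^{-1}X-\bbone_n=Y^{-1}(X-Y)=u+O(\geq 2)$, we have $(Y^{-1}X-\bbone_n)^{\ell}e_1=u^{\ell}e_1+O(\geq \ell+1)$, while the $e_j$-columns are constant and the diagonal factors $x_{jj},y_{jj}$ equal $1+O(\geq 1)$; hence the reduced matrix has columns whose lowest degree terms are the $e_j$ (degree $0$) and the $u^{\ell}e_1$ (degree $\ell$). Expanding the determinant, any term using a strictly-higher-order part of a column has degree exceeding $\sum_{\ell=0}^m \ell$, so the term of least degree equals the determinant $D$ of the lowest degree columns, which is homogeneous of degree $\sum_{\ell=0}^m\ell$ — provided $D\not\equiv 0$. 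That $D\not\equiv 0$ I would prove by writing a generic $u=P\operatorname{diag}(\lambda_1,\dots,\lambda_n)P^{-1}$, so that the $u^{\ell}e_1$ become $P\operatorname{diag}(\lambda_j^{\ell})w$ with $w=P^{-1}e_1$ generic, and applying Cauchy--Binet: $D$ becomes a sum of products of a maximal minor of a submatrix of $P$, a monomial in $w$, and a generalized Vandermonde determinant in the $\lambda_j$, and this sum does not vanish identically in the $\lambda_j, w_j, P_{jk}$. Granting $D\not\equiv 0$ we get $\varphi_{p(n-1)+i}^{\low}=D$ up to sign. It then remains to carry out the index bookkeeping identifying $m$ and the precise multiset $\{e_j\}$: when $i\le p+1$ the deleted initial block of $k-1$ rows and columns has not yet consumed the terminal block column, so the Krylov run has full length $m=n-p-1$ and the surviving basis vectors are exactly $e_{n-p+i},\dots,e_n,e_i,\dots,e_2$; when $i>p+1$ the deletion has entered the terminal block, shortening the run to $m=n-p-2$ and shifting the basis vectors to $e_{i-p},\dots,e_i$. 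Matching with the reduced matrix yields the two displayed cases, and the reformulation of Theorem~\ref{thm:choice-GLn}(2) in terms of $F(u)=[\,ue_1\,|\,\cdots\,|\,u^ne_1\,]$ follows by Laplace expansion of $D$ along the $e_j$-columns and the $e_1$-column.

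I expect the main obstacle to be the second step together with the bookkeeping in the final step: making the telescoping of the GSV staircase matrix $\Psi$ precise, and tracking exactly which standard basis vectors survive and how long the Krylov run is — in particular pinning down how the special terminal block $Y^{[1,1]}_{[2,n]}$ and the position of $k=p(n-1)+i$ relative to the block boundaries of $\Psi$ produce the dichotomy $i\le p+1$ versus $i>p+1$. Everything else (the exponential expansion, the passage from the reduced determinant to its lowest degree term, and the non-vanishing of $D$) is routine linear algebra.
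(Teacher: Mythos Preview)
Your proposal is in the right spirit and could be made to work, but it differs from the paper's route in a way worth noting. The paper does not telescope $\Psi$ directly. Instead it first proves an \emph{exact} (not lowest-degree) formula, Theorem~\ref{Thm:expforphi}: by embedding the staircase into the square block matrix $\Theta$ and factoring $\Theta=L_1L_2$ via block elimination with the row $\bm{U}=-[(-U)^{n-1}\,\cdots\,-U]$, one obtains $\lambda_{p(n-1)+i}$ as a product of a minor of $Y$ with the determinant of a single $n\times n$ matrix whose columns are $U^{n-p}e_j$, $U^{n-p-1}e_j$, and $U^{k}e_1$ for various $j,k$. Only then is the lowest degree extracted, and this step is almost trivial: the column operations replacing $\{U^{k}e_1\}$ by $\{\sum_j(-1)^j\binom{k}{j}U^{k-j}e_1\}=\{(U-\bbone_n)^ke_1+O(\geq k+1)\}$ are lower-triangular with $1$'s on the diagonal, and $U^m e_j=e_j+O(\geq 1)$ for the remaining columns. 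Nonvanishing is Lemma~\ref{Lem:non-zeroA}, proved by conjugating by a permutation matrix that sends the given $e_{i_1},\dots,e_{i_k}$ to $e_{n-k+1},\dots,e_n$ and then specializing $u'$ to a single nilpotent Jordan block; this is shorter than your diagonalization/Cauchy--Binet argument.

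What your approach buys is directness: you never write down the intermediate exact formula. What the paper's approach buys is a clean separation of concerns---the heavy block elimination is packaged once into Theorem~\ref{Thm:expforphi} (which is of independent interest, matching \cite[Lemmas~3.3--3.4]{GSV20}), and the passage to $\varphi^{\low}$ becomes a one-line binomial identity. Your telescoping on $\Psi$ is essentially the same elimination carried out implicitly, so the ``main obstacle'' you flag is exactly the content of Theorem~\ref{Thm:expforphi}; if you pursue your route, I would recommend isolating that step as a standalone lemma rather than trying to read off both the structure and the lowest degree in one pass.
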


\begin{example} \label{ex:runningexGLn}
   Take $n=3$.  Let $(x,y)\in \mathfrak{g}_{\rm st}^*$ with entries
   \[
    x=\begin{bmatrix}
        \frac{1}{2} u_{11} & u_{12} & u_{13}\\
        0 & \frac{1}{2} u_{22} & u_{23}\\
        0 & 0 & \frac{1}{2} u_{33}
    \end{bmatrix};\quad
    y=\begin{bmatrix}
        - \frac{1}{2} u_{11} & 0 & 0\\
        - u_{21} & - \frac{1}{2} u_{22} & 0\\
        - u_{31} & - u_{32} & - \frac{1}{2} u_{33}
    \end{bmatrix},
   \]
   such that $(X = \exp(x), Y = \exp(y)) \in G^*$, and we can use the entries $u_{ij}$ of $u = x-y$ as a local holomorphic coordinate system on $G^*$ near $(\bbone_3, \bbone_3)$. In terms of the entries of $(X = (x_{ij}), Y = (y_{ij}))$, we have:
   \begin{align*}
       \varphi_1&=-x_{33} y_{32} y_{21}^2+x_{33} y_{22} y_{31} y_{21}+x_{23} y_{31} y_{32} y_{21}-x_{22} y_{31} y_{33} y_{21}-x_{23} y_{22} y_{31}^2,\\
       \varphi_2&=-x_{33} y_{21} y_{32}+x_{23} y_{31} y_{32}-x_{22} y_{31} y_{33},\quad \varphi_3=x_{23} y_{31}-x_{33} y_{21},\quad \varphi_4=y_{31}.
   \end{align*}
   Explicit calculation shows that, for the cluster variables, we have
   \begin{align*}
       \varphi_1^\low&=-u_{32} u_{21}^2- u_{22} u_{31} u_{21}+ u_{31} u_{33} u_{21}-u_{23} u_{31}^2\\
       &=\Delta_{[2,3],[1,2]} (u) \Delta_{\{2\},\{1\}} (u) + \Delta_{[2,3],\{1,3\}} (u) \Delta_{\{3\},\{1\}} (u),\\
       \varphi_2^\low&=u_{31}=\Delta_{\{3\},\{1\}} (u), \quad \varphi_3^\low=u_{21}=\Delta_{\{2\},\{1\}} (u),\quad \varphi_4^\low=-u_{31}=-\Delta_{\{3\},\{1\}} (u),
   \end{align*}
   and for the frozen variables, we have $\overline{x}_{ii}^\low=1$ and 
   \begin{align*}
   \overline{c}_0^\low&=u_{11} u_{22} u_{33}+u_{12} u_{23} u_{31}+ u_{21} u_{13} u_{32}-u_{13} u_{22} u_{31}-u_{11} u_{23} u_{32}-u_{12} u_{21} u_{33}=\det u,\\
    \overline{c}_1^\low&=-u_{12} u_{21}+u_{11} u_{22}-u_{13} u_{31}-u_{23} u_{32}+u_{11} u_{33}+u_{22} u_{33}\\
    &=\Delta_{\{1,2\},\{1,2\}}(u)+\Delta_{\{1,3\},\{1,3\}}(u)+\Delta_{\{2,3\},\{2,3\}}(u),\\
    \overline{c}_2^\low&=u_{11}+u_{22}+u_{33}=\tr(u).
    \end{align*}
   The explicit choice of integrable system given by part 1) of Theorem \ref{thm:choice-GLn} is $\{\varphi_1^\low,\varphi_3^\low,\varphi_4^\low,\overline{c}_0^\low,\overline{c}_1^\low,\overline{c}_2^\low\}$.

   Note that we presented $(\overline{\Phi}|_{G^*})^\low$ in terms of both the entries of $u$ and its minors. General formulas of $\varphi_i^\low$ and $\overline{c}_i^\low$ in terms of the minors of $u$ are given in Corollary \ref{cor:lowinminor1}, Corollary \ref{cor:lowinminor2} and Proposition \ref{lem:degcas}.

   Take $n = 4$.  Let $x, y$ and $u$ be analogously defined.  Computer verification shows that the polynomial $\varphi_1^{\low}$, in terms of the $u_{ij}$'s, has as many as $86$ terms.   See Example \ref{ex:gl4} for a much cleaner formula for $\vphi_1^\low$ in terms of the minors of $u$.
\end{example}

\begin{rem} \label{rem:different}
Example \ref{ex:runningexGLn} shows that our integrable system is quite different from the one of Gelfand-Zeitlin. In fact, in the case where $n=3$, the Gelfand-Zeitlin integrable system, depending on convention, contains either $u_{11}+u_{22}$ or $u_{22}+u_{33}$, but the Poisson bracket of $\varphi_4^{\low} = -u_{31}$ with both functions are non-zero.  Hence our integrable system and the Gelfand-Zeitlin integrable system are genuinely different. 
\end{rem}

\begin{rem}
    The map $F: (\g_{\rm st}^{\ast}, \pi_{0, \g}) \rightarrow (\g_{\rm st}^{\ast}, \pi_{0, \g})$ that appeared in Theorem \ref{thm:choice-GLn} is not a Poisson map.  However, one can prove that it is a birational isomorphism with inverse map given by $f \mapsto f [e_1 ~ f^{[1,n-1]}]^{-1}$.  Algebro-geometric and Poisson geometric properties of $F$ will be investigated in a subsequent publication.
\end{rem}

\subsection{Proof of Theorem \ref{thm:dphiBB-}} \label{sect:logvolformexp}
The proof consists of several steps, beginning with an outline of the overall strategy. Put
\begin{align*}
    d(X,Y)
    = & (dx_{22} \wedge dx_{23} \wedge \cdots \wedge dx_{2n}) \wedge (dx_{33} \wedge dx_{34} \wedge \cdots \wedge dx_{3n}) \wedge \cdots  \wedge (dx_{n-1,n-1} \wedge dx_{n-1,n}) \wedge dx_{nn} \\
    &\wedge dy_{11}\wedge (dy_{21}\wedge y_{22}) \wedge \cdots \wedge (dy_{n1} \wedge dy_{n2} \wedge \cdots \wedge dy_{nn}).
\end{align*}
Our proof is based on
\begin{thm}\cite[Proposition 3.5]{GSV:related}\label{thm:gsv3.5}
    There exists an $(n-1)\times (n-1)$ unipotent upper triangular matrix ${\bf G}={\bf G}(X,Y)$ such that entries of ${\bf G}$ are rational functions in $X,Y$ and the $(2n-2)\times n$ matrix
    \begin{equation}
    \label{eq:bigM}
            M=\begin{bmatrix}
                Y_{[2,n]}\\
                {\bf G}X_{[2,n]}
            \end{bmatrix}
        \end{equation}
    satisfies
        \begin{equation*}
        \det M_{[n-i+k,n+k-1]}^{[n-i+1,n]}=\frac{\varphi_{kn-i+1}}{\varphi_{kn+1}}, \hs \text{~for~} k\in [1,n-2] \text{~and~} i\in [1,n].
        \end{equation*}
\end{thm}
In \cite[Proof of Proposition 3.5]{GSV:related}, the matrix ${\bf G}$ is constructed as a product ${\bf G}={\bf G}_{n-2}\cdots {\bf G}_1$, where ${\bf G}_i={\bf G}_i(X,Y)$ is an $(n-1)\times (n-1)$ unipotent upper triangular matrix  such that the entries of ${\bf G}_i$ are rational functions in $X,Y$ and the $(k,l)$-entries are all zero for $k<l$ and $k\neq i$. Denote by ${\bf G}_i'$ the block diagonal matrix $\diag(1, {\bf G}_i)$. We introduce a sequence of coordinates
\[
(X_1={\bf G}_1'X,Y),\ (X_2={\bf G}_2'{\bf G}_1'X,Y),\  \ldots,\ (X_{n-2}={\bf G}_{n-2}'\cdots {\bf G}_{1}'X,Y)
\]
on $B \times B_-$. The proof of Theorem \ref{thm:dphiBB-} is a combination of Theorem \ref{thm:volform}, where we show that $d(X,Y)=d(X_1,Y)=\cdots =d(X_{n-2},Y)$, and Theorem \ref{thm:dphi}, where we show that
\[
\frac{1}{\varphi_1} d(X_{n-2},Y)=\pm\frac{d \varphi_1}{\varphi_1} \wedge \cdots \wedge \frac{d \varphi_{(n-1)^2}}{\varphi_{(n-1)^2}} \wedge  d x_{22} \wedge \cdots \wedge d x_{nn}\wedge  d y_{11} \wedge \cdots \wedge d y_{nn}.
\]
A technical result, Theorem \ref{thm:theta}, which is required for the proof of Theorem \ref{thm:volform} is postponed to the end of this section.  

Now let us explain the construction of ${\bf G}$. As in the proof of \cite[Proposition 3.5]{GSV:related}, solve for the row vector $[g_1 ~ g_2 ~ g_3 ~ \cdots ~ g_{{(n-1)^2}-n}]$ such that
\begin{align} \label{eq:rowoper}
    [y_{21} ~ y_{22} ~ \underbrace{0 ~ \cdots ~ 0}_{{(n-1)^2}-n-2}] + [g_1 ~ g_2 ~ g_3 ~ \cdots ~ g_{{(n-1)^2}-n}] \Psi_{[n+1,{(n-1)^2}]}^{[n+1,{(n-1)^2}]} = [\underbrace{0 ~ 0 ~ 0 ~ \cdots ~ 0}_{{(n-1)^2}-n}],
\end{align}
where we recall that $\Psi$ is the staircase matrix (\ref{eq:Psi}). Then define
\[
    {\bf G}_1=\begin{bmatrix}
        1 & {\bf g}_1\\
        0 & \bbone_{n-2}
    \end{bmatrix}, \text{~where~} {\bf g}_1=[g_1 ~ g_2 ~ \cdots ~ g_{n-2}].
\]
To define ${\bf G}_2$, we solve for the row vector $\mathbf{h} = [h_1 ~ h_2 ~ \cdots ~ h_{{(n-1)^2}-2n}]$ such that
\[
[y_{31} ~ y_{32} ~ y_{33} ~ \underbrace{0 ~ \cdots ~ 0}_{{(n-1)^2}-2n-3}] + [h_1 ~ h_2 ~ \cdots ~ h_{{(n-1)^2}-2n}] \Psi_{[2n+1,{(n-1)^2}]}^{[2n+1,{(n-1)^2}]} = [\underbrace{0 ~ \cdots ~ 0}_{{(n-1)^2}-2n}].
\]
Define
\[
    {\bf G}_2=\begin{bmatrix}
    \bbone_2  &{\bf g}_2\\
     0 & \bbone_{n-3}
    \end{bmatrix}, \text{~where~} {\bf g}_2=\begin{bmatrix}
        0 & 0& \cdots &0\\
        h_1 &h_2 & \cdots & h_{n-3}
    \end{bmatrix}.
\]

Iterating this process, we get all ${\bf G}_i$'s. We then show
\begin{thm} \label{thm:volform}
    We have $d(X,Y) = d(X_1,Y)=d(X_2, Y)=\cdots=d(X_{n-2},Y)$.
\end{thm}

\begin{proof}
     We  first show $d(X,Y) = d(X_1,Y)$. Write $x_{ij}'$ for the $(i,j)$-entry of $X_1$. Thus $X_1={\bf G}_1'X$ gives 
   \begin{equation}\label{eq:forG1}
       [x'_{23} ~ x'_{24} ~ \cdots ~ x'_{2n}] = [x_{23} ~ x_{24} ~ \cdots ~ x_{2n}] + [g_1 ~ g_2 ~ \cdots ~ g_{n-2}] X_{[3,n]}^{[3,n]}. 
   \end{equation}
    Differentiating both sides of \eqref{eq:forG1} and writing $\mathbb{J}_1$ for the Jacobi matrix $[\frac{\partial g_j}{\partial x_{2i}}]_{i \in [3,n]}^{j \in [1,n-2]}$, we have
    \begin{align*}
         [dx'_{23} ~ dx'_{24} ~ \cdots ~ dx'_{2n}] &=  [dx_{23} ~ dx_{24} ~ \cdots ~ dx_{2n}] + [dg_1 ~ dg_2 ~ \cdots ~ dg_{n-2}] X_{[3,n]}^{[3,n]} + \heartsuit,\\
       & =  [dx_{23} ~ dx_{24} ~ \cdots ~ dx_{2n}] + [dx_{23} ~ dx_{24} ~ \cdots ~ dx_{2n}] \mathbb{J}_1 X_{[3,n]}^{[3,n]} + \heartsuit \\
        &=  [dx_{23} ~ dx_{24} ~ \cdots ~ dx_{2n}] \left({\mathbbm 1}_{n-2} + \mathbb{J}_1 X_{[3,n]}^{[3,n]}\right) + \heartsuit,
    \end{align*}
    where $\heartsuit$ stands for terms that are irrelevant for the computation of $d(X_1,Y)$. Hence
    \[
    dx'_{23} \wedge dx'_{24} \wedge \cdots \wedge dx'_{2n} = \det \left( {\mathbbm 1}_{n-2} + \mathbb{J}_1 X_{[3,n]}^{[3,n]} \right) dx_{23} \wedge dx_{24} \wedge \cdots \wedge dx_{2n} + \heartsuit,
    \]
    from which it follows that
    \[
    d(X_1,Y) = \det \left( {\mathbbm 1}_{n-2} + \mathbb{J}_1 X_{[3,n]}^{[3,n]} \right) d(X,Y).
    \]

    We now compute $\mathbb{J}_1$. Let ${\mathbb {\Delta}} = \Psi_{[n+1,{(n-1)^2}]}^{[n+1,{(n-1)^2}]}$ and $\Rho = {\mathbb {\Delta}}^{-1}$. Differentiating both sides of  (\ref{eq:rowoper}), we get
    \begin{align*}
        (d \mathbf{g}) {\mathbb {\Delta}} + \mathbf{g} d{\mathbb {\Delta}} = \heartsuit,
    \end{align*}
    where ${\bf g}=[g_1 ~ g_2 ~ \cdots ~ g_{(n-1)^2-n}]$. Hence we have
    \[
    d \mathbf{g} = - \mathbf{g} (d {\mathbb {\Delta}}) {\mathbb {\Delta}}^{-1} + \heartsuit = - \mathbf{g} (d {\mathbb {\Delta}}) {\rm P} + \heartsuit.
    \]
    Write $e_{ij}$ (resp. $e_i$) for the $({(n-1)^2}-n) \times ({(n-1)^2}-n)$ (resp. $({(n-1)^2}-n) \times 1$) matrix whose only non-zero entry is in position $(i,j)$ (resp. $(i,1)$), and this non-zero entry is $1$.  Denote by
    \[
        E_k=e_{(n-1),k} + e_{2(n-1),n+k} + \cdots + e_{(n-3)(n-1),(n-4)n+k},
    \]
    where the index appeared are nothing but the positions where $x_{2k}$ sits in $\mathbb {\Delta}$ for $3\leq k\leq n$. Thus
   \begin{align}
       d\mathbf{g}=-\mathbf{g}(d{\mathbb {\Delta}}){\rm P}+\heartsuit =-\mathbf{g}\left(\sum_{k=3}^n E_k dx_{2k}+\heartsuit\right){\rm P}+\heartsuit=-\sum_{k=3}^n \mathbf{g}E_k {\rm P}dx_{2k}+\heartsuit.
   \end{align}
   Since $e_{ij}=e_i\cdot e_j^T$, we have,   
    \[
        \mathbf{g}E_k {\rm P}=g_{n-1}\rho_k+g_{2(n-1)}\rho_{n+k}+\cdots+g_{(n-3)(n-1)}\rho_{(n-4)n+k}, 
    \]
    where $k\in[3,n]$ and we write $\rho_1, \ldots, \rho_{{(n-1)^2}-n}$ for the rows of $\Rho$. Hence
    \begin{align*}
        \mathbb{J}_1 = -g_{n-1} {\rm P}_{[3,n]}^{[1,n-2]} -g_{2(n-1)} {\rm P}_{[n+3,n+n]}^{[1,n-2]} - \cdots -g_{(n-3)(n-1)} {\rm P}_{[(n-4)n+3,(n-4)n+n]}^{[1,n-2]} + \heartsuit.
    \end{align*}

    For $i \in [1,n-3]$, define $\A_i = \Rho_{[(i-1)n+3,(i-1)n+n]}^{[1,n-2]} X_{[3,n]}^{[3,n]}$.  We have proved
    \[
    d(X_1,Y) = \det \bigl( {\mathbbm 1}_{n-2} - g_{n-1}\A_1 - g_{2(n-1)}\A_2 - \cdots - g_{(n-3)(n-1)}\A_{n-3} \bigr) d(X,Y).
    \]
    Let $\mathbb K$ be the field of rational functions on $B\times B_-$, so $g_{n-1}, g_{2(n-1)}, \ldots, g_{(n-3)(n-1)} \in \mathbb K$.  By Theorem \ref{thm:theta} below, the matrix
    \[
    - g_{n-1}\A_1 - g_{2(n-1)}\A_2 - \cdots - g_{(n-3)(n-1)}\A_{n-3}
    \]
    is nilpotent.  Hence
    \[
    \det \bigl( {\mathbbm 1}_{n-2} - g_{n-1}\A_1 - g_{2(n-1)}\A_2 - \cdots - g_{(n-3)(n-1)}\A_{n-3} \bigr) = 1.
    \]
    A similar argument shows $d(X_2,Y)=d(X_1,Y)$. Iterating this process, we are done.
\end{proof}

To compute $d(X_{n-2},Y)$, let us introduce 
\begin{align*}
    \xi_{i+(j-1)(n-2)} = \det M_{[i+j-1,n+i-1]}^{[j,n]}, \hs \text{for} ~ i \in [1,n-2] ~ \text{and} ~ j \in [1,n],
\end{align*}
where $M$ is the matrix in \eqref{eq:bigM}. By Theorem \ref{thm:gsv3.5} and $\varphi_{(n-2)n+1} = y_{n1} = \det M_{[n-1,n-1]}^{[1,1]}$, we then have
\begin{equation}\label{eq:dxi}
d \varphi_1 \wedge \cdots \wedge d \varphi_{(n-1)^2} = \pm (\varphi_{n+1} \varphi_{2n+1} \cdots \varphi_{(n-2)n+1})^n d\xi_1 \wedge \cdots \wedge d\xi_{n^2-2n} \wedge dy_{n1}. 
\end{equation}

\begin{prop}\label{prop:newxi}
    We have
    \begin{align*}
        d \xi_1 \wedge d \xi_2 \wedge \cdots & \wedge d \xi_{n^2-2n} \wedge d x_{22} \wedge \cdots \wedge d x_{nn} \wedge d y_{n1} \wedge  d y_{11} \wedge \cdots \wedge d y_{nn}
         =\pm (\xi_{n-1} \xi_n \cdots \xi_{n^2-2n}) d(X_{n-2},Y).
    \end{align*}
\end{prop}
\begin{proof}
    Rewrite $M$ as
\begin{align*}\NiceMatrixOptions{margin=4pt,renew-matrix}
    M =
    \begin{bmatrix}
        a_1 & b_1' &&&&& \\
        a_2 & a_{n-1} & b_2' &&&& \\
        a_3 & a_n & a_{2n-3} & b_3' &&& \\
        \vdots & \vdots & \vdots & \vdots & \ddots && \\
        a_{n-2} & a_{2n-5} & a_{3n-8} & a_{4n-11} & \cdots & b_{n-2}' & \\
        b_n & a_{2n-4} & a_{3n-7} & a_{4n-10} & \cdots & a_{n^2-4n+5} & b_{n-1}' \\
        \hline
        & b_1 & a_{3n-6} & a_{4n-9} & \cdots & a_{n^2-4n+6} & a_{n^2-3n+3} \\
        && b_2 & a_{4n-8} & \cdots & a_{n^2-4n+7} & a_{n^2-3n+4} \\
        &&& \ddots & \ddots & \vdots & \vdots \\
        &&&& b_{n-3} & a_{n^2-3n+2} & a_{n^2-2n-1} \\
        &&&&& b_{n-2} & a_{n^2-2n} \\
        &&&&&& b_{n-1}
    \end{bmatrix}.
\end{align*}
We then have
\begin{align*}
    & \xi_{n^2-3n+3} = \xi_{1+(n-1)(n-2)} = \det M_{[n,n]}^{[n,n]} = a_{n^2-3n+3} \\
    & \xi_{n^2-3n+4} = \xi_{2+(n-1)(n-2)} = \det M_{[n+1,n+1]}^{[n,n]} = a_{n^2-3n+4} \\
    & \quad \quad \vdots \\
    & \xi_{n^2-2n} = \xi_{(n-2)+(n-1)(n-2)} = \det M_{[2n-3,2n-3]}^{[n,n]} = a_{n^2-2n}.
\end{align*}
 Hence,
    \[
    d\xi_{n^2-3n+3} \wedge d\xi_{n^2-3n+4} \wedge \cdots \wedge d\xi_{n^2-2n} = da_{n^2-3n+3} \wedge da_{n^2-3n+4} \wedge \cdots \wedge da_{n^2-2n}.
    \]
Expanding along the first column,  for each $k \in [1,n^2-3n+2]$, we have 
\begin{equation}\label{eq:xik}
\xi_k = a_k \xi_{k+n-2} + \eta_k   
\end{equation}
for some $\eta_k \in \CC[a_{k+1}, a_{k+2}, \ldots, a_{n^2-2n}, b_1, \ldots, b_n, b_1', \ldots, b_{n-1}']$. Thus
    \begin{align*}
        & ~ d\xi_{n^2-3n+2} \wedge d\xi_{n^2-3n+3} \wedge \cdots \wedge d\xi_{n^2-2n} \wedge d b_1 \wedge \cdots \wedge d b_n \wedge d b_1' \wedge \cdots \wedge d b_{n-1}' \\
        = & ~ d(a_{n^2-3n+2} \xi_{n^2-2n} + \eta_{n^2-3n+2}) \wedge da_{n^2-3n+3} \wedge da_{n^2-3n+4} \wedge \cdots \wedge da_{n^2-2n} \wedge \\
        & \,\,\,\, \wedge d b_1 \wedge \cdots \wedge d b_n \wedge d b_1' \wedge \cdots \wedge d b_{n-1}' \\
        = & ~ d(a_{n^2-3n+2} \xi_{n^2-2n}) \wedge da_{n^2-3n+3} \wedge da_{n^2-3n+4} \wedge \cdots \wedge da_{n^2-2n} \wedge d b_1 \wedge \cdots \wedge d b_n \wedge d b_1' \wedge \cdots \wedge d b_{n-1}' \\
        = & ~ \xi_{n^2-2n} da_{n^2-3n+2} \wedge da_{n^2-3n+3} \wedge da_{n^2-3n+4} \wedge \cdots \wedge da_{n^2-2n}\wedge d b_1 \wedge \cdots \wedge d b_n \wedge d b_1' \wedge \cdots \wedge d b_{n-1}'.
    \end{align*}
    Iterating this computation, we are done.
\end{proof}

\begin{rem}
    It is worth pointing out that \eqref{eq:xik} is reminiscent of the formula for a homogeneous Poisson prime element in a Poisson CGL extension.  See (\ref{eq:phik-CGL}). More specifically, let $R = \CC[b_1, \ldots, b_n, b_1', \ldots, b_{n-1}']$.  Suppose that $R[a_{n^2-2n}, a_{n^2-2n-1}, \ldots, a_1]$ is a Poisson CGL extension such that $k^-$ is $k+n-2$, then the recursive formula for the homogeneous Poisson primes is exactly of the form $\xi_k = a_k \xi_{k+n-2} + \eta_k$.   
\end{rem}

\begin{thm} \label{thm:dphi}
    We have
    \begin{align*}
        \frac{d \varphi_1}{\varphi_1} \wedge \cdots \wedge \frac{d \varphi_{(n-1)^2}}{\varphi_{(n-1)^2}} \wedge & d x_{22} \wedge \cdots \wedge d x_{nn} \wedge d y_{11} \wedge \cdots \wedge d y_{nn} = \pm \frac{1}{\varphi_1} d(X_{n-2},Y).
    \end{align*}
\end{thm}

\begin{proof}
    Combining Theorem \ref{thm:gsv3.5}, \eqref{eq:dxi} and Proposition \ref{prop:newxi}, we have
    \begin{align*}
        d \varphi_1 \wedge \cdots \wedge d \varphi_{(n-1)^2} \wedge  d x_{22} \wedge \cdots \wedge d x_{nn} \wedge d y_{11} \wedge \cdots \wedge d y_{nn} = \pm\varphi_2 \cdots \varphi_{(n-1)^2}d(X_{n-2},Y).\tag*{\qedhere}
    \end{align*}
\end{proof}

 We now prove the technical result that we needed to prove Theorem \ref{thm:volform}.  Retain the notation in the statement and proof of Theorem \ref{thm:volform}.

\begin{thm} \label{thm:theta}
    We have
    \[
    \A_1 [\A_1 ~ \A_2 ~ \cdots ~ \A_{n-4} ~ \A_{n-3}] = - [\A_2 ~ \A_3 ~ \cdots ~ \A_{n-3} ~ 0].
    \]
    In particular, we have
    \[
    \A_i = (-1)^{i-1} \A_1^i ~ \forall i \in [1,n-3] \quad \text{and} \quad \A_1^{n-2} = 0,
    \]
    so a $\mathbb K$-linear combination of $\A_1, \A_2, \ldots, \A_{n-3}$ is a polynomial of the nilpotent matrix $\A_1$ with coefficients in $\mathbb K$, and, hence, is nilpotent.
\end{thm}

\begin{proof}
    For $i \in [1, n-3]$, let $\B_i = X_{[3,n]}^{[3,n]} \Rho_{[(i-1)n+3,(i-1)n+n]}^{[1,n-2]}$.  Since $\A_i = (X_{[3,n]}^{[3,n]})^{-1} \B_i X_{[3,n]}^{[3,n]}$ is a conjugation of $\B_i$, we prove the corresponding statements for $\B_i$, which turns out to be easier.
    
    Divide the matrix ${\mathbb {\Delta}}$ into blocks such that the rows are of sizes
    \[
    \underbrace{n-2, 1, n-2, 1, \ldots, n-2, 1}_{2(n-3) \text{ parts}}, n-2
    \]
    from top to bottom and the columns are of sizes
    \[
    \underbrace{2, n-2, 2, n-2, \ldots, 2, n-2}_{2(n-3) \text{ parts}}, 1
    \]
    from left to right. Recall that $\Rho={\mathbb {\Delta}}^{-1}$. Divide $\Rho$ into blocks of conformable partitions of ${\mathbb {\Delta}}$. 
    For example, for $n=5$, we divide the ${\mathbb {\Delta}}$ as in the following picture:
    \[
    {\mathbb {\Delta}} = \left[\begin{array}{cc|ccc|cc|ccc|c}
     y_{31} & y_{32} & y_{33} & 0 & 0                  & 0 & 0 & 0 & 0 & 0 & 0  \\
     y_{41} & y_{42} & y_{43} & y_{44} & 0                     & 0 & 0 & 0 & 0 & 0 & 0\\
    y_{51} & y_{52} & y_{53} & y_{54} & y_{55}                    & 0 & 0 & 0 & 0 & 0 & 0\\ \hline
     0 & x_{22} & x_{23} & x_{24} & x_{25} & {y_{21}}    &{y_{22}}  & {0} & {0} & {0} & {0}\\ \hline
     0 & 0 & x_{33} & x_{34} & x_{35} & y_{31}   &y_{32} & y_{33}  & 0 & 0  & 0\\
     0 & 0 & 0 & x_{44} & x_{45} & y_{41}  & y_{42} & y_{43} & y_{44} & 0 & 0\\
     0 & 0 & 0 & 0 & x_{55}  & y_{51}  & y_{52} & y_{53} & y_{54} & y_{55}& 0\\ \hline
     0 & 0 & 0 & 0 & 0  &0 & x_{22} & x_{23} & x_{24} & x_{25} & {y_{21}}\\ \hline
    0 & 0 & 0 & 0 & 0  &0 & 0 & x_{33} & x_{34} & x_{35}  & y_{31}\\
    0 & 0 & 0 & 0 & 0  &0 & 0 & 0 & x_{44} & x_{45}  & y_{41}\\
     0 & 0 & 0 & 0 & 0  &0 & 0 & 0 & 0 & x_{55}  & y_{51}\end{array}\right].
    \]

    Let ${\mathbb {\Delta}}_{ij}$ (resp. $\Rho_{ij}$) be the $(i,j)$-th block of ${\mathbb {\Delta}}$ (resp. $\Rho$). One readily observes that ${\mathbb {\Delta}}_{ij} = 0$ if $|i-j|>1$. Moreover, we also have ${\mathbb {\Delta}}_{ij} = {\mathbb {\Delta}}_{i+2,j+2}$ for $i \in [1,2(n-3)-1]$ and $j \in [1,2(n-4)]$; ${\mathbb {\Delta}}_{2i+1,2i} = X_{[3,n]}^{[3,n]}$ for $i \in [1,n-3]$; the matrix $\Rho_{[(i-1)n+3,(i-1)n+n]}^{[1,n-2]}$ which appeared in the definition of $\B_i$ is $\Rho_{2i,1}$ for $i \in [1,n-3]$.  In particular, we have $\B_1 = {\mathbb {\Delta}}_{32} \Rho_{21}$.

    We write ${\mathbb {\Delta}}_{{\rm bl}(i)}$ (resp. ${\mathbb {\Delta}}^{{\rm bl}(j)}$) for the $i$-th block row (resp. $j$-th block column) of ${\mathbb {\Delta}}$.  Define $\Rho_{{\rm bl} (i)}$ and $\Rho^{{\rm bl} (j)}$ similarly.  

    Since ${\mathbb {\Delta}}_{{\rm bl} (2i+1)} \Rho^{{\rm bl} (1)} = 0$ for $i \in [1,n-4]$, and $\B_i =  {\mathbb {\Delta}}_{2i+1,2i} \Rho_{2i,1}={\mathbb {\Delta}}_{3,2} \Rho_{2i,1}$, we have
    \begin{align*}
        \B_i = & ~ {\mathbb {\Delta}}_{2i+1,2i} \Rho_{2i,1} = - ({\mathbb {\Delta}}_{2i+1,2i+1} \Rho_{2i+1,1} + {\mathbb {\Delta}}_{2i+1,2i+2} \Rho_{2i+2,1}) \\
        = & - [{\mathbb {\Delta}}_{2i+1,2i+1} ~ {\mathbb {\Delta}}_{2i+1,2i+2}] 
        \begin{bmatrix}
            \Rho_{2i+1,1} \\
            \Rho_{2i+2,1}
        \end{bmatrix}
        = - Y_{[3,n]}
        \begin{bmatrix}
            \Rho_{2i+1,1} \\
            \Rho_{2i+2,1}
        \end{bmatrix}
        = - [{\mathbb {\Delta}}_{11} ~ {\mathbb {\Delta}}_{12}] 
        \begin{bmatrix}
            \Rho_{2i+1,1} \\
            \Rho_{2i+2,1}
        \end{bmatrix}.
    \end{align*}
    Since $\Rho_{{\rm bl} (2)} {\mathbb {\Delta}}^{{\rm bl} (1)} = 0$ and $\Rho_{{\rm bl} (2)} {\mathbb {\Delta}}^{{\rm bl} (2)} = {\mathbbm 1}_{n-2}$, we have
    \begin{align*}
        \Rho_{21} {\mathbb {\Delta}}_{11} = - \Rho_{22} {\mathbb {\Delta}}_{21} \quad \text{and} \quad \Rho_{21} {\mathbb {\Delta}}_{12} = - \Rho_{22} {\mathbb {\Delta}}_{22} - \Rho_{23} {\mathbb {\Delta}}_{32} + {\mathbbm 1}_{n-2}.
    \end{align*}
    It follows that
    \begin{align*}
        - \Rho_{21} [{\mathbb {\Delta}}_{11} ~ {\mathbb {\Delta}}_{12}] = [\Rho_{22} {\mathbb {\Delta}}_{21} ~ \Rho_{22} {\mathbb {\Delta}}_{22} + \Rho_{23} {\mathbb {\Delta}}_{32} - {\mathbbm 1}_{n-2}] = [\Rho_{22} ~ \Rho_{23}] 
        \begin{bmatrix}
            {\mathbb {\Delta}}_{21} & {\mathbb {\Delta}}_{22} \\
            0 & {\mathbb {\Delta}}_{32}
        \end{bmatrix}
        - [0 ~ {\mathbbm 1}_{n-2}].
    \end{align*}
    Combining these computations, we get, for $i \in [1,n-4]$,
    \begin{align*}
        \B_1 \B_i = & ~{\mathbb {\Delta}}_{32} \Rho_{21} (- [{\mathbb {\Delta}}_{11} ~ {\mathbb {\Delta}}_{12}] 
        \begin{bmatrix}
            \Rho_{2i+1,1} \\
            \Rho_{2i+2,1}
        \end{bmatrix}
        ) = {\mathbb {\Delta}}_{32} (- \Rho_{21} [{\mathbb {\Delta}}_{11} ~ {\mathbb {\Delta}}_{12}]) 
        \begin{bmatrix}
            \Rho_{2i+1,1} \\
            \Rho_{2i+2,1}
        \end{bmatrix} \\
        = & ~{\mathbb {\Delta}}_{32} \bigl(
        [\Rho_{22} ~ \Rho_{23}] 
        \begin{bmatrix}
            {\mathbb {\Delta}}_{21} & {\mathbb {\Delta}}_{22} \\
            0 & {\mathbb {\Delta}}_{32}
        \end{bmatrix}
        - [0 ~ {\mathbbm 1}_{n-2}]
        \bigr)
        \begin{bmatrix}
            \Rho_{2i+1,1} \\
            \Rho_{2i+2,1}
        \end{bmatrix} \\
        = & ~{\mathbb {\Delta}}_{32} [\Rho_{22} ~ \Rho_{23}] 
        \begin{bmatrix}
            {\mathbb {\Delta}}_{21} & {\mathbb {\Delta}}_{22} \\
            0 & {\mathbb {\Delta}}_{32}
        \end{bmatrix}
        \begin{bmatrix}
            \Rho_{2i+1,1} \\
            \Rho_{2i+2,1}
        \end{bmatrix}
        - {\mathbb {\Delta}}_{32} \Rho_{2i+2,1} \\
        = & ~{\mathbb {\Delta}}_{32} [\Rho_{22} ~ \Rho_{23}] X_{[2,n]}
        \begin{bmatrix}
            \Rho_{2i+1,1} \\
            \Rho_{2i+2,1}
        \end{bmatrix}
        - \B_{i+1}.
    \end{align*}

    For $j \in [2,n-3]$, since $\Rho_{{\rm bl} (2)} {\mathbb {\Delta}}^{{\rm bl} (2j-1)} = 0$ and $\Rho_{{\rm bl} (2)} {\mathbb {\Delta}}^{{\rm bl} (2j)} = 0$, we have
    \begin{align*}
        [\Rho_{2,2j} ~ \Rho_{2,2j+1}] X_{[2,n]} & = [\Rho_{2,2j} ~ \Rho_{2,2j+1}]
        \begin{bmatrix}
            {\mathbb {\Delta}}_{2j,2j-1} & {\mathbb {\Delta}}_{2j,2j} \\
            0 & {\mathbb {\Delta}}_{2j+1,2j}
        \end{bmatrix} \\
        &= -  [\Rho_{2,2j-2} ~ \Rho_{2,2j-1}]
        \begin{bmatrix}
            {\mathbb {\Delta}}_{2j-2,2j-1} & 0 \\
            {\mathbb {\Delta}}_{2j-1,2j-1} & {\mathbb {\Delta}}_{2j-1,2j}
        \end{bmatrix}
        = - [\Rho_{2,2j-2} ~ \Rho_{2,2j-1}] Y_{[2,n]}.
    \end{align*}
    Similarly, for $i \in [1,n-4]$, since ${\mathbb {\Delta}}_{{\rm bl} (2i)} \Rho^{{\rm bl} (1)} = 0$ and ${\mathbb {\Delta}}_{{\rm bl} (2i+1)} \Rho^{{\rm bl} (1)} = 0$, we have
    \begin{align*}
        X_{[2,n]} 
        \begin{bmatrix}
            \Rho_{2i-1,1} \\
            \Rho_{2i,1}
        \end{bmatrix}
        = &
        \begin{bmatrix}
            {\mathbb {\Delta}}_{2i,2i-1} & {\mathbb {\Delta}}_{2i,2i} \\
            0 & {\mathbb {\Delta}}_{2i+1,2i}
        \end{bmatrix}
        \begin{bmatrix}
            \Rho_{2i-1,1} \\
            \Rho_{2i,1}
        \end{bmatrix} \\
        = & - 
        \begin{bmatrix}
            {\mathbb {\Delta}}_{2i,2i+1} & 0 \\
            {\mathbb {\Delta}}_{2i+1,2i+1} & {\mathbb {\Delta}}_{2i+1,2i+2}
        \end{bmatrix}
        \begin{bmatrix}
            \Rho_{2i+1,1} \\
            \Rho_{2i+2,1}
        \end{bmatrix} 
        = - Y_{[2,n]} 
        \begin{bmatrix}
            \Rho_{2i+1,1} \\
            \Rho_{2i+2,1}
        \end{bmatrix}.
    \end{align*}
    Combining these, we see that, for $i \in [1,n-4]$,
    \begin{align*}
        [\Rho_{22} ~ \Rho_{23}] X_{[2,n]}
        \begin{bmatrix}
            \Rho_{2i+1,1} \\
            \Rho_{2i+2,1}
        \end{bmatrix} 
        &=  - [\Rho_{22} ~ \Rho_{23}] Y_{[2,n]}
        \begin{bmatrix}
            \Rho_{2i+3,1} \\
            \Rho_{2i+4,1}
        \end{bmatrix} 
        =  [\Rho_{24} ~ \Rho_{25}] X_{[2,n]}
        \begin{bmatrix}
            \Rho_{2i+3,1} \\
            \Rho_{2i+4,1}
        \end{bmatrix} 
        =  \cdots \\
        &=  [\Rho_{2,2(n-i-3)} ~ \Rho_{2,2(n-i-3)+1}] X_{[2,n]}
        \begin{bmatrix}
            \Rho_{2(n-3)-1,1} \\
            \Rho_{2(n-3),1}
        \end{bmatrix}\\
        &=  -[\Rho_{2,2(n-i-3)} ~ \Rho_{2,2(n-i-3)+1}]
        \begin{bmatrix}
            {\mathbb {\Delta}}_{2(n-3),2(n-3)+1} \Rho_{2(n-3)+1,1} \\
            {\mathbb {\Delta}}_{2(n-3)+1,2(n-3)+1} \Rho_{2(n-3)+1,1}
        \end{bmatrix}.
    \end{align*}
    Noticing that
    \begin{align*}
        \begin{bmatrix}
            {\mathbb {\Delta}}_{2(n-3),2(n-3)+1} \\
            {\mathbb {\Delta}}_{2(n-3)+1,2(n-3)+1}
        \end{bmatrix}
        = Y_{[2,n]}^{[1,1]} = 
        \begin{bmatrix}
            {\mathbb {\Delta}}_{2(n-i-3),2(n-i-3)+1} \\
            {\mathbb {\Delta}}_{2(n-i-3)+1,2(n-i-3)+1}
        \end{bmatrix}^{[1,1]},
    \end{align*}
    and $Y_{[2,n]}^{[1,1]}$ are the only non-zero entries in the first column of ${\mathbb {\Delta}}^{{\rm bl} (2(n-i-3)+1)}$, we see that
    \begin{align*}
        [\Rho_{22} ~ \Rho_{23}] X_{[2,n]}
        \begin{bmatrix}
            \Rho_{2i+1,1} \\
            \Rho_{2i+2,1}
        \end{bmatrix}
        = -\Rho_{{\rm bl} (2)} {\mathbb {\Delta}}^{{\rm bl} (2(n-i-3)+1)} \Rho_{2(n-3)+1} = 0.
    \end{align*}
    So we have proved
    \[
    \B_1 \B_i = - \B_{i+1} \quad \forall i \in [1,n-4].
    \]
    The proof of $\B_1 \B_{n-3} = 0$ is analogous.
\end{proof}

Combining Theorem \ref{thm:volform} and Theorem \ref{thm:dphi}, we have proved Theorem \ref{thm:dphiBB-}.

\subsection{Proof of Theorem \ref{thm:choice-GLn}} \label{sect:lowdegtermexp}
Retain the notation and assumption in $\S$\ref{sect:log-cansys}. In this section, we work out expressions of $\Phi^\low$, where $\Phi$ is given in (\ref{eq:initialG*}).  We start with expressions of the $\varphi_i$'s in terms of $U:=XY^{-1}$ and the diagonal entries of $Y$ for $(X,Y)\in B\times B_-$. Our expressions are analogous to \cite[Lemma 3.3, 3.4]{GSV20}.  Using these expressions, we find formulas for $\varphi_i^{\low}$.

Let $(X,Y) \in B \times B_-$.  Denote by $\Lambda$ the following $n\times (n-1)$ block matrix
\[
   \Lambda= 
    \begin{bmatrix}
        Y_{[2,n]} & & & & \\
        X_{[2,n]} & Y_{[2,n]} & & & \\
        & X_{[2,n]} & Y_{[2,n]} & & \\
        & & \ddots & \ddots & \\
        & & & X_{[2,n]} & Y_{[2,n]} \\
        & & & & X_{[2,n]}
    \end{bmatrix}.
\]
Note that $\Lambda$ is a square matrix of size $n(n-1)$. Put
\begin{equation}\label{eq:lambda}
    \lambda_i=\det\Lambda_{[i,n(n-1)]}^{[i,n(n-1)]}.
\end{equation}  
Because of the shape of $\Lambda$, we have
\[
   \lambda_{n(n-1)-i+1}=x_{n-i+1,n-i+1}\cdots x_{n,n}, \text{~for~} i=1,\ldots, n-1,
\]
and
\begin{equation}\label{eq:lambdai}
    \lambda_i=\varphi_i\cdot x_{22}\cdots x_{nn},\hs \text{~for~} i=1,\ldots, (n-1)^2.
\end{equation}
We reparametrize indices and write $\lambda_{p(n-1)+i}$, with $i \in [1, n-1]$ and $p \in [0, n-2]$.

\begin{thm}\label{Thm:expforphi}
    Let $(X,Y) \in B \times B_-$.  Recall that $U=XY^{-1}$ and $e_1=[1 ~ 0 ~ \cdots ~0]^T\in \mathbb{C}^n$. If $i \le p+1$, up to a sign, we have
    \begin{align*}
       \lambda_{p(n-1)+i} = & \det (Y_{[n-p+i,n]}^{[n-p+i,n]}) \cdot (\det Y)^{n-p-1} \cdot \\
        & \det \begin{bmatrix}
        (U^{n-p})^{[n-p+i,n]} &  (U^{n-p-1})^{[1,i]} & U^{n-p-2} e_1 & \cdots & Ue_1 & e_1
    \end{bmatrix}.
    \end{align*}
    If $i\geq p+1$, up to a sign, we have
    \begin{align*}
       \lambda_{p(n-1)+i}  = & \det (Y_{[i-p,n]}^{[i-p,n]}) \cdot (\det Y)^{n-p-2} \cdot \\
        & \det \begin{bmatrix}
        (U^{n-p-1})^{[i-p,i]} & U^{n-p-2} e_1 & \cdots & U^2 e_1 & U e_1 & e_1
    \end{bmatrix}.
    \end{align*}

\end{thm}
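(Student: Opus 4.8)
The plan is to express $\lambda_{p(n-1)+i}$ by iteratively performing block row operations on the staircase matrix $\Lambda$ that ``peel off'' the leading block rows, converting products of $X_{[2,n]}$ and $Y_{[2,n]}$ into powers of $U = XY^{-1}$. First I would fix the reparametrization $k = p(n-1)+i$ with $i \in [1,n-1]$, $p \in [0,n-2]$, and identify the bottom-right $(n(n-1)-k+1) \times (n(n-1)-k+1)$ submatrix $\Lambda_{[k,n(n-1)]}^{[k,n(n-1)]}$ explicitly: it consists of a partial (truncated) block row $[Y_{[2,n]}]$ with the first $i-1$ rows removed in its top-left block, followed by $n-p-1$ (or $n-p-2$, depending on whether $i \le p+1$) full staircase block rows of the pattern $\begin{bmatrix} X_{[2,n]} & Y_{[2,n]}\end{bmatrix}$, and terminating in the degenerate last block $X_{[2,n]}$ (or $Y^{[1,1]}_{[2,n]}$). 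The key idea is that because $Y$ is lower triangular, the blocks $Y_{[2,n]}$ are ``almost'' invertible in a block sense, and left-multiplying appropriate block rows by $X Y^{-1} = U$ telescopes the staircase.

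The main technical step is the block row reduction. I would multiply the first block row of $\Lambda_{[k,n(n-1)]}^{[k,n(n-1)]}$ (which contains a partial $Y_{[2,n]}$) on the left by a suitable matrix so that the $X_{[2,n]}$ immediately below it in the next block row gets eliminated, producing $U$-factors; then iterate down the staircase. Each elimination step contributes a determinantal factor coming from the pivot block $Y_{[2,n]}$, and after $n-p-1$ (resp.\ $n-p-2$) steps these accumulate to $(\det Y)^{n-p-1}$ (resp.\ $(\det Y)^{n-p-2}$) together with the leftover factor $\det(Y^{[n-p+i,n]}_{[n-p+i,n]})$ (resp.\ $\det(Y^{[i-p,n]}_{[i-p,n]})$) from the truncated top block and the degenerate bottom block. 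What survives in the reduced matrix is a single block column built from $e_1$, $U e_1$, \ldots, $U^{n-p-2} e_1$ together with a wide block $(U^{n-p-1})^{[1,i]}$ or $(U^{n-p})^{[n-p+i,n]}$ coming from the top truncated row propagating down; tracking exactly which columns of which power of $U$ appear is the bookkeeping heart of the argument, and matching the casework $i \le p+1$ versus $i \ge p+1$ against the structure of the truncation. I expect this column-tracking — keeping precise control of the index ranges $[n-p+i,n]$, $[i-p,i]$, $[1,i]$ as one propagates the partial first block row through the staircase — to be the main obstacle, and the sign ambiguity (which we only claim up to) is what lets us avoid chasing permutation signs.

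Concretely, I would carry out the reduction by induction on $p$: for $p=0$ the matrix $\Lambda_{[i,n(n-1)]}^{[i,n(n-1)]}$ has a single $Y_{[2,n]}$ block in the top row (minus $i-1$ rows) and the ``staircase'' is just the two-block pattern repeated, degenerating quickly, and the claimed formula can be checked by a direct Laplace expansion along the block column of $e_1$'s. For the inductive step, one strips the top block row via the $U$-multiplication trick described above, reducing a $p$-case to a $(p-1)$-type computation on a smaller staircase, while the emitted factors are exactly $\det(Y_{[\cdot,n]}^{[\cdot,n]})$ times one more power of $\det Y$. Throughout I would use that $XY^{-1}$ and $Y$ determine $X = UY$, so every occurrence of $X_{[2,n]}$ can be rewritten, and that $e_1$ is the unique vector with $Y e_1 = y_{11} e_1$ up to scalar since $Y$ is lower triangular — this is what forces the terminal column to be the Krylov-type vector $e_1$ rather than an arbitrary vector. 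Finally, I would assemble the two cases, verify the exponents $n-p-1$ and $n-p-2$ and the index sets against small examples ($n=3$, as in Example~\ref{ex:runningexGLn}) to confirm the normalization, and note that the identity $\lambda_k = \varphi_k \cdot x_{22}\cdots x_{nn}$ together with these formulas is exactly what feeds into the computation of $\varphi_i^\low$ in Theorem~\ref{prop:lambdalow}.
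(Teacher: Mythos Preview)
Your overall strategy --- rewrite everything in terms of $U = XY^{-1}$ and telescope the staircase --- is the right instinct, but there is a real obstacle you have glossed over: the blocks $Y_{[2,n]}$ and $X_{[2,n]}$ in $\Lambda$ are $(n-1)\times n$, not square, so there is no inverse to multiply by and no clean way to eliminate an $X_{[2,n]}$ using the $Y_{[2,n]}$ above it by row operations. Your phrase ``almost invertible in a block sense'' hides precisely the difficulty. Concretely, to convert $X$ into $UY$ you need the \emph{full} first row of $Y$, and that row is absent from every block of $\Lambda$; without it the telescoping you describe does not go through, and your induction on $p$ stalls at the very first step.

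The paper's proof circumvents this by enlarging $\Lambda$ to an $n\times n$ block matrix $\Theta$ with \emph{full} blocks $X$ and $Y$ (so $\Theta$ is $n^2\times n^2$), and observing that $\lambda_k = \det \Theta_{J_k}^{[k,m]}$ for a row set $J_k$ that simply omits the extra first row from each block. Now $Y$ is invertible, and one obtains a single global factorization $\Theta = L_1 L_2$ with
\[
L_1=\begin{bmatrix} {\mathbbm 1}_m & 0\\ \bm{U} & {\mathbbm 1}_n\end{bmatrix},\qquad
L_2=\begin{bmatrix} \Theta_{[1,m]}^{[1,m]} & 0\\ 0 & Y\end{bmatrix},
\]
where $\bm{U}$ is the row $-[(-U)^{n-1}\ \cdots\ -U]$. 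One application of Binet--Cauchy then splits $\lambda_k$ into a minor of $L_1$ (producing the determinant of the $U$-power matrix) times a minor of $L_2$ (producing the $\det(Y_{[\cdot,n]}^{[\cdot,n]})\cdot(\det Y)^{\cdots}$ factors), and the two cases $i\le p+1$, $i\ge p+1$ arise from whether the $(k,k)$-entry of the auxiliary matrix $L_1'$ lands in a zero block or not. This replaces your iterative peeling and induction on $p$ by a one-shot argument, and avoids the rectangular-block issue entirely.
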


\begin{rem} \label{rem:lambda1}
    Notice that, when $i = p+1$, the two formulas in Theorem \ref{Thm:expforphi} coincide.  Hence, although the case $i = p+1$ appears twice in the statement of Theorem \ref{Thm:expforphi}, we only have one formula for $\lambda_{p(n-1)+(p+1)}$.  However, in the proof below, we will group the case $i = p+1$ together with $i > p+1$.
    
    In the case of $i = 1, p = 0$, we have
\begin{align*}
    \lambda_1 & = \det (Y_{[1,n]}^{[1,n]}) (\det Y)^{n-2} \det
    \begin{bmatrix}
        U^{n-1} e_1 & U^{n-2} e_1 & \cdots & U^2 e_1 & U e_1 & e_1
    \end{bmatrix} \\
    & = (\det Y)^{n-1} \det
    \begin{bmatrix}
        U^{n-1} e_1 & U^{n-2} e_1 & \cdots & U^2 e_1 & U e_1 & e_1
    \end{bmatrix}.
\end{align*}
This agrees with the formula  in \cite[Lemma 3.3]{GSV20}.

In the case of $i = 2, p = 0$, we have
\begin{align*}
    \lambda_2 = \det (Y_{[2,n]}^{[2,n]}) (\det Y)^{n-2} \det
    \begin{bmatrix}
        U^{n-1} e_2 & U^{n-2} e_1 & \cdots & U^2 e_1 & U e_1 & e_1
    \end{bmatrix},
\end{align*}
which agrees with the formula in \cite[Lemma 3.4]{GSV20}.   
\end{rem}
\begin{proof}[Proof of Theorem \ref{Thm:expforphi}]
    Let $\Theta$ be the $n \times n$ block matrix
\[
    \Theta =
    \begin{bmatrix}
        Y & 0 & 0 & \cdots & 0 & 0 \\
        X & Y & 0 & \cdots & 0 & 0 \\
        0 & X & Y & \cdots & 0 & 0 \\
        \vdots & \vdots & \vdots & \ddots & \vdots & \vdots \\
        0 & 0 & 0 & \cdots & Y & 0 \\
        0 & 0 & 0 & \cdots & X & Y
    \end{bmatrix}.
\]
Write $m=n(n-1)$. Denote by $\bm{U}$ the following $1\times (n-1)$ block matrix
\[
    \bm{U}=-\begin{bmatrix}
        (-U)^{n-1} &  (-U)^{n-2} &  (-U)^{n-3} & \cdots & -U
    \end{bmatrix}.
\]
Direct computation shows that
\[
    \bm{U}\Theta_{[1,m]}^{[1,m]}=[
        \underbrace{0 ~ 0 ~ \cdots ~ 0}_{n-2} ~ X].
\]
Thus we have
\[
    \Theta=\begin{bmatrix}
        {\mathbbm 1}_{m} & 0\\
        \bm{U} & \mathbbm{1}_{n}
    \end{bmatrix}\cdot \begin{bmatrix}
        \Theta_{[1,m]}^{[1,m]} & 0\\
        0 & Y
    \end{bmatrix}=:L_1\cdot L_2.
\]

By definition, for $k \in [1,m]$, we have
\[\lambda_k = \det \Theta_{J_k}^{[k,m]},\] where $J_k$ consists of the last $m-k+1$ elements of $[1,n^2] \setminus \{1, n+1, 2n+1, \ldots, (n-1)n+1\}$. Thus by the Binet-Cauchy formula, we have
\begin{align*}
    \lambda_k = \det \Theta_{J_k}^{[k,m]} = \det (L_1L_2)_{J_k}^{[k,m]}= \sum \limits_{\substack{K \subset[1,n^2] \\ |K|=m-k+1}} \det (L_1)_{J_k}^K\cdot \det (L_2)_K^{[k,m]}.
\end{align*}
Since $L_2$ is block diagonal and lower triangular, $\det (L_2)_K^{[k,m]} \neq 0$ if and only if $K= [k,m]$. Thus
\[
    \lambda_k = \det (L_1)_{J_k}^{[k,m]} \det (L_2)_{[k,m]}^{[k,m]}.
\]

We deal with the two determinants that appear in the last expression separately. Set
\[
    L_1'=\begin{bmatrix}
        L & 0\\
        {\bm U} & e_1
    \end{bmatrix}, \text{~where~}L=
    \begin{bmatrix}
        (\mathbbm{1}_{n})_{[2,n]} & 0 & 0 & \cdots & 0  \\
        0 & (\mathbbm{1}_{n})_{[2,n]} & 0 & \cdots & 0  \\
        \vdots & \vdots & \vdots & \ddots & \vdots \\
        0 & 0 & 0 & \cdots &(\mathbbm{1}_{n})_{[2,n]}  \\
    \end{bmatrix}.
\]
Then, up to a sign, we have 
\[
    \det (L_1)_{J_k}^{[k,m]}=\det(L_1')_{[k,m+1]}^{[k,m+1]}.
\]

Consider $k=p(n-1)+i=pn+(i-p)$. If $i-p<1$, the $(k,k)$-entry of $L_1'$ lies in the $(p+1,p)$-block of $L_1'$, which is a zero block. Hence by Laplace expansion along the first $(n-1)^2-k+1$ rows of $(L_1')_{[k,m+1]}^{[k,m+1]}$, up to a sign, we have
\[
    \det((L_1')_{[k,m+1]}^{[k,m+1]})=\det \begin{bmatrix}
        (U^{n-p})^{[n-p+i,n]} &  (U^{n-p-1})^{[1,i]} & U^{n-p-2} e_1 & \cdots & Ue_1 & e_1
    \end{bmatrix}.
\]
Since $k=(p-1)n+(n+i-p)$, we have
\[
    \det (L_2)_{[k,m]}^{[k,m]}=\det Y_{[n-p+i,n]}^{[n-p+i,n]} \cdot (\det Y)^{n-p-1}.
\]

If $i-p\geq 1$, the $(k,k)$-entry of $L'_1$ lies in the $(p+1,p+1)$-block of $L_1'$. Again, up to a sign,
\begin{align*}
    \det(L_1')_{[k,m+1]}^{[k,m+1]}&=\det \begin{bmatrix}
        (U^{n-p-1})^{[i-p,i]} & U^{n-p-2} e_1 & \cdots & U^2 e_1 & U e_1 & e_1
    \end{bmatrix},\\
    \det (L_2)_{[k,m]}^{[k,m]}&=\det Y_{[i-p,n]}^{[i-p,n]} \cdot (\det Y)^{n-p-2},
\end{align*}
which implies the desired expressions. 
\end{proof}

Now let us turn to the calculation of $\lambda_i^{\low}$. Recall that $\b$ (resp. $\b_-$) is the Lie algebra of $B$ (resp. $B_-$). Let $(x,y)\in \mathfrak{b}\oplus\mathfrak{b}_-$, so that $(X = \exp(x), Y = \exp(y)) \in B\times B_-$.  We have
\begin{align*}
    U = XY^{-1} = \exp(x) \exp(y)^{-1} = \exp(x) \exp(-y) = \exp(x-y) + \cdots,
\end{align*}
where $\cdots$ stands for expressions of the entries of $x, y$ of degree at least $2$. 
 Write $u$ for $x-y$ with entries $u_{ij}$, so $U = \mathbbm{1}_n + u + \cdots$. Denote by $\diag(y_1,\ldots,y_n)$ the diagonal of $y$. Thus the $u_{ij}$'s together with the $y_k$'s form a local holomorphic coordinate system on $B\times B_-$ near $(\mathbbm{1}_n,\mathbbm {1}_n)$. We will use these local coordinates to calculate $\lambda_i^\low$. 

Let $e_i$ be the $i$-th column of $\mathbbm{1}_n$. We need the following technical lemma.
\begin{lem} \label{Lem:non-zeroA}
    Let $u$ be as above and take pairwise distinct indices $i_1,\ldots,i_k\in [2,n]$.  The function
    \[
   \det
    \begin{bmatrix}
        e_{i_1} & \cdots & e_{i_k}  &u^{n-k-1} e_1 & \cdots & u e_1 & e_1
    \end{bmatrix}
    \]
    of $u$ is not the zero function.
\end{lem}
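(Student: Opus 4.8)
The plan is to exhibit a monomial in the entries $u_{ij}$ that appears in the determinant expansion with a nonzero coefficient, which immediately shows the determinant is a nonzero polynomial. Write $D(u) = \det[\,e_{i_1}\ \cdots\ e_{i_k}\ u^{n-k-1}e_1\ \cdots\ ue_1\ e_1\,]$, an $n\times n$ determinant whose columns are the $k$ standard basis vectors $e_{i_1},\dots,e_{i_k}$ (with $i_1,\dots,i_k\in[2,n]$ pairwise distinct) followed by the $n-k$ vectors $u^{n-k-1}e_1,\dots,ue_1,e_1$. The first step is to specialize $u$ to the nilpotent matrix $u_0$ acting as a single Jordan block on the chain $e_1\mapsto e_2\mapsto\cdots\mapsto e_n\mapsto 0$, i.e. $u_0 e_j = e_{j+1}$ for $j<n$ and $u_0 e_n = 0$; note $u_0\in\n_-$ (strictly lower triangular) so it is a legitimate value of $u = x - y$ with $x=0$, $y=-u_0$. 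For this choice, $u_0^{m}e_1 = e_{m+1}$ for $0\le m\le n-1$, so the last $n-k$ columns become $e_{n-k},e_{n-k-1},\dots,e_2,e_1$, which are exactly the standard basis vectors indexed by $\{1,2,\dots,n-k\}$.

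The second step is to observe that, since $i_1,\dots,i_k$ are pairwise distinct elements of $[2,n]$ and there are $k$ of them, while the last block contributes the $n-k$ indices $\{1,\dots,n-k\}$, the combined column set $\{i_1,\dots,i_k\}\cup\{1,\dots,n-k\}$ consists of $n$ basis vectors; I claim this is all of $\{1,\dots,n\}$ exactly when $\{i_1,\dots,i_k\}\supseteq\{n-k+1,\dots,n\}$, which need not hold in general — so the naive Jordan-block specialization may give $0$. To fix this, instead choose the nilpotent $u_0$ more cleverly, or rather argue directly on the polynomial: expand $D(u)$ by the generalized Laplace expansion along the first $k$ columns $e_{i_1},\dots,e_{i_k}$. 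Because each $e_{i_\ell}$ has a single nonzero entry, this Laplace expansion collapses to (up to sign) the complementary minor $\det\big((\,[u^{n-k-1}e_1\ \cdots\ ue_1\ e_1\,])_{R}\big)$ where $R = [1,n]\setminus\{i_1,\dots,i_k\}$ is the set of remaining rows. So $D(u) = \pm\det M_R$, where $M$ is the $n\times(n-k)$ matrix with columns $u^{n-k-1}e_1,\dots,ue_1,e_1$ and $M_R$ is its submatrix on the row set $R$, which has $|R| = n-k$ elements. The key point is that $1\notin\{i_1,\dots,i_k\}$ (the $i_\ell$ lie in $[2,n]$), so $1\in R$.

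The third step is to show $\det M_R$ is a nonzero polynomial. Reverse the column order so $M_R$ has columns $e_1, ue_1, u^2e_1,\dots,u^{n-k-1}e_1$ restricted to rows $R = \{1=r_1<r_2<\cdots<r_{n-k}\}$; this is a ``Krylov-type'' minor. Specialize $u$ to the Jordan-block nilpotent $u_0$ above: then $u_0^{m}e_1 = e_{m+1}$, so $M_R$ becomes the submatrix of $[e_1\ e_2\ \cdots\ e_{n-k}]$ on rows $R$, which is nonsingular precisely when $R = \{1,2,\dots,n-k\}$ — again not guaranteed. So instead I would pick, for each $R$, a tailored nilpotent lower-triangular $u$: define $u$ so that the Krylov chain starting from $e_1$ runs through exactly the rows of $R$ in increasing order, i.e. $u e_{r_j}$ has its leading position at row $r_{j+1}$ (and $u e_n$ arbitrary among the non-$R$ coordinates). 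Concretely, take $u = \sum_{j=1}^{n-k-1} E_{r_{j+1},r_j}$ plus possibly terms in the complementary rows; this is strictly lower triangular since $r_{j+1}>r_j$, hence a valid value of $u$. For this $u$ one computes $u^{m}e_1 = e_{r_{m+1}}$ for $0\le m\le n-k-1$, so $M_R$ is the identity matrix on rows $R$, giving $\det M_R = \pm 1 \ne 0$. Therefore $D(u)$ is a nonzero polynomial, hence nonzero for generic $u$, proving the lemma.

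The main obstacle is the third step: the bookkeeping needed to verify that the tailored nilpotent $u$ is (a) strictly lower triangular — which follows from $r_1<r_2<\cdots<r_{n-k}$ and $r_1 = 1$ — and (b) produces exactly the Krylov vectors $u^m e_1 = e_{r_{m+1}}$. Both are straightforward once set up correctly, but one must be careful that no ``premature'' return to an already-used coordinate occurs and that the extra freedom in $u e_n$ (needed to make $u$ range over a set where the identity $U = \exp(x)\exp(-y)$ with $(x,y)\in\b\oplus\b_-$ is realizable, i.e. $u\in\n_- $ suffices here) does not interfere. An alternative, cleaner route avoiding case analysis: directly produce the single monomial $\prod_{j=1}^{n-k-1} u_{r_{j+1},r_j}$ as a term in $\det M_R$ with coefficient $\pm1$ by tracking which entry of each column $u^m e_1$ contributes, using that $u^m e_1$ has $u_{r_{m+1},r_m}u_{r_m,r_{m-1}}\cdots u_{r_2,1}$ as (one of) its entries in row $r_{m+1}$; this term cannot be cancelled because the multidegrees in the $u_{ij}$'s are distinct across the chosen positions. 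I would present whichever of these two is shorter in the final writeup.
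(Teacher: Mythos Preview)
Your argument is correct and is essentially the paper's proof in different clothing: both specialize $u$ to a nilpotent so that the Krylov chain $e_1, ue_1, \dots, u^{n-k-1}e_1$ hits exactly the complementary set of standard basis vectors, making the determinant $\pm 1$. The only difference is packaging: the paper first conjugates by a permutation matrix $E_\sigma$ sending $e_{i_\ell}\mapsto e_{n-k+\ell}$ and fixing $e_1$, which reduces the problem to the single case $\{i_1,\dots,i_k\}=\{n-k+1,\dots,n\}$, and then plugs in the standard Jordan block $u'e_j=e_{j+1}$; you instead bypass the conjugation, Laplace-expand to the row set $R=[1,n]\setminus\{i_1,\dots,i_k\}$, and build the tailored nilpotent $u=\sum_j E_{r_{j+1},r_j}$ directly. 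These are literally the same matrix up to conjugation by $E_\sigma$, so your ``obstacle'' paragraph and the alternative monomial-tracking route are unnecessary---the chosen $u$ already does the job without any case analysis.
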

\begin{proof}
    Note that there is a permutation $\sigma$ of $[1,n]$ such that
    \[
        \sigma(i_1)=n-k+1, \sigma(i_2)=n-k+2, \ldots, \sigma(i_k)=n; \text{~and~} \sigma(1)=1.
    \]
    Thus there exists a permutation matrix $E_{\sigma}$ such that
    \[
        E_{\sigma}\begin{bmatrix}
        e_{i_1} & \cdots & e_{i_k}  &u^{n-k-1} e_1 & \cdots & u e_1 & e_1
    \end{bmatrix}=\begin{bmatrix}
        e_{n-k+1} & \cdots & e_{n}  &(u')^{n-k-1} e_1 & \cdots & u' e_1 & e_1
    \end{bmatrix},
    \]
    where $u'=E_{\sigma}uE_{\sigma}^{-1}$. Then by taking $u'$ to be the matrix such that $u'e_i=e_{i+1}$ for $i=1,\cdots,n-1$, we see that
    \[
        \det \begin{bmatrix}
        e_{n-k+1} & \cdots & e_{n}  &(u')^{n-k-1} e_1 & \cdots & u' e_1 & e_1
    \end{bmatrix}=\pm 1\neq 0. \tag*{\qedhere}
    \]
\end{proof}

\begin{proof}[Proof of Theorem \ref{prop:lambdalow}]
    Retain the notation in the statement of Theorem \ref{prop:lambdalow}. 
 Since $\varphi_{p(n-1)+i}x_{22} \cdots x_{nn} = \lambda_{p(n-1)+i}$  (see \eqref{eq:lambdai}) and $x_{22}^{\low} = \cdots = x_{nn}^{\low} = 1$, it is clear that $\varphi_{p(n-1)+i}^{\low} = \lambda_{p(n-1)+i}^{\low}$.
    
    We only prove the formula for $\lambda_{p(n-1)+i}^{\low}$ for $i < p+1$. The cases where $i = p+1$ and $i > p+1$ are similar. Notice that, for all $k \in [0, n-p-1]$, we have
\begin{align*}
    \sum \limits_{j=0}^k (-1)^j \binom{k}{j} U^{k-j} e_1 & = U^k \sum \limits_{j=0}^k \binom{k}{j} (-U^{-1})^j e_1 = U^k (\mathbbm{1}_n - U^{-1})^k e_1 \\
    & = \left( \exp (ku)+ \text{higher degree terms} \right) (u + \text{higher degree terms})^k e_1 \\
    & = u^k e_1 + \text{higher degree terms}.
\end{align*}
It is evident that the two sets $$\{U^{n-p-1} e_1, U^{n-p-2} e_1, \ldots, Ue_1, e_1\} ~ \text{and} ~ \{\sum \limits_{j=0}^k (-1)^j \binom{k}{j} U^{k-j} e_1: k = n-p-1, n-p-2, \ldots, 1, 0\}$$ of column vectors differ from each other by right multiplication by a lower triangular matrix with $1$'s on the diagonal.  Hence we have, up to a sign, 
\begin{align*}
    & ~ \det
    \begin{bmatrix}
        U^{n-p} e_{n-p+i} & \cdots & U^{n-p} e_n & U^{n-p-1} e_1 & \cdots & U^{n-p-1} e_i & U^{n-p-2} e_1 & \cdots & Ue_1 & e_1
    \end{bmatrix} \\
    = & ~ \det
    \begin{bmatrix}
        U^{n-p} e_{n-p+i} & \cdots & U^{n-p} e_n & U^{n-p-1} e_i & \cdots & U^{n-p-1} e_1 & U^{n-p-2} e_1 & \cdots & Ue_1 & e_1
    \end{bmatrix} \\
        = & ~ \det
    \begin{bmatrix}
        e_{n-p+i}+\heartsuit & \cdots & e_n+\heartsuit & e_i+\heartsuit & \cdots & e_2+\heartsuit & u^{n-p-1} e_1+\heartsuit & \cdots & u e_1+\heartsuit & e_1+\heartsuit
    \end{bmatrix},
\end{align*}
where $\heartsuit$ stands for the higher degree terms in each expression.  

By Lemma \ref{Lem:non-zeroA},
\begin{align*}
    \det
    \begin{bmatrix}
        e_{n-p+i} & \cdots & e_n & e_i & \cdots & e_2 & u^{n-p-1} e_1 & \cdots & u e_1 & e_1
    \end{bmatrix}
    \neq 0.
\end{align*}
Hence it is the lowest degree term of
\begin{align*}
    & \det
    \begin{bmatrix}
        U^{n-p} e_{n-p+i} & \cdots & U^{n-p} e_n & U^{n-p-1} e_1 & \cdots & U^{n-p-1} e_i & U^{n-p-2} e_1 & \cdots & Ue_1 & e_1
    \end{bmatrix}.
\end{align*}
Now the conclusion follows from Theorem \ref{Thm:expforphi}.
\end{proof}

As above, let $(x,y) \in \b \oplus \b_-$ and $u = x-y$.  We next work out an expression for $\lambda_{p(n-1)+i}^{\low}$ in terms of the minors of the matrix $u$.   Recall the definition of $\Delta_{I,J}(u)$ prior to Theorem \ref{thm:choice-GLn}.

\begin{prop} \label{Pro:LowinMinor}
    For any $k \in [1,n-1]$ and $I_1 \subseteq [1,n]$ with $|I_1| = k$, we have
    \begin{align*}
        \det
        \begin{bmatrix}
            u e_1 & u^2 e_1 & \cdots & u^k e_1
        \end{bmatrix}_{I_1}
        = \sum \limits_{\substack{I_2, \ldots, I_k \subseteq [2,n] \\ |I_i| = k-i+1 ~ \forall i \in [2,k]}} \prod \limits_{i=1}^k \Delta_{I_i, I_{i+1} \sqcup \{1\}}(u).
    \end{align*}
    Here, we use the convention that $I_{k+1} = \emptyset$. For $k=1$, the right hand side is interpreted as $\Delta_{I_1,\{1\}}(u)$.
\end{prop}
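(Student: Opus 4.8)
The plan is to establish a one-step recursion in $k$ via the Cauchy--Binet formula and then unwind it by induction on $k$.

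First I would observe that, writing $M_k := \begin{bmatrix} ue_1 & u^2 e_1 & \cdots & u^k e_1 \end{bmatrix}$ and $N_k := \begin{bmatrix} e_1 & M_{k-1} \end{bmatrix} = \begin{bmatrix} e_1 & ue_1 & \cdots & u^{k-1}e_1 \end{bmatrix}$, one has $M_k = u N_k$, simply because $u^j e_1 = u(u^{j-1}e_1)$. Restricting to the rows indexed by $I_1$ expresses $(M_k)_{I_1}$ as the product of the $k \times n$ submatrix of $u$ with row set $I_1$ and the $n \times k$ matrix $N_k$, so the Cauchy--Binet formula gives
\[
\det (M_k)_{I_1} = \sum_{\substack{J \subseteq [1,n] \\ |J| = k}} \Delta_{I_1, J}(u)\, \det (N_k)_J .
\]

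Next I would analyze $\det (N_k)_J$. Its first column is $e_1$ restricted to the rows of $J$, which vanishes whenever $1 \notin J$; hence only subsets $J$ with $1 \in J$ contribute. When $1 \in J$, the index $1$ is the smallest element of $J$, so the pivot sits in the top-left corner, and cofactor expansion along the first column yields $\det (N_k)_J = \det (M_{k-1})_{J \setminus \{1\}}$ with sign exactly $+1$. Writing $I_2 := J \setminus \{1\} \subseteq [2,n]$ with $|I_2| = k-1$, this produces the recursion
\[
\det (M_k)_{I_1} = \sum_{\substack{I_2 \subseteq [2,n] \\ |I_2| = k-1}} \Delta_{I_1,\, I_2 \sqcup \{1\}}(u)\, \det (M_{k-1})_{I_2}.
\]
Finally I would induct on $k$. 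For $k = 1$, $\det (M_1)_{I_1}$ is the single entry of $ue_1$ in the row $I_1 = \{i_1\}$, namely $u_{i_1,1} = \Delta_{\{i_1\},\{1\}}(u)$, which is the asserted right-hand side. For the inductive step, substituting the formula for $\det (M_{k-1})_{I_2}$ (with $I_2$ playing the role of $I_1$, and the dummy subsets relabeled $I_3, \ldots, I_k$) into the recursion collapses the nested sums into a single sum over all tuples $(I_2, \ldots, I_k)$ with $I_i \subseteq [2,n]$ and $|I_i| = k-i+1$ of $\prod_{i=1}^k \Delta_{I_i,\, I_{i+1} \sqcup \{1\}}(u)$, as claimed, with the convention $I_{k+1} = \emptyset$.

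There is no serious obstacle here; the computation is a clean application of Cauchy--Binet combined with induction. The only delicate points are verifying that the cofactor sign is precisely $+1$ (which holds because $1$ is the minimum of $J$) and correctly tracking the index shift $I_i \mapsto I_{i+1}$ when feeding the inductive hypothesis into the recursion — both are routine bookkeeping.
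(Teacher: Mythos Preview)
Your proposal is correct and follows essentially the same approach as the paper: factor $M_k = uN_k$, apply Cauchy--Binet, observe that only subsets containing $1$ contribute because of the $e_1$ column, peel off that column to reduce to $M_{k-1}$, and induct. The paper's proof is identical in structure and detail, differing only in that it does not introduce the shorthand $M_k$, $N_k$.
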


\begin{proof}
    We induct on $k$.  The case $k=1$ is immediate. For $k > 1$, by the Binet-Cauchy formula, we have
    \begin{align*}
         & ~ \det
        \begin{bmatrix}
            u e_1 & u^2 e_1 & \cdots & u^k e_1
        \end{bmatrix}_{I_1} 
        = \det (u
        \begin{bmatrix}
            e_1 & u e_1 & \cdots & u^{k-1} e_1
        \end{bmatrix})_{I_1} \\
        = & \sum \limits_{\substack{I_2 \subseteq [1,n] \\ |I_2| = k}} \Delta_{I_1,I_2}(u) \det (
        \begin{bmatrix}
            e_1 & u e_1 & \cdots & u^{k-1} e_1
        \end{bmatrix}_{I_2})
        = \!\!\!\! \sum \limits_{\substack{I_2 \subseteq [2,n] \\ |I_2| = k-1}} \!\! \Delta_{I_1,I_2 \sqcup \{1\}}(u) \det (
        \begin{bmatrix}
            u e_1 & \cdots & u^{k-1} e_1
        \end{bmatrix}_{I_2}) \\
        = & \sum \limits_{\substack{I_2 \subseteq [2,n] \\ |I_2| = k-1}} \Delta_{I_1,I_2 \sqcup \{1\}}(u) \sum \limits_{\substack{I_3, \ldots, I_k \subseteq [2,n] \\ |I_i| = k-i+1 ~ \forall i \in [3,k]}} \prod \limits_{i=2}^k \Delta_{I_i, I_{i+1} \sqcup \{1\}}(u)
        = \sum \limits_{\substack{I_2, \ldots, I_k \subseteq [2,n] \\ |I_i| = k-i+1 ~ \forall i \in [2,k]}} \prod \limits_{i=1}^k \Delta_{I_i, I_{i+1} \sqcup \{1\}}(u),
    \end{align*}
    where the second to last equality follows from induction hypothesis.
\end{proof}

The following lemmas are easy consequences of Theorem \ref{prop:lambdalow} and Proposition \ref{Pro:LowinMinor}.

\begin{cor}\label{cor:lowinminor1}
    Let $(x,y) \in \g_{\rm st}^*$ and $u = x-y$.  Let $i \in [1,n-1], p \in [0,n-2]$ such that $i \le p+1$.  Putting $I_1 = [i+1,n-p+i-1]$, so $k = |I_1| = n-p-1$, up to a sign, we have
    \begin{align*}
        \varphi_{p(n-1)+i}^{\low} (u) = \lambda_{p(n-1)+i}^{\low} (u) = \sum \limits_{\substack{I_2, \ldots, I_k \subseteq [2,n] \\ |I_j| = k-j+1 ~ \forall j \in [2,n-p-1]}} \prod \limits_{j=1}^{n-p-1} \Delta_{I_j, I_{j+1} \sqcup \{1\}}(u).
    \end{align*}
\end{cor}

\begin{cor}\label{cor:lowinminor2}
    Let $(x,y) \in \g_{\rm st}^*$ and $u = x-y$.  Let $i \in [1,n-1], p \in [0,n-2]$ such that $i > p+1$.  Putting $I_1 = [2,i-p-1] \sqcup [i+1,n]$, so $k = |I_1| = n-p-2$, up to a sign, we have
    \begin{align*}
        \varphi_{p(n-1)+i}^{\low} (u) = \lambda_{p(n-1)+i}^{\low} (u) =  \sum \limits_{\substack{I_2, \ldots, I_k \subseteq [2,n] \\ |I_j| = k-j+1 ~ \forall j \in [2,n-p-2]}} \prod \limits_{j=1}^{n-p-2} \Delta_{I_j, I_{j+1} \sqcup \{1\}}(u).
    \end{align*}
\end{cor}

\begin{example} \label{ex:gl4}
    Take $n = 4$.  The two corollaries above say that, up to a sign,
    \begin{align*}
        & \varphi_1^{\low} (u) = \Delta_{[2,4],[1,3]} (u) \varphi_4^{\low} (u) + \Delta_{[2,4],\{1,2,4\}} (u) \varphi_3^{\low} (u) + \Delta_{[2,4],\{1,3,4\}} (u) \varphi_2^{\low} (u), \\
        & \varphi_2^{\low} (u) = \Delta_{[3,4],[1,2]} (u) \Delta_{\{2\},\{1\}} (u) + \Delta_{[3,4],\{1,3\}} (u) \Delta_{\{3\},\{1\}} (u) + \Delta_{[3,4],\{1,4\}} (u) \Delta_{\{4\},\{1\}}, \\
        & \varphi_3^{\low} (u) = \Delta_{\{2,4\},[1,2]} (u) \Delta_{\{2\},\{1\}} (u) + \Delta_{\{2,4\},\{1,3\}} (u) \Delta_{\{3\},\{1\}} (u) + \Delta_{\{2,4\},\{1,4\}} (u) \Delta_{\{4\},\{1\}}, \\
        & \varphi_4^{\low} (u) = \Delta_{[2,3],[1,2]} (u) \Delta_{\{2\},\{1\}} (u) + \Delta_{[2,3],\{1,3\}} (u) \Delta_{\{3\},\{1\}} (u) + \Delta_{[2,3],\{1,4\}} (u) \Delta_{\{4\},\{1\}}, \\
        & \varphi_5^{\low} (u) = \varphi_2^{\low} (u), \\
        & \varphi_6^{\low} (u) = \Delta_{\{4\},\{1\}} (u), \quad \varphi_7^{\low} (u) = \Delta_{\{2\},\{1\}} (u), \quad \varphi_8^{\low} (u) = \Delta_{\{3\},\{1\}} (u), \quad \varphi_9^{\low} (u) = \varphi_6^{\low} (u).
    \end{align*}
    In particular, in terms of the minors of $u$, the function $\varphi_1^{\low}$ only has $9$ terms, which is much simpler than the $86$-term formula in terms of the entries of $u$ mentioned in Example \ref{ex:runningexGLn}.   
\end{example}

To prove the first statement of Theorem \ref{thm:choice-GLn}, we first prove a lemma. Take $(x,y)\in \g_{\rm st}^*$ and set $u=x-y$ with entries $u_{ij}$.  Write $u'$ for $u_{[1,n-1]}^{[1,n-1]}$, so one can apply the functions $\varphi_k^{\low}$ for $\mathfrak{gl}(n-1,\mathbb{C})^{\ast}$ to $u'$.  

\begin{lem} \label{Lem:Ind}
    Assume that $u$ is strictly lower triangular.  Then for $i \le p$, $i \in [1,n-1]$ and $p \in [0,n-2]$, we have, up to a sign,
    \begin{align*}
        \varphi_{p(n-1)+i}^{\low} (u) = \varphi_{(p-1)(n-2)+i}^{\low} (u').
    \end{align*}
    In particular, the function $\varphi_{p(n-1)+i}^{\low} (u)$ is independent of $u_{n1}, \ldots, u_{n,n-1}$.
\end{lem}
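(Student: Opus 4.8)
The plan is to identify both sides with determinants of submatrices of a Krylov-type matrix and then observe that, under the hypothesis that $u$ is strictly lower triangular, these submatrices literally coincide. Since $i\le p$ together with $i\ge 1$ forces $p\ge 1$, and $i\le p\le n-2$, Corollary~\ref{cor:lowinminor1} applies both to $\mathfrak{gl}(n,\CC)$ with parameters $(p,i)$ and, after replacing $n$ by $n-1$ and $p$ by $p-1$, to $\mathfrak{gl}(n-1,\CC)$ with parameters $(p-1,i)$. A short index computation shows that in both cases the row index set comes out to be $I_1=[i+1,\,n-p+i-1]$, of cardinality $k=n-p-1$, and the relevant matrices are $\bigl[\,ue_1\ \ u^2e_1\ \cdots\ u^{n-p-1}e_1\,\bigr]$ and $\bigl[\,u'e_1'\ \ (u')^2e_1'\ \cdots\ (u')^{n-p-1}e_1'\,\bigr]$ respectively, where $e_1'$ denotes the first standard basis vector of $\CC^{n-1}$. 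Hence it suffices to prove that the submatrices obtained by extracting the rows $I_1$ agree.

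First I would record the elementary fact that a strictly lower triangular $u$ has zero last column, since the $(k,n)$-entry of such a matrix vanishes for every $k$. This implies that for any $v\in\CC^n$ the first $n-1$ coordinates of $uv$ depend only on the first $n-1$ coordinates of $v$ and are equal to $u'(v_1,\dots,v_{n-1})^{T}$. An induction on $j$ then gives that the first $n-1$ coordinates of $u^j e_1$ coincide with $(u')^j e_1'$ for every $j\ge 0$, the base case $j=0$ being trivial and the inductive step using only the just-mentioned fact about the last column.

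To conclude, I would note that every index in $I_1=[i+1,\,n-p+i-1]$ lies in $[2,n-1]$, the upper bound $n-p+i-1\le n-1$ being exactly where the hypothesis $i\le p$ enters. Therefore the submatrix of $\bigl[\,ue_1\ \cdots\ u^{n-p-1}e_1\,\bigr]$ on rows $I_1$ involves only the first $n-1$ coordinates of each column $u^j e_1$, which by the previous paragraph equal the corresponding coordinates of $(u')^j e_1'$; so this submatrix is identical to the submatrix of $\bigl[\,u'e_1'\ \cdots\ (u')^{n-p-1}e_1'\,\bigr]$ on rows $I_1$. Taking determinants and invoking Corollary~\ref{cor:lowinminor1} on both sides yields $\varphi_{p(n-1)+i}^{\low}(u)=\pm\,\varphi_{(p-1)(n-2)+i}^{\low}(u')$. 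The final assertion is then immediate, since $u'=u_{[1,n-1]}^{[1,n-1]}$ does not involve the entries $u_{n1},\dots,u_{n,n-1}$ of the last row of $u$.

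I do not expect a serious obstacle: the argument is a combination of the already-established minor formula (Corollary~\ref{cor:lowinminor1}, itself a consequence of Theorem~\ref{prop:lambdalow}) with a one-line observation about strictly lower triangular matrices. The only points that require care are the parameter bookkeeping between the two invocations of Corollary~\ref{cor:lowinminor1} --- in particular verifying that $k$ and $I_1$ are unchanged under $(n,p)\mapsto(n-1,p-1)$, and that $i\le p$ is precisely what is needed for both invocations and for the bound $n-p+i-1\le n-1$ --- and phrasing the induction so that it genuinely uses the vanishing of the last column of $u$ rather than the weaker statement that each $u^j$ is strictly lower triangular.
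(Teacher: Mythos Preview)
Your proposal is correct and follows essentially the same approach as the paper's proof: both identify $\varphi_{p(n-1)+i}^{\low}(u)$ and $\varphi_{(p-1)(n-2)+i}^{\low}(u')$ with the determinant of rows $[i+1,n-p+i-1]$ of the respective Krylov matrices, use strict lower triangularity (vanishing last column) to show that the first $n-1$ entries of $u^je_1$ coincide with $(u')^je_1'$, and invoke $i\le p$ to guarantee these rows lie in $[2,n-1]$. The only cosmetic difference is that the paper records the block formula $u^k=\begin{bmatrix}(u')^k & 0\\ u_{[n,n]}^{[1,n-1]}(u')^{k-1} & 0\end{bmatrix}$ directly and cites Theorem~\ref{prop:lambdalow}, whereas you phrase the same fact as a one-line induction and cite Corollary~\ref{cor:lowinminor1}; the parameter check $(n,p)\mapsto(n-1,p-1)$ is handled identically.
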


\begin{proof}
    Since $u$ is strictly upper triangular, for all $k \in \NN$, we have
    \begin{align*}
        u^k = 
        \begin{bmatrix}
            (u')^k & 0 \\
            u^{[1,n-1]}_{[n,n]}(u')^{k-1} & 0
        \end{bmatrix}.
    \end{align*}

    By Theorem \ref{prop:lambdalow}, up to a sign, we have
    \begin{align*}
        \varphi_{p(n-1)+i}^{\low} (u) = \det
        \begin{bmatrix}
            u e_1 & \cdots & u^{n-p-1} e_1
        \end{bmatrix}_{[i+1,n-p+i-1]}.
    \end{align*}
    Since $i \le p$, we have $n-p+i-1 \le n-1 < n$, so when computing the determinant above, the last row of the matrix does not play a role.  Hence we have, up to a sign,
    \begin{align*}
        & \varphi_{p(n-1)+i}^{\low} (u) = \det
        \begin{bmatrix}
            u' e'_1 & \cdots & (u')^{n-p-1} e'_1
        \end{bmatrix}_{[i+1,n-p+i-1]} \\
        = & \det
        \begin{bmatrix}
            u' e'_1 & \cdots & (u')^{(n-1)-(p-1)-1} e'_1
        \end{bmatrix}_{[i+1,(n-1)-(p-1)+i-1]} = \varphi_{(p-1)(n-2)+i}^{\low} (u'),
    \end{align*}
    where $e'_1$ is the vector $[1 ~ 0 \cdots 0]^T$ in $\CC^{n-1}$ and the last equality holds since  $i \le p = (p-1)+1$.
\end{proof}

\begin{proof}[Proof of the first statement of Theorem \ref{thm:choice-GLn}]
    It suffices to prove that $\{ \varphi_{p(n-1)+i}^\low : i\leq p+1\} \cup\{\overline{c}_i^\low: 0 \le i \le n-1 \}$ consists of independent functions.  By Corollary \ref{cor:lowinminor1}, the entries $u_{11}, \ldots, u_{1n}$ of $u$ do not show up in the functions $\varphi_{p(n-1)+i}^\low$, $i \le p+1$.  Hence, in view of Proposition \ref{lem:degcas}, it suffices to prove that $\{ \varphi_{p(n-1)+i}^\low : i\leq p+1\}$ consists of independent functions.

    Let $\n_-$ stand for the set of all strictly lower triangular $n \times n$ matrices.  We will show, by induction on $n$, that $\{ \bigl( \varphi_{p(n-1)+i}^\low \bigr) \big|_{\n_-} \colon i \in [1,n-1], p \in [0,n-2], i \leq p+1\}$ is independent.

    For the base case $n=2$, we need to show that $\{ (\varphi_1^\low) \big|_{\n_-} = u_{21} \}$ is independent.  This is evident.

    Now assume that $n>2$ and independence has been proved for all strictly smaller values of $n$.  By Lemma \ref{Lem:Ind} and induction hypothesis, for $u \in \mathfrak n_-$, the set $$\{\varphi_{p(n-1)+i}^{\low} (u): i \le p, i \in [1,n-1] ~ \text{and} ~ p \in [0,n-2]\}$$ is independent; and $u_{n1}, \ldots, u_{n,n-1}$ do not appear in these functions.

    Consider those $\varphi_{p(n-1)+i}^{\low}$ that satisfy $i=p+1$.  Up to a sign, these are $$\{\varphi_{p(n-1)+(p+1)}^{\low} (u) = \det 
    \begin{bmatrix}
        u e_1 & \cdots & u^{n-p-1} e_1
    \end{bmatrix}_{[p+2,n]}: p \in [0,n-2]\}.$$  If we can show that the determinant of
    \begin{align*}
        \Jac(u) = \left[
            \frac{\partial \varphi_{p(n-1)+(p+1)}^{\low}}{\partial u_{nj}} (u)
        \right]^{p \in [0,n-2]}_{j \in [1,n-1]}
    \end{align*}
    is a non-zero function, then we are done. By the computation in the proof of Lemma \ref{Lem:Ind}, we have
    \begin{align*}
        u^k e_1 =
        \begin{bmatrix}
            (u')^k e'_1 \\
             u^{[1,n-1]}_{[n,n]} (u')^{k-1} e'_1
        \end{bmatrix}
    \end{align*}
    for all $k \in \NN$.  Thus, representing expressions not involving $u_{n1},\ldots,u_{n,n-1}$ by $\ast$, the $(n,1)$-entry of $u^k e_1$ has the form
    \begin{align*}
        u^{[1,n-1]}_{[n,n]} ((u')^{k-1} e'_1) = [u_{n1} ~ \cdots ~ u_{n,n-1}] [\underbrace{0 ~ \cdots ~ 0}_{k-1} \ast \cdots \ast]^T = \ast u_{nk} + \cdots + \ast u_{n,n-1} =: \bar u_k.
    \end{align*}
    It follows that $\varphi_{p(n-1)+(p+1)}^{\low} (u)$ is a $\CC[u_{ab}: 1 \le b < a \le n-1]$-linear combination of $\bar u_1, \ldots, \bar u_{n-p-1}$. Thus
    \[
        \Jac(u) = \left[
            \frac{\partial \varphi_{p(n-1)+(p+1)}^{\low}}{\partial u_{nj}}
        \right]^{p \in [0,n-2]}_{j \in [1,n-1]} = \left[
            \frac{\partial \bar u_k}{\partial u_{nj}}
        \right]^{k \in [1,n-1]}_{j \in [1,n-1]} \cdot \left[
            \frac{\partial \varphi_{p(n-1)+(p+1)}^{\low}}{\partial \bar u_k} 
        \right]_{k \in [1,n-1]}^{p \in [0,n-2]} =: L_1\cdot L_2.
    \]
    
    Note that $L_1$ is lower triangular. By induction on $k$, one can show that
    \[
    u_{21} u_{32} \cdots u_{k,k-1}
    \]
    is the coefficient for $u_{nk}$ in $\bar u_k$, hence $\det L_1 = \prod \limits_{k=1}^{n-1} u_{21} u_{32} \cdots u_{k,k-1} \neq 0$.
    
    Note also that $L_2$ has the property that its $(k,p)$-entry is zero for $k+p \geq n$.  Hence, up to a sign, we have $\det L_2 = \prod \limits_{p=0}^{n-2} \frac{\partial \varphi_{p(n-1)+(p+1)}^{\low}}{\partial \bar u_{n-p-1}}$.  For $p \in [0,n-2]$, the coefficient for $\bar u_{n-p-1}$ in $\varphi_{p(n-1)+(p+1)}^{\low} (u)$ is
    \begin{align*}
        \det
        \begin{bmatrix}
            u e_1 & \cdots & u^{n-p-2} e_1
        \end{bmatrix}_{[p+2,n-1]},
    \end{align*}
    which is non-zero by Lemma \ref{Lem:non-zeroA}. Thus $\det L_2 \neq 0$. Then we conclude $\det \Jac(u) \neq 0$.
\end{proof}

\Addresses

\begin{thebibliography}{99}



\bibitem[AH17]{Adam-He:lifts}
J.~Adams and X.~He. Lifting of elements of Weyl groups. {\em J. Algebra}, 485:142--165, 2017.

\bibitem[AMV04]{AMVan:book}
M.~Adler, P.~Moerbeke, and P.~Vanhaecke.
{\em Algebraic integrability, Painleve geometry and Lie Algebras}, volume~47 of {\em A Series of Modern Surveys in Mathematics (MATHE3)}, Springer, 2004.

\bibitem[BFZ05]{BFZ:III}
A.~Berenstein, S.~Fomin, and A.~Zelevinsky. Cluster algebras III: Upper bounds and double Bruhat cells.
{\em Duke Math. J.}, 126(1):1--52, 2005.



\bibitem[BZ01]{BZ:tensor}
A.~Berenstein and A.~Zelevinsky.
\newblock Tensor product multiplicities, canonical bases and totally positive
  varieties.
\newblock {\em Invent. Math.}, 143:77--128, 2001.

\bibitem[Bol92]{Bolsinov}
A.~V. Bolsinov. Compatible Poisson brackets on Lie algebras and completeness of families of functions in involution. {\em Math. USSR-Izv.}, 38(1):69, Feb. 1992.

\bibitem[BD25]{B-D:index}
D.~Burde and K.~Dekimpe.
\newblock The index of nilpotent Lie algebras.
\newblock arXiv:2505.08331.

\bibitem[CS14]{Chekhov-Shapiro}
L.~Chekhov and M.~Shapiro.
\newblock {Teichmuller} spaces of {Riemann} surfaces with orbifold points of
  arbitrary order and cluster variables.
\newblock {\em Dokl. Math.}, 2014:2746--2772, 2014.

\bibitem[Dix57]{Dixm:57}
J.~Dixmier.
\newblock {Sur les repr\'esentations unitaires des groupes de {Lie} nilpotents,
  II}.
\newblock {\em Bull. Soc. Math. France}, 85:325--388, 1957.


\bibitem[DK00]{DK:index}
V. Dergachev and A. Kirillov,  Index of Lie algebras of seaweed type, {\em Journal of Lie Theory}, 10 (2), 331--343, (2000).

\bibitem[Dix96]{Diximir:book}
J.~Dixmier.
\newblock {\em Enveloping Algebras}, volume~11 of {\em Grad. Stud. Math.}
\newblock Amer. Math. Soc., Providence, RI, 1996.

\bibitem[Dri82]{dr:Hamil}
V.~G. Drinfeld.
\newblock {Hamiltonian} structures on {Lie} groups, {Lie} bialgebras and the
  geometric meaning of the classical {Yang}--{Baxter} equations.
\newblock {\em Dokl. Math.}, 27(1):68--71, 1982.

\bibitem[EL21]{EL:BS}
B.~Elek and J.-H. Lu.
\newblock {{Bott}-{Samelson} varieties and {Poisson} {Ore} extensions}.
\newblock {\em Int. Math. Res. Not.}, 2021(14):10745--10797, 2021.

\bibitem[ES02]{Etingof-Schiffman}
P.~Etingof and O.~Schiffman.
\newblock {\em Lectures on Quantum Groups}.
\newblock International Press, 2002.

\bibitem[FWZ21]{FWZ:online}
S.~Fomin, L.~Williams, and A.~Zelevinsky.
\newblock Introduction to cluster algebras, Chapter 6.
\newblock arXiv:2008.09189v2, 2021.

\bibitem[FZ99]{FZ:double}
S.~Fomin and A.~Zelevinsky.
\newblock Double {Bruhat} cells and total positivity.
\newblock {\em J. Amer. Math. Soc.}, 12(2):335--380, 1999.

\bibitem[FZ02]{FZ:I}
S.~Fomin and A.~Zelevinsky.
\newblock {Cluster algebras {I}: Foundations}.
\newblock {\em J. Amer. Math. Soc.}, 15:497--529, 2002.

\bibitem[FZ03]{FZ:II}
S.~Fomin and A.~Zelevinsky.
\newblock Cluster algebras {II}: {Finite} type classification.
\newblock {\em Invent. Math.}, 154(1):63--121, 2003.

\bibitem[FZ07]{FZ:IV}
S.~Fomin and A.~Zelevinsky.
\newblock {Cluster algebras IV: Coefficients}.
\newblock {\em Compos. Math.}, 143(1):112--164, 2007.

\bibitem[Ful98]{Ful}
W.~Fulton.
\newblock {\it Intersection theory, second edition}, 
\newblock Ergebnisse der Mathematik und ihrer Grenzgebiete. 3. Folge. volume 2 of {\it A Series of Modern Surveys in Mathematics}, Springer, Berlin, 1998

\bibitem[GLS08]{GLS:partial}
C.~Geiss, B.~Leclerc, and J.~Schr\"oer.
\newblock Partial flag varieties and preprojective algebras.
\newblock {\em Ann. de l'Inst. Fourier}, 58(3):825--876, 2008.

\bibitem[GLS13]{GLS:factorial}
C.~Geiss, B.~Leclerc, and J.~Schr\"oer.
\newblock Factorial cluster algebras.
\newblock {\em Doc. Math.}, 18:249--274, 2013.

\bibitem[GS83]{Guillemin-Sternberg:collective}
V.~Guillemin and S.~Sternberg.
\newblock {On collective complete integrability according to the method of
  {Thimm}}.
\newblock {\em Dokl. Math.}, 3:219--230, 1983.

\bibitem[GSV10]{GSV:book}
M.~Gekhtman, M.~Shapiro, and A.~Vainshtein.
\newblock {\em Cluster algebras and {Poisson} geometry}, volume 167 of {\em
  Mathematical Surveys and Monographs}.
\newblock Amer. Math. Soc., Providence, RI, 2010.

\bibitem[GSV12]{GSV:2012}
M.~Gekhtman, M.~Shapiro, and A.~Vainshtein.
\newblock Cluster structures on simple complex {Lie} groups and
  {Belavin}--{Drinfeld} classification.
\newblock {\em Mosc. Math. J.}, 12(2):293--312, 2012.

\bibitem[GSV18]{GSV:2018}
M.~Gekhtman, M.~Shapiro, and A.~Vainshtein.
\newblock {Drinfeld} double of {$\GL_n$} and generalized cluster structures.
\newblock {\em Proc. Lond. Math. Soc.}, 116(3):429--484, 2018.

\bibitem[GSV20]{GSV20}
M.~Gekhtman, M.~Shapiro, and A.~Vainshtein.
\newblock Periodic staircase matrices and generalized cluster structures.
\newblock {\em Int. Math. Res. Not.}, 2020(6):4181--4221, 2020.

\bibitem[GSV22]{GSV:related}
M.~Gekhtman, M.~Shapiro, and A.~Vainshtein.
\newblock Generalized cluster structures related to the {Drinfeld} double of
  {$\GL_n$}.
\newblock {\em J. Lond. Math. Soc.}, 105(3):1601--1633, 2022.

\bibitem[GY09]{GY09}
K.~R. Goodearl and M.~T. Yakimov.
\newblock Poisson structures on affine spaces and flag varieties. II.
\newblock {\em Trans. Amer. Math. Soc.}, 361(11):5753--5780, 2009.

\bibitem[GY23]{GY:Poi-CGL}
K.~R. Goodearl and M.~T. Yakimov.
\newblock {\em {Cluster algebra structures on {Poisson} nilpotent algebras}},
  volume 290.
\newblock Amer. Math. Soc., 2023.

\bibitem[Kos12]{K12}
B.~Kostant.
\newblock {The cascade of orthogonal roots and the coadjoint structure of the
  nilradical of a {Borel} subgroup of a semisimple {Lie} group}.
\newblock {\em Mosc. Math. J.}, 12(3):605--620, 669, 2012.

\bibitem[Kos13]{K13}
B.~Kostant.
\newblock {Center of $U(\mathfrak{n})$, cascade of orthogonal roots, and a
  construction of {Lipsman}--{Wolf}}.
\newblock In {\em Lie Groups: Structure, Actions, and Representations: In Honor
  of {Joseph} A. {Wolf} on the Occasion of his 75th Birthday}, volume 306 of
  {\em Progress in Mathematics}, pages 163--173. Birkh\"auser, New York, NY,
  2013.

\bibitem[KZ02]{KZ:leaves}
M.~Kogan and A.~Zelevinsky.
\newblock On symplectic leaves and integrable systems in standard complex
  semi-simple Poisson--Lie groups.
\newblock {\em Int. Math. Res. Not.}, 2002(32):1685--1702, 2002.

\bibitem[LGPV13]{LGPV:book}
C.~Laurent-Gengoux, A.~Pichereau, and P.~Vanhaecke.
\newblock {\em {Poisson} structures}, volume 347 of {\em Grundlehren der
  mathematischen Wissenschaften}.
\newblock Springer, 2013.

\bibitem[Li19]{L}
Y.~Li.
\newblock {Quantum Boson algebra and Poisson geometry of the flag variety}.
\newblock arXiv:1904.10141, 2019.

\bibitem[LM23]{LM23}
Y.~Loizides and E.~Meinrenken.
\newblock Differential geometry of weightings. 
\newblock {\it Adv. Math.} 424:1-49, 2023.

\bibitem[LM17a]{LM:mixed}
J.-H. Lu and V.~Mouquin.
\newblock {Mixed product {Poisson} structures associated to {Poisson} {Lie}
  groups and {Lie} bialgebras}.
\newblock {\em Int. Math. Res. Not.}, 2017(19):5919--5976, 2017.

\bibitem[LM17b]{LM:T-leaves}
J.-H. Lu and V.~Mouquin.
\newblock {On the {T}-leaves of some {Poisson} structures related to products
  of flag varieties}.
\newblock {\em Adv. Math.}, 306:1209--1261, 2017.

\bibitem[LM18]{LMi:Kostant}
J.-H. Lu and Y.-P. Mi.
\newblock {Generalized {Bruhat} cells and completeness of {Hamiltonian} flows
  of the {Kogan}--{Zelevinsky} integrable systems}.
\newblock In {\em Lie Groups, Geometry, and Representation Theory, A Tribute to
  the Life and Work of {Bertram} {Kostant}}, volume 326 of {\em Progress in
  Mathematics}, pages 315--365. Birkh\"auser, Cham, 2018.

\bibitem[LM25]{Lu-Mi:matrices}
J.-H. Lu and Y.-P.~Mi.
\newblock Mutation matrices from symmetric Poisson CGL extensions.
\newblock In preparation, 2025.

\bibitem[Lu97]{Lu:Lie-Duke}
J.-H. Lu.
\newblock {Poisson} homogeneous spaces and {Lie} algebroids associated to
  {Poisson} actions.
\newblock {\em Duke Math. J.}, 86(2):261--304, 1997.

\bibitem[MF81]{M-F:Poi}
A.~S. Mishchenko and A.~T. Fomenko.
\newblock {Integration of Hamiltonian systems with non-commutative symmetries}.
\newblock {\em Tr. Semin. Vektorn. Tenzorn. Anal.}, 20:5--54, 1981.
\newblock in Russian.

\bibitem[MR04]{MR:flag}
B.~R. Marsh and K.~Rietsch.
\newblock Parametrizations of flag varieties.
\newblock {\em Represent. Theory}, 8:212--242, 2004.

\bibitem[Pan03]{Panyushev:index}
D.~Panyushev.
\newblock The index of a {Lie} algebra, the centralizer of a nilpotent element,
  and the normalizer of the centralizer.
\newblock {\em Math. Proc. Camb. Philos. Soc.}, 134(1):41--59, 2003.

\bibitem[QY25]{Qin-Yakimov:G}
F.~Qin and M.~Yakimov.
\newblock Partially compactified quantum cluster structures on simple algebraic
  groups and the full {Berenstein}--{Zelevinsky} conjecture.
\newblock arXiv:2504.05134v2, 2025.

\bibitem[RT92]{RT:index}
M. Raïs and P. Tauvel.  Indice et polynômes invariants pour certaines algèbres de Lie. {\em Journal für die reine und angewandte Mathematik}, 425, 123--140, 1992. 

\bibitem[Sad04]{Sadetov}
S.~T. Sadetov.
\newblock {A proof of the Mishchenko--Fomenko conjecture}.
\newblock {\em Dokl. Math.}, 397(6):751--754, 2004.

\bibitem[SW21]{ShenWeng:DBS}
L.~Shen and D.~Weng.
\newblock {Cluster structures on double {Bott}--{Samelson} cells}.
\newblock {\em Forum Math. Sigma}, 9:1--89, 2021.

\bibitem[Thi81]{Thimm}
A.~Thimm.
\newblock {Integrable geodesic flows on homogeneous spaces}.
\newblock {\em Ergod. Theory Dyn. Syst.}, 1:495--517, 1981.

\bibitem[Tit66]{Tits}
J.~Tits.
\newblock Normalisateurs de tores I. Groupes de {Coxeter} \'etendus.
\newblock {\em J. Algebra}, 4:96--116, 1966.

\bibitem[Tro80]{Trofimov}
V.~V. Trofimov.
\newblock Euler equations on Borel subalgebras of semisimple Lie algebras.
\newblock {\em Math. USSR-Izv.}, 14(3):653--670, 1980.

\bibitem[Ver69]{Vergne:CRAS}
M.~Vergne.
\newblock {La structure de Poisson sur l'alg\`ebre sym\'etrique d'une
  alg\`ebra de {Lie} nilpotente}.
\newblock {\em C. R. Acad. Sc. Paris, S\'erie A}, 269:950--952, 1969.

\bibitem[Ver72]{Vergne:nilpotent}
M.~Vergne.
\newblock {La structure de {Poisson} sur l'alg\`ebre sym\'etrique d'une
  alg\`ebra de {Lie} nilpotente}.
\newblock {\em Bull. Soc. Math. France}, 100:301--335, 1972.

\bibitem[Zel00]{Z:component}
A.~Zelevinsky.
\newblock Connected components of real double {Bruhat} cells.
\newblock {\em Int. Math. Res. Not.}, 2000(21):1131--1153, 2000.

\end{thebibliography}
\end{document}